\documentclass[12pt, twoside]{book}

\usepackage[T1]{fontenc}
\usepackage{charter}
\usepackage[expert]{mathdesign}
\usepackage{amsmath,amssymb,amsthm,amsfonts}
\usepackage{mathtools}
\usepackage{minitoc-hyper}
\usepackage{graphicx,wrapfig}
\graphicspath{{./figures/}}
\usepackage[a4paper,width=150mm,top=25mm,bottom=25mm,bindingoffset=6mm]{geometry}
\usepackage{emptypage}
\usepackage[cal=boondox]{mathalfa}
\usepackage{wasysym,textcomp,array,arcs,polynom,cancel,enumerate,dcolumn} 
\usepackage{float}
\usepackage{caption}
\usepackage{subcaption}
\usepackage{mathrsfs}
\usepackage{makeidx}
\makeindex 
\usepackage{afterpage}
\usepackage[all, cmtip]{xy} 
\usepackage[Lenny]{fncychap} 
\usepackage{fancyhdr}
\pagestyle{fancy}
\fancyhf{}
\fancyhead[LE,RO]{\thepage}
\fancyhead[RE]{\leftmark}
\fancyhead[LO]{\rightmark}

\allowdisplaybreaks
\usepackage[usenames,dvipsnames]{xcolor}

\definecolor{aureolin}{HTML}{009EFF}
\definecolor{tealGreen}{HTML}{008731}
\usepackage[linktoc=all,colorlinks,linkcolor=teal,citecolor=aureolin]{hyperref}
\usepackage[nameinlink,noabbrev]{cleveref}
\crefname{defn}{definition}{definitions}
\Crefname{defn}{Definition}{Definitions}
\usepackage[symbols,nogroupskip,stylemods,postpunc=dot, automake]{glossaries-extra}
\usepackage{tikz-cd}
\usepackage{cases} 
\usepackage{longtable}
\makeatletter






\newcommand{\rbb}{\ensuremath{\mathbb{R}}} 
\newcommand{\cbb}{\ensuremath{\mathbb{C}}} 
\newcommand{\zbb}{\ensuremath{\mathbb{Z}}}

\newcommand{\zbf}{\ensuremath{\mathbf{z}}} 
\newcommand{\ubf}{\ensuremath{\mathbf{u}}} 
\newcommand{\vbf}{\ensuremath{\mathbf{v}}}
\newcommand{\wbf}{\ensuremath{\mathbf{w}}}
\newcommand{\ebf}{\ensuremath{\mathbf{e}}}
\renewcommand{\bf}[1]{\ensuremath{\mathbf{#1}}}
\newcommand{\sbb}{\ensuremath{\mathbb{S}}}
\newcommand{\cali}[1]{\ensuremath{\mathcal{#1}}}

\newcommand{\blue}[1]{\textcolor{blue}{#1}}

\newcommand{\delbydel}[2]{\ensuremath{\dfrac{\partial#1}{\partial#2}}} 
\newcommand{\crf}{\ensuremath{\mathrm{Cr}(f)}} 
\newcommand{\grad}{\ensuremath{\nabla}} 
\newcommand{\comp}{\mathbin{\mathchoice {\compcent\scriptstyle}{\compcent\scriptstyle} {\compcent\scriptscriptstyle}{\compcent\scriptscriptstyle}}} 
\newcommand{\compcent}[1]{\vcenter{\hbox{$#1\circ$}}} 
 
\newcommand{\dist}{\ensuremath{\operatorname{dist}}} 
 
\newcommand{\isom}{\ensuremath{\cong}} 

\newcommand{\defeq}{\vcentcolon=}
\newcommand{\eqdef}{=\vcentcolon}

\newcommand{\directsum}{\ensuremath{\oplus}}

\newcommand{\cutn}[1][N]{\ensuremath{\mathrm{Cu}(#1)}}
\newcommand{\sen}{\ensuremath{\mathrm{Se}(N)}}
\newcommand{\innerprod}[2]{\ensuremath{\left\langle #1,#2\right\rangle}}
\newcommand{\norm}[1]{\ensuremath{\left\|#1\right\|}}

\newcommand{\paran}[1]{\ensuremath{\left( #1 \right)}}
\newcommand{\curlybracket}[1]{\ensuremath{\left\{ #1 \right\}}}
\newcommand{\squarebracket}[1]{\ensuremath{\left[ #1 \right]}}
\newcommand{\abs}[1]{\ensuremath{\left|#1\right|}}
\newcommand{\aTransInverse}{\ensuremath{\paran{A^T}^{-1}}}
\newcommand{\sqrtATransAInverse}{\ensuremath{\paran{\sqrt{A^TA}}^{-1}}}
\newcommand{\trace}[1]{\ensuremath{\mathrm{tr}\left( #1 \right)}}
\newcommand{\cu}{\ensuremath{\mathrm{Cu}(p)}}
\newcommand{\se}[1][p]{\ensuremath{\mathrm{Se}(#1)}} 
\newcommand{\co}{\ensuremath{\mathrm{Co}(x_0,\delta)~}} 
\newcommand{\costar}{\ensuremath{\mathrm{Co}^\star(x_0,\delta)~}}
\newcommand{\Ball}{\ensuremath{\overline{B(x_0,\delta)}~}}

\newcommand{\hess}{\mathrm{Hess}}
\newcommand{\R}{\mathbb{R}}
\newcommand{\C}{\mathbb{C}}
\newcommand{\CP}{\mathbb{CP}}

\newcommand{\bgd}{\begin{displaymath}}
\newcommand{\edd}{\end{displaymath}}
\newcommand{\bgc}{\begin{center}}
\newcommand{\edc}{\end{center}}
\newcommand{\hf}{\hspace*{0.5cm}}
\newcommand{\hfb}{\hspace{1cm}}
\newcommand{\lan}{\left\langle}
\newcommand{\ran}{\right\rangle}
\newcommand{\upq}{U(p,q)}
\newcommand{\ep}{\varepsilon}
\renewcommand{\epsilon}{\varepsilon}
\DeclareMathOperator{\spn}{span}
\newcommand{\bigzero}{\mbox{\normalfont\Large\bfseries 0}}
\newcommand{\ubb}{\mathcal{u}}
\newcommand{\vbb}{\mathcal{v}}
\newcommand{\rs}{0.7ex}


\newcommand{\spmat}[1]{%
  \left(\begin{smallmatrix}#1\end{smallmatrix}\right)%
}

\DeclareMathAlphabet{\mathpzc}{OT1}{pzc}{m}{it} 
\definecolor{PropColor}{HTML}{0DC8F2}
\definecolor{CorColor}{HTML}{FFC300}
\definecolor{ProbColor}{HTML}{FF1212}

\usepackage{thmtools}
\usepackage[framemethod=TikZ]{mdframed}
\mdfsetup{skipabove=1em,skipbelow=0em}
\theoremstyle{definition}
\declaretheoremstyle[
    headfont=\bfseries\sffamily\color{Orange!70!black}, bodyfont=\normalfont,
    mdframed={
        linewidth=2pt,
        rightline=false, topline=false, bottomline=false,
        linecolor=Orange, backgroundcolor=Orange!5,
    }
]{thmthmbox}

\declaretheoremstyle[
    headfont=\bfseries\sffamily\color{NavyBlue!70!black}, bodyfont=\normalfont,
    mdframed={
        linewidth=2pt,
        rightline=false, topline=false, bottomline=false,
        linecolor=NavyBlue, backgroundcolor=NavyBlue!5,
    }
]{thmdefnbox}

\declaretheoremstyle[
    headfont=\bfseries\sffamily\color{LimeGreen!70!black}, bodyfont=\normalfont,
    mdframed={
        linewidth=2pt,
        rightline=false, topline=false, bottomline=false,
        linecolor=LimeGreen, backgroundcolor=LimeGreen!5,
    }
]{thmlemmabox}

\declaretheoremstyle[
    headfont=\bfseries\sffamily\color{PropColor!70!black}, bodyfont=\normalfont,
    mdframed={
        linewidth=2pt,
        rightline=false, topline=false, bottomline=false,
        linecolor=PropColor, backgroundcolor=PropColor!5,
    }
]{thmpropbox}

\declaretheoremstyle[
    headfont=\bfseries\sffamily\color{CorColor!70!black}, bodyfont=\normalfont,
    mdframed={
        linewidth=2pt,
        rightline=false, topline=false, bottomline=false,
        linecolor=CorColor, backgroundcolor=CorColor!5,
    }
]{thmcorbox}

\declaretheoremstyle[
    headfont=\bfseries\sffamily\color{ProbColor!70!black}, bodyfont=\normalfont,
    mdframed={
        linewidth=2pt,
        rightline=false, topline=false, bottomline=false,
        linecolor=ProbColor, backgroundcolor=ProbColor!5,
    }
]{thmprobbox}

\declaretheoremstyle[
    headfont=\bfseries\sffamily, bodyfont=\normalfont
]{thmexambox}

\declaretheorem[style=thmdefnbox, name=Definition, numberwithin=section]{defn}
\declaretheorem[style=thmthmbox, name=Theorem, numberwithin=section]{thm}
\declaretheorem[style=thmlemmabox, name=Lemma, numberwithin=section]{lemma}
\declaretheorem[style=thmpropbox, name=Proposition, numberwithin=section]{prop}
\declaretheorem[style=thmprobbox, name=Conjecture, numbered=no]{conj}

\declaretheorem[style=thmcorbox, name=Corollary, numberwithin=section]{cor}
\declaretheorem[style=thmexambox, name=Example, numberwithin=section]{eg}

\theoremstyle{definition}
\newtheorem{rem}{Remark}[section]
\newtheorem{note}{Note}[section]

\declaretheorem[style=thmthmbox, name=Theorem]{mainthm}

\declaretheorem[style=thmthmbox, name=Theorem, numbered=no]{thmSec}

\usepackage{import}
\usepackage{xifthen}
\usepackage{pdfpages}
\usepackage{transparent}
\newcommand{%
    \def\svgwidth{1\columnwidth}
    \import{./figures/}{.pdf_tex}
}[2][1]{%
    \def\svgwidth{#1\columnwidth}
    \import{./figures/}{#2.pdf_tex}
}


\sloppy 

\makeglossaries

\newglossaryentry{cutn}{
 name=\ensuremath{\cutn},
 description={cut locus of $N$},
 type=symbols
}

\newglossaryentry{cutp}{
 name=\ensuremath{\cutn[p]},
 description={cut locus of $p$},
 type=symbols
}
\newglossaryentry{sen}{
 name=\ensuremath{\sen},
 description={separating set of $N$},
 type=symbols
}

\newglossaryentry{sep}{
 name=\ensuremath{\se},
 description={separating set of $p$},
 type=symbols
}

\begin{document}
	\frontmatter
	\begin{titlepage}
	\centering 
	
	
	\vspace*{\baselineskip} 
	
	
	\rule{\textwidth}{1.6pt}\vspace*{-\baselineskip}\vspace*{2pt} 
	\rule{\textwidth}{0.4pt} 
	
	\vspace{0.75\baselineskip} 
	\Large \textbf {Cut Locus of Submanifolds: A Geometric and Topological Viewpoint}
		\vspace{0.75\baselineskip} 
	
	\rule{\textwidth}{0.4pt}\vspace*{-\baselineskip}\vspace{3.2pt} 
	\rule{\textwidth}{1.6pt} 
	
	\vspace{2\baselineskip} 

	\small \emph{A thesis submitted in partial fulfillment of the\\
		requirements for the award of the degree of}
	\vspace{.2in}
	
	{\bf Doctor of Philosophy }\\[0.2in]
	
	\emph{ Submitted by} \\[0.1cm]
	\textbf{ Sachchidanand Prasad\\[0.2mm] (17RS038)}\\[0.2in]
	
	\emph{ Under the supervision of}\\[0.1cm]
	{\textbf{Dr. Somnath Basu}}\\[0.3in]
	to the\\[0.1in]
	\textbf{\Large{Department of Mathematics and Statistics}}\\[2in]
	\includegraphics[width=0.12\textwidth, keepaspectratio]{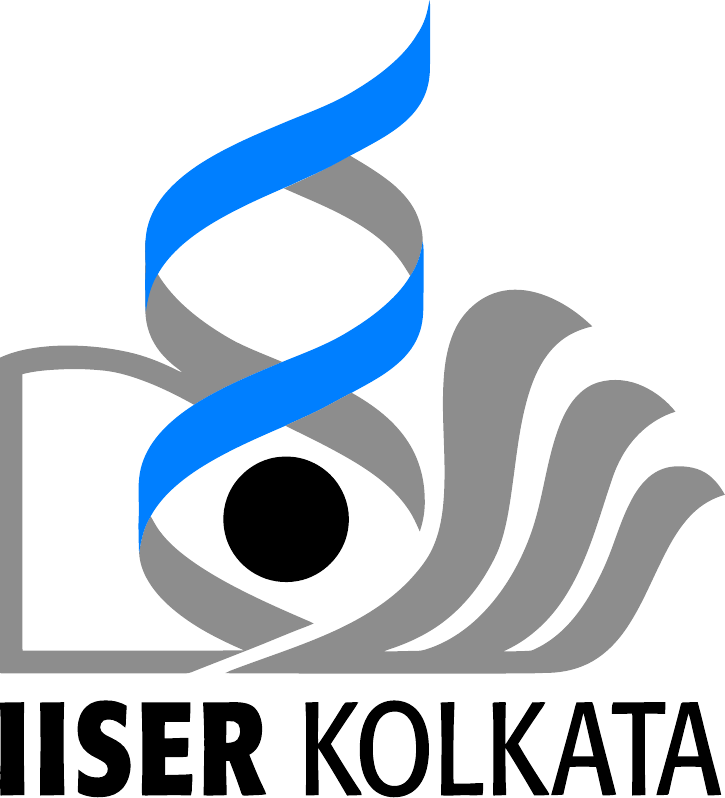}\\[0.5in]
	\normalsize
	\textsc{Indian Institute of Science Education and Research, Kolkata}
\end{titlepage}	
	
	\chapter*{}
	\vspace{4.0in}
	\begin{center}
		Copyright by \\[.2cm]
		Sachchidanand Prasad \\[0.2cm]
		2022
	\end{center}
	
	\chapter*{Declaration}
	I hereby declare that this thesis is my own work and, to the best of my knowledge, it contains no materials previously published or written by any other person, or substantial proportions of material which have been accepted for the award of any other degree or diploma at IISER Kolkata or any other educational institution, except where due acknowledgement is made in the thesis.

\vspace{3cm}

\begin{minipage}{0.45\linewidth}
    \begin{flushleft}
        \today\\[0.5ex]
        IISER Kolkata
    \end{flushleft}
\end{minipage}
\hfill
\begin{minipage}{0.45\linewidth}
    \begin{flushright}
        \includegraphics[scale=0.70]{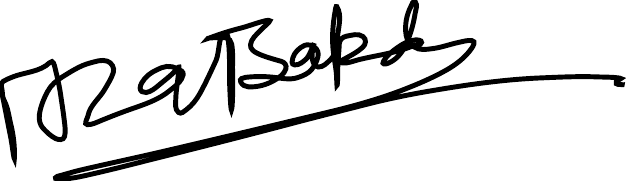}
        \rule{4.6cm}{0.15mm}\\[0.5ex]
        Sachchidanand Prasad \\[0.5ex]
        (17RS038)
    \end{flushright}
\end{minipage}


	\chapter*{Certificate}
	This is to certify that the thesis entitled \textit{Cut Locus of Submanifolds: A Geometric and Topological Viewpoint} is a bona fide record of work done by Sachchidanand Prasad (17RS038),
a student enrolled in the PhD Programme, under my supervision during August 2017 - October 2022, submitted in partial fulfillment of the requirements for the award of PhD Degree by the Department of Mathematics and Statistics (DMS), Indian Institute of Science Education and Research (IISER) Kolkata.
\vspace{2cm}
\begin{flushright}
    \rule{4.5cm}{0.15mm}\\
    \textit{Supervisor}\\
    Dr. Somnath Basu\\
    Associate Professor\\
    DMS, IISER Kolkata
\end{flushright}

	\chapter{Acknowledgements}
	\hfb At first, I would like to thank my parents for their support and encouragement. Thanks to them for raising me to be what I am today, believing in me, giving me freedom and space to grow as I deemed fit. A special thanks go to my mother. Without her love and affection, it would have been impossible to finish my work. I also thank my brother and sisters, who have shown their belief in me which gave me constant confidence. Their constant support has often helped me sustain myself throughout. I also take this opportunity to thank one of my teachers Satyendra Singh who has always been helpful in many ways.

\vspace{0.3cm}
\hf At this point, I must thank the faculties at the National Institute of Technology, Rourkela, who are responsible for the academic path that has led me here. I wish to thank Dr. Debajyoti Choudhuri for his unforgettable guidance during my stay at NIT Rourkela. Thanks to Dr. Bikash Sahoo for introducing me to Prof. Swadhin Pattanayak, who deserves a special mention for sharing his philosophy of mathematics and encouragement for the research. 

\vspace{0.3cm}
\hf I must also thank the IISER Kolkata for providing a friendly environment for my academic study. I also acknowledge the staff members and security persons of IISER Kolkata for helping me in numerous ways. The library resources have been tremendously helpful during my whole stay here. I must thank the library staff for this. 

\vspace{0.3cm}
\hf  I am really grateful to the Mathematics Training and Talent Search (MTTS) program as they set the right base, which was helpful for me to build up on. A special thanks to Prof. Kumaresan and Prof. Bhabha Kumar Sharma for their wonderful teaching and ideas, which helped me to handle a research problem. The methodology given by Prof. Kumaresan was always giving me the strength to understand research articles. 

\vspace{0.3cm}
\hf This whole PhD would not have been possible without financial support. I must thank and acknowledge the CSIR-UGC Government of India for this support. 

\vspace{0.3cm}
\hf It is my privilege to express heartful thanks and sincere gratitude to the faculty members of the Department of Mathematics, IISER Kolkata. A special thanks to my RPC members Dr. Sushil Gorai and Dr. Subrata Shyam Roy, for listening to me each year and giving valuable comments. I want to extend my sincere thanks to Satyaki Sir and Shirshendu Sir for their guidance and for being a friend, which made me feel more comfortable. I am also grateful to the non-teaching staff of my department, Adrish Da and Rajesh Da, for their administrative and many other bits of help.

\vspace{0.3cm}
\hf I would like to thank the thesis reviewers, Dr.  Ritwik Mukherjee from NISER Bhubaneswar and Prof. Dr. Janko Latschev from University of Hamburg for their valuable suggestions. They have pointed out corrections and these have been invaluable in improving the mathematical exposition of the thesis.

\vspace{0.3cm} 
\hf I cannot begin to express my thanks to my friends, who played an important role during my stay. Prahllad Da's discussions were beneficial in the early days of my PhD. His experience and knowledge helped me a lot to understand basic tools in algebraic and differential topology. Many thanks to Sandip and Ramyak for the fruitful discussions. When I got stuck on some proofs, conversations with them often solved many problems. I gladly thank Golam, Subrata, Ashish Da and Mrinmoy Da for their availability for any help. Many thanks to Manish, Sanjoy, Jiten, Samiron, Avishek, Sugata. I thank Mukil, Saikat, Sandip, and Anant for their helpful discussions and make valuable comments after reading the thesis. I thank Saikat Panja from IISER Pune for many suggestions and daily discussions. Finally, I would also like to extend my deepest gratitude to Gaurav, Raksha and Sonu for their friendship. This journey would have been tough without you guys.

\vspace{0.3cm}
\hf I thank Anada Marg and my father for teaching me meditation which always gave me positive energy. Thanks to Gurudev for always helping me, showing me the right path and for constant blessings.

\vspace{0.3cm} 
\hf I would also like to thank the creators of beautiful and open source software like \LaTeX, \textsc{Inkscape}, \textsc{GeoGebra} and many more. Also, I should not forget to thank the webpages like Wikipedia, math stack exchange, math overflow and TeX stack exchange. 

\vspace{0.3cm} 
\hf Finally, it comes to the director of the thesis, my supervisor Dr. Somnath Basu. He is the most important person without whom this was absolutely not possible. His way of doing mathematics is totally different. He has crystal clear concepts, geometric intuition and beautiful imagination. He has been very patient and attentive, simultaneously providing guidance and sharing his mathematical insights. I am thankful to him for his guidance and friendly encouragement throughout my work over the last five years. I feel lucky to have him as an advisor who has taken a keen interest in my progress. Apart from mathematics, his contagious enthusiasm and work ethic makes him a great teacher to work under. I could not imagine someone better to learn mathematics from! I also appreciate his patience and efficiency with regular and extensive online discussions during the lockdown period due to the COVID pandemic. I feel fortunate to know him; he will always be an inspiration to me. Thank you so much Sir, for providing a proper guidance, suggestions and feedback throughout my PhD tenure. It was impossible for me to complete this thesis without your support and supervision.

\vspace{0.3cm}
\hf Finally, thanks to all who are not mentioned here but are associated with me.


	\chapter*{}
	\begin{center}
    \vspace*{\fill}
    \textit{To my family.} 
    \vspace*{\fill}
\end{center}

	\chapter*{Publications related to the thesis}
	\begin{enumerate}
    \item Basu, S. and Prasad, S. (2021) \textit{A connection between cut locus, Thom space and Morse-Bott functions}, available at \url{https://arxiv.
    org/abs/2011.02972}, to appear in Algebraic \&  Geometric Topology.
\end{enumerate}

	\chapter{Abstract}
	Associated to every closed, embedded submanifold $N$ of a connected Riemannian manifold $M$, there is the distance function $d_N$ which measures the distance of a point in $M$ from $N$. We analyze the square of this function and show that it is Morse-Bott on the complement of the cut locus $\cutn$ of $N$, provided $M$ is complete. Moreover, the gradient flow lines provide a deformation retraction of $M-\cutn$ to $N$. If $M$ is a closed manifold, then we prove that the Thom space of the normal bundle of $N$ is homeomorphic to $M/\cutn$. We also discuss several interesting results which are either applications of these or related observations regarding the theory of cut locus. These results include, but are not limited to, a computation of the local homology of singular matrices, a classification of the homotopy type of the cut locus of a homology sphere inside a sphere, a deformation of the indefinite unitary group $U(p,q)$ to $U(p)\times U(q)$ and a geometric deformation of $GL(n,\R)$ to $O(n,\R)$ which is different from the Gram-Schmidt retraction. 

\bigskip
\noindent If a compact Lie group $G$ acts on a Riemannian manifold $M$ freely then $M/G$ is a manifold. In addition, if the action is isometric, then the metric of $M$ induces a metric on $M/G$.  We show that if $N$ is a $G$-invariant submanifold of $M$, then the cut locus $\cutn$ is $G$-invariant, and $\cutn/G = \cutn[N/G]$ in $M/G$. An application of this result to complex projective hypersurfaces has been provided.

	\dominitoc
	\tableofcontents

	\listoffigures
	\addcontentsline{toc}{chapter}{List of figures}
	
	\chapter{Notations}
	\begin{longtable}{r l}
    $A\cup B:$ & union of sets $A$ and $B$ \\[\rs]
    $A\cap B:$ & intersection of sets $A$ and $B$ \\[\rs]
    $A\times B:$ & Cartesian product of $A$ and $B$ \\[\rs]
    $A\setminus B:$ & set of elements in $A$ but not in $B$ \\[\rs] 
    $A \subset B:$ & $A$ is a subset of $B$, not necessarily proper \\[\rs] 
    $A \sqcup B:$ & disjoint union of $A$ and $B$  \\[\rs] 
    $A \directsum B:$ & direct sum of $A$ and $B$  \\[\rs] 
    $\mathbb{Z}: $ & the set of all integers\\[\rs]
    $\mathbb{R}: $ & the set of all real numbers\\[\rs]
    $\mathbb{C}: $ & the set of all complex numbers\\[\rs]
    $\mathbb{Z}_d: $ & the set of all integers modulo $d$, where $d$ is a positive integer \\[\rs]
    $\mathbb{R}^n: $ & the $n$-dimensional Euclidean plane, where $n$ is a positive integer\\[\rs]
    $\mathbb{C}^n: $ & the $n$-dimensional complex plane, where $n$ is a positive integer\\[\rs]
    $\mathbb{S}^n: $ & the unit sphere in $\mathbb{R}^{n+1}$ \\[\rs]
    $\mathbb{D}^n: $ & the unit disk in $\mathbb{R}^{n}$ \\[\rs]
    $\bar{X}: $ & the closure of the space $X$  \\[\rs]
    $M(n,\mathbb{R}): $ & the set of all $n\times n$ matrices \\[\rs]
    $GL(n,\mathbb{R}): $ & the set of all $n\times n$ invertible matrices \\[\rs]
    $O(n,\mathbb{R}): $ & the set of all $n\times n$ orthogonal matrices \\[\rs]
    $SO(n,\mathbb{R}): $ & the set of all $n\times n$ orthogonal matrices with determinant $1$ \\[\rs]
    $U(n): $ & the set of all $n\times n$ unitary matrices \\[\rs]
    $I_n: $ & identity matrix of size $n$ \\[\rs]
    $\trace{A}:$ & trace of a matrix $A$ \\[\rs]
    $A^T:$ & transpose of a matrix $A$ \\[\rs]
    $A^*:$ & conjugate transpose of a matrix $A$ \\[\rs]
    $T_pM:$ & tangent space of $M$ at $p\in M$ \\[\rs]
    $(T_pN)^\perp:$ & orthogonal complement of $T_pN$, where $N$ is a submanifold of $M$ and $p\in N$ \\[\rs]
    $TM:$ & tangent bundle of $M$  \\[\rs]
    $\nu :$ & normal bundle of $N$, where $N$ is a submanifold of $M$  \\[\rs]
    $S(\nu) :$ & unit sphere bundle of $\nu$ \\[\rs]
    $D(\nu) :$ & unit disk bundle of $\nu$  \\[\rs]
    $\exp_p:$ & Riemannian exponential map at $p$  \\[\rs]
    $\exp_\nu:$ & normal exponential map (see \eqref{eq:normalExponentialMap}) \\[\rs]
    $d(p,q):$ & the distance between points $p$ and $q$ \\[\rs]
    $d(A,B):$ & the distance between sets $A$ and $B$ \\[\rs]
    $\nabla f(\mathbf{\mathbf{x}}):$ & gradient of the function $f$ at $\mathbf{x}$   \\[\rs]
    $\pi_n(X): $ & $n^{\text{th}}$ homotopy group of the space $X$ \\[\rs]
    $\pi_n(X,A): $ & $n^{\text{th}}$ relative homotopy group of the pair of spaces $(X,A)$, where $A\subset X$ \\[\rs]
    $H_n(X):$ & $n^{\text{th}}$ homology of the space $X$ \\[\rs]
    $\tilde{H}_n(X):$ & $n^{\text{th}}$ reduced homology of the space $X$ \\[\rs]
    $H_n(X,A):$ & $n^{\text{th}}$ relative homology of the pair of spaces $(X,A)$, where $A\subset X$ \\[\rs]
    $H^n(X):$ & $n^{\text{th}}$ cohomology of the space $X$ \\[\rs]
    $\cutn[p]:$ & cut locus of the point $p$ (see \Cref{defn:cutLocusOfPoint}) \\[\rs]
    $\cutn[N]:$ & cut locus of the set $N$ (see \Cref{cutlocus1}) \\[\rs]
    $\mathrm{Se}(p):$ & separating set of a point $p$ (see \eqref{eq:SeSetofPoint}) \\[\rs]
    $\sen:$ & separating set of the set $N$ (see \Cref{defn:SeparatingSet}) \\[\rs]
    $A\star B:$ & topological join of $A$ and $B$ \\[\rs]
    $df_p:$ & the derivative of $f$ at $p$ \\[\rs]
    $\hess_p(f):$ & the Hessian of $f$ at $p$ (see \Cref{defn:Hessian}) \\[\rs]
\end{longtable}
	\mainmatter

	\chapter{Introduction}\label{ch:introduction}

\hfb On a Riemannian manifold $M$, the distance function $d_N(\cdot) \defeq d(N,\cdot)$\index{$d_N$} from a closed subset $N$ is fundamental in the study of variational problems. For instance, the viscosity solution of the Hamilton-Jacobi equation is given by the flow of the gradient vector of the distance function $d_N$, when $N$ is the smooth boundary of a relatively compact domain in manifolds; see  \cite{LiNi05,MaMe03}. Although the distance function $d_N$ is not differentiable at $N$, squaring the function removes this issue. Associated to $N$ and the distance function $d_N$ is a set $\cutn$,\index{\cutn} the cut locus \index{cut locus} of $N$ in $M$. The cut locus of a point (submanifold) consists of all points such that a distance minimal geodesic (see \Cref{defn:cutLocusOfPoint,cutlocus1}) starting at the point (submanifold) fails its distance minimality property. The aim of the thesis is to explore the topological and geometric properties of cut locus of a submanifold.

\section{A survey of the cut locus}
\hfb This section is devoted to the literature survey and a discussion of some known results.

\vspace{0.2cm}
\hf Cut locus of a point, a notion initiated by Henri Poincar\'{e} \cite{Poin05}, has been extensively studied (see \cite{Kob67} for a survey as well as \cite{Buc77},  \cite{Mye35}, \cite{Sak96}, and \cite{Wol79}). Prior to Poincar\'{e} it had appeared implicitly in a paper \cite{Man81}. Other articles \cite{Whi35} and \cite{Mye35,Mye36} describe topological behavior of the cut locus. Due to its topological properties, it became an important tool in the field of Riemannian geometry or Finsler geometry. We list a few references like \cite{Kli59},  \cite{Rau59}, and \cite[Chapter 5]{ChEb75} for a detailed study of cut locus of a point. We also mention the work around the Blaschke conjecture which uses the geometry of the cut locus of a point, see \cite{Bes78,McK15}. A great source of reference for articles related to cut loci is \cite[\S 4]{Sak84}. Further, articles \cite{Sak77,Sak78,Sak79} and \cite{Tak78,Tak79} discussed cut loci in symmetric spaces. For questions on the triangulability of cut loci and differential topological aspects, see \cite{Buc77,GlSi76,GlSi78,Wall77}.

\vspace{0.2cm}
\hf Cut locus of submanifolds was first studied by Ren\'{e} Thom \cite{Thom72}. We mention some references for cut locus of submanifolds where it has been analyzed via the Eikonal equations and Hamilton-Jacobi equation, for example, see \cite{AnGu11,MaMe03} as well as analyzed via topological methods, for example, see \cite{Fla65,Ozo74,Singh87A,Singh87B,Singh88}.

\section{Overview of results}
\hfb Suitable simple examples indicate that $M-\cutn$ topologically deforms to $N$. One of our main results is the following (\Cref{thm: Morse-Bott}).
\begin{mainthm}\label{thm:ThmA}
    \textit{Let $N$ be a closed embedded submanifold of a complete Riemannian manifold $M$ and $d:M\to \mathbb{R}$ denote the distance function with respect to $N$. If $f=d^2$, then its restriction to $M-\mathrm{Cu}(N)$ is a Morse-Bott function, with $N$ as the critical submanifold. Moreover, $M-\mathrm{Cu}(N)$ deforms to $N$ via the gradient flow of $f$.}
\end{mainthm}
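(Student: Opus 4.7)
The plan is to pull $f$ back via the normal exponential map $\exp_\nu \colon \nu \to M$ of the normal bundle $\nu$ of $N$ and reduce everything to the Euclidean model $v \mapsto \|v\|^2$ on the fibers of $\nu$. Concretely, I expect $\exp_\nu$ to restrict to a diffeomorphism from an open neighborhood $U$ of the zero section in $\nu$ onto $M - \mathrm{Cu}(N)$, under which $f$ corresponds exactly to the fiberwise squared-norm function. Granting this, the Morse-Bott property and the deformation retraction both become essentially the linear-algebra statements that the Hessian of $\|v\|^2$ at $v=0$ is twice the identity in the fiber directions and that the negative gradient flow of $\|v\|^2$ radially contracts each vector to zero.

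The first and main step is therefore to establish that $\exp_\nu \colon U \to M - \mathrm{Cu}(N)$ is a diffeomorphism for a suitable open $U$. The standard ingredients assemble into the required statement: completeness of $M$ guarantees a minimizing geodesic from $N$ to every $q \in M$; by the definition of the cut locus, for $q \notin \mathrm{Cu}(N)$ this minimizer is unique; a first variation argument forces it to meet $N$ orthogonally (since $N$ is a submanifold rather than a point); and since no conjugate point lies strictly before the cut point, $\exp_\nu$ is a local diffeomorphism at the corresponding normal vector. Uniqueness then gives injectivity on $U$, and the local diffeomorphism property gives smoothness of the inverse.

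Second, under this identification $f \circ \exp_\nu = \|v\|^2$, so $f$ is smooth on $M - \mathrm{Cu}(N)$, and the fiberwise gradient $\nabla \|v\|^2 = 2v$ vanishes only on the zero section, showing that the critical set of $f$ is exactly $N$. For the Hessian at $p \in N$, I would use the canonical splitting $T_{(p,0)}\nu \cong T_p N \oplus (T_p N)^\perp \cong T_p M$: in these coordinates $f$ is $(x, v) \mapsto \|v\|^2$, whose Hessian at $(p,0)$ vanishes on $T_p N$ and equals $2\,\mathrm{Id}$ on $(T_p N)^\perp$. This is precisely the Morse-Bott condition with $N$ as a critical submanifold of index $0$.

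Finally, for the deformation I take the negative gradient flow of $f$. Under $\exp_\nu^{-1}$ it becomes the fiberwise contraction $v \mapsto e^{-2t}v$; reparametrising via $s = 1 - e^{-2t} \in [0,1]$ yields a continuous deformation retraction of $M - \mathrm{Cu}(N)$ onto $N$, which at $s=1$ sends each $q$ to the foot on $N$ of its unique minimizing geodesic. The principal obstacle is the first step: rigorously separating conjugate-point phenomena (where $\exp_\nu$ fails to be a local diffeomorphism) from cut-point phenomena (where $\exp_\nu$ fails to be injective), and verifying that outside $\mathrm{Cu}(N)$ neither failure occurs. Once that geometric input is in hand, the remaining Morse-Bott and retraction claims reduce to the Euclidean picture for $\|v\|^2$ on a vector bundle.
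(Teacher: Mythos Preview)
Your approach is essentially the paper's: both pull $f$ back through the normal exponential to the fiberwise squared norm on $\nu$, both invoke the focal-point/uniqueness characterization of $\mathrm{Cu}(N)$ to show $\exp_\nu$ is a diffeomorphism from an open star-shaped set in $\nu$ onto $M-\mathrm{Cu}(N)$, and the Morse--Bott claim is handled identically (the paper's Fermi coordinates are exactly your $(x,v)\mapsto\|v\|^2$ model, so the Hessian at $N$ is $2\,\mathrm{Id}$ on the normal directions).

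There is one gap in your gradient-flow step. When you write ``$\nabla\|v\|^2=2v$'' and conclude that the flow pulls back to $v\mapsto e^{-2t}v$, you are computing the gradient with respect to the \emph{flat bundle metric} on $\nu$. But the gradient of $f$ on $M$ pulls back to the gradient of $\|v\|^2$ with respect to $(\exp_\nu)^*g$, and $\exp_\nu$ is not an isometry. Your conclusion is still correct, but it needs the generalized Gauss lemma: in radial coordinates the pulled-back metric has the form $dr^2+h$ with $h$ purely transverse, so $\nabla(r^2)=2r\,\partial_r$ in either metric and the flow lines really are radial. The paper sidesteps this by computing $\nabla f$ directly in $M$: from $\|\nabla d\|=1$ one gets $\|\nabla f(q)\|=2d(q)$, and then the calculation $\langle\nabla f(q),\gamma'(d(q))\rangle=2d(q)$ forces $\nabla f(q)=2d(q)\,\gamma'(d(q))$, yielding the flow line $\eta(t)=\gamma\bigl(d(q)e^{-2t}\bigr)$. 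Either argument closes the gap; you should make the metric dependence explicit in yours.
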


\vspace{0.2cm}
\noindent It is observed that this deformation takes infinite time. To obtain a strong deformation retract, one reparameterizes the flow lines to be defined over $[0,1]$. It can be shown (\Cref{defretM-N}) that the cut locus $\mathrm{Cu}(N)$ is a strong deformation retract of $M-N$. A primary motivation for \Cref{thm:ThmA} came from understanding the cut locus of $N=O(n,\rbb)$ inside $M= M(n,\rbb)$, equipped with the Euclidean metric. We show in \Cref{Sec:IlluminatingExample} that the cut locus is the set $\mathrm{Sing}$ of singular matrices and the deformation of its complement is not the Gram-Schmidt deformation but rather the deformation obtained from the polar decomposition, i.e., $A\in GL(n,\R)$ deforms to $A\big(\sqrt{A^T A}\,\big)^{-1}$. Combining with a result of J. J. Hebda \cite[Theorem 1.4]{Heb83} we are able to compute the local homology of $\mathrm{Sing}$ (cf \Cref{link-sing} and \Cref{locsinghom}).
\begin{mainthm}\label{thm:ThmB}
    \textit{For $A\in M(n,\R)$
    \begin{displaymath}
        H_{n^2-1-i}(\mathrm{Sing},\mathrm{Sing} -A;G)\cong \widetilde{H}^i(O(n-k,\R);G)
    \end{displaymath}
    where $A\in \mathrm{Sing}$ has rank $k<n$ and $G$ is any abelian group. }
\end{mainthm}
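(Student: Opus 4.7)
The plan is to combine two ingredients: the identification (established in the illuminating example of Section~\ref{Sec:IlluminatingExample}) that $\mathrm{Sing}=\mathrm{Cu}(O(n,\R))$ inside $M(n,\R)$ equipped with the Euclidean metric, together with Hebda's Theorem~1.4, which expresses local homology of a cut locus at a point in the ambient manifold in terms of the reduced cohomology of the ``link'' consisting of points on $N$ realizing the minimal distance to that point. Once those two pieces are in place, the theorem will reduce to identifying this link for a matrix $A$ of rank $k$.

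First I would note that Hebda's formula takes the shape $H_{d-1-i}(\mathrm{Cu}(N),\mathrm{Cu}(N)-x;G)\cong \widetilde{H}^{\,i}(\Lambda_x;G)$, where $d=\dim M$ and $\Lambda_x\subset N$ is the footpoint set of distance-minimizing geodesics from $x$ to $N$. In our case $d=\dim M(n,\R)=n^2$, matching the target dimension $n^{2}-1-i$ in the statement. So the substance of the proof lies in showing $\Lambda_A\cong O(n-k,\R)$ as a topological space whenever $A$ has rank $k<n$.

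To identify $\Lambda_A$, I would use the singular value decomposition $A=U\Sigma V^{T}$ with $U,V\in O(n,\R)$ and $\Sigma=\mathrm{diag}(\sigma_1,\dots,\sigma_k,0,\dots,0)$, $\sigma_i>0$. For any $Q\in O(n,\R)$, by orthogonal invariance of the Frobenius norm,
\begin{displaymath}
    \|A-Q\|^{2}=\|\Sigma - U^{T}QV\|^{2},
\end{displaymath}
so writing $R=U^{T}QV\in O(n,\R)$ with entries $r_{ij}$, the squared distance equals $\sum_{i=1}^{k}(\sigma_{i}-r_{ii})^{2}+\sum_{\text{other }i,j}r_{ij}^{2}+\text{const}$. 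Minimizing over $R\in O(n,\R)$ and using $\sigma_i>0$ forces $r_{ii}=1$ for $i\leq k$ and hence, by orthonormality of rows and columns, forces the upper-left $k\times k$ block to be $I_k$ with all off-block entries vanishing. The lower-right $(n-k)\times(n-k)$ block is then free to be any element of $O(n-k,\R)$. Hence $\Lambda_A$ is homeomorphic to $O(n-k,\R)$ via $Q\mapsto$ (lower-right block of $U^{T}QV$).

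Substituting $\Lambda_A\cong O(n-k,\R)$ into Hebda's formula yields the desired isomorphism. The main obstacle I anticipate is not the SVD computation, which is straightforward, but verifying the precise hypotheses of Hebda's theorem in the submanifold setting: specifically, that $O(n,\R)\subset M(n,\R)$ (a closed embedded submanifold of a complete Riemannian manifold) and the matrix $A$ (lying in the cut locus with finitely many minimizing footpoints forming a smooth manifold) satisfy the regularity conditions under which the link formula applies. This will require checking that the normal exponential map behaves generically near $A$ in the sense needed by Hebda, which should follow from the explicit polar-decomposition geometry developed in Section~\ref{Sec:IlluminatingExample} together with Theorem~A.
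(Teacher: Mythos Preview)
Your proposal is correct and follows essentially the same route as the paper: apply Hebda's duality \eqref{duality} with $d=n^2$, and identify the link of $A$ with $O(n-k,\R)$ via the SVD argument (this is exactly Lemma~\ref{link-sing} and Corollary~\ref{locsinghom}). Two minor remarks: the paper distinguishes the link $\Lambda(A,N)\subset S(T_AM)$ from the equidistant footpoint set $\mathrm{Eq}(A,N)\subset N$ and notes they coincide here because Euclidean geodesics are straight lines meeting only at $A$; and your worry about verifying Hebda's hypotheses is unnecessary, since $O(n,\R)$ is a properly embedded submanifold of the complete manifold $M(n,\R)$ and $O(n-k,\R)$ is a manifold (so \v{C}ech and singular cohomology agree), which is all that is needed.
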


\vspace{0.2cm}
\noindent When the cut locus is empty, we deduce that $M$ is diffeomorphic to the normal bundle $\nu$ of $N$ in $M$. In particular, $M$ deforms to $N$. Among applications, we discuss two families of examples. We reprove the known fact that $GL(n,\R)$ deforms to $O(n,\R)$ for any choice of left-invariant metric on $GL(n,\R)$ which is right-$O(n,\R)$-invariant. However, this deformation is not obtained topologically but by Morse-Bott flows. For a natural choice of such a metric, this deformation \eqref{GLdefOver2} is not the Gram-Schmidt deformation, but one obtained from the polar decomposition. We also consider $U(p,q)$, the group preserving the indefinite form of signature $(p,q)$ on $\C^n$. We show (\Cref{mainthm}) that $U(p,q)$ deforms to $U(p)\times U(q)$ for the left-invariant metric given by $\left\langle X,Y\right\rangle:=\textup{tr}(X^\ast Y)$. In particular, we show that the exponential map is surjective for $U(p,q)$ (\Cref{expsurj}). To our knowledge, this method is different from the standard proof.

\vspace{0.3cm}
\hf For a Riemannian manifold we have the exponential map at $p\in M$, $\exp_p:T_pM\to M$. Let $\nu$ denote the normal bundle of $N$ in $M$. We will modify the exponential map (see \S \ref{Sec: Thom}) to define the \emph{rescaled exponential} $\widetilde{\exp}:D(\nu)\to M$, the domain of which is the unit disk bundle of $\nu$. The main result (\Cref{Thomsp}) here is the observation that there is a connection between the cut locus $\mathrm{Cu}(N)$ and Thom space $\mathrm{Th}(\nu):=D(\nu)/S(\nu)$ of $\nu$.

\begin{mainthm}\label{thm:ThmC}
    \textit{ Let $N$ be an embedded submanifold inside a closed, connected Riemannian manifold $M$. If $\nu$ denotes the normal bundle of $N$ in $M$, then there is a homeomorphism
    \begin{displaymath}
        \widetilde{\exp}:D(\nu)/S(\nu) \xrightarrow{\cong}M/\mathrm{Cu}(N).
    \end{displaymath}}
\end{mainthm}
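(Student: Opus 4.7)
The plan is to exhibit $\widetilde{\exp}: D(\nu) \to M$ as a map that collapses $S(\nu)$ onto $\cutn$ while mapping the complement diffeomorphically onto $M\setminus \cutn$, so that the induced map $\overline{\exp}: D(\nu)/S(\nu) \to M/\cutn$ is a continuous bijection. Since $M$ is closed, $D(\nu)$ is compact and therefore so is $D(\nu)/S(\nu)$; since $\cutn$ is a closed subset of the Hausdorff space $M$ (a fact already in place earlier in the thesis), the target $M/\cutn$ is Hausdorff. A continuous bijection from a compact space onto a Hausdorff space is automatically a homeomorphism, and that will finish the proof.

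First I would unpack the definition of $\widetilde{\exp}$ from \S \ref{Sec: Thom}. For $v \in S(\nu)_p$, the normal geodesic $t \mapsto \exp_p(tv)$ is $N$-distance minimizing up to some cut time $\tau(v) \in (0,\infty)$ — finiteness being forced by compactness of $M$ — at which it first meets $\cutn$. The rescaled exponential is then
\begin{equation*}
\widetilde{\exp}(sv) \defeq \exp_p\paran{s\,\tau(v)\,v}, \qquad v \in S(\nu)_p,\ s \in [0,1],
\end{equation*}
which is continuous on all of $D(\nu)$ provided $\tau: S(\nu) \to (0,\infty)$ is continuous.

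Next I would verify three properties of $\widetilde{\exp}$. (i) $\widetilde{\exp}(S(\nu)) \subseteq \cutn$, immediate from the definition of $\tau$. (ii) The restriction of $\widetilde{\exp}$ to the open disk bundle $D(\nu)\setminus S(\nu)$ is a diffeomorphism onto $M\setminus \cutn$: the fiberwise rescaling $(p,sv)\mapsto (p,s\tau(v)v)$ is a diffeomorphism from the open disk bundle onto the injectivity domain of $\exp_\nu$, and the normal exponential is a diffeomorphism of that domain onto $M\setminus \cutn$. (iii) $\widetilde{\exp}$ is surjective: every $q\in M$ is joined to some $p\in N$ by a minimizing normal geodesic of length $d_N(q) \le \tau(v)$ with initial unit direction $v$, so $q = \widetilde{\exp}(sv)$ for $s = d_N(q)/\tau(v) \in [0,1]$. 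Together (i)--(iii) force $\widetilde{\exp}$ to descend to a continuous bijection $\overline{\exp}: D(\nu)/S(\nu) \to M/\cutn$ — surjectivity from (iii), injectivity from (ii) together with the fact that the only possible collision is between the two collapsed points, which match up by (i).

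The main obstacle is the continuity of $\tau$ at unit normals $v$ where the minimizing geodesic from $N$ fails to be unique or conjugate/focal phenomena arise; continuity still holds at such points, but requires a compactness-based limit argument — any accumulation value of $\tau(v_n)$ for $v_n\to v$ is realized by a minimizing normal geodesic, forcing equality with $\tau(v)$. Once $\tau$ is continuous, the diffeomorphism claim in (ii) and the surjectivity in (iii) rest on the standard behavior of $\exp_\nu$ on its injectivity domain, already invoked in the proof of \Cref{thm:ThmA}, and the rest is bookkeeping in quotient topology.
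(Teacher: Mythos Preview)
Your argument is correct and matches the paper's proof of \Cref{Thomsp} essentially point for point: continuity of the cut-time function $\tau$ (the paper's $\rho$, \Cref{snucts}) gives continuity of $\widetilde{\exp}$, surjectivity and the boundary-to-cut-locus property are as you state, injectivity on the open disk bundle is what the paper extracts from \Cref{thm:CharacterizationOfCutLocusInTermsOfFocalPoint}, and the compact-to-Hausdorff step is identical. One small overstatement: in (ii) the fibrewise rescaling $(p,sv)\mapsto(p,s\tau(v)v)$ is only a \emph{homeomorphism} in general, since $\tau$ is merely continuous---but that is all you need.
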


\vspace{0.3cm}
\noindent This immediately leads to a long exact sequence in homology (see \eqref{lesThom})
\begin{displaymath}
	\cdots\to  H_j(\mathrm{Cu}(N)) \stackrel{i_*}{\longrightarrow}H_j(M)\stackrel{q}{\longrightarrow} \widetilde{H}_j(\mathrm{Th}(\nu))\stackrel{\partial}{\longrightarrow} H_{j-1}(\mathrm{Cu}(N))\to \cdots.
\end{displaymath}
This is a useful tool in characterizing the homotopy type of the cut locus. We list a few applications and related results.

\begin{mainthm}\label{thm:ThmD}
    \textit{Let $N$ be a homology $k$-sphere embedded in a Riemannian manifold $M^d$ homeomorphic to $S^d$.}
    \begin{enumerate}
        \item \textit{If $d\ge k+3$, then $\mathrm{Cu}(N)$ is homotopy equivalent to $S^{d-k-1}$. Moreover, if $M,N$ are real analytic and the embedding is real analytic, then $\cutn$ is a simplicial complex of dimension at most $d-1$.}
        \item \textit{If $d=k+2$, then $\cutn$ has the homology of $S^1$. There exists homology $3$-spheres in $S^5$ for which $\cutn\simeq S^1$. However, for non-trivial knots $K$ in $S^3$, the cut locus is not homotopy equivalent to $S^1$. }
    \end{enumerate}
\end{mainthm}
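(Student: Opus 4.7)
The plan is to derive the homology of $\cutn$ from the long exact sequence of \Cref{thm:ThmC} together with the Thom isomorphism, and then upgrade the homology computation to a homotopy equivalence in the high-codimension regime using the strong deformation retract $\cutn\simeq M-N$ (\Cref{defretM-N}) combined with transversality and Whitehead's theorem.

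For the homology computation (applicable in both regimes), note that $\nu$ has rank $d-k$ and is orientable (as both $M\cong S^d$ and $N$ are), so the Thom isomorphism gives $\widetilde{H}_j(\mathrm{Th}(\nu))\cong H_{j-(d-k)}(N)$, which is $\mathbb{Z}$ precisely at $j\in\{d-k,d\}$ and zero otherwise. Feeding this together with $H_*(M)=H_*(S^d)$ into the long exact sequence
\begin{displaymath}
\cdots\to H_j(\cutn)\to H_j(M)\to\widetilde{H}_j(\mathrm{Th}(\nu))\to H_{j-1}(\cutn)\to\cdots,
\end{displaymath}
and using that $\cutn$ is a proper closed subset of $M$ of dimension strictly less than $d$ (a standard property of the cut locus, which forces the top-degree map $H_d(M)\to\widetilde{H}_d(\mathrm{Th}(\nu))$ to be an isomorphism), the sequence collapses to $\widetilde{H}_*(\cutn)\cong\widetilde{H}_*(S^{d-k-1})$.

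For part (1), with $d\geq k+3$, the codimension of $N$ in $M$ is at least three, so transversality yields $\pi_i(S^d,S^d-N)=0$ for $i\leq d-k-1$; hence $\cutn\simeq S^d-N$ is $(d-k-2)$-connected. The Hurewicz theorem then gives $\pi_{d-k-1}(\cutn)\cong H_{d-k-1}(\cutn)\cong\mathbb{Z}$, and a representing map $S^{d-k-1}\to\cutn$ is a homology equivalence between simply connected CW complexes. Whitehead's theorem upgrades it to $\cutn\simeq S^{d-k-1}$. The real analytic claim reduces to the triangulability result of Buchner \cite{Buc77}, with the dimension bound $\leq d-1$ being immediate since $\cutn$ is a proper closed subset of $M^d$.

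For part (2), with $d=k+2$, the same computation yields $H_*(\cutn)\cong H_*(S^1)$, but the codimension-two obstruction to simple connectivity of the complement prevents an automatic upgrade. For any non-trivial knot $K\subset S^3$, one has $\cutn[K]\simeq S^3-K$ and the knot group $\pi_1(S^3-K)$ is $\mathbb{Z}$ if and only if $K$ is unknotted (by Dehn's lemma), ruling out $\cutn[K]\simeq S^1$. For the positive existence statement, the model is the standard unknotted embedding of $S^3\subset S^5$ as $\{(z_1,z_2,0)\in\mathbb{C}^3\}\cap S^5$, whose complement is $\simeq S^1$; an analogous construction for a homology $3$-sphere $\Sigma$ uses a contractible $4$-manifold $W$ bounding $\Sigma$ (which exists by Freedman) embedded in $S^5$ so that $S^5-\Sigma$ is aspherical with $\pi_1\cong\mathbb{Z}$. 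The main obstacle is precisely this last geometric construction, since in codimension two the homology alone does not pin down the homotopy type.
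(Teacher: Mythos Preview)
Your argument for part (1) follows the paper's strategy: transversality gives $(d-k-2)$-connectedness of $M-N\simeq\cutn$, a homology computation supplies the remaining groups, and Hurewicz plus Whitehead finish. One caution: justifying that the top-degree map $H_d(M)\to\widetilde H_d(\mathrm{Th}(\nu))$ is an isomorphism via ``dimension strictly less than $d$'' is shaky, since singular homology can be nonzero above the topological dimension (the paper itself flags this, citing \cite{BaMi62}), and so this alone does not force $H_{d-1}(\cutn)=0$ and hence does not by itself rule out $q_*$ being multiplication by some $m>1$. The paper closes this with Poincar\'e--Lefschetz duality, $H_{d-1}(\cutn)\cong H_{d-1}(M-N)\cong H^1(M,N)=0$; alternatively, observe that $q:M\to M/\cutn$ is a homeomorphism off $\cutn$, so $q_*[M]$ is the relative fundamental class of $(D(\nu),S(\nu))$ and hence a generator.

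For part (2), your homology computation and the knot argument via Dehn's lemma agree with the paper. The genuine gap is the existence claim for a \emph{non-trivial} homology $3$-sphere (the standard $S^3\subset S^5$ is of course fine, but the content of the statement is that the phenomenon persists beyond $S^3$). Your Freedman route does not work as stated: Freedman's theorem produces only a \emph{topological} contractible $4$-manifold bounding $\Sigma$, whereas the cut locus requires a smooth embedding in a Riemannian $S^5$; and even granting a smooth $W$, you offer no argument that the resulting complement $S^5-\Sigma$ is a homotopy circle rather than merely a homology circle. You yourself identify this as ``the main obstacle'', which means the proof is incomplete at exactly the nontrivial point. The paper resolves it by citing Plotnick \cite{Plo82}, who shows that certain homology $3$-spheres obtained by $\tfrac{1}{2a}$ Dehn surgery on a knot embed smoothly in $S^5$ with complement a homotopy $S^1$; you should invoke that reference or supply a comparable smooth construction.
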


\vspace{0.3cm}
\noindent The above results are a combination of Theorem \ref{homsph}, Theorem \ref{Buchner} and Example \ref{codim2}. In general, the structure of the cut locus may be wild (see \cite{GlSi78}, \cite{ItSa16}, and \cite{ItVi15}). S. B. Myers \cite{Mye35} had shown that if $M$ is a real analytic sphere, then $\mathrm{Cu}(p)$ is a finite tree each of whose edge is an analytic curve with finite length. Buchner \cite{Buc77} later generalized this result to cut locus of a point in higher dimensional manifolds. \Cref{Buchner}, which states that the cut locus of an analytic submanifold (in an analytic manifold) is a simplicial complex, is a natural generalization of Buchner's result (and its proof). We attribute it to Buchner, although it is not present in the original paper. This analyticity assumption also helps us to compute the homotopy type of the cut locus of a finite set of points in any closed, orientable, real analytic surface of genus $g$ (\Cref{cutlocus-surface}). In \Cref{codim2} we make some observations about the cut locus of embedded homology spheres of codimension $2$. This includes the case of real analytic knots in the round sphere $\mathbb{S}^3$.

\vspace{0.3cm}
\hf Let $M$ be a closed Riemannian manifold and $G$ be any compact Lie group acting on $M$ freely. Then it is known that $M/G$ is a manifold. Further, if the action is isometric, then the metric on $M$ induces a metric on $M/G$. If $N$ is any $G$-invariant submanifold of $M$, then $N/G$ is a submanifold of $M/G$. If the action is isometric, then we provide an equality between $\cutn/G$ and $\cutn[N/G]$ (\Cref{thm:equivariant-cut-locus}).

\begin{mainthm}\label{thm:ThmE}
    Let $M$ be a closed and connected Riemannian manifold  and $G$ be any compact Lie group which acts on $M$ freely and isometrically. Let $N$ be any $G$-invariant closed submanifold of $M$, then we have an equality
    \begin{displaymath}
        \mathrm{Cu}(N)/G  = \mathrm{Cu}(N/G).
    \end{displaymath}
\end{mainthm}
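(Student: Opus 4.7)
The proof will exploit that $\pi:M\to M/G$ is a Riemannian submersion (with the induced metric on $M/G$) and that $N/G$ is an embedded submanifold of $M/G$. The first observation I will record is that at any $p\in N$ the vertical subspace $T_p(G\cdot p)$ is contained in $T_pN$, so the normal bundle $\nu(N)$ lies entirely in the horizontal distribution of $\pi$, and $d\pi_p$ restricts to a linear isomorphism $\nu_p N\to \nu_{\bar p}(N/G)$. Using horizontal lifts of paths (which preserve length) together with the fact that $G$ acts by isometries and preserves $N$ setwise, I will establish the distance identity
\begin{equation*}
d_M(N,q)=d_{M/G}(N/G,\pi(q))\qquad \forall\, q\in M.
\end{equation*}
The inequality $\geq$ follows by projecting any competitor; for $\leq$, a curve from $N/G$ to $\pi(q)$ horizontally lifts to a curve starting on $N$ and ending at some $g\cdot q$, and applying $g^{-1}\in G$ produces a curve of the same length from $N$ to $q$. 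A straightforward consequence of $G$-equivariance of the whole setup is that $\cutn$ is $G$-invariant, so $\pi(\cutn)$ legitimately represents $\cutn/G$ inside $M/G$.

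For the forward inclusion $\pi(\cutn)\subseteq \cutn[N/G]$, I will use the characterization that $q\in\cutn$ iff there is a minimizing $N$-geodesic $\gamma$ ending at $q$ which either (a) fails to be minimizing past $q$, or (b) admits a second minimizing $N$-geodesic $\gamma'$ with the same endpoint. Such $\gamma$ has horizontal initial velocity (since $\nu N\subset \mathcal{H}$), so $\bar\gamma=\pi\circ\gamma$ is a geodesic of length $d(N,q)=d(N/G,\pi(q))$, hence minimizing. In case (a), the distance identity applied to $\gamma(T+\varepsilon)$ transfers the failure of minimality from $\gamma$ to $\bar\gamma$. In case (b), the projections $\bar\gamma,\bar\gamma'$ must be distinct, because otherwise $\gamma$ and $\gamma'$ would be two horizontal lifts of the same curve ending at $q$, which would force $\gamma=\gamma'$ by uniqueness of horizontal lifts. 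Either way $\pi(q)\in \cutn[N/G]$.

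For the reverse inclusion, given $\bar q\in \cutn[N/G]$ I will pick a minimizing $N/G$-geodesic $\bar\gamma:[0,T]\to M/G$ realizing $\bar q$ as a cut point, and horizontally lift it through some $p\in N\cap \pi^{-1}(\bar\gamma(0))$ to a minimizing $N$-geodesic $\gamma$ ending at some $q'\in \pi^{-1}(\bar q)$. If $\bar\gamma$ cannot be extended as a minimizer, neither can $\gamma$, again by the distance identity. If a rival $\bar\gamma'$ exists, I horizontally lift it to $\widetilde{\gamma'}$ ending at some $g\cdot q'\in \pi^{-1}(\bar q)$, and then translate by $g^{-1}$: the curve $g^{-1}\cdot\widetilde{\gamma'}$ is a horizontal geodesic still starting in $N$ (by $G$-invariance of $N$), ending exactly at $q'$, and distinct from $\gamma$ since their projections differ. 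Thus $q'\in\cutn$ and $\bar q=\pi(q')\in\cutn/G$.

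The main subtlety, and what I expect to be the most delicate step, is precisely this $G$-adjustment in case (b) of the reverse inclusion: one needs that after lifting a second minimizing $N/G$-geodesic, a group element can be used to bring its endpoint to coincide with $q'$ while keeping the starting point on $N$. This crucially uses both transitivity of $G$ on each fiber of $\pi$ and the $G$-invariance of $N$. Everything else is essentially a careful bookkeeping of how length, horizontality, and the isomorphism $\nu_p N\cong \nu_{\bar p}(N/G)$ interact, but the care taken to translate back into the fixed fiber over $\bar q$ is what lets the two cut-point criteria transfer cleanly between $M$ and $M/G$.
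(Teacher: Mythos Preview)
Your proof is correct, and in one respect it is more economical than the paper's. The paper does \emph{not} work directly with the cut-locus definition: it first proves $\mathrm{Se}(N)/G=\mathrm{Se}(N/G)$ (this is precisely your case (b) argument, lifted and projected via horizontal lifts and O'Neill's theorem), and then passes to closures using the nontrivial density theorem $\overline{\sen}=\cutn$ together with the fact that $\overline{\sen}/G$ is closed because $G$ is compact. Your case (a), by contrast, transfers the defining property of a cut point (``the minimizing $N$-geodesic stops minimizing beyond $q$'') straight across the submersion via the distance identity $d_M(N,q)=d_{M/G}(N/G,\pi(q))$. That identity is exactly the right tool, and your sketch of it (project competitors for $\geq$, horizontally lift and $G$-translate for $\leq$) is sound.

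One remark on your case split: since (a) is literally the definition of a cut point, it holds for \emph{every} $q\in\cutn$, so (b) is redundant and your case (a) argument alone already proves both inclusions. The case (b) discussion is not wrong, just unnecessary; it is essentially the paper's $\mathrm{Se}$-level argument folded into yours. What your route buys is that you never need to invoke $\overline{\sen}=\cutn$ at all, which makes the argument self-contained at the level of geodesics and the distance identity. What the paper's route buys is a cleaner structural statement ($\mathrm{Se}$ is exactly $G$-equivariant), at the cost of importing the density theorem.
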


\section{Outline of Chapter 2}
\hfb The majority of this chapter is an overview of recalling some basic results in Riemannian geometry and differential topology. This chapter also deals with some known results for cut locus of a point. Although this chapter may be interesting to read and help clarify the concepts, the experts can skip the details.

\subsection*{\S \ref{Sec:FermiCoordinates} Fermi coordinates}
\hfb Fermi coordinates \index{Fermi coordinates} are important for studying the geometry of submanifolds. In this coordinate system the metric is rectangular and the derivative of metric vanishes at each point of a curve. It makes the calculations much simpler. This section is devoted to recalling the construction of Fermi coordinates in a tubular neighborhood of a submanifold of a Riemannian manifold. This requires us to define the exponential map restricted to the normal bundle. We have recollected some results which will be used to study the distance squared function from a submanifold. For example, it is shown that the distance squared function from a submanifold is sum of squares of Fermi coordinates in a tubular neighborhood of the submanifold.

\subsection*{\S \ref{Sec:MorseBottFunctions} Morse-Bott theory}
\hfb In order to study the space via critical points of some real valued function on that space, Morse theory plays an important role. If non-degenerate critical points are replaced by non-degenerate critical submanifolds (see \Cref{defn:nonDegenerateCriticalSubmanifolds}), then a generalization of Morse theory comes into the picture -- Morse-Bott theory. In this section, we have recalled the definition of a Morse function and some examples of Morse functions. In \S\ref{subsec:MorseBottFunctions} we have discussed Morse-Bott theory motivated by an example.

\subsection*{\S \ref{Sec:cutLocusOfPoint} Cut locus and conjugate locus}
\hfb In a Riemannian manifold $M$ a geodesic $\gamma$ joining $p,q\in M$ is said to be \textit{distance minimal} if $l(\gamma)=d(p,q)$, where $d$ is the Riemannian distance. Cut locus of a point captures all points in $M$ beyond which geodesics fail to be distance minimal. In \S\ref{subsec:CutLocusOfAPoint} we have discussed numerous example of cut locus of a point. Characterizations of cut locus has been discussed in terms of conjugate points (points $p$ and $q$ are said to be conjugate along a geodesic $\gamma$ if there exists a non-vanishing Jacobi field vanishes at $p$ and $q$) and number of geodesics joining the two points (\Cref{thm:CharacterizationOfCutLocusInTermsOfConjugatePoint}). In particular, it says that a cut point is either the first conjugate point or there exists more than one geodesic joining the point and the cut point. We also have a characterization which shows the existence of a closed geodesic (\Cref{thm:ExistenseOfClosedGeodesic-2}). One of the result \cite[Theorem 1]{Wol79} is very important to find the cut points, which says that the cut locus of a point is the closure of points which can be joined by more than one geodesic (\Cref{thm:ClosureOfSeisCup}).

\section{Outline of Chapter 3}
\hfb This chapter serves as a motivation for the results of the subsequent chapters. It includes a detailed discussion of cut locus of submanifolds with numerous examples.

\subsection*{\S\ref{sec:cutLocusOfSubmanifolds} Cut locus of submanifolds}
\hfb To define cut locus of subset of a Riemannian manifold, one needs to define distance minimal geodesic starting from the subset. This section starts with defining the same (\Cref{distmin}) and then the cut locus of a subset is similarly. \Cref{eg:cutLocusOfkPoints} shows that the cut locus need not be a manifold. \Cref{join} shows that the topological join of $\mathbb{S}^k$ and $\mathbb{S}^{n-k-1}$ is induced from cut locus by showing that $\cutn[\mathbb{S}^k_i] = \mathbb{S}^{n-k-1}_l$, where $\mathbb{S}_i^k \hookrightarrow \mathbb{S}^n$ denote the embedding of the $k$-sphere in the first $k+1$ coordinates and $\mathbb{S}^{n-k-1}_l$ denote the embedding of the $(n-k-1)$-sphere in the last $n-k$ coordinates. In \S \ref{subsec:separatingSet} we have defined the separating set of a subset which consists of all points which have more than one distance minimal geodesic joining the subset. In \Cref{eg:CutLocusOfEllipse} we have shown that the cut locus is strictly bigger than the separating set. 

\subsection*{\S \ref{Sec:IlluminatingExample} An illuminating example}
\hfb The main aim of this section is to find the cut locus of the set of all $n\times n$ orthogonal matrices. We have shown that the cut locus is the set of all singular matrices by showing that it is the separating set. We also analyzed the regularity of distance squared function on the singular set and outside the singular set, set of all invertible matrices. In fact, we have shown that the distance squared function is differentiable at $A$ if and only if $A\in GL(n,\mathbb{R})$ In this section we have also shown that $GL(n,\mathbb{R})$ deforms to the set of all orthogonal matrices, but we noted that this deformation is  different from one we obtained via Gram-Schmidt. We will also prove \Cref{thm:ThmB}.

\section{Outline of Chapter 4}
\hfb This chapter is based on joint work with Basu \cite{BaPr21}. Here we have explored some topological properties (relation with the Thom space (\Cref{Thomsp}), homology and homotopy groups of cut locus) and geometric properties (regularity of the distance squared function \S\ref{sec:RegularityOfDistanceSquaredFunction}, complement of cut locus deforms to the submanifold (\Cref{thm: Morse-Bott})).

\subsection*{\S \ref{sec:RegularityOfDistanceSquaredFunction} Regularity of distance squared function}
\hfb This section is motivated by the example of cut locus of $O(n,\mathbb{R})$ in $M(n,\mathbb{R})$ (\S \ref{Sec:IlluminatingExample}). We proved that the distance squared function is not differentiable on the separating set (\Cref{Lmm: singdsq}). We have also shown by an example that the distance squared function can be differentiable on points which are cut points but not separating points (\Cref{eg:CutLocusOfEllipse-2}).

\subsection*{\S\ref{sec:characterizationOfCutLocus} Characterizations of \texorpdfstring{\cutn}{Cu(N)}}
\hfb We have discussed two characterizations of cut locus. One in terms of first focal points (\Cref{defn:focalPoint}) and number of geodesics joining the submanifold to the cut points (\Cref{thm:CharacterizationOfCutLocusInTermsOfFocalPoint}) and other is in terms of separating set (\Cref{thm:SeClosureIsCutLocus}). The latter one is important for computation viewpoint. Let $\nu$\index{$\nu$} denotes the normal bundle of $N$ and $S(\nu)$\index{$S(\nu)$} be the unit sphere bundle. Consider a map
\begin{gather*}
    \rho:S(\nu) \to [0,\infty),\\
    v\mapsto \sup\{t\in[0,\infty):\gamma_v|_{[0,t]} \text{ is a distance minimal geodesic from $N$}\}\index{$\rho$}
\end{gather*}
where $\gamma_v$ \index{$\gamma_v$} means $\gamma'(0)=v$ (also see \eqref{snu}).

\begin{thmSec}
    Let $u\in S(\nu)$. A positive real number $T$ is $\rho(u)$ if and only if $\gamma_u:[0,T]$ is a distance minimal geodesic from $N$ and at least one of the following holds:
    \begin{enumerate}[(i)]
        \item $\gamma_u(T)$ is the first focal point of $N$ along $\gamma_u$,
        \item there exists $v\in S(\nu)$ with $v\neq u$ such that $\gamma_v(T)=\gamma_u(T)$.
    \end{enumerate}
\end{thmSec}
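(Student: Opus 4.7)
The plan is to model this proof on the classical characterization of the cut locus of a point (\Cref{thm:CharacterizationOfCutLocusInTermsOfConjugatePoint}), replacing conjugate points by focal points of $N$ and replacing ``a second geodesic through $p$'' by ``a second geodesic starting normally from $N$.'' Throughout I will use completeness of $M$ and closedness of $N$ to guarantee existence of distance minimizers from $N$ and to run a compactness argument in $S(\nu)$.

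For the forward direction, assume $T = \rho(u)$. Distance minimality of $\gamma_u|_{[0,T]}$ is immediate from the definition of $\rho$. Pick a sequence $t_n \downarrow T$ with $t_n > T$; by the definition of $\rho$ the segment $\gamma_u|_{[0,t_n]}$ is not distance minimal, so there exists a minimal geodesic $\gamma_{v_n}|_{[0,s_n]}$ from $N$ to $\gamma_u(t_n)$ with $v_n \in S(\nu)$ and $s_n = d(N,\gamma_u(t_n)) < t_n$ (existence uses a Hopf--Rinow type argument via completeness of $M$ and closedness of $N$). The base points of the $v_n$ lie in a bounded, hence relatively compact, subset of $N$, so after passing to a subsequence $v_n \to v \in S(\nu)$ and $s_n \to T$. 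Continuity of the normal exponential gives $\gamma_v(T) = \gamma_u(T)$ with $\gamma_v|_{[0,T]}$ distance minimal. If $v \neq u$ we are in case (ii). If $v = u$, I argue by contradiction that $\exp_\nu$ fails to be a local diffeomorphism at $Tu$: were it one, injectivity on a neighborhood of $Tu$ combined with $\exp_\nu(s_n v_n) = \gamma_u(t_n) = \exp_\nu(t_n u)$ would force $s_n v_n = t_n u$, contradicting $s_n < t_n$ for large $n$. Hence $Tu$ is a critical point of $\exp_\nu$, that is, $\gamma_u(T)$ is a focal point; the distance minimality of $\gamma_u|_{[0,T]}$ rules out earlier focal points by the standard index-form argument, so $\gamma_u(T)$ is the \emph{first} focal point, yielding (i).

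For the backward direction, suppose $\gamma_u|_{[0,T]}$ is distance minimal from $N$ and either (i) or (ii) holds. I must show that $\gamma_u|_{[0,T+\epsilon]}$ fails to be distance minimal for every $\epsilon > 0$. In case (ii), if $v \neq u$ with $\gamma_v(T) = \gamma_u(T)$ and $\gamma_v|_{[0,T]}$ distance minimal, then the forward tangents $\gamma_v'(T)$ and $\gamma_u'(T)$ must differ, since otherwise running both geodesics backwards from the common endpoint with the common tangent would force $v = u$. Thus the concatenation of $\gamma_v|_{[0,T]}$ with $\gamma_u|_{[T,T+\epsilon]}$ has a genuine corner at $\gamma_u(T)$, and a standard smoothing shortens it, producing a path from $N$ to $\gamma_u(T+\epsilon)$ strictly shorter than $T+\epsilon$. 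In case (i), the first focal point theorem for submanifolds (proved via a second variation and Jacobi field argument for $N$-Jacobi fields) gives a proper variation of $\gamma_u|_{[0,T+\epsilon]}$ with endpoints on $N$ and $\gamma_u(T+\epsilon)$ whose length is strictly smaller, so again $\gamma_u|_{[0,T+\epsilon]}$ is not minimal.

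The main obstacle is the $v = u$ subcase of the forward direction, which is where focal behavior must be extracted from a local inversion failure of the normal exponential; setting this up cleanly in Fermi coordinates (\Cref{Sec:FermiCoordinates}) should make the contradiction argument go through. A subsidiary nuisance is the non-compactness of $S(\nu)$ when $N$ is non-compact: one must observe that base points of minimizers to $\gamma_u(t_n)$ lie in a bounded and closed (hence, by completeness of $M$ and closedness of $N$, compact) subset of $N$ before the subsequence extraction is legal. The remaining steps are a direct generalization of the point-cut-locus arguments already summarized in Chapter~2.
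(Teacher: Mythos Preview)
Your proposal is correct and follows essentially the same approach as the paper: the backward direction uses the first-focal-point index argument (the paper's Observation~A) for case~(i) and a corner-shortening/triangle-inequality argument for case~(ii), while the forward direction extracts a limiting $N$-geodesic from minimizers to $\gamma_u(t_n)$ and uses local injectivity of $\exp_\nu$ near $Tu$ to force either $v\neq u$ or a focal point. Your explicit handling of the compactness issue for the base points in $S(\nu)$ when $N$ is non-compact is actually a bit more careful than the paper's treatment, and the extra clause ``$\gamma_v|_{[0,T]}$ distance minimal'' you inserted in case~(ii) is automatic (since $d(N,\gamma_u(T))=T$) and in any event unnecessary for your corner argument.
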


\vspace{0.3cm}
\begin{thmSec}
    Let $\cutn$\index{\cutn} be the cut locus of a compact submanifold $N$  of a  complete Riemannian manifold $M$. The subset $\sen$,\index{\sen} the set of all points in $M$ which can be joined by at least two distance minimal geodesic starting from $N$,  of $\cutn$ is dense in $\cutn$.
\end{thmSec}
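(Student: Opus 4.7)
The plan is to adapt the classical argument of Wolter, used in the point case (see \Cref{thm:ClosureOfSeisCup}), to the submanifold setting by exploiting the characterization of $\cutn$ stated just above. Fix $q\in\cutn$. If $q\in\sen$, then $q\in\overline{\sen}$ trivially, so assume $q\in\cutn\setminus\sen$. By the characterization theorem, the point $q$ is reached from $N$ by a \emph{unique} distance-minimal geodesic $\gamma_u$, with $u\in S(\nu)$ and $T:=\rho(u)=d(N,q)$, and moreover $q=\gamma_u(T)$ is the first focal point of $N$ along $\gamma_u$. It therefore suffices to construct a sequence $r_s\in\sen$ with $r_s\to q$.

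Since $M$ is complete, extend $\gamma_u$ to $[0,T+\varepsilon]$ and set $q_s:=\gamma_u(T+s)$ for each $s\in(0,\varepsilon)$. Because $\rho(u)=T$, the geodesic $\gamma_u|_{[0,T+s]}$ fails to be distance-minimal, so $L_s:=d(N,q_s)<T+s$. By the Hopf--Rinow theorem together with compactness of $N$, there exists a distance-minimal geodesic $\sigma_s\colon[0,L_s]\to M$ from $N$ to $q_s$ with unit initial vector $v_s\in S(\nu)$, and necessarily $\sigma_s\neq\gamma_u|_{[0,T+s]}$. Using compactness of $S(\nu)$ (which is compact because $N$ is), any accumulation point of $\{v_s\}$ produces a distance-minimal geodesic from $N$ to $q$ of length $T$; by uniqueness this must equal $\gamma_u$. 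Hence $v_s\to u$ and $L_s\to T$ as $s\downarrow 0$, and the segments $\sigma_s$ converge to $\gamma_u|_{[0,T]}$.

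The crux is to produce, for each sufficiently small $s$, a point $r_s\in\sen$ with $r_s\to q$. My plan is to take $r_s:=\sigma_s(L_s-\alpha_s)$ for a carefully chosen sequence $\alpha_s\downarrow 0$ (for instance $\alpha_s=\sqrt{s}$); clearly $r_s\to q$, and one distance-minimal geodesic from $N$ to $r_s$ is $\sigma_s|_{[0,L_s-\alpha_s]}$. A second, distinct distance-minimizer is to be produced from the focal condition at $q$: since $d(\exp_\nu)_{(u,T)}$ has non-trivial kernel, the normal exponential map folds at $(u,T)$, giving a one-parameter family of geodesic variations of $\gamma_u$ whose endpoints accumulate at $q$. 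A continuity and pigeonhole argument should then, for each small $s$, select a geodesic in this family whose endpoint coincides with $r_s$ and whose length is exactly $L_s-\alpha_s$, thereby certifying $r_s\in\sen$.

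The main obstacle is the quantitative matching in the last step --- controlling the second-order behavior of $\exp_\nu$ near the focal point $(u,T)$ precisely enough to realize the prescribed length $L_s-\alpha_s$ of the competing minimizer. As a parallel route that I would pursue, there is a contradiction argument: were some open neighborhood $U$ of $q$ disjoint from $\sen$, the characterization theorem would force every cut point of $N$ in $U$ to be a focal point; a Sard-type argument applied to $\exp_\nu$ on a tubular neighborhood of $(u,T)$ in $\nu$, combined with the upper semicontinuity of $\rho$ and the fact that focal times along each fixed geodesic form a discrete set, would then yield the desired contradiction.
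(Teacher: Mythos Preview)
Your setup is correct and matches the paper's: reduce to $q\in\cutn\setminus\sen$, so that the minimizing $N$-geodesic $\gamma_u$ is unique and $q$ is a first focal point. The construction of $q_s=\gamma_u(T+s)$ and of a genuine minimizer $\sigma_s$ to $q_s$, together with the convergence $v_s\to u$, is also fine.

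The gap is exactly where you locate it, and it is genuine rather than merely technical. The focal condition $\ker d(\exp_\nu)_{(u,T)}\neq 0$ gives only an \emph{infinitesimal} family of $N$-geodesics meeting at $q$; it does not produce an actual second geodesic of prescribed length hitting a prescribed nearby point $r_s$. There is no ``continuity and pigeonhole'' mechanism that turns a Jacobi field into a second minimizer to a specific point, so the plan to certify $r_s\in\sen$ via the focal fold cannot be completed. Your parallel route also breaks down: a Sard-type argument shows that the set of focal \emph{values} of $\exp_\nu$ has measure zero, but $\cutn$ itself may have measure zero, so ``every cut point in $U$ is focal'' is not contradicted by Sard. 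Likewise, discreteness of focal times along a \emph{single} geodesic says nothing about the focal locus of a neighborhood of directions.

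The paper avoids these difficulties by a different, essentially topological, dichotomy. Around $x_0=u$ it builds a small closed cone $\mathrm{Co}(x_0,\delta)\subset\nu$ of directions and the $\rho$-rescaled cone $\mathrm{Co}^\star(x_0,\delta)$, both homeomorphic to closed balls. Either (b) $\exp_\nu$ fails to be injective on $\mathrm{Co}^\star(x_0,\delta)$ for every $\delta$, in which case the two preimages already give $q_n\in\sen$ with $q_n\to q$; or (a) for some $\delta$ it \emph{is} injective, hence a homeomorphism onto its image, and since $\rho(x_0)x_0$ lies on the boundary of $\mathrm{Co}^\star$, every ball around $q$ meets $M\setminus\exp_\nu(\mathrm{Co}^\star)$. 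Picking $q_n$ there and minimizers $\gamma_n$ from $N$ to $q_n$, a Busemann-type compactness argument produces a limiting $N$-geodesic $\tilde\gamma$ to $q$ whose initial direction $\tilde x$ lies \emph{outside} the cone, hence $\tilde x\neq x_0$, contradicting $q\notin\sen$. The key idea you are missing is this injectivity dichotomy on a cone in $\nu$; it replaces the hopeless attempt to manufacture a second minimizer from focal data.
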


\subsection*{\S\ref{sec:topologicalProperties} Topological properties}
\hfb In this section we start by showing that the cut locus is a simplicial complex for an analytic pair (following Buchner \cite{Buc77}). In \S \ref{Sec: Thom} we prove \Cref{thm:ThmC} and discuss some applications including \Cref{thm:ThmD}. We end this section by proving one of the main theorem \Cref{thm:ThmA}.

\section{Outline of Chapter 5}
\hfb We apply our study of gradient of distance squared function to two families of Lie groups - $GL(n,\R)$ and $U(p,q)$. With a particular choice of left-invariant Riemannian metric which is right-invariant with respect to a maximally compact subgroup $K$, we analyze the geodesics and the cut locus of $K$. In both cases, we obtain that $G$ deforms to $K$ via Morse-Bott flow (\Cref{CartanGLn} and \Cref{mainthm}). Although these results are deducible from classical results of Cartan and Iwasawa, our method is geometric and specific to suitable choices of Riemannian metrics. It also makes very little use of structure theory of Lie algebras. 

\section{Outline of Chapter 6}
\hfb Consider a Riemannian manifold $M$ on which a compact Lie group $G$ acts freely. It is well known that the quotient $M/G$ is a manifold. This chapter is devoted to the study of cut locus of a $G$-invariant submanifold $N$ inside $M$. We will prove \Cref{thm:ThmE}. As an application of \Cref{thm:ThmE}, we have shown some examples of cut locus in orbit space. We also discuss an application to complex hypersurfaces. Let $\pi:\mathbb{S}^{2n+1}\to \mathbb{CP}^n$ be the quotient map. If 
\begin{displaymath}
    X(d)=\Bigg\{[z_0:z_1:\cdots:z_n]\in \mathbb{CP}^n:\sum_{i=0}^n z_i^d=0\Bigg\}\index{$X(d)$}
\end{displaymath}
and $\tilde{X}(d)\defeq \pi^{-1}(X(d))$, then we make the following conjecture. 
\begin{conj}\label{thm:cut-locus-of_X(d)}
	The cut locus of $\tilde{X}(d)\subseteq \sbb^{2n+1}$ is $\zbb_d^{\star(n+1)}\times_{\zbb_d}\sbb^1$, where $\times_{\mathbb{Z}_d}$ is the diagonal action of $\mathbb{Z}_d$ and $\star$ denotes the topological join of spaces.  
\end{conj}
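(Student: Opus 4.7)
The plan is to leverage Theorem~\ref{thm:ThmE} to transport the question from $\sbb^{2n+1}$ down to $\CP^n$, and then to identify the cut locus of the Fermat hypersurface $X(d)$ with $\zbb_d^{\star(n+1)}/\zbb_d$ via an explicit parametrization. Since the Hopf action of $\sbb^1$ on $\sbb^{2n+1}$ is free and isometric with respect to the round metric, and $\tilde X(d) = \pi^{-1}(X(d))$ is $\sbb^1$-invariant with quotient $X(d)$, Theorem~\ref{thm:ThmE} gives
\begin{displaymath}
    \mathrm{Cu}(\tilde X(d))/\sbb^1 \;=\; \mathrm{Cu}(X(d)) \;\subseteq\; \CP^n.
\end{displaymath}
Since $\zbb_d^{\star(n+1)} \times_{\zbb_d} \sbb^1$ is the total space of a canonical $\sbb^1$-bundle over $\zbb_d^{\star(n+1)}/\zbb_d$ induced by the inclusion of $\zbb_d$ as the $d$-th roots of unity in $\sbb^1$, the conjecture reduces to identifying $\mathrm{Cu}(X(d))$ with $\zbb_d^{\star(n+1)}/\zbb_d$ together with its $\sbb^1$-equivariant lift.

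The candidate parametrization I would use is
\begin{displaymath}
    \Phi\colon \zbb_d^{\star(n+1)} \times_{\zbb_d} \sbb^1 \longrightarrow \sbb^{2n+1}, \qquad [t,\zeta;e^{i\phi}] \longmapsto e^{i\phi}\big(\sqrt{t_0}\,\bar\zeta_0,\ldots,\sqrt{t_n}\,\bar\zeta_n\big),
\end{displaymath}
where a class in $\zbb_d^{\star(n+1)}$ is represented by barycentric weights $t_j\ge 0$ with $\sum_j t_j=1$ and $d$-th roots of unity $\zeta_j$ (irrelevant when $t_j=0$). The map is $\sbb^1$-equivariant by construction, well-defined on the diagonal $\zbb_d$-quotient because $\omega\bar\omega=1$ for $\omega\in\zbb_d\subset\sbb^1$, and injective by a short case analysis on the support $\{j:t_j>0\}$; compactness then yields a homeomorphism onto the image. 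For the inclusion $\mathrm{Im}(\Phi)\subseteq\mathrm{Cu}(\tilde X(d))$, I would fix $p=\Phi([t,\zeta;1])$ and use the componentwise $\zbb_d^{n+1}$-action on $\sbb^{2n+1}$ (which preserves $\tilde X(d)$) together with the complex conjugation $z\mapsto\bar z$ (which also preserves $\tilde X(d)$) to explicitly exhibit pairs of points of $\tilde X(d)$ equidistant from $p$, placing $p$ in the separating set. Two model cases serve as sanity checks: (i) $n=1,\,d=2$, where $\tilde X(2)$ is two linked Hopf circles in $\sbb^3$ and the cut locus is the Clifford torus $T^2=\zbb_2^{\star 2}\times_{\zbb_2}\sbb^1$; (ii) $n=d=2$, where $\mathrm{Im}(\Phi)$ is the $\sbb^1$-orbit of the real sphere $\sbb^2\subset\R^3\subset\C^3$ and each such point has an $\sbb^1$-worth of nearest points on $\tilde X(2)\cong\mathbb{RP}^3$.

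The main obstacle will be the reverse inclusion $\mathrm{Cu}(\tilde X(d))\subseteq\mathrm{Im}(\Phi)$. Since $\mathrm{Im}(\Phi)$ is closed in $\sbb^{2n+1}$ and the separating set is dense in the cut locus (Theorem~\ref{thm:SeClosureIsCutLocus}), it suffices to show that every separating point of $\tilde X(d)$ has the form $e^{i\phi}(\sqrt{t_0}\bar\zeta_0,\ldots,\sqrt{t_n}\bar\zeta_n)$. I would tackle this via Lagrange multipliers for the maximization of $\mathrm{Re}\langle p,q\rangle$ over $q\in\tilde X(d)\cap\sbb^{2n+1}$. Writing $q_j=r_je^{i\beta_j}$ and $p_j=\rho_je^{i\alpha_j}$ and using a complex multiplier $\bar\lambda$ for the Fermat equation and a real multiplier $\mu$ for the norm constraint, the stationarity equations read $p_j=2\bar\lambda d\,\bar q_j^{d-1}+2\mu q_j$ for every $j$. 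When $p$ admits two distinct maximizers $q^{(1)},q^{(2)}$, subtracting their stationarity equations should force the $\zbb_d^{n+1}$-orbit of any maximizer to be one-dimensional (an $\sbb^1$-orbit); combined with the real analyticity of the Fermat equation, this will pin the phases $\alpha_j$ of the nonzero entries of $p$ to a single coset $\phi+\tfrac{2\pi}{d}\zbb$, giving $p\in\mathrm{Im}(\Phi)$. The central technical difficulty is ruling out spurious critical configurations not dictated by the $\zbb_d^{n+1}/\zbb_d$-symmetry; I expect this can be handled by combining a convexity estimate for the distance-squared function near $\tilde X(d)$ with a second-variation (focal-point) analysis along the minimizing geodesics from $\tilde X(d)$ to $p$. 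Matching the $\sbb^1$-bundle twist to recover the full statement follows immediately from the $\sbb^1$-equivariance of $\Phi$.
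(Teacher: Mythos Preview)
The statement you are addressing is labeled as a \emph{conjecture} in the paper, and the paper does not prove it in general. It establishes only two special cases: $d=2$ with $n$ arbitrary (\Cref{thm:cut-locus-of_Xn_2}) and $n=1$ with $d$ arbitrary (\Cref{thm:cut-locus-for_X1_d}). So there is no ``paper's own proof'' of the general statement to compare against, and your proposal is an attempt to go beyond what the paper achieves.

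Your forward inclusion (the image of $\Phi$ lies in the separating set) is plausible and close in spirit to what the paper does in its proved cases: reduce by the $\zbb_d^{n+1}$-symmetry and an isometry to a canonical point, then exhibit two distinct $N$-geodesics. The reverse inclusion, however, is the entire difficulty, and your sketch (``subtracting their stationarity equations should force\ldots'', ``I expect this can be handled by combining a convexity estimate\ldots with a second-variation analysis'') is a hope, not a proof. In the paper's $d=2$ case the reverse inclusion is obtained not by anything resembling your Lagrange-multiplier-plus-convexity strategy, but by a long, hands-on case analysis on the rank of the matrix $[\vbf,\wbf,\vbf_1,\vbf_2]$ (where $(\vbf,\wbf)$ is the candidate point and $(\vbf_1,\vbf_2)$ the normal geodesic direction), ruling out ranks $2$, $3$, $4$ separately with substantial algebra. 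For $n=1$ the argument exploits that $\tilde X(d)$ is a disjoint union of $d$ Hopf circles and computes the distance to each component explicitly. Neither technique extends in any visible way to general $(n,d)$, and the fact that the authors leave the general case as an open conjecture is strong evidence that the ``spurious critical configurations'' you hope to rule out are genuinely hard to control. In short: your plan might reproduce the two proved special cases by a different route, but for the full conjecture it remains a program with the same gap the paper leaves open.
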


\vspace{0.3cm}
\noindent We prove the above conjecture for two families:  $d=2, n$ arbitrary (\Cref{thm:cut-locus-of_Xn_2}) and $n=1, d$ arbitrary (\Cref{thm:cut-locus-for_X1_d}). 
	\chapter[Preliminaries]{Preliminaries} \label{ch:preliminaries}
\setcounter{mtc}{6}
\minitoc
\section{Fermi coordinates}\label{Sec:FermiCoordinates}\index{Fermi coordinates}
\hfb In this section we give a brief overview of the Fermi coordinates which are generalizations of normal coordinates in Riemannian geometry. To study the distance squared function from a submanifold $N$ of a Riemannian manifold $M$, it is essential to analyze the local geometry of $M$ around $N$. For this the Fermi coordinates are the most convenient tool. In 1922, Enrico Fermi \cite{Fer22} came up  with a coordinate system in which the Christoffel symbols vanish along geodesics which makes the metric simpler. For an extensive reading we refer to the book \cite[Chapter 2]{Gr04} and an article \cite{MaMi63}.

\subsection{Normal exponential map} \index{normal exponential map}
\hfb Let $N$ be an embedded submanifold of a Riemannian manifold $M$. We define the \textit{normal bundle},\index{normal bundle} denoted by $\nu$,
\begin{displaymath}
    \nu \defeq \left\{(p,v):p\in N\text{ and } v\in \left(T_pN\right)^\perp\right\}, \index{$\nu$}
\end{displaymath}
where $(T_pN)^\perp$ is the orthogonal complement of $T_pN$. Indeed, $\nu$ is a subbundle of the restriction of the tangent bundle $TM$ to $N$. We can restrict the usual exponential map of the Riemannian manifold to the normal bundle to define the exponential map of the normal bundle. We define the \textit{exponential map of the normal bundle} as follows:\index{$\exp_{\nu}$}
\begin{equation}\label{eq:normalExponentialMap}
    \exp_\nu:\nu\to M,~(p,v)\mapsto \exp_p(v),
\end{equation} 
where $\exp_p:T_pM\to M$ is the exponential map of $M$. We may write $\exp_\nu(v)$ in short and call this the \textit{normal exponential map}. Note that we can identify $N$ as the zero section of the normal bundle and hence $N$ can be assumed to be submanifold of $\nu$. 
\begin{lemma}\cite[Lemma 2.3]{Gr04}
    Let $M$ be a Riemannian manifold and $N$ be any embedded submanifold. Then the normal exponential map $\exp_\nu:\nu\to M$ is a diffeomorphism from a neighbourhood of $N\subseteq \nu$ onto a neighbourhood of $N\subseteq M$. 
\end{lemma}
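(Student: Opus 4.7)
The plan is to apply the inverse function theorem at each point of the zero section and then upgrade the collection of local diffeomorphisms into a single diffeomorphism defined on an open neighbourhood of all of $N$.

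First, I would identify the tangent space of $\nu$ at a point $(p,0)$ of the zero section. Since $\nu \to N$ is a vector bundle, there is a canonical splitting
\begin{displaymath}
    T_{(p,0)}\nu \;\cong\; T_pN \oplus (T_pN)^\perp \;\cong\; T_pM,
\end{displaymath}
where the first summand is the tangent space to the zero section and the second is the vertical (fibre) direction. Under this identification I would compute $d(\exp_\nu)_{(p,0)}$ on each summand: on the horizontal summand $T_pN$ it is the identity because $\exp_\nu$ restricted to the zero section is the inclusion $N \hookrightarrow M$, and on the vertical summand $(T_pN)^\perp$ it is also the identity because for a normal vector $v$, the curve $t\mapsto \exp_p(tv)$ has initial velocity $v$. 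Hence $d(\exp_\nu)_{(p,0)}$ is an isomorphism for every $p\in N$.

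By the inverse function theorem, for each $p\in N$ there exist open neighbourhoods $U_p\subseteq \nu$ of $(p,0)$ and $V_p\subseteq M$ of $p$ such that $\exp_\nu$ maps $U_p$ diffeomorphically onto $V_p$. The main obstacle is that these local diffeomorphisms do not immediately glue into a global diffeomorphism on a neighbourhood of the whole zero section, since $\exp_\nu$ could fail to be injective on $\bigcup_p U_p$. To handle this, I would fix a smooth auxiliary metric on the total space $\nu$ (for instance using the bundle metric on fibres together with a Riemannian metric on $N$), let $\nu_\epsilon$ denote the open $\epsilon$-disk bundle, and argue that for a suitable positive function $\epsilon: N\to (0,\infty)$ the restriction $\exp_\nu|_{\nu_{\epsilon}}$ is injective. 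Injectivity follows from a standard contradiction argument: if no such $\epsilon$ exists, one extracts sequences $(p_n, v_n), (q_n, w_n)\in \nu$ with $v_n, w_n \to 0$, $(p_n, v_n)\ne (q_n, w_n)$, and $\exp_\nu(p_n, v_n) = \exp_\nu(q_n, w_n)$; after possibly passing to subsequences (using either compactness of a neighbourhood of a limit point in $N$, or working locally over a compact subset of $N$), both points converge to the same $p\in N$, and the local injectivity from the inverse function theorem at $(p,0)$ yields a contradiction.

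Once injectivity is established on some open neighbourhood $W$ of $N$ inside $\nu$, the map $\exp_\nu|_W$ is an injective local diffeomorphism, hence a diffeomorphism onto its image $\exp_\nu(W)$, which is an open neighbourhood of $N$ in $M$. The argument is local in nature, so strictly speaking I would first carry it out on a relatively compact open set $N'\Subset N$ to invoke compactness cleanly, and then patch these domains over an exhaustion of $N$ by precompact opens to obtain a single open tubular neighbourhood of all of $N$. The main technical point to be careful about is precisely this compactness/exhaustion step, since $N$ need not be compact.
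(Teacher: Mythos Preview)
The paper does not actually prove this lemma; it merely cites \cite[Lemma 2.3]{Gr04} and immediately uses it. Your argument is the standard tubular-neighbourhood proof and is correct in outline: compute $d(\exp_\nu)$ along the zero section, invoke the inverse function theorem pointwise, and then use a sequence/compactness contradiction to pass from local injectivity to injectivity on a uniform disk bundle. This is essentially the proof one finds in Gray's book and in most differential-geometry texts, so there is nothing to compare against within the paper itself.

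One small remark: in the last paragraph you say you would first work on a relatively compact $N'\Subset N$ and then patch over an exhaustion. That is fine, but be aware that patching tubular neighbourhoods over an exhaustion requires a little care to ensure the resulting neighbourhood is still a disk bundle (e.g.\ by taking $\epsilon$ to be a single smooth positive function on $N$ obtained from a locally finite partition of unity subordinate to the exhaustion). As stated your sketch is correct but this is the one place where a careless write-up could go wrong.
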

\bigskip
Using the above lemma, let $\mathcal{U}_N$ be the largest open neighbourhood of $N\subseteq \nu$ for which $\exp_\nu$ is a diffeomorphism. We shall later be able to describe this neighbourhood in terms of a function $\rho$ \eqref{snu}. We now ready to define the Fermi coordinates. 
\subsection{Fermi coordinate system} \index{Fermi coordinates}
\hfb To define a system of Fermi coordinates, we need an arbitrary system of coordinates $\left(y_1,\cdots,y_k\right)$ defined in a neighborhood $\mathcal{O}\subseteq N$ of $p\in N$ together with orthogonal sections $\mathcal{E}_{k+1},\cdots,\mathcal{E}_n$ of the restriction on $\nu$ to $\mathcal{O}$.
\begin{defn}[Fermi coordinates]\label{Defn:FermiCoordinates}
    The \textit{Fermi coordinates} $\left(x_1,\cdots,x_n\right)$ of $N\subseteq M$ centered at $p$ (relative to a given coordinate $\left(y_1,\cdots,y_k\right)$ on $N$ and given orthogonal sections $\mathcal{E}_{k+1},\cdots,\mathcal{E}_n$ of $\nu$) are defined by
    \begin{align*}
        & x_l \left(\exp_{\nu} \bigg(\sum_{i=k+1}^n \tau_i \mathcal{E}_i \left(p'\right)\bigg)\right) = y_l \left(p'\right),~ l = 1,\cdots,k 
        \\[1ex]
        & x_m \left(\exp_{\nu} \bigg(\sum_{i=k+1}^n \tau_i \mathcal{E}_i \left(p'\right)\bigg)\right) = \tau_m,~ m = k+1,\cdots,n
    \end{align*}
    for $p'\in \mathcal{O}$ provided the numbers $\tau_{k+1},\cdots,\tau_n$ are small enough so that $\tau_{k+1}\mathcal{E}_{k+1}\left(p'\right)+\cdots+\tau_{n}\mathcal{E}_{n}\left(p'\right)\in \mathcal{U}_N$. 
\end{defn}

\bigskip 

\hf As the normal exponential map is a diffeomorphism on the set $~\mathcal{U}_N$, $\left(x_1,\cdots,x_k,x_{k+1}\right.$ $\left.,\cdots, x_n\right)$ defines a coordinate system near $p$. In fact, the restrictions to $N$ of coordinate vector fields $ \allowbreak\partial/\partial x_{k+1},\ldots, \partial/ \partial x_n$ are orthonormal.
\begin{lemma} \label{Lemma: geodesic in fermi}
    Let $\gamma$ be  a unit speed geodesic normal to $N$ with $\gamma(0) = p\in N$. If $v = \gamma'(0)$, then there exists a system of Fermi coordinates $(x_1,\cdots,x_n)$ such that whenever $(p,tv)\in \mathcal{U}_N$, we have
    \begin{align*}
        & \kern 1cm\left.\delbydel{}{x_{k+1}}\right|_{\gamma(t)} = \gamma'(t),\\
        & \left.\delbydel{}{x_l}\right|_p\in T_pN, \text{ and } \left.\delbydel{}{x_i}\right|_p \in (T_p N)^\perp
    \end{align*}
    for $1\le l\le k$ and $k+1\le i\le n.$ Furthermore, for $1\le j \le n$
    \begin{displaymath}
        (x_j \comp \gamma)(t) = t\delta_{j (k+1)}.
    \end{displaymath}
\end{lemma}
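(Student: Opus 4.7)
The plan is to build Fermi coordinates tailored to the given normal geodesic $\gamma$ by making one crucial choice in the free data that enters Definition 2.1.3 — namely, arranging the first normal section $\mathcal{E}_{k+1}$ to take the value $v$ at $p$. Once this choice is made, every assertion in the lemma drops out by unpacking the definition.

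First I would pick any chart $(y_1,\ldots,y_k)$ on an open neighborhood $\mathcal{O}\subseteq N$ of $p$, normalized so that $y_l(p)=0$ for $1\le l\le k$. Since $v\in (T_pN)^\perp$ is a unit vector, I can extend it to an orthonormal basis $\{e_{k+1}=v,e_{k+2},\ldots,e_n\}$ of $(T_pN)^\perp$. Using the local triviality of $\nu$ over a possibly smaller $\mathcal{O}$, I propagate this basis to smooth orthonormal sections $\mathcal{E}_{k+1},\ldots,\mathcal{E}_n$ of $\nu|_{\mathcal{O}}$ with $\mathcal{E}_i(p)=e_i$ (for instance by applying Gram–Schmidt to any smooth local frame agreeing with $\{e_i\}$ at $p$). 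This data determines Fermi coordinates $(x_1,\ldots,x_n)$ via Definition 2.1.3.

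Next I read off the geodesic identity. By the definition, for $t$ with $(p,tv)\in \mathcal{U}_N$,
\begin{displaymath}
\gamma(t)=\exp_p(tv)=\exp_\nu\bigl(t\,\mathcal{E}_{k+1}(p)\bigr),
\end{displaymath}
so $x_l(\gamma(t))=y_l(p)=0$ for $1\le l\le k$ and $x_m(\gamma(t))=t\,\delta_{m(k+1)}$ for $k+1\le m\le n$. Combined, $(x_j\circ\gamma)(t)=t\,\delta_{j(k+1)}$ for every $j$, which is the last displayed equation. Differentiating this identity in $t$ and expanding $\gamma'(t)=\sum_j (x_j\circ\gamma)'(t)\,\partial/\partial x_j|_{\gamma(t)}$ yields $\gamma'(t)=\partial/\partial x_{k+1}|_{\gamma(t)}$.

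Finally, for the tangent and normal space assertions at $p$: when $l\le k$, the $l$-th coordinate curve through $p$ is the image under the chart of varying only $y_l$, which stays inside $\mathcal{O}\subseteq N$; hence $\partial/\partial x_l|_p\in T_pN$. When $i\ge k+1$, the $i$-th coordinate curve through $p$ is $t\mapsto \exp_\nu(t\,\mathcal{E}_i(p))$, a normal geodesic whose initial velocity is $\mathcal{E}_i(p)\in (T_pN)^\perp$, so $\partial/\partial x_i|_p=\mathcal{E}_i(p)\in (T_pN)^\perp$. The only non-formal input anywhere is the existence of the smooth orthonormal frame $\{\mathcal{E}_i\}$ with the prescribed value at $p$, which is the mild obstacle; everything else is pure bookkeeping against the definition of Fermi coordinates and the diffeomorphism property of $\exp_\nu$ on $\mathcal{U}_N$.
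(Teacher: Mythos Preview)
Your proof is correct and complete. The paper itself does not supply a proof of this lemma; it is stated as a standard fact drawn from the reference \cite[Chapter 2]{Gr04}, so there is no argument in the paper to compare against. Your approach---choosing the normal frame so that $\mathcal{E}_{k+1}(p)=v$ and then reading everything off from Definition~\ref{Defn:FermiCoordinates}---is exactly the standard construction one finds in Gray's book, and each step you give (the coordinate formula for $\gamma$, differentiation to identify $\gamma'(t)$ with $\partial/\partial x_{k+1}$, and the identification of tangent versus normal coordinate directions at $p$) is correct.
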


\bigskip 

\hf The following object will be useful while studying the distance squared function from a submanifold $N$.
\begin{defn}\label{defn:DeltaMap}
    Let $N$ be a submanifold of a Riemannian manifold and let $(x_1,\cdots,x_n)$ be a system of Fermi coordinates for $N$. We define $\Delta(x_1,\cdots,x_n)$ to be the non-negative number satisfying
    \begin{displaymath}
        \Delta^2 = \sum_{i=k+1}^n x_i^2.
    \end{displaymath}
\end{defn}


\begin{lemma}
    Let $p\in N$. The $\Delta$ is independent of the choice of Fermi coordinates at $p$.
\end{lemma}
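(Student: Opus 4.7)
The plan is to identify $\Delta(q)$ with an intrinsic geometric quantity attached to the point $q$ in the tubular neighborhood, namely the Riemannian norm of the unique normal vector $v \in \nu$ satisfying $\exp_\nu(v) = q$. Since this quantity does not refer to any choice of coordinates on $N$ or orthonormal frame on $\nu$, independence follows at once.

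In detail, I would first unpack \Cref{Defn:FermiCoordinates}. Fixing a chart $(y_1,\ldots,y_k)$ on $\mathcal{O}\subseteq N$ around $p$ and orthonormal sections $\mathcal{E}_{k+1},\ldots,\mathcal{E}_n$ of $\nu|_\mathcal{O}$, every point $q$ in the coordinate neighborhood has the form
\begin{displaymath}
q = \exp_\nu\!\left(\sum_{i=k+1}^n \tau_i\,\mathcal{E}_i(p')\right)
\end{displaymath}
for a uniquely determined $p'\in\mathcal{O}$ and scalars $\tau_{k+1},\ldots,\tau_n$, by the diffeomorphism property of $\exp_\nu$ on $\mathcal{U}_N$. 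By the defining formulas, $x_i(q)=\tau_i$ for $k+1\le i\le n$, so
\begin{displaymath}
\Delta^2(q) \;=\; \sum_{i=k+1}^n \tau_i^2.
\end{displaymath}

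Next, because $\mathcal{E}_{k+1}(p'),\ldots,\mathcal{E}_n(p')$ is an orthonormal basis of $(T_{p'}N)^\perp$, the right-hand side is precisely the squared Riemannian norm of the normal vector $v:=\sum_i \tau_i\mathcal{E}_i(p')\in \nu_{p'}$. Thus $\Delta(q) = \|v\|$, where $v = \exp_\nu^{-1}(q)$, the preimage being well-defined since $q$ lies in the tubular neighborhood on which $\exp_\nu$ is a diffeomorphism. Neither $v$ nor its norm depends on the chart $(y_1,\ldots,y_k)$ or on the choice of orthonormal frame $\{\mathcal{E}_i\}$, so any other system of Fermi coordinates at $p$ produces the same value of $\Delta(q)$.

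The only point requiring care—and what I would call the main (mild) obstacle—is justifying that the footpoint $p'$ and the normal vector $v$ are unambiguously determined by $q$ alone, independently of the chosen frame. This is exactly the content of the preceding lemma guaranteeing $\exp_\nu|_{\mathcal{U}_N}$ is a diffeomorphism: once we know $q\in \exp_\nu(\mathcal{U}_N)$, the vector $\exp_\nu^{-1}(q)\in \nu$ is canonical, and its basepoint in $N$ and norm are coordinate-free. This completes the verification.
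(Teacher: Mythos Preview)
Your proof is correct and takes a genuinely different, more conceptual route than the paper. The paper proceeds by direct computation: given a second Fermi coordinate system $(x_1',\ldots,x_n')$ built from a different orthonormal frame $\{\mathcal{E}_j'\}$, it writes $\mathcal{E}_j' = \sum_i a_{ij}\mathcal{E}_i$ with $(a_{ij})\in O(n-k)$, derives the transformation law $x_m = \sum_l a_{ml}x_l'$, and then verifies $\sum x_m^2 = \sum (x_m')^2$ using orthogonality of $(a_{ij})$. Your argument instead identifies $\Delta(q)$ with the intrinsic quantity $\|\exp_\nu^{-1}(q)\|$, sidestepping any change-of-frame computation. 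What you gain is clarity and brevity; what the paper's approach gains is the explicit transformation law $x_m = \sum_l a_{ml}x_l'$, which is occasionally useful in its own right but is not needed here. Both are complete, and yours anticipates the content of \Cref{dsq-Fermi} a bit more directly.
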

\begin{proof}
    Let $(x_1',\cdots,x_n')$ be another system of Fermi coordinates at $p$, and let $\curlybracket{\mathcal{E}_{k+1}', \cdots, \mathcal{E}_n'}$ be the orthonormal sections of $\nu$ that give rise to it. We can write
    \begin{displaymath}
        \mathcal{E}_j' = \sum_{i=k+1}^na_{ij}\mathcal{E}_i
    \end{displaymath}
    where $(a_{ij})$ is a matrix of functions in the orthogonal group $O(n-k)$ with each $a_{ji}$ being a smooth function on $N.$ Now,
    \begin{align*}
        x_m\paran{\exp_\nu\paran{\sum_{j=k+1}^n \tau_j'\textcolor{blue}{\mathcal{E}_j'}}} & = x_m \paran{\exp_\nu\paran{ \sum_{j=k+1}^n\tau_j' \textcolor{blue}{\sum_{i=k+1}^na_{ij}\mathcal{E}_i}}} 
        \\[1ex]
        & = x_m\paran{\exp_\nu \bigg(\sum_{i=k+1}^n \bigg(\sum_{j=k+1}^na_{ij}\tau_j'\bigg) \mathcal{E}_i\bigg)}
        \\[1ex]
        & = \sum_{l=k+1}^n a_{ml}\textcolor[HTML]{06a3b3}{\tau_l'} 
        \\[1ex]
        & = \sum_{l=k+1}^n a_{ml}\textcolor[HTML]{06a3b3}{x_l' \left(\exp_\nu \bigg(\sum_{j=k+1}^n\tau_j'\mathcal{E}_j'\bigg)\right)}.
    \end{align*}
    Therefore, we have 
    \begin{equation}
        x_m = \sum_{l=k+1}^na_{ml}x_l',~m=k+1,\cdots,n.
    \end{equation}
    Now consider,
    \begin{align*}
        \sum_{m=k+1}^n x_m^2 & = \sum_{m=k+1}^n\paran{\sum_{l=k+1}^na_{ml}x_l'}^2 
        \\[1ex]
        & = \sum_{m=k+1}^n\paran{\sum_{l=k+1}^n\sum_{j=k+1}^n (a_{ml}x_l') (a_{mj}x_j')} 
        \\[1ex]
        & = \sum_{l=k+1}^n\sum_{j=k+1}^n \left(\sum_{m=k+1}^na_{ml}a_{mj}\right)x_l'x_j'\\[1ex]
        & = \sum_{l=k+1}^n\sum_{j=k+1}^n\delta_{lj}x_l'x_j' \\[1ex]
        & = \sum_{m=k+1}^n \left(x_m'\right)^2.
    \end{align*}
\end{proof}

\section{Morse-Bott theory} \label{Sec:MorseBottFunctions}\index{Morse-Bott functions}
\hfb This section will be devoted to a generalization of Morse function in which we study the space by looking at the critical points of a smooth real valued function. We will briefly recall Morse functions with a couple of examples, and then we will define Morse-Bott functions. The reference for this section will be the original article by Raoul Bott \cite{Bot54} and the book \cite[Section 3.5]{BaHu04}.

\subsection{Morse functions}
\hfb Broadly the ``functions'' and ``spaces'' are objects of study in analysis and geometry respectively. However, these two objects are related to each other. For example, on a line we can have functions like $f(x)=x,~g(x)=x^2$ which takes arbitrarily large values, whereas on the circle there does not exist any function which takes arbitrarily large value. In this way, we are able to differentiate circles with lines by seeing functions on them. Morse theory studies relations between shape of space and function defined on this space. We study the critical points of a function defined on spaces to find out information on the space. More specifically, in Morse theory we study the topology of smooth manifolds by analyzing the critical point of a smooth real valued function. If $f:M\to\mathbb{R}$ is a smooth function on a smooth manifold $M$, then using Morse theory we can find a CW-complex which is homotopy equivalent to $M$ and the CW-complex has one cell for each critical point of $f$. For a detailed study of Morse theory we refer to the book \cite{Mil63} by John Milnor.
\begin{defn}[Critical Points] \label{defn:CriticalPoints}\index{critical points}
    Let $M$ and $N$ be two smooth manifolds of dimension $m$ and $n$ respectively. A point $p\in M$ is said to be \emph{critical point} of a smooth function $f:M\to N$ if the differential map
    \begin{displaymath}
        df_p:T_pM\to T_{f(p)}N\index{$df_p$}
    \end{displaymath}
    does not have full rank.
\end{defn}

\bigskip 

\noindent We confine our study to real-valued functions. In this case the above is equivalent to $df_p\equiv 0$. In a coordinate neighborhood $(\phi=(x_1,x_2,\ldots,x_n),U)$ around $p$, we have 
\begin{equation}\label{eq:LocalExpressionOfCriticalPoint}
    \delbydel{(f\comp \phi^{-1})}{x_j}(\phi(p))=0,~j=1,\cdots,n.
\end{equation} At a critical point of $f:M\to \mathbb{R}$, we define the Hessian which is similar to the second derivative of the function. 
\begin{defn}[Hessian of $f$ at $p$]\label{defn:Hessian}\index{Hessian}
    Let $f:M\to \mathbb{R}$ be any smooth real valued function and $p$ be any critical point of $f$. The \emph{Hessian of $f$ at $p$} is the map
    \begin{equation}\label{eq:LocalExpressionOfHessianMap}
        \hess_p(f):T_pM \times T_pM\to \mathbb{R},~\hess_p(f)(V,W)=\tilde{V}\cdot \left(\tilde{W}\cdot f\right)(p),\index{$\hess_p(f)$}
    \end{equation}
    where $\tilde{V}$ and $\tilde{W}$ are any extensions of $V$ and $W$ respectively.
\end{defn}

\bigskip

\noindent Note that the Hessian is a bilinear form of $V$ and $W$. Consider
\begin{align*}
    V\cdot \left(\tilde{W}\cdot f\right)(p) - W\cdot \left(\tilde{V}\cdot f\right)(p)  & = \left[\tilde{V}, \tilde{W}\right]_p\cdot f \\
    & = df_p \left(\left[\tilde{V},\tilde{W}\right]_p\right) \\
    & = 0.
\end{align*}
Thus, Hessian is a symmetric bilinear form on $T_pM\times T_pM$. The above computation, in particular, also proves that the definition is well defined, that is, it is  independent of the choice of extension. 

\bigskip 

\hf Any critical point is categorized by looking at the value of Hessian at that point.

\begin{defn}\label{defn:nonDegenerateCriticalPoints} \index{non-degenerate critical point}
    A critical point $p\in M$ of a smooth function $f:M\to \mathbb{R}$ is said to be \emph{non-degenerate} if the Hessian is non-degenerate. Otherwise, we call $p$ to be a \emph{degenerate} critical point. \index{degenerate critical point} The \emph{index}\index{index} of a non-degenerate critical point $p$ is the dimension of the subspace of the maximum dimension on which $\hess_pf$ is negative definite.
\end{defn}

\bigskip

\noindent For example, the function $f:\mathbb{R}\to \mathbb{R} ~x\mapsto x^2$ has $0$ a critical point which is non-degenerate but $0$ is the degenerate critical point of the function $f(x)=x^3$.

\begin{defn}\label{defn:MorseFunction}\index{Morse function}
    A smooth function $f:M\to \mathbb{R}$ is said to be a \textit{Morse function}  if all its critical points are non-degenerate.
\end{defn}

\begin{eg}
    The function 
    \begin{displaymath}
        f:\mathbb{R}^2\to \mathbb{R},~(x,y) \mapsto x^2-3xy^2
    \end{displaymath}
    is not a Morse function, as the critical point $(0,0)$ is not non-degenerate.
\end{eg}

\begin{eg}[Height function on sphere]\label{eg:HeightFunctionOnSphere}\index{height function of sphere}
    The height function on the $n$-sphere is a Morse function with critical points $N=(0,0,\cdots, 1)$ and $S=(0,0,\cdots,-1)$. The index of $N$ and $S$ is $n$ and $0$ respectively. 
    \begin{figure}[H]
        \centering
    \def\svgwidth{0.5\columnwidth}
\begingroup%
  \makeatletter%
  \providecommand\color[2][]{%
    \errmessage{(Inkscape) Color is used for the text in Inkscape, but the package 'color.sty' is not loaded}%
    \renewcommand\color[2][]{}%
  }%
  \providecommand\transparent[1]{%
    \errmessage{(Inkscape) Transparency is used (non-zero) for the text in Inkscape, but the package 'transparent.sty' is not loaded}%
    \renewcommand\transparent[1]{}%
  }%
  \providecommand\rotatebox[2]{#2}%
  \newcommand*\fsize{\dimexpr\f@size pt\relax}%
  \newcommand*\lineheight[1]{\fontsize{\fsize}{#1\fsize}\selectfont}%
  \ifx\svgwidth\undefined%
    \setlength{\unitlength}{499.69775273bp}%
    \ifx\svgscale\undefined%
      \relax%
    \else%
      \setlength{\unitlength}{\unitlength * \real{\svgscale}}%
    \fi%
  \else%
    \setlength{\unitlength}{\svgwidth}%
  \fi%
  \global\let\svgwidth\undefined%
  \global\let\svgscale\undefined%
  \makeatother%
  \begin{picture}(1,0.68948658)%
    \lineheight{1}%
    \setlength\tabcolsep{0pt}%
    \put(0,0){\includegraphics[width=\unitlength,page=1]{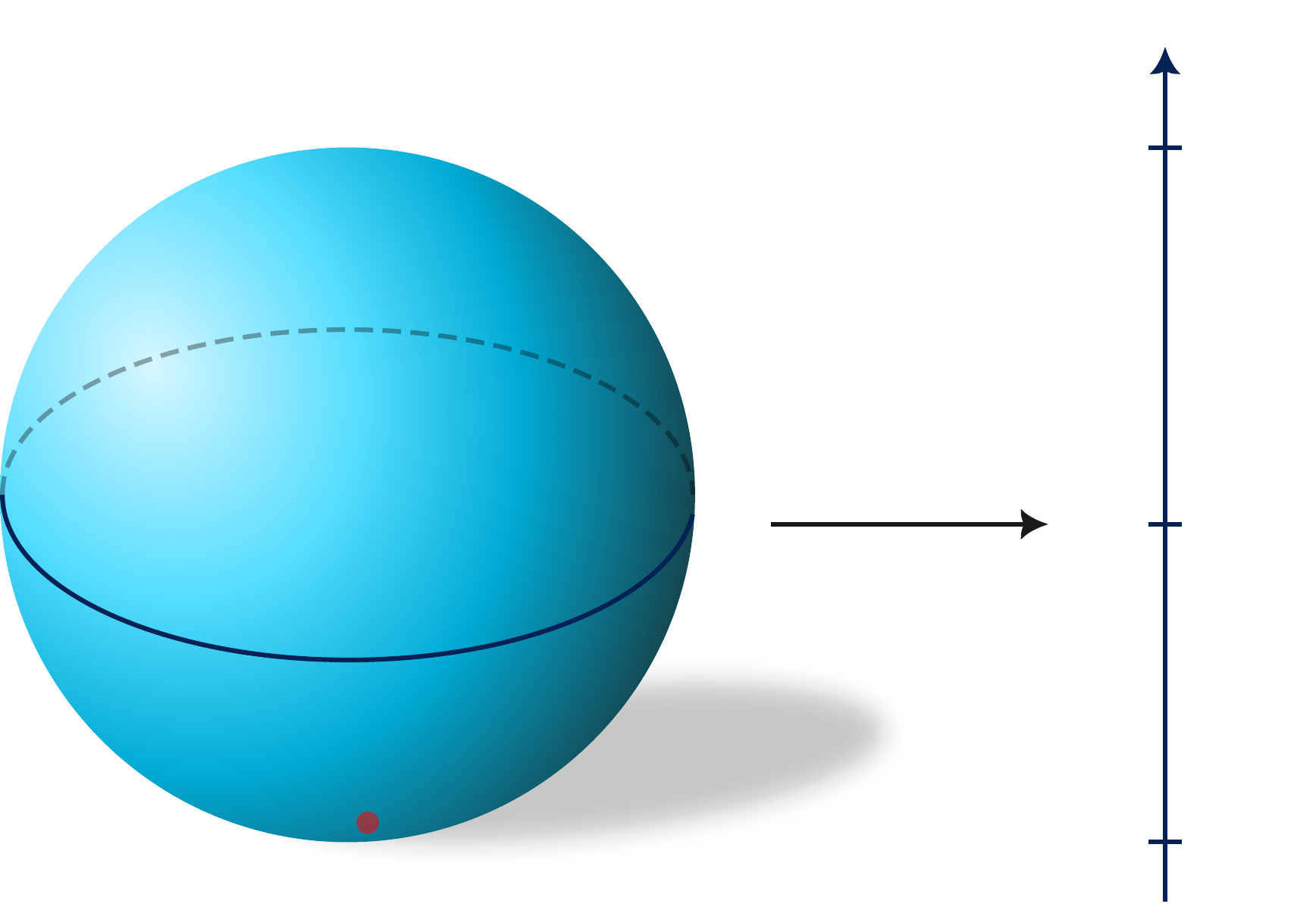}}%
    \put(0.68196421,0.31063587){\makebox(0,0)[lt]{\lineheight{1.25}\smash{\begin{tabular}[t]{l}$f$\end{tabular}}}}%
    \put(0.9062871,0.03642934){\makebox(0,0)[lt]{\lineheight{1.25}\smash{\begin{tabular}[t]{l}$-1$\end{tabular}}}}%
    \put(0.9062871,0.27630601){\makebox(0,0)[lt]{\lineheight{1.25}\smash{\begin{tabular}[t]{l}$0$\end{tabular}}}}%
    \put(0.9062871,0.56291761){\makebox(0,0)[lt]{\lineheight{1.25}\smash{\begin{tabular}[t]{l}$1$\end{tabular}}}}%
    \put(0.89416194,0.65412145){\makebox(0,0)[lt]{\lineheight{1.25}\smash{\begin{tabular}[t]{l}$z$\end{tabular}}}}%
    \put(0,0){\includegraphics[width=\unitlength,page=2]{sphereMorseFunction.pdf}}%
    \put(0.25200771,0.58212091){\makebox(0,0)[lt]{\lineheight{1.25}\smash{\begin{tabular}[t]{l}$N$\end{tabular}}}}%
    \put(0.25350986,0.00651957){\makebox(0,0)[lt]{\lineheight{1.25}\smash{\begin{tabular}[t]{l}$S$\end{tabular}}}}%
  \end{picture}%
\endgroup%

        \caption{Height function on the $2$-sphere is a Morse function with two non-degenerate critical points with index $2$ and $0$.}
        \label{fig:HeightFunctionOnSphere}
    \end{figure}
    
    For this, let
    \begin{align*}
        & \phi_1:\mathbb{S}^n
        \setminus\{N\}\to \mathbb{R}^n,~ \left(x_1,\cdots,x_{n+1}\right)\mapsto \left(\dfrac{x_1 }{1-x_{n+1}},\cdots,\dfrac{x_n}{1-x_{n+1}}\right),\text{ and } \\[1ex]
        & \phi_2:\mathbb{S}^n\setminus\{S\}\to \mathbb{R}^n,~ \left(x_1,\cdots,x_{n+1}\right)\mapsto \left(\dfrac{x_1 }{1+x_{n+1}},\cdots,\dfrac{x_n}{1+x_{n+1}}\right)
    \end{align*} 
    be two charts of $\mathbb{S}^n$. The inverse is given by 
    \begin{align*}
        & \phi_1^{-1}(\mathbf{y}) = \left(\dfrac{2y_1}{\left\|\mathbf{y}\right\|^2+1}, \cdots, \dfrac{2y_n}{\left\|\mathbf{y}\right\|^2+1},\dfrac{\left\|\mathbf{y}\right\|^2-1}{\left\|\mathbf{y}\right\|^2+1}\right) \\[1ex]
        & \phi_1^{-1}(\mathbf{y}) = \left(\dfrac{2y_1}{\left\|\mathbf{y}\right\|^2+1}, \cdots, \dfrac{2y_n}{\left\|\mathbf{y}\right\|^2+1},-\dfrac{\left\|\mathbf{y}\right\|^2-1}{\left\|\mathbf{y}\right\|^2+1}\right).
    \end{align*} 
    From \cref{eq:LocalExpressionOfCriticalPoint}, the critical points of $f$ will be the critical points of $\psi_i=f\circ \phi_i^{-1}:\mathbb{R}^n\to \mathbb{R},~i=1,2$. Note that 
    \begin{align*}
        & \psi_1(\mathbf{x}) = \dfrac{\left\|\mathbf{x}\right\|^2-1}{\left\|\mathbf{x}\right\|^2+1},~\text{ and } \psi_2(\mathbf{x}) = -\dfrac{\left\|\mathbf{x}\right\|^2-1}{\left\|\mathbf{x}\right\|^2+1},~\mathbf{x}\in \mathbb{R}^n. \\[1ex]
        \implies & \left(d\psi_1\right)_\mathbf{x} = \frac{4 \mathbf{x}}{\left(\left\|\mathbf{x}\right\|+1\right)^2},\text{ and } \left(d\psi_2\right)_\mathbf{x} = -\frac{4 \mathbf{x}}{\left(\left\|\mathbf{x}\right\|+1\right)^2}.
    \end{align*}
    Therefore, the critical points are $\phi_1^{-1}(\mathbf{0})=S$ in $\mathbb{S}^n\setminus \{N\}$  and $\phi_2^{-1}(\mathbf{0})=N$ in $\mathbb{S}^n\setminus \{N\}$. Note that
    \begin{align*}
        & \hess_S(f) = \left(\delbydel{^2 \psi_1}{x_i\partial x_j}(\mathbf{0})\right)_{1\le i,j\le n} = 4 I_{n\times n}, \text{ and } \\[1ex]
        & \hess_N(f) = \left(\delbydel{^2 \psi_1}{x_i\partial x_j}(\mathbf{0})\right)_{1\le i,j\le n} = -4 I_{n\times n}.
    \end{align*}
    Hence, both critical points are non-degenerate and index of $N$ and $S$ is $n$ and $0$ respectively.
\end{eg}

\begin{eg}[Height function on torus]\label{eg:HeightFunctionOnTorus}\index{height function of torus}
    If $a$ and $b$ be two positive real numbers with $0<b<a$, then the torus is
    \begin{displaymath}
        \mathbb{T} \eqdef \left\{(x,y,z):x^2+\left(\sqrt{y^2+z^2}-a\right)^2=b^2\right\}.
    \end{displaymath} 
    The function 
    \begin{displaymath}
        f:\mathbb{T}\to \mathbb{R},~(x,y,z)\mapsto z
    \end{displaymath}
    is a Morse function with critical points $(0,0,\pm(a+b))$ and $(0,0,\pm(a-b))$.
    \begin{figure}[H]
        \centering
    \def\svgwidth{0.45\columnwidth}
\begingroup%
  \makeatletter%
  \providecommand\color[2][]{%
    \errmessage{(Inkscape) Color is used for the text in Inkscape, but the package 'color.sty' is not loaded}%
    \renewcommand\color[2][]{}%
  }%
  \providecommand\transparent[1]{%
    \errmessage{(Inkscape) Transparency is used (non-zero) for the text in Inkscape, but the package 'transparent.sty' is not loaded}%
    \renewcommand\transparent[1]{}%
  }%
  \providecommand\rotatebox[2]{#2}%
  \newcommand*\fsize{\dimexpr\f@size pt\relax}%
  \newcommand*\lineheight[1]{\fontsize{\fsize}{#1\fsize}\selectfont}%
  \ifx\svgwidth\undefined%
    \setlength{\unitlength}{237.5006464bp}%
    \ifx\svgscale\undefined%
      \relax%
    \else%
      \setlength{\unitlength}{\unitlength * \real{\svgscale}}%
    \fi%
  \else%
    \setlength{\unitlength}{\svgwidth}%
  \fi%
  \global\let\svgwidth\undefined%
  \global\let\svgscale\undefined%
  \makeatother%
  \begin{picture}(1,0.91657516)%
    \lineheight{1}%
    \setlength\tabcolsep{0pt}%
    \put(0,0){\includegraphics[width=\unitlength,page=1]{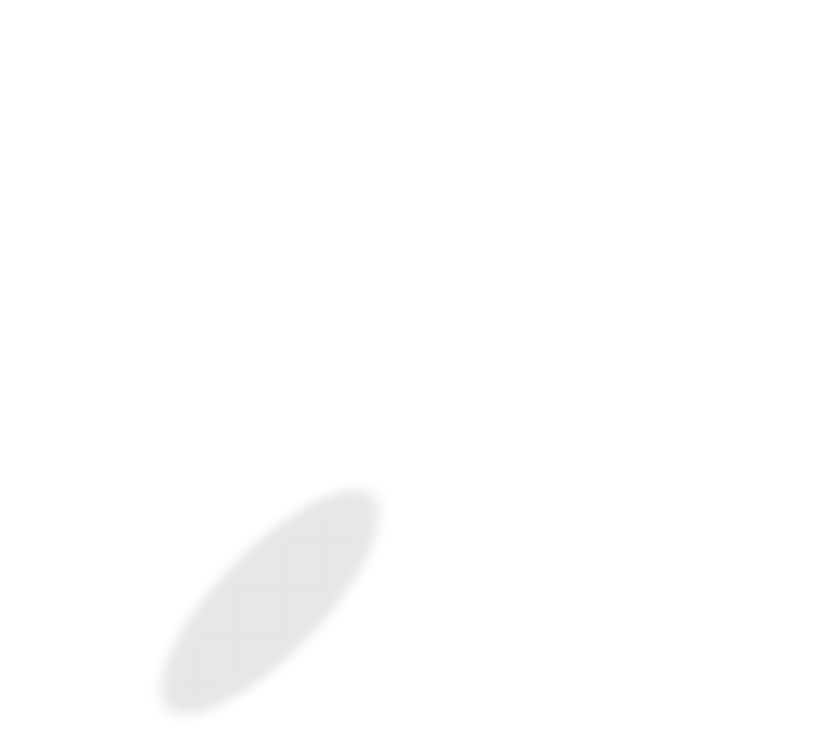}}%
    \put(0.62708109,0.40454946){\color[rgb]{0,0,0}\makebox(0,0)[lt]{\lineheight{1.25}\smash{\begin{tabular}[t]{l}$f$\end{tabular}}}}%
    \put(0,0){\includegraphics[width=\unitlength,page=2]{torusMorseFunction.pdf}}%
    \put(0.82137569,0.84216746){\makebox(0,0)[lt]{\lineheight{1.25}\smash{\begin{tabular}[t]{l}$z$\end{tabular}}}}%
    \put(0,0){\includegraphics[width=\unitlength,page=3]{torusMorseFunction.pdf}}%
  \end{picture}%
\endgroup%

        \caption{Height function on torus is a Morse function with four non-degenerate critical points}
        \label{fig:HeightFunctionOnTorus}
    \end{figure}
\end{eg}

\subsection{Morse-Bott functions}\label{subsec:MorseBottFunctions}
\hfb Morse-Bott functions are generalizations of Morse functions where we are allowed to have critical set need not be isolated but may form a submanifold. For example, let a torus be kept horizontally (a donut is kept in a plate). If $f$ is the height function on the torus, then there are two critical submanifolds, the top and bottom circles.

\vspace{0.1cm}
\hf Let $M$ be a Riemannian manifold and $f$ be any real valued smooth function on $M$. Let $\crf$ denotes the set of all critical points of $f$ and $N$ be any submanifold of $M$  which is contained in $\crf$. For any point $p\in M$ we have the following decomposition:
\begin{displaymath}
    T_pM = T_pN \directsum \nu_pN,
\end{displaymath}
where $\nu_pN$ is the normal bundle at $p$. Note that if $p\in N$ then for any $V\in T_pN$ and $W\in T_pM$ the Hessian vanishes, i.e., $\hess_p(f)(V,W)=0$. Therefore, $\hess_p(f)$ induces a symmetric bilinear form on $\nu_pN$. Now we can define non-degenerate critical submanifold similar to the non-degenerate critical points. 
\begin{defn}[Non-degenerate critical submanifold] \label{defn:nonDegenerateCriticalSubmanifolds}
    Let $N\subset M$ be a submanifold of a Riemannian manifold $M$. Then $N$ is said to be \emph{non-degenerate critical submanifold} of $f$ if $N\subseteq\crf$ and for any $p\in N$ the Hessian, $\hess_p(f)$ is non-degenerate in the direction normal to $N$ at $p$.
\end{defn}

\vspace{0.1cm}
\hf In the above definition, by $\hess_p(f)$ is non-degenerate in the direction normal to $N$ at $p$ we mean that for any $V\in \nu_pN$ there exists $W\in \nu_pN$ such that $\hess_p(f)(V,W)\neq 0$.

\begin{defn}[Morse-Bott functions] \label{defn:MorseBottFunction} 
    The function $f:M\to \mathbb{R}$ is said to be \emph{Morse-Bott} if the connected components of $\crf$ are non-degenerate critical submanifolds.
\end{defn}

\begin{eg}
    Let $f:M\to \mathbb{R}$ be a Morse function. Then the critical submanifolds are zero-dimensional and hence the Hessian $\hess_p(f)$  at any critical point $p$ is non-degenerate in every direction as all the directions are normal. So $f$ is Morse-Bott with critical submanifolds as critical points.
\end{eg}

\begin{eg}
    Any constant function defined on a smooth manifold $M$ is a Morse-Bott function with critical submanifold $M$.
\end{eg}

\begin{eg}
    Let $M=\mathbb{R}^2$. Define
    \begin{displaymath}
        f:M\to \mathbb{R},~(x,y)\mapsto x^4.
    \end{displaymath}
    Then the derivative map 
    \begin{displaymath}
        df_{(x,y)} = \left(4x^3,0\right)=(0,0) \implies x=0.
    \end{displaymath}
    Thus, the critical set is $\{(x,y):x=0\}$ which is $y$-axis. Now the Hessian at $(0,y)$ will be
    \begin{align*}
        \hess_{(0,y)}(f) = 
        \begin{pmatrix}
            0 & 0 \\ 
            0 & 0
        \end{pmatrix},
    \end{align*}
    which is degenerate in every direction and hence it is not a Morse-Bott function.
\end{eg}

\begin{eg}\label{eg:MorseBottDistanceSquaredFromLine}
    Let $M=\mathbb{R}^2$ with the Euclidean distance $\dist$  and $N=\{(x,x):x\in \mathbb{R}\}$. Consider the function 
    \begin{displaymath}
        f:M\to \mathbb{R},~(x,y)\mapsto \dist^2((x,y),N)= \dfrac{(x-y)^2}{2}.
    \end{displaymath} 
    \begin{figure}[H]
        \centering
    \def\svgwidth{0.4\columnwidth}
\begingroup%
  \makeatletter%
  \providecommand\color[2][]{%
    \errmessage{(Inkscape) Color is used for the text in Inkscape, but the package 'color.sty' is not loaded}%
    \renewcommand\color[2][]{}%
  }%
  \providecommand\transparent[1]{%
    \errmessage{(Inkscape) Transparency is used (non-zero) for the text in Inkscape, but the package 'transparent.sty' is not loaded}%
    \renewcommand\transparent[1]{}%
  }%
  \providecommand\rotatebox[2]{#2}%
  \newcommand*\fsize{\dimexpr\f@size pt\relax}%
  \newcommand*\lineheight[1]{\fontsize{\fsize}{#1\fsize}\selectfont}%
  \ifx\svgwidth\undefined%
    \setlength{\unitlength}{315.13986585bp}%
    \ifx\svgscale\undefined%
      \relax%
    \else%
      \setlength{\unitlength}{\unitlength * \real{\svgscale}}%
    \fi%
  \else%
    \setlength{\unitlength}{\svgwidth}%
  \fi%
  \global\let\svgwidth\undefined%
  \global\let\svgscale\undefined%
  \makeatother%
  \begin{picture}(1,0.8533331)%
    \lineheight{1}%
    \setlength\tabcolsep{0pt}%
    \put(0.03456115,0.79710804){\makebox(0,0)[lt]{\lineheight{1.25}\smash{\begin{tabular}[t]{l}$(x,y)$\end{tabular}}}}%
    \put(0,0){\includegraphics[width=\unitlength,page=1]{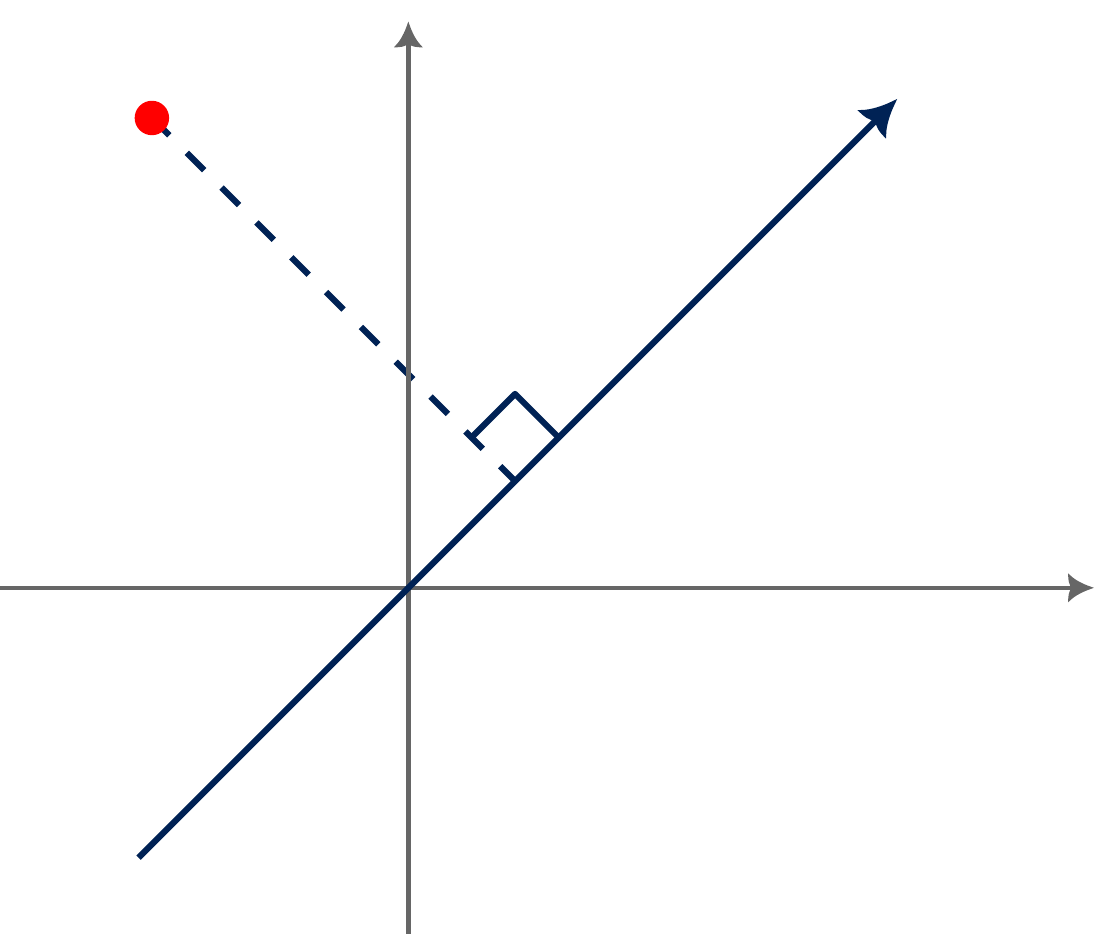}}%
    \put(0.18899145,0.59438195){\rotatebox{-45}{\makebox(0,0)[lt]{\lineheight{1.25}\smash{\begin{tabular}[t]{l}$\frac{x-y}{\sqrt{2}}$\end{tabular}}}}}%
    \put(0.67321396,0.6555711){\color[rgb]{0,0.13333333,0.33333333}\rotatebox{45}{\makebox(0,0)[lt]{\lineheight{1.25}\smash{\begin{tabular}[t]{l}$y=x$\end{tabular}}}}}%
  \end{picture}%
\endgroup%

        \caption{Distance of $(x,y)$ from the line $y=x$.}
        \label{fig:MorseBottDistanceSquaredFromLine}
    \end{figure}
    \noindent So we have
    \begin{align*}
        df_{(x,y)} = \left(x-y,y-x\right) = 0 \implies x=y.
    \end{align*}
    Thus, the critical submanifold is $N$. Now to see whether it is non-degenerate or not in the normal direction, we need to compute the Hessian. Let $(p,p)\in N$ be any critical point. 
    \begin{align*}
        \hess_{(p,p)}(f) & = 
        \begin{pmatrix}
            1 & -1 \\
            -1 & 1
        \end{pmatrix}.
    \end{align*}
    Note that for any $\mathbf{v}=(a,-a)\in \left(T_{(p,p)}N\right)^\perp$ with $\mathbf{v}\neq 0$, we have
    \begin{displaymath}
        \hess_{(p,p)}(f)(\mathbf{v},\mathbf{v})= \mathbf{v}^T \hess_{(p,p)}(f) \mathbf{v} = 4a^2\neq 0.
    \end{displaymath} 
    Thus, the given function is Morse-Bott.
\end{eg}
\begin{eg} \label{eg:MorseBottDistanceSquaredFromSphere}
    Let $M=\mathbb{R}^{n+1}$ with the Euclidean metric $d$. If $N=\mathbb{S}^n$ be the unit sphere, then the distance between a point $\bf{p}\in \rbb^{n+1}$ and $N$ is given by
    \begin{displaymath}
        \dist(\bf{p},N) \defeq \inf_{\bf{q}\in N} \dist(\bf{p},\bf{q}).
    \end{displaymath} 
    We shall denote by $d^2$ the square of the distance. Now consider the function
\begin{displaymath}
    f:M\to \rbb,~~\bf{x}\mapsto \dist^2(\bf{x},N)= \left(\|\bf{x}\|-1\right)^2.
\end{displaymath}
\begin{figure}[H]
    \centering
    \begin{subfigure}{.5\textwidth}
      \centering
    \def\svgwidth{1\columnwidth}
\begingroup%
  \makeatletter%
  \providecommand\color[2][]{%
    \errmessage{(Inkscape) Color is used for the text in Inkscape, but the package 'color.sty' is not loaded}%
    \renewcommand\color[2][]{}%
  }%
  \providecommand\transparent[1]{%
    \errmessage{(Inkscape) Transparency is used (non-zero) for the text in Inkscape, but the package 'transparent.sty' is not loaded}%
    \renewcommand\transparent[1]{}%
  }%
  \providecommand\rotatebox[2]{#2}%
  \newcommand*\fsize{\dimexpr\f@size pt\relax}%
  \newcommand*\lineheight[1]{\fontsize{\fsize}{#1\fsize}\selectfont}%
  \ifx\svgwidth\undefined%
    \setlength{\unitlength}{469.26559205bp}%
    \ifx\svgscale\undefined%
      \relax%
    \else%
      \setlength{\unitlength}{\unitlength * \real{\svgscale}}%
    \fi%
  \else%
    \setlength{\unitlength}{\svgwidth}%
  \fi%
  \global\let\svgwidth\undefined%
  \global\let\svgscale\undefined%
  \makeatother%
  \begin{picture}(1,0.75899863)%
    \lineheight{1}%
    \setlength\tabcolsep{0pt}%
    \put(0,0){\includegraphics[width=\unitlength,page=1]{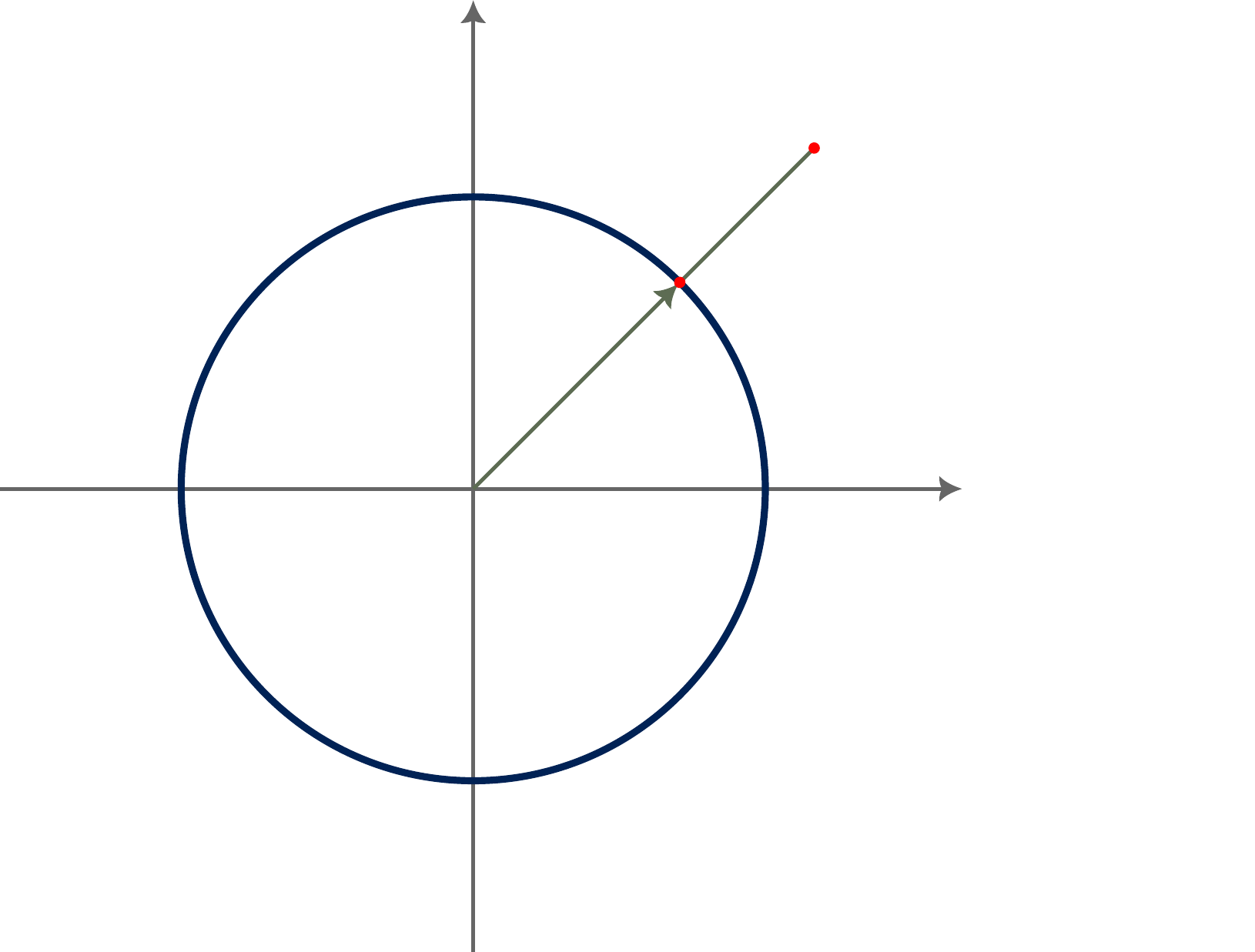}}%
    \put(0.42635609,0.45729261){\makebox(0,0)[lt]{\lineheight{1.25}\smash{\begin{tabular}[t]{l}$1$\end{tabular}}}}%
    \put(0.60650733,0.64634465){\makebox(0,0)[lt]{\lineheight{1.25}\smash{\begin{tabular}[t]{l}$\bf{x}$\end{tabular}}}}%
    \put(0.63824776,0.53625436){\makebox(0,0)[lt]{\lineheight{1.25}\smash{\begin{tabular}[t]{l}$\|\bf{x}\|-1$\end{tabular}}}}%
    \put(0,0){\includegraphics[width=\unitlength,page=2]{Example-MorseBott-DistanceSquaredFromCircleOutside.pdf}}%
  \end{picture}%
\endgroup%

    \end{subfigure}%
    \begin{subfigure}{.5\textwidth}
      \centering
    \def\svgwidth{1\columnwidth}
\begingroup%
  \makeatletter%
  \providecommand\color[2][]{%
    \errmessage{(Inkscape) Color is used for the text in Inkscape, but the package 'color.sty' is not loaded}%
    \renewcommand\color[2][]{}%
  }%
  \providecommand\transparent[1]{%
    \errmessage{(Inkscape) Transparency is used (non-zero) for the text in Inkscape, but the package 'transparent.sty' is not loaded}%
    \renewcommand\transparent[1]{}%
  }%
  \providecommand\rotatebox[2]{#2}%
  \newcommand*\fsize{\dimexpr\f@size pt\relax}%
  \newcommand*\lineheight[1]{\fontsize{\fsize}{#1\fsize}\selectfont}%
  \ifx\svgwidth\undefined%
    \setlength{\unitlength}{426.2005699bp}%
    \ifx\svgscale\undefined%
      \relax%
    \else%
      \setlength{\unitlength}{\unitlength * \real{\svgscale}}%
    \fi%
  \else%
    \setlength{\unitlength}{\svgwidth}%
  \fi%
  \global\let\svgwidth\undefined%
  \global\let\svgscale\undefined%
  \makeatother%
  \begin{picture}(1,0.8356909)%
    \lineheight{1}%
    \setlength\tabcolsep{0pt}%
    \put(0,0){\includegraphics[width=\unitlength,page=1]{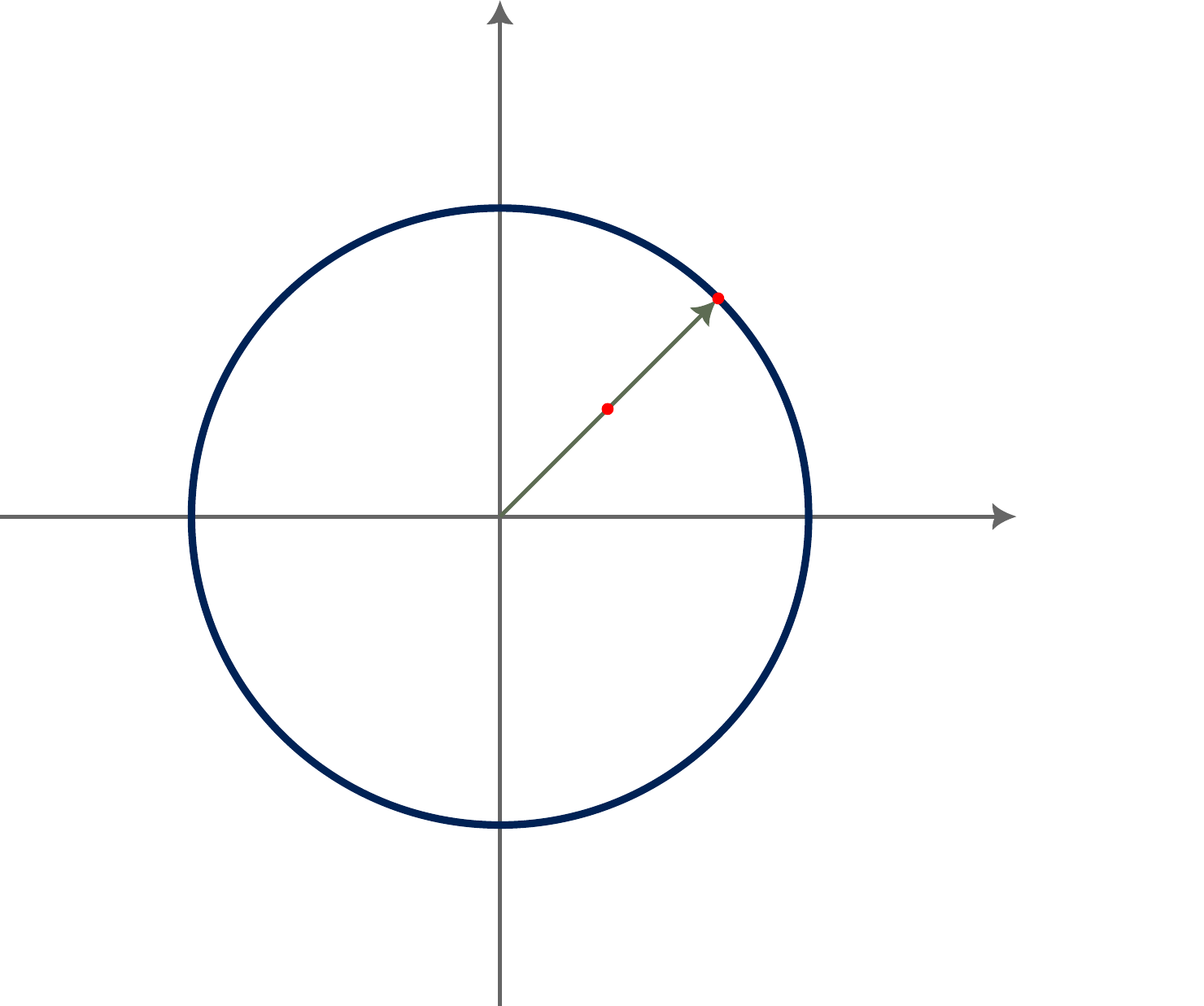}}%
    \put(0.46960291,0.50181748){\makebox(0,0)[lt]{\lineheight{1.25}\smash{\begin{tabular}[t]{l}$\bf{x}$\end{tabular}}}}%
    \put(0.60169486,0.50342247){\makebox(0,0)[lt]{\lineheight{1.25}\smash{\begin{tabular}[t]{l}$1-\|\bf{x}\|$\end{tabular}}}}%
    \put(0,0){\includegraphics[width=\unitlength,page=2]{Example-MorseBott-DistanceSquaredFromCircleInside.pdf}}%
  \end{picture}%
\endgroup%

    \end{subfigure}
    \caption{Distance of $\mathbf{x}$ from the unit circle.}
        \label{fig:MorseBottDistanceSquaredFromSphere}
\end{figure}
\noindent The function $f:M-\{\mathbf{0}\}$ is a Morse-Bott function with $N=\sbb^n$ as the critical submanifold. We will see a general version of this example in \Cref{ch:GeometricViewpointOfCutLocus}.
\end{eg}

\begin{eg}\label{eg:MorseBottSphereHeightSquared}
    Consider the function 
    \begin{displaymath}
        f:\mathbb{S}^2\to \mathbb{R},~(x,y,z)\mapsto z^2.
    \end{displaymath}
    It is square of the height function discussed in the \Cref{eg:MorseBottDistanceSquaredFromSphere}. We claim that $f$ is a Morse-Bott function with critical set as $N=(0,0,1),~S=(0,0,-1)$ and the equator $E=\left\{(x,y,0):x^2+y^2=1\right\}$.  
    \begin{figure}[H]
        \centering
    \def\svgwidth{0.5\columnwidth}
    \import{./figures/}{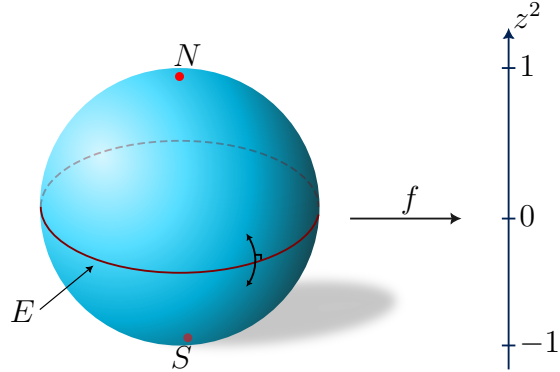}

        \caption{Square of the height function on sphere has three critical submanifolds; north pole, south pole and the equator circle.\label{fig:MorseBottSphereHeightSquared}}
    \end{figure}
    \noindent We take the charts on $\mathbb{S}^2$ as given in \Cref{eg:HeightFunctionOnSphere}. So we have
    \begin{align*}
        & \psi_1:\mathbb{R}^2\to \mathbb{R},~\mathbf{p}=(x,y)\mapsto \left(\dfrac{\left\|\mathbf{p}\right\|^2-1}{1+\left\|\mathbf{p}\right\|^2} \right)^2,~\text{ and} \\[1ex]
        & \psi_2:\mathbb{R}^2\to \mathbb{R},~\mathbf{p}=(x,y)\mapsto \left(\dfrac{1-\left\|\mathbf{p}\right\|^2}{1+\left\|\mathbf{p}\right\|^2} \right)^2.
    \end{align*} 
    The critical points are 
    \begin{align*}
        d\psi_{1_{\mathbf{p}}} = (0,0) & \implies \dfrac{8 \left(\left\|\mathbf{p}\right\|^2-1\right)}{\left(1+\left\|\mathbf{p}\right\|^2\right)^3}\mathbf{p} = \mathbf{0} \\[1ex]
        & \implies \left\|\mathbf{p}\right\| = 1 ~\text{ or}~~ \mathbf{p} = 0.
    \end{align*}
    Similarly, $\psi_2$ gives the same condition and hence the critical set will be $N,S$, and $E$ (\Cref{fig:MorseBottSphereHeightSquared}). It is clear that the two submanifolds $\{N\}$ and $\{S\}$ are non-degenerate. To show that $E$ is non-degenerate, we calculate the Hessian matrix at any point of $E$, say $\mathbf{p}=(x,y,0)$. The Hessian with respect to the above charts is given by
    \begin{displaymath}
        \hess_\mathbf{p}(f) = 
        \begin{pmatrix}
            2x^2 & 2xy \\
            2xy & 2y^2
        \end{pmatrix}.
    \end{displaymath}     
    Note that for any $\mathbf{p}\in E$ the normal space $\left(T_\mathbf{p}E\right)^\perp$ is spanned by $(0,0,1)$. Since we have 
    \begin{displaymath}
        T_\mathbf{p}\mathbb{S}^2 =T_\mathbf{p}E\directsum \left(T_\mathbf{p}E\right)^\perp = \spn\{(-y,x,0)\} \directsum \spn\{(0,0,1)\}.
    \end{displaymath}
    We need to show that for any $\mathbf{v}(0,0,\alpha)$ with $\alpha\neq 0$, there exists $\mathbf{w}(0,0,\beta)$ such that $\hess_\mathbf{p}(f)(\mathbf{v},\mathbf{w})\neq 0$. For that we will identify $T_\mathbf{p}\mathbb{S}^2$ with $\mathbb{R}^2$ Consider two curves $\gamma$ and $\eta$ passing through $p$.
    \begin{align*}
        & \gamma(t) = \left\{(x\cos t-y\sin t,y\cos t+x\sin t,0):0\le t\le 2\pi\right\}\\
        & \eta(t) = \left\{(x\cos t,y\cos t,\sin t):0\le t\le 2\pi\right\}.
    \end{align*}
    Note that $\gamma'(0)=(-y,x,0)=\mathbf{v}_1$ and $\eta'(0)=(0,0,1)=\mathbf{v}_2$. So we have
    \begin{align*}
        & d\phi_{1_\mathbf{p}} \left(\mathbf{v}_1\right)=\dfrac{\mathrm{d}}{\mathrm{d}t}\left(\phi_1\circ \gamma\right)(t)\big|_{t=0} =(-y,x) \\[1ex]
        & d\phi_{1_\mathbf{p}} \left(\mathbf{v}_2\right)=\dfrac{\mathrm{d}}{\mathrm{d}t}\left(\phi_1\circ \eta\right)(t)\big|_{t=0} =(-x,-y).
    \end{align*}
    \begin{figure}[H]
        \centering
    \def\svgwidth{0.5\columnwidth}
\begingroup%
  \makeatletter%
  \providecommand\color[2][]{%
    \errmessage{(Inkscape) Color is used for the text in Inkscape, but the package 'color.sty' is not loaded}%
    \renewcommand\color[2][]{}%
  }%
  \providecommand\transparent[1]{%
    \errmessage{(Inkscape) Transparency is used (non-zero) for the text in Inkscape, but the package 'transparent.sty' is not loaded}%
    \renewcommand\transparent[1]{}%
  }%
  \providecommand\rotatebox[2]{#2}%
  \newcommand*\fsize{\dimexpr\f@size pt\relax}%
  \newcommand*\lineheight[1]{\fontsize{\fsize}{#1\fsize}\selectfont}%
  \ifx\svgwidth\undefined%
    \setlength{\unitlength}{276.70651939bp}%
    \ifx\svgscale\undefined%
      \relax%
    \else%
      \setlength{\unitlength}{\unitlength * \real{\svgscale}}%
    \fi%
  \else%
    \setlength{\unitlength}{\svgwidth}%
  \fi%
  \global\let\svgwidth\undefined%
  \global\let\svgscale\undefined%
  \makeatother%
  \begin{picture}(1,0.69357904)%
    \lineheight{1}%
    \setlength\tabcolsep{0pt}%
    \put(0,0){\includegraphics[width=\unitlength,page=1]{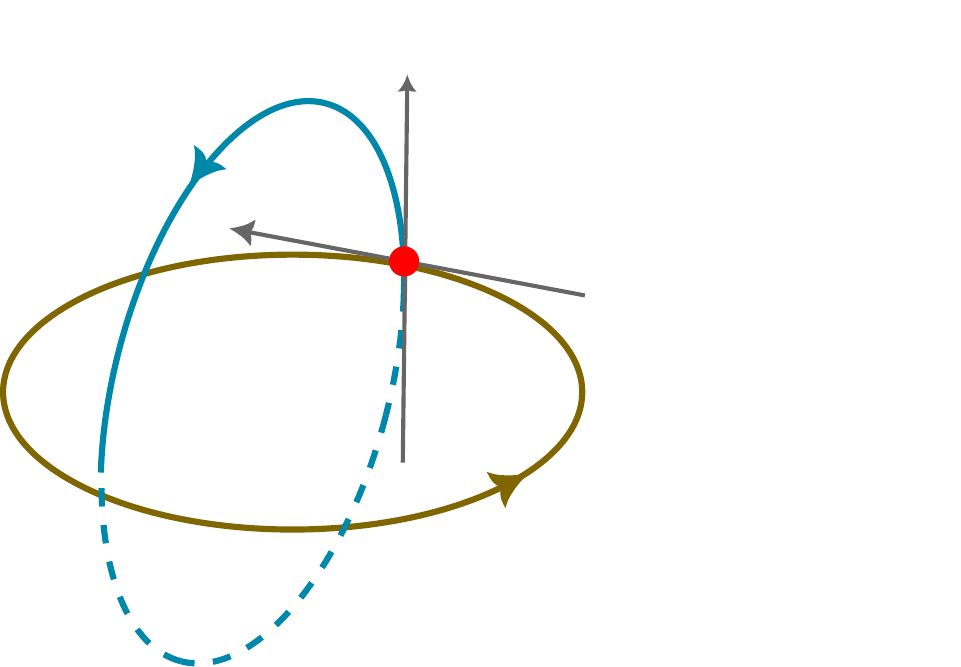}}%
    \put(0.55802661,0.15910496){\color[rgb]{0.50196078,0.4,0}\makebox(0,0)[lt]{\lineheight{1.25}\smash{\begin{tabular}[t]{l}$\gamma$\end{tabular}}}}%
    \put(0.20845421,0.57603658){\color[rgb]{0,0.53333333,0.66666667}\rotatebox{-14.615651}{\makebox(0,0)[lt]{\lineheight{1.25}\smash{\begin{tabular}[t]{l}$\eta$\end{tabular}}}}}%
    \put(0.36600518,0.3777595){\makebox(0,0)[lt]{\lineheight{1.25}\smash{\begin{tabular}[t]{l}$\bf{p}$\end{tabular}}}}%
    \put(0.18600895,0.45693283){\makebox(0,0)[lt]{\lineheight{1.25}\smash{\begin{tabular}[t]{l}$\bf{v}_1$\end{tabular}}}}%
    \put(0.39155229,0.62937516){\makebox(0,0)[lt]{\lineheight{1.25}\smash{\begin{tabular}[t]{l}$\bf{v}_2$\end{tabular}}}}%
    \put(0,0){\includegraphics[width=\unitlength,page=2]{curvesOnCircle.pdf}}%
  \end{picture}%
\endgroup%

        \caption{The curves $\gamma$ and $\eta$ passing through $\mathbf{p}$ \label{fig:curvesOnCircle}}
    \end{figure}
    \noindent So, we can define an isomorphism between
    \begin{displaymath}
        T_\mathbf{p}\mathbb{S}^2\to \mathbb{R}^2,~(-y,x,0)\mapsto (-y,x)~\text{ and } (0,0,1)\mapsto (-x,-y).
    \end{displaymath}
    Now note that
    \begin{align*}
        \hess_{\mathbf{p}}((-x,-y),(-x,-y)) & = (-x,-y) \begin{pmatrix}
            2x^2 & 2xy \\ 2xy & 2y^2
        \end{pmatrix}
        \begin{pmatrix}
            -x\\-y
        \end{pmatrix} \\[1ex]
        & = (-x,-y) 
        \begin{pmatrix}
            -2x^3 & -2xy^2 \\ -2x^2y & -2y^3    
        \end{pmatrix} \\[1ex]
        & = 2x^4+2x^2y^2+2x^2y^2+2y^4 \\
        & = 2 \left(x^2+y^2\right)^2 = 2.
    \end{align*}
    Thus, Hessian is non-degenerate in the normal direction and hence it is a Morse-Bott functions.
\end{eg}
\begin{rem}
    The above example, in particular, shows that the critical submanifolds may have different dimensions.
\end{rem}

\begin{eg}
    Let $f:M\to \mathbb{R}$ be a Morse-Bott function. If $\pi:X\to M$ is any smooth fiber bundle, then the composition $\pi\circ f:X\to M$ is a Morse-Bott function. 
\end{eg}
\bigskip
\noindent The trace function on $SO(n,\rbb), U(n,\cbb)$ and $Sp(n,\cbb)$ is a Morse-Bott function (cf \cite[page~90, Exercise~22]{BaHu04}).

\section{Cut locus and conjugate locus}\label{Sec:cutLocusOfPoint}
\hfb Let $M$ be a complete Riemannian manifold and $p\in M$. Let $\gamma$ be a geodesic such that $\gamma(0)=p$. A \textit{cut point}\index{cut point} of $p$ along the geodesic $\gamma$ is the first point $q$ on $\gamma$ such that for any point $\tilde{q}$ on $\gamma$ beyond $q$, there exists a geodesic $\tilde{\gamma}$ joining $p$ to $\tilde{q}$ such that $l \left(\tilde{\gamma}\right)<l(\gamma)$, where $l(\gamma)$ is the length of $\gamma$. In simple words, $q$ is the first point beyond which $\gamma$ stops to minimize the distance. In this section we will recall the definition of cut locus of a point with some examples. We will also mention some important results which will be generalized in the upcoming chapters. The main references for this section are books \cite[Chapter 3, Section 4]{Sak96} and \cite[Chapter 5]{ChEb75}.

\subsection{Cut locus of a point}\label{subsec:CutLocusOfAPoint}
\hfb Let $M$ be a Riemannian manifold and $p,q\in M$ be two points. If there exists a piecewise differentiable curve joining them, then using the Riemannian metric we can measure the length of the curve. We now consider all possible curves joining these points. Then the distance between $p$ and $q$ is the infimum of the length of all (piecewise differentiable) curves joining $p$ and $q$. This distance induces a metric. We call $M$ to be \textit{complete Riemannian manifold} if $(M,d)$ is a complete metric space. From now onwards, we always consider $M$ to be a complete Riemannian manifold. A geodesic $\gamma(t),~t\in[a,b]$ is said to be \emph{extendable} if it can be extended to a geodesic $\gamma(t),~t\in[c,d]\supsetneq [a,b]$. A Riemannian manifold is said to be \textit{geodesically complete}\index{geodesically complete} if any geodesic can be extendable for all $t\in \mathbb{R}$. Then the Hopf-Rinow Theorem \cite{HoRi31} says that these two notions of completeness are equivalent. If a manifold is not complete, then we can not always extend a geodesic. For example, $\mathbb{R}^2\setminus\{\bf{0}\}$ is not complete and the geodesic $\gamma(t)=t,~t>0$ is not extendable in the negative $x$-axis. This problem does not arise if the manifold is complete. The more is true which says that $M$ is complete if and only if every geodesic can be extended for infinite time. The completeness of $M$ also guarantees that any two points can be joined by a distance minimal geodesic which is defined as follows.

\begin{defn}[Distance minimal geodesic]\index{distance minimal geodesic}
    A geodesic joining $p$ and $q$ is said to be \textit{distance minimal} if the length of the geodesic is equal to the distance between these points, i.e., $l(\gamma)=d(p,q)$.
\end{defn}

\vspace{0.1cm}
\hf We shall now define the cut locus, $\cutn[p]$ of a point $p$ in a complete Riemannian manifold $M$. The notion of cut locus was first introduced for convex surfaces by Henri Poincar\'{e} \cite{Poin05} in 1905 under the name \textit{la ligne de partage} meaning \textit{the dividing line}.

\begin{defn}[Cut locus of a point]\label{defn:cutLocusOfPoint}\index{cut locus of a point}
    Let $M$ be a complete Riemannian manifold and $p\in M$. If $\cu$ denotes the \emph{cut locus} of $p$, then a point $q\in \cu$ if there exists a minimal geodesic joining $p$ to $q$ any extension of which beyond $q$ is not minimal. 
\end{defn}

\vspace{0.1cm}
\noindent Consider the set
\begin{displaymath}
    S=\{s>0:\gamma(t),~0\le t\le s \text{ is a distance minimal geodesic}\}.
\end{displaymath}
\noindent If $S=(0,t_0)$, then $\gamma(t_0)$ is the cut point of $p$ along $\gamma$, and if $S=(0,\infty)$, then the point $p$ does not have a cut locus along $\gamma(t)$. 

\vspace{0.1cm}
\noindent Note that if $q_0$ is a point on the geodesic $\gamma(t)$ which comes after the cut point, i.e., $q=\gamma(t_0)$ and $q_0=\gamma(t),~t>t_0$, then there  is a geodesic $\eta(t)$ joining $p$ to $q_0$ such that $l(\eta)<l(\gamma)$ (see \Cref{fig:shorterGeodesicExists}).
\begin{figure}[!htb]
    \centering
    \def\svgwidth{0.5\columnwidth}
\begingroup%
  \makeatletter%
  \providecommand\color[2][]{%
    \errmessage{(Inkscape) Color is used for the text in Inkscape, but the package 'color.sty' is not loaded}%
    \renewcommand\color[2][]{}%
  }%
  \providecommand\transparent[1]{%
    \errmessage{(Inkscape) Transparency is used (non-zero) for the text in Inkscape, but the package 'transparent.sty' is not loaded}%
    \renewcommand\transparent[1]{}%
  }%
  \providecommand\rotatebox[2]{#2}%
  \newcommand*\fsize{\dimexpr\f@size pt\relax}%
  \newcommand*\lineheight[1]{\fontsize{\fsize}{#1\fsize}\selectfont}%
  \ifx\svgwidth\undefined%
    \setlength{\unitlength}{409.4569457bp}%
    \ifx\svgscale\undefined%
      \relax%
    \else%
      \setlength{\unitlength}{\unitlength * \real{\svgscale}}%
    \fi%
  \else%
    \setlength{\unitlength}{\svgwidth}%
  \fi%
  \global\let\svgwidth\undefined%
  \global\let\svgscale\undefined%
  \makeatother%
  \begin{picture}(1,0.55134443)%
    \lineheight{1}%
    \setlength\tabcolsep{0pt}%
    \put(0,0){\includegraphics[width=\unitlength,page=1]{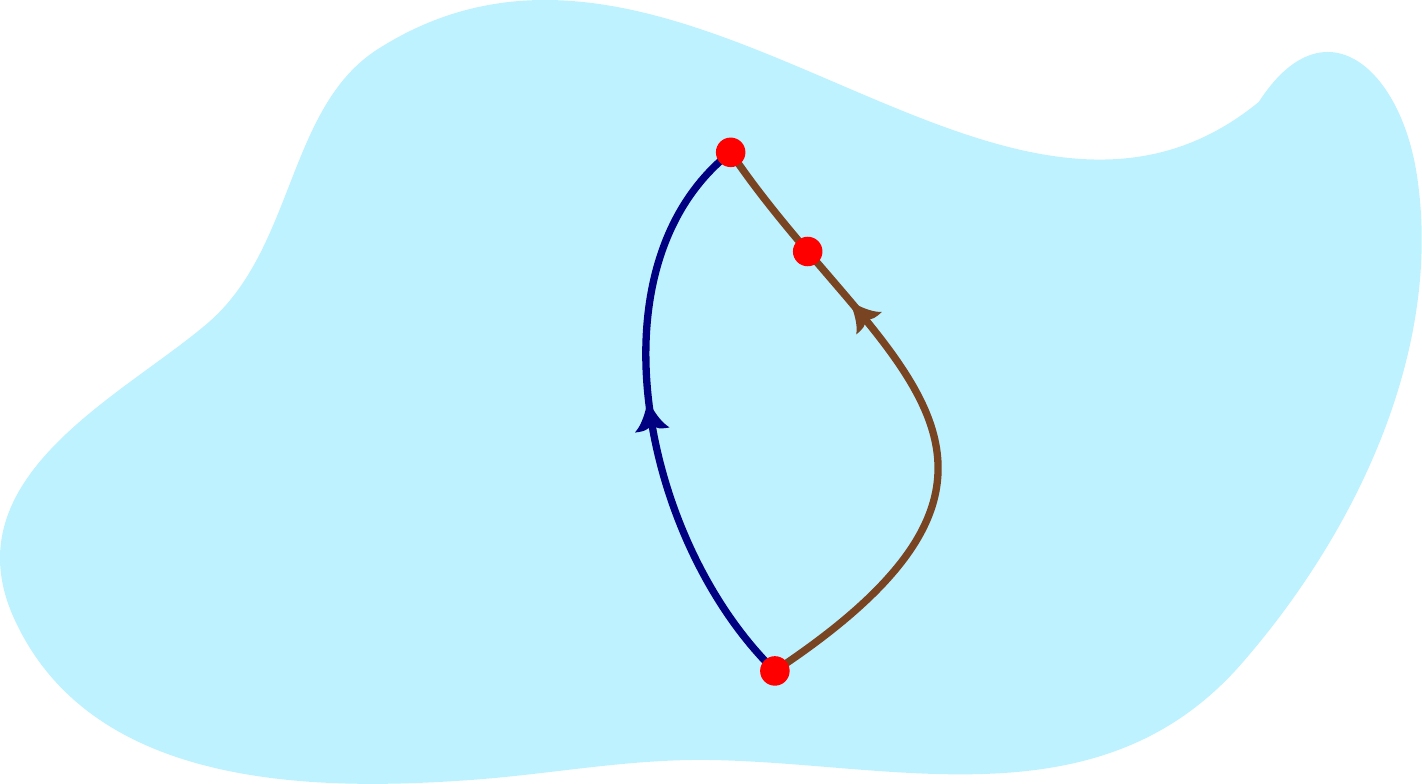}}%
    \put(0.55541775,0.04646027){\makebox(0,0)[lt]{\lineheight{1.25}\smash{\begin{tabular}[t]{l}$p$\end{tabular}}}}%
    \put(0.58338749,0.37870601){\makebox(0,0)[lt]{\lineheight{1.25}\smash{\begin{tabular}[t]{l}$q$\end{tabular}}}}%
    \put(0.49561576,0.47351329){\makebox(0,0)[lt]{\lineheight{1.25}\smash{\begin{tabular}[t]{l}$q_0$\end{tabular}}}}%
    \put(0.66148918,0.25980228){\color[rgb]{0.47058824,0.26666667,0.12941176}\makebox(0,0)[lt]{\lineheight{1.25}\smash{\begin{tabular}[t]{l}$\gamma$\end{tabular}}}}%
    \put(0.40003338,0.26457015){\color[rgb]{0,0,0.50196078}\makebox(0,0)[lt]{\lineheight{1.25}\smash{\begin{tabular}[t]{l}$\eta$\end{tabular}}}}%
    \put(0.06500667,0.08497852){\makebox(0,0)[lt]{\lineheight{1.25}\smash{\begin{tabular}[t]{l}$M$\end{tabular}}}}%
    \put(0.69662899,0.3715044){\makebox(0,0)[lt]{\lineheight{1.25}\smash{\begin{tabular}[t]{l}$l(\eta)<l(\gamma)$\end{tabular}}}}%
  \end{picture}%
\endgroup%

    \caption{A point which is beyond cut point can be joined by a shorter geodesic\label{fig:shorterGeodesicExists}}
\end{figure}

\vspace{0.3cm}
\noindent If $q_0$ comes before the cut point $q$, then we can not find any geodesic shorter than $\gamma$ joining $p$ to $q_0$. Moreover, we even can not find another geodesic $\eta$ joining $p$ to $q_0$ such that $l(\gamma)=l(\eta)$. So we can say that if $q_0$ is coming before cut point, then $\gamma$ is the only minimal geodesic joining $p$ to $q_0$. To prove this fact, we assume that if $\eta$ is another geodesic joining $p$ to $q_0$ such that $l(\gamma)=l(\eta)$ then 
\begin{displaymath}
    \delta(t)= 
    \begin{cases}
        \eta(t),~0\le t\le t_1 \\
        \gamma(t),~0\le t_1\le t_0
    \end{cases}
\end{displaymath}
is a curve such that $l(\delta)=d(p,q)=l(\gamma)$. Choose two points $q_1$ and $q_2$ sufficiently close to $q_0$ as shown in \Cref{fig:shorterGeodesicDoesNotExists}.
\begin{figure}[!htpb]
    \centering
    \def\svgwidth{0.35\columnwidth}
    \import{./figures/}{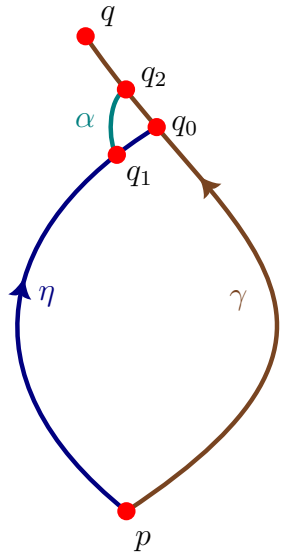}

    \caption{If a point appears before the cut point along the geodesic, then it can not be joined by two or more minimal geodesics\label{fig:shorterGeodesicDoesNotExists}}
\end{figure}
\noindent Then $\alpha$ is a distance minimal geodesic joining points $q_1$ and $q_2$ and hence we got a curve $\zeta$ which is $\eta$ from $p$ to $q_1$, $\alpha$ from $q_1$ to $q_2$ and $\gamma$ from $q_2$ to $q$. Note that $l(\zeta)$ is less than the distance between $p$ and $q$, which is a contradiction.

\vspace{0.1cm}
\noindent We now will discuss some examples.
\begin{eg}
    Let $M=\mathbb{R}^n$ be the $n$-Euclidean plane equipped with the Euclidean metric. The cut locus of any point is a null set because any geodesic never fails to satisfy its distance minimizing property.
    \begin{figure}[!htb]
        \centering
    \def\svgwidth{0.6\columnwidth}
\begingroup%
  \makeatletter%
  \providecommand\color[2][]{%
    \errmessage{(Inkscape) Color is used for the text in Inkscape, but the package 'color.sty' is not loaded}%
    \renewcommand\color[2][]{}%
  }%
  \providecommand\transparent[1]{%
    \errmessage{(Inkscape) Transparency is used (non-zero) for the text in Inkscape, but the package 'transparent.sty' is not loaded}%
    \renewcommand\transparent[1]{}%
  }%
  \providecommand\rotatebox[2]{#2}%
  \newcommand*\fsize{\dimexpr\f@size pt\relax}%
  \newcommand*\lineheight[1]{\fontsize{\fsize}{#1\fsize}\selectfont}%
  \ifx\svgwidth\undefined%
    \setlength{\unitlength}{210.53334075bp}%
    \ifx\svgscale\undefined%
      \relax%
    \else%
      \setlength{\unitlength}{\unitlength * \real{\svgscale}}%
    \fi%
  \else%
    \setlength{\unitlength}{\svgwidth}%
  \fi%
  \global\let\svgwidth\undefined%
  \global\let\svgscale\undefined%
  \makeatother%
  \begin{picture}(1,0.79105822)%
    \lineheight{1}%
    \setlength\tabcolsep{0pt}%
    \put(0,0){\includegraphics[width=\unitlength,page=1]{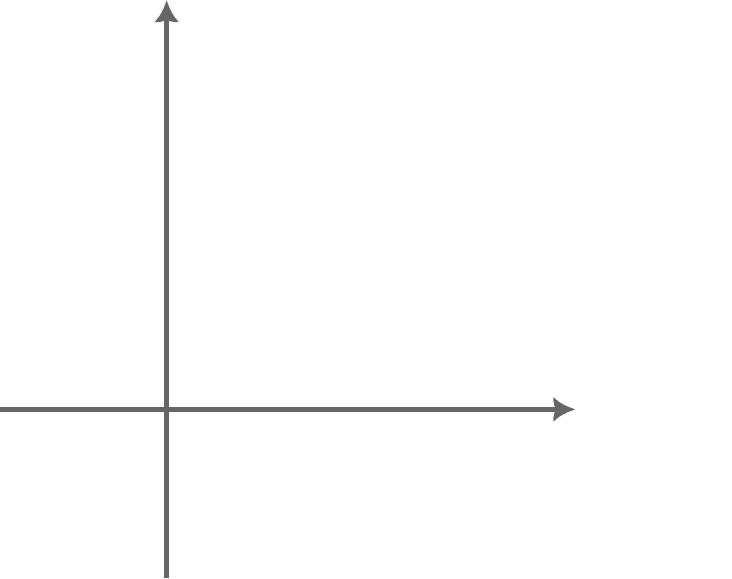}}%
    \put(0.17429767,0.18280737){\makebox(0,0)[lt]{\lineheight{1.25}\smash{\begin{tabular}[t]{l}$\bf{0}$\end{tabular}}}}%
    \put(0,0){\includegraphics[width=\unitlength,page=2]{Example-CutLocus-EuclideanSpace.pdf}}%
    \put(0.60782445,0.39006307){\color[rgb]{0,0,0}\makebox(0,0)[lt]{\lineheight{1.25}\smash{\begin{tabular}[t]{l}$\bf{q}_1$\end{tabular}}}}%
    \put(0.72438883,0.46954432){\color[rgb]{0,0,0}\makebox(0,0)[lt]{\lineheight{1.25}\smash{\begin{tabular}[t]{l}$\bf{q}_2$\end{tabular}}}}%
    \put(0,0){\includegraphics[width=\unitlength,page=3]{Example-CutLocus-EuclideanSpace.pdf}}%
    \put(0.26907706,0.67136166){\makebox(0,0)[lt]{\lineheight{1.25}\smash{\begin{tabular}[t]{l}Geodesic never fails\\to be distance minimal\end{tabular}}}}%
  \end{picture}%
\endgroup%

        \caption{Cut locus of $(0,0)$ in $\mathbb{R}^2$ \label{fig:Example-CutLocus-EuclideanSpace}}
    \end{figure}
     
\end{eg}

\begin{eg}[Cut locus of a point in $n$-sphere]
    Let $M=\mathbb{S}^n$ be the $n$-sphere with the round metric. The geodesics are great circles. The cut locus of the south pole is the north pole. 
    \begin{figure}[H]
        \centering
        \begin{subfigure}{.30\textwidth}
          \centering
    \def\svgwidth{0.9\columnwidth}
\begingroup%
  \makeatletter%
  \providecommand\color[2][]{%
    \errmessage{(Inkscape) Color is used for the text in Inkscape, but the package 'color.sty' is not loaded}%
    \renewcommand\color[2][]{}%
  }%
  \providecommand\transparent[1]{%
    \errmessage{(Inkscape) Transparency is used (non-zero) for the text in Inkscape, but the package 'transparent.sty' is not loaded}%
    \renewcommand\transparent[1]{}%
  }%
  \providecommand\rotatebox[2]{#2}%
  \newcommand*\fsize{\dimexpr\f@size pt\relax}%
  \newcommand*\lineheight[1]{\fontsize{\fsize}{#1\fsize}\selectfont}%
  \ifx\svgwidth\undefined%
    \setlength{\unitlength}{347.31249952bp}%
    \ifx\svgscale\undefined%
      \relax%
    \else%
      \setlength{\unitlength}{\unitlength * \real{\svgscale}}%
    \fi%
  \else%
    \setlength{\unitlength}{\svgwidth}%
  \fi%
  \global\let\svgwidth\undefined%
  \global\let\svgscale\undefined%
  \makeatother%
  \begin{picture}(1,1.08101537)%
    \lineheight{1}%
    \setlength\tabcolsep{0pt}%
    \put(0,0){\includegraphics[width=\unitlength,page=1]{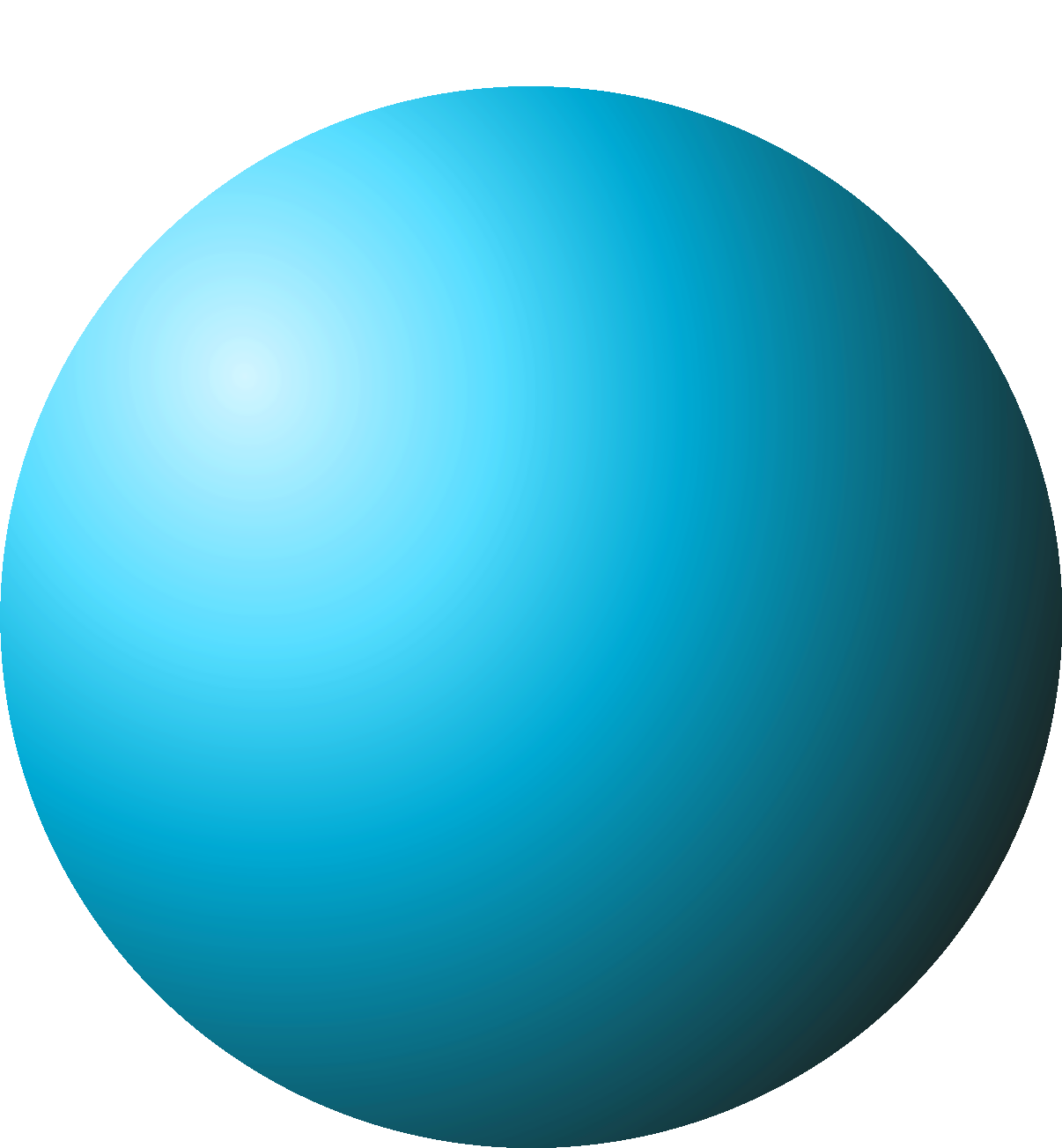}}%
    \put(0.53175319,0.96200861){\makebox(0,0)[lt]{\lineheight{1.25}\smash{\begin{tabular}[t]{l}$N$\end{tabular}}}}%
    \put(0,0){\includegraphics[width=\unitlength,page=2]{Example-Sphere-CutLocusPoint-1.pdf}}%
    \put(0.48223441,0.07563445){\color[rgb]{0.94901961,0.94901961,0.94901961}\makebox(0,0)[lt]{\lineheight{1.25}\smash{\begin{tabular}[t]{l}$S$\end{tabular}}}}%
    \put(0,0){\includegraphics[width=\unitlength,page=3]{Example-Sphere-CutLocusPoint-1.pdf}}%
  \end{picture}%
\endgroup%

          \caption{}
          \label{fig:Sphere-CutLocusPoint-01}
        \end{subfigure}%
        \begin{subfigure}{.30\textwidth}
          \centering
    \def\svgwidth{.9\columnwidth}
\begingroup%
  \makeatletter%
  \providecommand\color[2][]{%
    \errmessage{(Inkscape) Color is used for the text in Inkscape, but the package 'color.sty' is not loaded}%
    \renewcommand\color[2][]{}%
  }%
  \providecommand\transparent[1]{%
    \errmessage{(Inkscape) Transparency is used (non-zero) for the text in Inkscape, but the package 'transparent.sty' is not loaded}%
    \renewcommand\transparent[1]{}%
  }%
  \providecommand\rotatebox[2]{#2}%
  \newcommand*\fsize{\dimexpr\f@size pt\relax}%
  \newcommand*\lineheight[1]{\fontsize{\fsize}{#1\fsize}\selectfont}%
  \ifx\svgwidth\undefined%
    \setlength{\unitlength}{347.31249952bp}%
    \ifx\svgscale\undefined%
      \relax%
    \else%
      \setlength{\unitlength}{\unitlength * \real{\svgscale}}%
    \fi%
  \else%
    \setlength{\unitlength}{\svgwidth}%
  \fi%
  \global\let\svgwidth\undefined%
  \global\let\svgscale\undefined%
  \makeatother%
  \begin{picture}(1,1.08101537)%
    \lineheight{1}%
    \setlength\tabcolsep{0pt}%
    \put(0,0){\includegraphics[width=\unitlength,page=1]{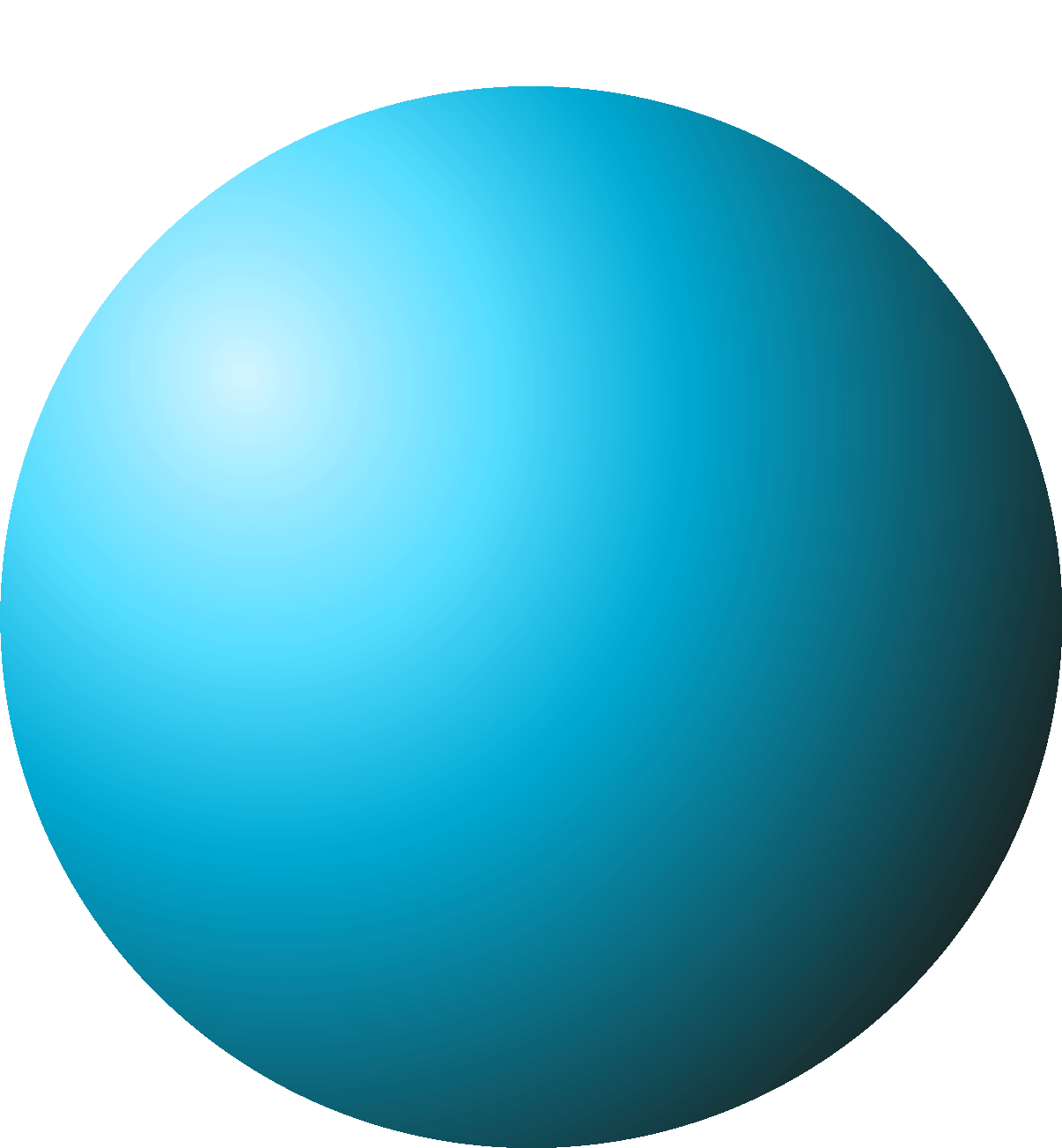}}%
    \put(0.53175319,0.96200861){\makebox(0,0)[lt]{\lineheight{1.25}\smash{\begin{tabular}[t]{l}$N$\end{tabular}}}}%
    \put(0,0){\includegraphics[width=\unitlength,page=2]{Example-Sphere-CutLocusPoint-2.pdf}}%
    \put(0.48223441,0.07563445){\color[rgb]{0.94901961,0.94901961,0.94901961}\makebox(0,0)[lt]{\lineheight{1.25}\smash{\begin{tabular}[t]{l}$S$\end{tabular}}}}%
    \put(0,0){\includegraphics[width=\unitlength,page=3]{Example-Sphere-CutLocusPoint-2.pdf}}%
    \put(0.27587484,0.77203511){\makebox(0,0)[lt]{\lineheight{1.25}\smash{\begin{tabular}[t]{l}$P$\end{tabular}}}}%
    \put(0,0){\includegraphics[width=\unitlength,page=4]{Example-Sphere-CutLocusPoint-2.pdf}}%
  \end{picture}%
\endgroup%

          \caption{}
          \label{fig:Sphere-CutLocusPoint-02}
        \end{subfigure}
        \begin{subfigure}{.30\textwidth}
            \centering
    \def\svgwidth{.9\columnwidth}
\begingroup%
  \makeatletter%
  \providecommand\color[2][]{%
    \errmessage{(Inkscape) Color is used for the text in Inkscape, but the package 'color.sty' is not loaded}%
    \renewcommand\color[2][]{}%
  }%
  \providecommand\transparent[1]{%
    \errmessage{(Inkscape) Transparency is used (non-zero) for the text in Inkscape, but the package 'transparent.sty' is not loaded}%
    \renewcommand\transparent[1]{}%
  }%
  \providecommand\rotatebox[2]{#2}%
  \newcommand*\fsize{\dimexpr\f@size pt\relax}%
  \newcommand*\lineheight[1]{\fontsize{\fsize}{#1\fsize}\selectfont}%
  \ifx\svgwidth\undefined%
    \setlength{\unitlength}{347.31249952bp}%
    \ifx\svgscale\undefined%
      \relax%
    \else%
      \setlength{\unitlength}{\unitlength * \real{\svgscale}}%
    \fi%
  \else%
    \setlength{\unitlength}{\svgwidth}%
  \fi%
  \global\let\svgwidth\undefined%
  \global\let\svgscale\undefined%
  \makeatother%
  \begin{picture}(1,1.08101537)%
    \lineheight{1}%
    \setlength\tabcolsep{0pt}%
    \put(0,0){\includegraphics[width=\unitlength,page=1]{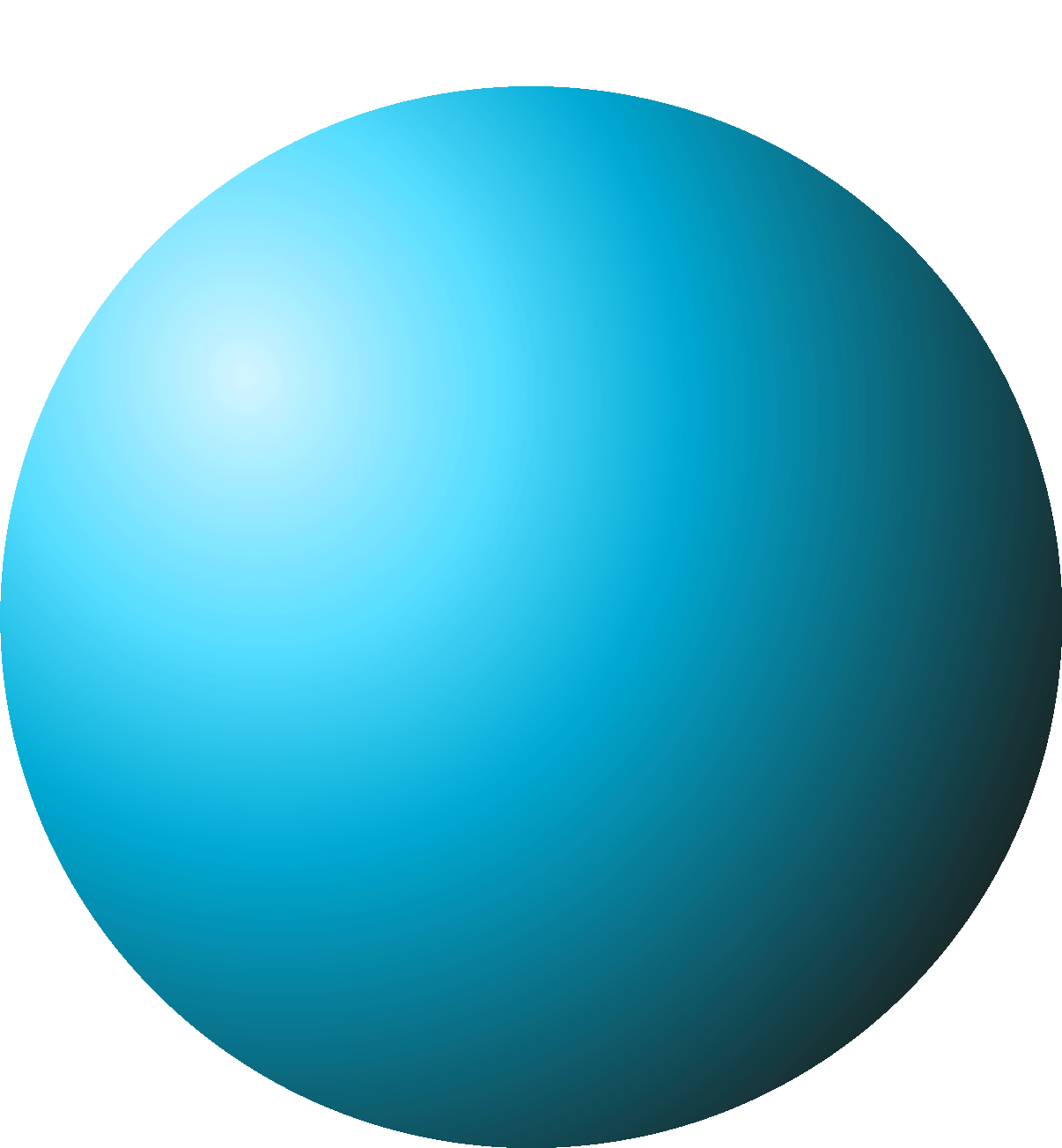}}%
    \put(0.53175319,0.96200861){\makebox(0,0)[lt]{\lineheight{1.25}\smash{\begin{tabular}[t]{l}$N$\end{tabular}}}}%
    \put(0,0){\includegraphics[width=\unitlength,page=2]{Example-Sphere-CutLocusPoint-3.pdf}}%
    \put(0.48223441,0.07563445){\color[rgb]{0.94901961,0.94901961,0.94901961}\makebox(0,0)[lt]{\lineheight{1.25}\smash{\begin{tabular}[t]{l}$S$\end{tabular}}}}%
    \put(0,0){\includegraphics[width=\unitlength,page=3]{Example-Sphere-CutLocusPoint-3.pdf}}%
    \put(0.27587484,0.77203511){\makebox(0,0)[lt]{\lineheight{1.25}\smash{\begin{tabular}[t]{l}$P$\end{tabular}}}}%
    \put(0,0){\includegraphics[width=\unitlength,page=4]{Example-Sphere-CutLocusPoint-3.pdf}}%
  \end{picture}%
\endgroup%

            \caption{}
            \label{fig:Sphere-CutLocusPoint-03}
          \end{subfigure}
        \caption{Cut locus of south pole in $\mathbb{S}^2$}
            \label{fig:Sphere-CutLocusPoint}
    \end{figure}
    \noindent In \Cref{fig:Sphere-CutLocusPoint} we have proven the claim. If $\gamma$ is a geodesic from south pole $S$ to north pole $N$, then the length of $\gamma$ is $\pi$ which is also the distance between these two points. Extending this geodesic beyond $N$ (\Cref{fig:Sphere-CutLocusPoint-02}) makes its length more than $\pi$, whereas the distance between $S$ to $P$ is less than $\pi$ (\Cref{fig:Sphere-CutLocusPoint-03}).
\end{eg}

\begin{eg}[Flat torus]
    Consider $[0,1]\times [0,1]\subseteq \mathbb{R}^2$. We identify $(x,0)$ with $(x,1)$ and $(0,y)$ with $(1,y)$ where $x,y\in [0,1]$. The obtained quotient space is the flat torus. The metric is naturally induced from the Euclidean metric and hence the geodesics are straight lines. If $p$ be the center $\left(\frac{1}{2},\frac{1}{2}\right)$, then the cut locus is the wedge of two circles.
    \begin{figure}[!htpb]
        \centering
        \begin{subfigure}{.45\textwidth}
          \centering
    \def\svgwidth{0.5\columnwidth}
\begingroup%
  \makeatletter%
  \providecommand\color[2][]{%
    \errmessage{(Inkscape) Color is used for the text in Inkscape, but the package 'color.sty' is not loaded}%
    \renewcommand\color[2][]{}%
  }%
  \providecommand\transparent[1]{%
    \errmessage{(Inkscape) Transparency is used (non-zero) for the text in Inkscape, but the package 'transparent.sty' is not loaded}%
    \renewcommand\transparent[1]{}%
  }%
  \providecommand\rotatebox[2]{#2}%
  \newcommand*\fsize{\dimexpr\f@size pt\relax}%
  \newcommand*\lineheight[1]{\fontsize{\fsize}{#1\fsize}\selectfont}%
  \ifx\svgwidth\undefined%
    \setlength{\unitlength}{266.42330446bp}%
    \ifx\svgscale\undefined%
      \relax%
    \else%
      \setlength{\unitlength}{\unitlength * \real{\svgscale}}%
    \fi%
  \else%
    \setlength{\unitlength}{\svgwidth}%
  \fi%
  \global\let\svgwidth\undefined%
  \global\let\svgscale\undefined%
  \makeatother%
  \begin{picture}(1,0.86854653)%
    \lineheight{1}%
    \setlength\tabcolsep{0pt}%
    \put(0,0){\includegraphics[width=\unitlength,page=1]{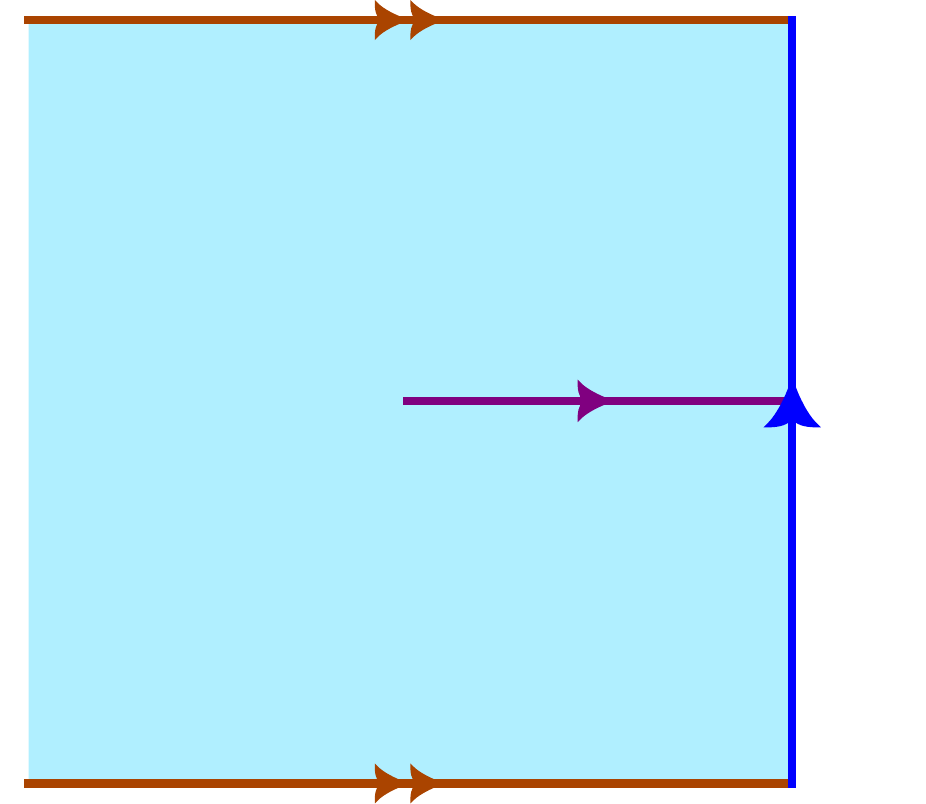}}%
    \put(0.45262449,0.48478503){\makebox(0,0)[lt]{\lineheight{1.25}\smash{\begin{tabular}[t]{l}$p$\end{tabular}}}}%
    \put(0,0){\includegraphics[width=\unitlength,page=2]{Example-Torus-CutLocusPoint-03.pdf}}%
    \put(0.54096764,0.3627715){\color[rgb]{0.50196078,0,0.50196078}\makebox(0,0)[lt]{\lineheight{1.25}\smash{\begin{tabular}[t]{l}$\gamma$\end{tabular}}}}%
    \put(0.30642161,0.48953582){\color[rgb]{0.50196078,0.4,0}\makebox(0,0)[lt]{\lineheight{1.25}\smash{\begin{tabular}[t]{l}$\eta$\end{tabular}}}}%
  \end{picture}%
\endgroup%

          \caption{Extending $\gamma$ beyond the blue line fails to be distance minimal}
        \end{subfigure}%
        \begin{subfigure}{.45\textwidth}
            \centering
    \def\svgwidth{0.43\columnwidth}
    \import{./figures/}{Example-Torus-CutLocusPoint.pdf_tex}

            \caption{cut locus of $p$}
          \end{subfigure}
        \caption{Cut locus of a point in a flat torus}
            \label{fig:Torus-CutLocusPoint}
    \end{figure}
\end{eg}


\begin{eg}[Real projective planes]\label{eg:CutLocusofPointRPn}
    We obtain the real projective plane $\mathbb{RP}^n$ by identifying the antipodal points of the round sphere $\mathbb{S}^n$. The metric on $\mathbb{RP}^n$ is induced from the metric on $\mathbb{S}^n$. If $\pi:\mathbb{S}^n\to \mathbb{RP}^n,~p,-p\mapsto [p]$, then 
    \begin{displaymath}
         \left\langle X,Y \right\rangle_{[p]}\eqdef \left\langle \left(d\pi_p\right)^{-1}(X),\left(d\pi_p\right)^{-1}(Y) \right\rangle_p
    \end{displaymath}
    is a metric on $\mathbb{RP}^n$. Since the antipodal map is an isometry of $\mathbb{S}^n$, the map $\pi$ is a local isometry. Let $[p]\in \mathbb{RP}^n$ such that it is the image of north and south pole under the map $\pi$. Then the image of the equator of $\mathbb{S}^n$ under the quotient map $\pi$, $\mathbb{RP}^{n-1}$, is the cut locus of $[p]$. We will see a generalization of similar result in \Cref{ch:equivariantCutLocusTheorem}.
\end{eg}

\begin{eg}[Cut locus a point in cylinder]
    Note that for a given point $p$ if more than one distance minimal geodesic joining $p$ and $q$ exists, then $q$ is a cut point. Using this, we observed that the cut locus of a point in cylinder is a line (shown in \Cref{fig:Example-Cylinder-CutLocusPoint}). We also note that the point $-p$ is the closest point and there exists a closed geodesic passing through $-p$ starting and ending at $p$. This fact is more generally true (see \Cref{thm:ExistenseOfClosedGeodesic-2}).  Generalizing this example, the cut locus of a point $(\mathbf{p},\mathbf{v})\in\mathbb{S}^n\times \mathbb{R}^m$ with the product metric is $\{-\mathbf{p}\}\times \mathbb{R}^m$.
    \begin{figure}[H]
        \centering
    \def\svgwidth{0.3\columnwidth}
\begingroup%
  \makeatletter%
  \providecommand\color[2][]{%
    \errmessage{(Inkscape) Color is used for the text in Inkscape, but the package 'color.sty' is not loaded}%
    \renewcommand\color[2][]{}%
  }%
  \providecommand\transparent[1]{%
    \errmessage{(Inkscape) Transparency is used (non-zero) for the text in Inkscape, but the package 'transparent.sty' is not loaded}%
    \renewcommand\transparent[1]{}%
  }%
  \providecommand\rotatebox[2]{#2}%
  \newcommand*\fsize{\dimexpr\f@size pt\relax}%
  \newcommand*\lineheight[1]{\fontsize{\fsize}{#1\fsize}\selectfont}%
  \ifx\svgwidth\undefined%
    \setlength{\unitlength}{633.82758065bp}%
    \ifx\svgscale\undefined%
      \relax%
    \else%
      \setlength{\unitlength}{\unitlength * \real{\svgscale}}%
    \fi%
  \else%
    \setlength{\unitlength}{\svgwidth}%
  \fi%
  \global\let\svgwidth\undefined%
  \global\let\svgscale\undefined%
  \makeatother%
  \begin{picture}(1,1.14453702)%
    \lineheight{1}%
    \setlength\tabcolsep{0pt}%
    \put(0,0){\includegraphics[width=\unitlength,page=1]{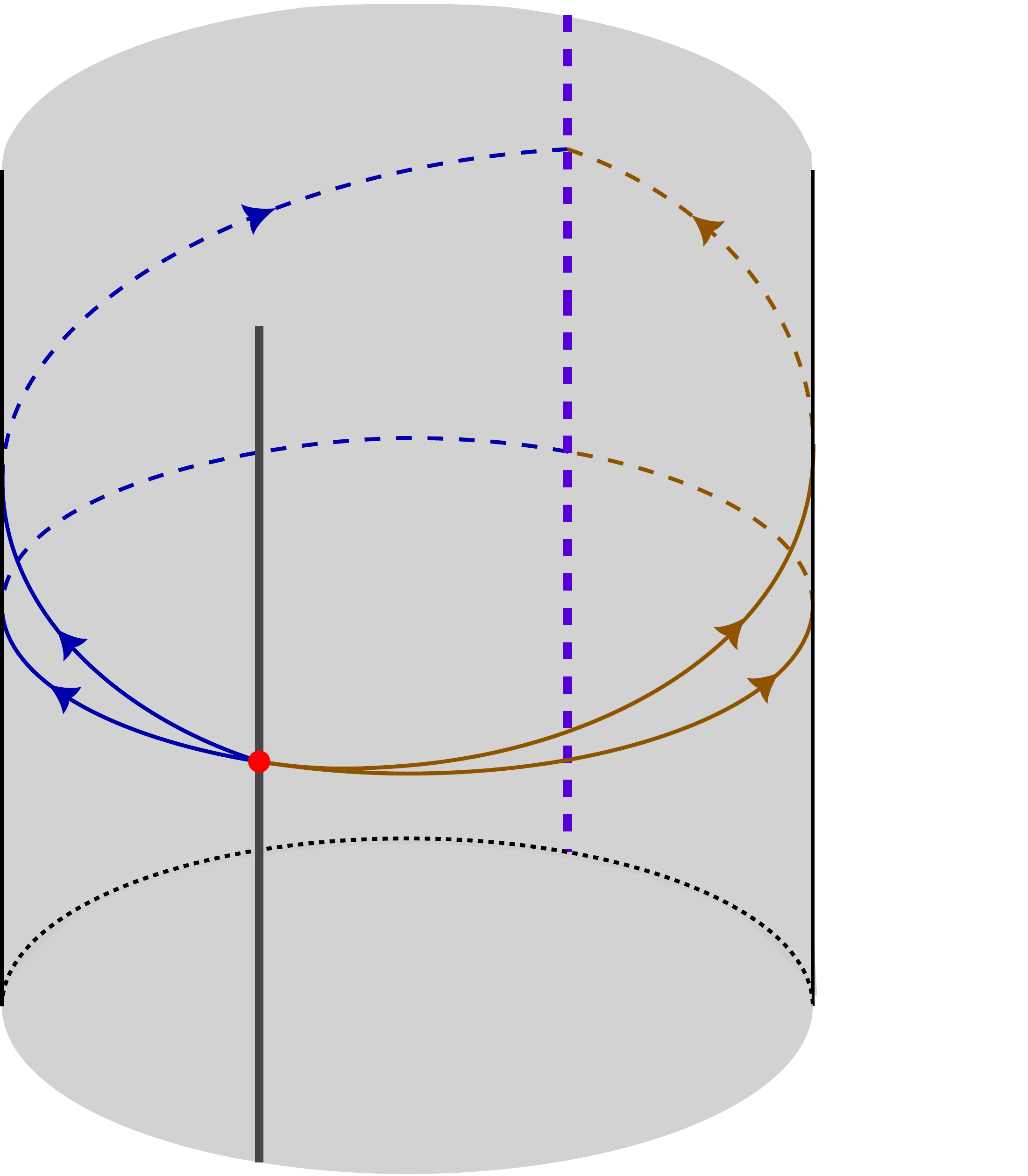}}%
    \put(0.26741269,0.4311601){\makebox(0,0)[lt]{\lineheight{1.25}\smash{\begin{tabular}[t]{l}$p$\end{tabular}}}}%
    \put(0,0){\includegraphics[width=\unitlength,page=2]{Example-Cylinder-CutLocusPoint.pdf}}%
    \put(0.81207922,0.5223416){\makebox(0,0)[lt]{\lineheight{1.25}\smash{\begin{tabular}[t]{l}$\cutn[p]$\end{tabular}}}}%
  \end{picture}%
\endgroup%

        \caption{Cut locus of a point in a cylinder\label{fig:Example-Cylinder-CutLocusPoint}}
    \end{figure}
    
\end{eg}

\subsection{Conjugate locus of a point}\index{conjugate locus}
\hfb Let $M$ be a Riemannian manifold and $\gamma$ be any curve defined on $[a,b]$. A \textit{variation of $\gamma$}\index{variation of a curve} is a function $\Gamma:[a,b]\times (-\varepsilon,\varepsilon)\to M$ such that $\Gamma(t,0)=\gamma(t)$. So $\Gamma$ is a one-parameter family of curves $\gamma_s(t)\defeq \Gamma(t,s)$. If each of $\gamma_s$ is a geodesic, then we call it is a \textit{geodesic variation} \index{geodesic variation}. In this section by variation we mean the geodesic variation. The variation field $\delbydel{\Gamma}{s}(t,0)$ is called a \textit{Jacobi field} and we will denote it by $J(t)$.

\bigskip
\hf Let $\gamma$ be a geodesic. We say that a point $q\in M$ on $\gamma$ is \textit{conjugate to $p\in M$} if we can find a variation $\gamma_s$ of $\gamma$ such that $\gamma_s(0)=p$ for $s\in (-\varepsilon,\varepsilon)$ and each of the geodesic $\gamma_s$ meet \textit{infinitesimally} at $q$. That is, if $\gamma(t_0)=q$, then
\begin{displaymath}
    \left.\delbydel{\gamma_s}{t}\right|_{(t,s)=(0,0)}=0 = \left.\delbydel{\gamma_s}{t}\right|_{(t,s)=(t_0,0)}.
\end{displaymath}
The conjugate points can be defined in two more equivalent ways. One of them uses the Jacobi field and other uses the exponential map. Recall that a Jacobi field along a geodesic $\gamma$ satisfies 
\begin{displaymath}
    \nabla_{\dot{\gamma}}\nabla_{\dot{\gamma}}J + R \left(\dot{\gamma},J\right)\dot{\gamma}=0,
\end{displaymath}
where $R$ is the Riemann curvature tensor.
\begin{defn}[Conjugate points in terms of Jacobi fields] \label{defn:ConjugatePointsInTermsOfJacobiFields}\index{conjugate points via Jacobi fields}
    A point $p$ is said to be \textit{conjugate to q along a geodesic $\gamma$} if there exists a non-vanishing Jacobi field along $\gamma$ which vanishes at $p$ and $q$.
\end{defn}
\begin{figure}[H]
    \centering
    \def\svgwidth{0.5\columnwidth}
\begingroup%
  \makeatletter%
  \providecommand\color[2][]{%
    \errmessage{(Inkscape) Color is used for the text in Inkscape, but the package 'color.sty' is not loaded}%
    \renewcommand\color[2][]{}%
  }%
  \providecommand\transparent[1]{%
    \errmessage{(Inkscape) Transparency is used (non-zero) for the text in Inkscape, but the package 'transparent.sty' is not loaded}%
    \renewcommand\transparent[1]{}%
  }%
  \providecommand\rotatebox[2]{#2}%
  \newcommand*\fsize{\dimexpr\f@size pt\relax}%
  \newcommand*\lineheight[1]{\fontsize{\fsize}{#1\fsize}\selectfont}%
  \ifx\svgwidth\undefined%
    \setlength{\unitlength}{514.98244588bp}%
    \ifx\svgscale\undefined%
      \relax%
    \else%
      \setlength{\unitlength}{\unitlength * \real{\svgscale}}%
    \fi%
  \else%
    \setlength{\unitlength}{\svgwidth}%
  \fi%
  \global\let\svgwidth\undefined%
  \global\let\svgscale\undefined%
  \makeatother%
  \begin{picture}(1,0.27667711)%
    \lineheight{1}%
    \setlength\tabcolsep{0pt}%
    \put(0,0){\includegraphics[width=\unitlength,page=1]{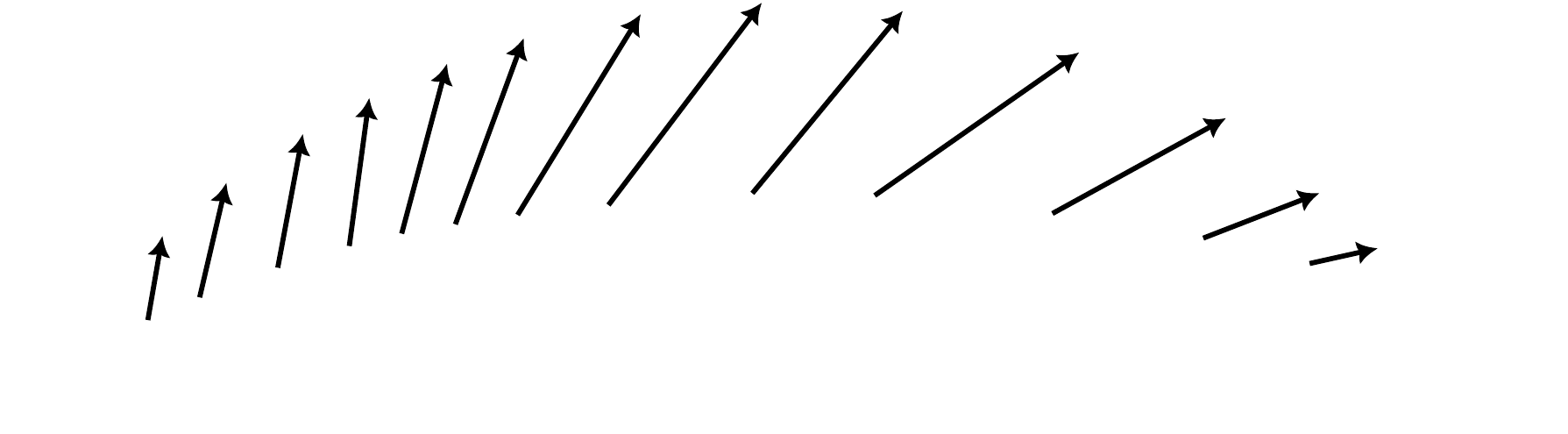}}%
    \put(-0.00332232,0.00876092){\makebox(0,0)[lt]{\lineheight{1.25}\smash{\begin{tabular}[t]{l}$p$\end{tabular}}}}%
    \put(0.92458781,0.06690133){\makebox(0,0)[lt]{\lineheight{1.25}\smash{\begin{tabular}[t]{l}$q$\end{tabular}}}}%
    \put(0,0){\includegraphics[width=\unitlength,page=2]{ConjugateLocusIllustration.pdf}}%
    \put(0.44555607,0.10645926){\color[rgb]{0,0.4,0.50196078}\makebox(0,0)[lt]{\lineheight{1.25}\smash{\begin{tabular}[t]{l}$\gamma$\end{tabular}}}}%
    \put(0.45793306,0.20902614){\makebox(0,0)[lt]{\lineheight{1.25}\smash{\begin{tabular}[t]{l}$J$\end{tabular}}}}%
    \put(0,0){\includegraphics[width=\unitlength,page=3]{ConjugateLocusIllustration.pdf}}%
  \end{picture}%
\endgroup%

    \caption{$p$ and $q$ are conjugate to each other along $\gamma$\label{fig:sample}}
\end{figure}

\begin{defn}[Conjugate points in terms of exponential map] \label{defn:ConjugatePointsInTermsOfExponentialMap}\index{conjugate points via exponential map}
    For a point $p\in M$ we say that $\mathbf{v}\in T_pM$ is a \textit{tangent conjugate point of $p$} if the derivative of the exponential map is singular at $\mathbf{v}$ i.e., $\det \left(d \left(\exp_p\right)_\mathbf{v}\right)=0$. The point $q\defeq \exp_p(\mathbf{v})$ is said to be \textit{conjugate point of $p$} along the geodesic $\gamma(t)=\exp_p(t \mathbf{v})$. 
\end{defn}
\vspace{0.3cm}
For a proof of equivalence of these three definitions we refer the reader to books on Riemannian geometry, for example see \cite{Car92}. 

\vspace{0.3cm} 
\hf The \textit{multiplicity of a conjugate point}\index{multiplicity of a conjugate point} is defined to be the nullity of $d(\exp_p)_\mathbf{v}$. If nullity is one, then we say it is first conjugate point.\index{first conjugate point} 
\vspace{0.3cm}
\begin{eg}
    Let $M=\mathbb{R}^n$ with the Euclidean metric, then there are no conjugate points along any geodesic. 
\end{eg}

\begin{eg}
    If $M=\mathbb{S}^n$ with the round metric, then any antipodal points are conjugate to each other along any great circles. In particular, south pole and north poles are conjugate to each other.
\end{eg}

\begin{eg}\label{eg:ConugatePointsonTorus}
    If $M$ is a flat torus, then there are no conjugate points along any geodesic. Recall that metric is flat if and only if the Riemann curvature tensor $R$ vanishes, which implies the Jacobi field is affine and vanishes at two points forced it to be zero everywhere. This example, in particular, proves that if $M$ is flat, then there are no conjugate points along any geodesic.
\end{eg}
\begin{eg}
    Let $M$ be the real projective plane, $\mathbb{RP}^n$, with the metric induced from $\mathbb{S}^n$. Here any point $p$ is conjugate to itself along any geodesic.   
\end{eg}

\subsection{Some results involving cut and conjugate locus}
\hfb We will present some results related to the two concepts. As most of the results are standard, we will not provide proofs. Instead, we will mention references for each.

\vspace{0.3cm}
\hf The following result is one of the most important characterization of cut locus in terms of first conjugate point.
\begin{thm}\cite[Chapter 3, Proposition 4.1]{Sak96}\label{thm:CharacterizationOfCutLocusInTermsOfConjugatePoint}
    Let $\gamma$ be a unit speed geodesic. Then $q=\gamma(t_0)$ is a cut point of $p=\gamma(0)$ along $\gamma$ if either of the following holds.
    \begin{enumerate}[(i)]
        \item The point $q=\gamma(t_0)$  is the first conjugate point of $p$ along $\gamma$.
        \item There exists at least two distance minimal geodesic joining $p$ to $q$.
    \end{enumerate}
\end{thm}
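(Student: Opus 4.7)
The plan is to establish the classical characterization — in fact an ``if and only if'' — that $q=\gamma(t_0)$ is a cut point of $p$ along $\gamma$ exactly when at least one of (i) and (ii) holds. Although only the ``if'' direction is stated, I will outline both since the converse is the substantive content and its method is reused in the thesis.

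For the sufficiency direction, suppose (ii) holds: let $\sigma \neq \gamma$ be a second minimal geodesic from $p$ to $q$, and fix any small $\varepsilon > 0$. Concatenating $\sigma$ with $\gamma|_{[t_0, t_0+\varepsilon]}$ produces a broken curve from $p$ to $\gamma(t_0+\varepsilon)$ of length $t_0+\varepsilon$; since $\sigma'(t_0) \neq \gamma'(t_0)$ (otherwise uniqueness of geodesics forces $\sigma = \gamma$), this curve has a genuine corner at $q$, and a standard first-variation corner-rounding argument produces a nearby smooth curve of strictly smaller length. Hence $\gamma|_{[0, t_0+\varepsilon]}$ is not minimal while $\gamma|_{[0, t_0]}$ is, identifying $q$ as the cut point. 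Assuming (i) instead, the Morse index theorem yields a variation past the first conjugate point along which length strictly decreases, while the absence of earlier conjugate points preserves minimality of $\gamma|_{[0, t_0]}$; again $q$ is the cut point.

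For the converse, assume $q$ is a cut point, so $\gamma|_{[0, t_0]}$ is minimal but $\gamma|_{[0, t_0+\varepsilon_n]}$ fails to be minimal for some $\varepsilon_n \downarrow 0$. For each $n$ choose a minimal geodesic $\sigma_n$ from $p$ to $q_n := \gamma(t_0+\varepsilon_n)$, parametrized by arc length with initial unit vector $v_n \in T_pM$ and length $\ell_n := d(p, q_n) < t_0+\varepsilon_n$. Pass to a subsequence with $v_n \to v_\infty$ (compactness of the unit sphere in $T_pM$); the sandwich $t_0 \le \ell_n < t_0+\varepsilon_n$ gives $\ell_n \to t_0$, and continuity of $\exp_p$ forces $\exp_p(t_0 v_\infty) = q$, with $t \mapsto \exp_p(tv_\infty)$ a minimal geodesic of length $t_0$ from $p$ to $q$. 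If $v_\infty \neq \gamma'(0)$, then (ii) holds immediately.

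The remaining case $v_\infty = \gamma'(0)$ is the crux. Set $u_n := \ell_n v_n$ and $w_n := (t_0+\varepsilon_n)\gamma'(0)$; both sequences converge to $t_0\gamma'(0)$ in $T_pM$, both satisfy $\exp_p(u_n) = \exp_p(w_n) = q_n$, and they are distinct because $|u_n| = \ell_n \neq t_0+\varepsilon_n = |w_n|$. If $d(\exp_p)_{t_0\gamma'(0)}$ were nonsingular, the inverse function theorem would make $\exp_p$ injective near $t_0\gamma'(0)$, contradicting the existence of distinct preimages $u_n$, $w_n$ arbitrarily close with equal image. Hence $q$ is conjugate to $p$ along $\gamma$; any earlier conjugate point would violate minimality of $\gamma|_{[0, t_0]}$ by the Morse index comparison, so $q$ is the first conjugate point and (i) holds. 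The main obstacle is precisely this inverse-function-theorem step — extracting a convergent subsequence and ensuring genuine distinctness of the two preimages — but the length comparison $\ell_n < t_0+\varepsilon_n$ settles it cleanly.
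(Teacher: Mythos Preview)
Your argument is correct and is the classical one. The paper does not actually supply a proof of this particular statement---it is quoted from Sakai's book as a standard preliminary result---so there is no in-paper proof to compare against directly. That said, the paper later proves the submanifold generalization (\Cref{thm:CharacterizationOfCutLocusInTermsOfFocalPoint}) by the same strategy you use: for sufficiency of (ii) it rounds a corner via an auxiliary minimal segment, and for the converse it extracts a convergent subsequence of initial directions $u_n\to v$, disposes of the case $v\neq u$ immediately, and in the case $v=u$ invokes the local diffeomorphism property of the (normal) exponential near $Tu$ to reach a contradiction. Your inverse-function-theorem step and the paper's ``$\exp_\nu$ is a diffeomorphism on a neighbourhood $\tilde U$'' step are the same idea in slightly different packaging, and your length inequality $\ell_n<t_0+\varepsilon_n$ plays exactly the role of the paper's $T+1/n>d(p,q_n)$ in guaranteeing that the two preimages are genuinely distinct.
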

\vspace{0.3cm}
\noindent The next result is also a relation between the two loci. In particular, it states that the cut point of $p$ always comes before (if not the same) the conjugate point.

\begin{thm}\cite[Theorem 4.1]{Kob67}
    Let $\gamma$ be a unit speed geodesic starting at $p$. Let $q=\gamma(t_0)$ be the first conjugate point along $\gamma$. Then $\gamma$ is not a distance minimal geodesic beyond $q$.
\end{thm}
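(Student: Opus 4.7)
The plan is to show that for every $t_1 > t_0$, the geodesic segment $\gamma|_{[0,t_1]}$ fails to be length minimizing by producing an admissible variation whose second variation of arc length is strictly negative. The key tool is the index form
\[
I(X,Y) = \int_0^{t_1}\!\bigl(\langle \nabla_{\dot\gamma} X,\nabla_{\dot\gamma} Y\rangle - \langle R(X,\dot\gamma)\dot\gamma, Y\rangle\bigr)\,dt
\]
on piecewise smooth vector fields along $\gamma|_{[0,t_1]}$ that vanish at both endpoints and are normal to $\dot\gamma$. I will use the standard fact that if there exists such an $X$ with $I(X,X)<0$, then $\gamma|_{[0,t_1]}$ admits a proper variation of strictly smaller length, so it cannot be distance minimizing.

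By hypothesis there is a nontrivial Jacobi field $J$ along $\gamma|_{[0,t_0]}$ with $J(0)=J(t_0)=0$; uniqueness for the Jacobi ODE forces $\nabla_{\dot\gamma} J(t_0)\neq 0$, since otherwise $J$ would be identically zero. I would define the piecewise smooth field
\[
V(t)=\begin{cases} J(t), & 0\le t\le t_0,\\ 0, & t_0\le t\le t_1,\end{cases}
\]
and observe, via integration by parts together with the Jacobi equation, that $I(V,V)=0$. Morally, $V$ is a ``would-be'' minimizer of $I$, but it has a corner at $t_0$ with jump $-\nabla_{\dot\gamma} J(t_0)$, so it cannot be a genuine critical point of $I$.

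To convert this corner into a strictly negative $I$-value I pick any smooth vector field $W$ along $\gamma$, normal to $\dot\gamma$, with $W(0)=W(t_1)=0$ and $W(t_0)=-\nabla_{\dot\gamma} J(t_0)$. A single integration by parts on $[0,t_0]$, using the Jacobi equation, collapses $I(V,W)$ to the boundary contribution $\langle \nabla_{\dot\gamma} J(t_0), W(t_0)\rangle = -|\nabla_{\dot\gamma} J(t_0)|^2<0$. Consequently
\[
I(V+\varepsilon W,\, V+\varepsilon W)=2\varepsilon\, I(V,W)+\varepsilon^2\, I(W,W)<0
\]
for all sufficiently small $\varepsilon>0$. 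Smoothing the corner of $V+\varepsilon W$ near $t_0$ (which changes $I$ by an arbitrarily small amount) produces a smooth admissible field with negative index form, and the standard second variation formula then yields a variation of $\gamma|_{[0,t_1]}$ with fixed endpoints whose length is strictly less than that of $\gamma|_{[0,t_1]}$.

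The main technical hurdle is the integration-by-parts computation of $I(V,W)$, where one must carefully isolate the boundary term at $t_0$ coming from the non-smoothness of $V$; this boundary term is precisely where the conjugacy hypothesis enters through $\nabla_{\dot\gamma} J(t_0)\neq 0$. The remaining analytic steps (smoothing the corner, translating $I(X,X)<0$ into a length-decreasing variation) are routine consequences of the second variation formula.
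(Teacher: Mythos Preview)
The paper does not supply its own proof of this statement; it is simply quoted from \cite[Theorem 4.1]{Kob67} as background in \S\ref{Sec:cutLocusOfPoint}. So there is nothing in the paper to compare against.

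Your argument is the classical index-form proof (as in Milnor's \emph{Morse Theory} or do~Carmo) and it is correct. Two small remarks. First, you should note explicitly that the Jacobi field $J$ is automatically normal to $\dot\gamma$: the tangential component of any Jacobi field has the form $(a+bt)\dot\gamma(t)$, and the vanishing at $0$ and $t_0$ forces $a=b=0$. Second, the smoothing step at the end is unnecessary: the second variation formula for arc length already holds for piecewise smooth proper variations, and $V+\varepsilon W$ is such a field, so $I(V+\varepsilon W,V+\varepsilon W)<0$ directly produces a length-decreasing variation without any approximation.
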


\vspace{0.3cm}
\noindent There is one more characterization of the cut locus in terms of number of geodesics joining the point to the cut point. For $p\in M$ we define the set $\mathrm{Se}(p)$ as 
\begin{equation}\label{eq:SeSetofPoint}
    \mathrm{Se}(p) \defeq
    \left\{q\in M ~\middle|~
        \begin{aligned}
            & \text{ there exists at least two distance }\\
            & \text{minimal geodesics joining $p$ to $q$}
        \end{aligned}
    \right\}
\end{equation}
Note that $\mathrm{Se}(p)\subseteq \cutn[p]$. Franz-Erich Wolter in 1979 showed that the closure of $\mathrm{Se}(p)$ is the cut locus. 
\begin{thm}\cite[Theorem 1]{Wol79}\label{thm:ClosureOfSeisCup}
    Let $M$ be a complete Riemannian manifold and $p$ be any point in $M$. Then 
    \begin{displaymath}
        \overline{\mathrm{Se}(p)} = \cutn[p].
    \end{displaymath}
\end{thm}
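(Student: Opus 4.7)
The proof splits into two inclusions. For $\overline{\mathrm{Se}(p)} \subseteq \mathrm{Cu}(p)$, it suffices to establish $\mathrm{Se}(p) \subseteq \mathrm{Cu}(p)$ and to note that the cut locus is closed in $M$. The first containment follows from the broken-geodesic principle: if $q$ admits two distinct minimizing geodesics $\gamma_1, \gamma_2$ from $p$ and an extension of, say, $\gamma_1$ past $q$ were still distance-minimal, then concatenating $\gamma_2$ on $[0,t_0]$ with the extension of $\gamma_1$ beyond $q$ would give a broken length-minimizing curve to a point past $q$. Such a curve can be strictly shortened by smoothing the corner at $q$ (where $\gamma_1'(t_0) \ne \gamma_2'(t_0)$), contradicting minimality. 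Passing to closures then yields $\overline{\mathrm{Se}(p)} \subseteq \mathrm{Cu}(p)$.

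For the reverse inclusion, take $q \in \mathrm{Cu}(p)$ and let $\gamma$ be a unit-speed minimizing geodesic from $p$ with $\gamma(t_0) = q$. By the characterization theorem (first conjugate point versus two minimizing geodesics), either $q \in \mathrm{Se}(p)$---giving $q \in \overline{\mathrm{Se}(p)}$ trivially---or $q$ is the first conjugate point of $p$ along $\gamma$. In the latter case, I would build a sequence $q_n \to q$ with $q_n \in \mathrm{Se}(p)$. Set $q_n := \gamma(t_0 + 1/n)$. Since $\gamma$ ceases to minimize past $q$, there is a unit-speed minimizing geodesic $\sigma_n$ from $p$ to $q_n$ of length $t_n < t_0 + 1/n$, with initial velocity $u_n := \sigma_n'(0)$ necessarily distinct from $\gamma'(0)$ (otherwise $\sigma_n = \gamma|_{[0,t_n]}$ would fail to end at $q_n$). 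Passing to a subsequence, $u_n \to u \in S_pM$, and $t \mapsto \exp_p(tu)$ is a minimizing geodesic from $p$ to $q$ on $[0, t_0]$. If $u \ne \gamma'(0)$, this is a second minimizing geodesic to $q$, giving $q \in \mathrm{Se}(p)$ outright.

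The main obstacle is the residual case $u = \gamma'(0)$, where $u_n \to \gamma'(0)$ through distinct unit vectors---manifesting the failure of injectivity of $\exp_p$ at the first conjugate vector $v_0 := t_0 \gamma'(0)$. The plan here is to exploit the non-trivial Jacobi field $J$ along $\gamma$ with $J(0) = J(t_0) = 0$, which integrates to a geodesic variation $\Gamma(t,s) = \exp_p(t(\gamma'(0) + sw))$ whose endpoint satisfies $\Gamma(t_0, s) = q + O(s^2)$, returning to $q$ at second order. This fold-type behavior of $\exp_p$ near $v_0$ should yield, for each large $n$, a second minimizing geodesic from $p$ to $q_n$ distinct from $\sigma_n$, so that $q_n \in \mathrm{Se}(p)$. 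The technical crux is upgrading ``two distinct geodesic preimages converging to $v_0$'' into ``two distinct \emph{minimizing} geodesic preimages'' to a common image point near $q$: one needs a continuity/compactness argument showing that since the preimage lengths approach $t_0 = d(p,q)$ while the common images approach $q$, both preimages must realize the distance in the limit. Making this precise---ruling out the possibility that the fold produces preimages whose length strictly exceeds the distance to their image---is the hardest step.
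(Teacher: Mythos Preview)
Your forward inclusion is correct and standard. The reverse inclusion, however, has a genuine gap exactly where you identify it: the residual case $u = \gamma'(0)$. Your proposed fix via the Jacobi field and the fold structure of $\exp_p$ at the conjugate vector $v_0$ is suggestive but not a proof. Producing two preimages near $v_0$ mapping to the same point is not the same as producing two \emph{minimizing} geodesics to that point, and there is no soft continuity argument forcing both branches of the fold to be minimizing simultaneously. Indeed, for a generic fold one branch will typically be strictly longer than the distance; your sentence ``ruling out the possibility that the fold produces preimages whose length strictly exceeds the distance to their image'' is exactly the point you have not addressed, and it cannot be finessed by the second-order Taylor expansion you sketch.

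The paper's proof of the submanifold generalization (Theorem~\ref{thm:SeClosureIsCutLocus}), which specializes to the point case, sidesteps the conjugate-point analysis entirely. Given $q \in \mathrm{Cu}(p) \setminus \mathrm{Se}(p)$ with unique minimizing direction $x_0 \in S(T_pM)$, one forms a small closed cone $\mathrm{Co}(x_0,\delta)$ of directions near $x_0$ and rescales it by the continuous cut-time function $\rho$ to obtain a cone $\mathrm{Co}^\star(x_0,\delta)$ in $T_pM$, still homeomorphic to a closed ball. The dichotomy is purely topological: either $\exp_p$ fails to be injective on $\mathrm{Co}^\star(x_0,1/n)$ for every $n$, in which case the non-injectivity (at the cut value, hence between minimizing geodesics) directly furnishes points $q_n \in \mathrm{Se}(p)$ with $q_n \to q$; or for some $\delta$ it is a homeomorphism onto its image. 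In the latter case $\rho(x_0)x_0$ sits on the boundary of $\mathrm{Co}^\star$, so every ball $B(q,1/n)$ contains points $q_n$ \emph{outside} $\exp_p(\mathrm{Co}^\star)$. Minimizing geodesics to these $q_n$ have initial directions lying outside $\overline{B(x_0,\delta)} \cap S(T_pM)$; a limiting argument (Busemann's compactness for rectifiable curves) yields a second minimizing geodesic to $q$ with direction $\tilde{x} \neq x_0$, contradicting $q \notin \mathrm{Se}(p)$. Thus case~(a) cannot occur, and case~(b) gives the conclusion. The key advantage over your approach is that injectivity of $\exp_p$ on the rescaled cone replaces any local analysis of the conjugate locus.
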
 
\bigskip
The above theorem, in particular, shows that the cut locus of a point is a closed set. He also proved that the distance squared function from the point $p$ is not differentiable on the set $\mathrm{Se}$.
\begin{thm}\cite[Lemma 1]{Wol79}
    Let $d^2(p,\cdot)$ denotes the square of the distance from the point $p$. Let $q\in \mathrm{Se}(p)$ and let $\gamma_1$ and $\gamma_2$ be two distance minimal geodesics joining $p$ to $q$. Then the directional derivative of $d^2(p,\cdot)$ does not exist at $q$ in the direction of $\gamma_i,~i=1,2$.
\end{thm}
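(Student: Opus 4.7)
The plan is to show that the one-sided directional derivatives of $f = d^2(p,\cdot)$ at $q$ along $\gamma_1$ disagree, which forces the two-sided directional derivative to fail to exist. Let $L = d(p,q)$, parameterize each $\gamma_i : [0,L] \to M$ by arc length with $\gamma_i(0)=p$ and $\gamma_i(L)=q$, and set $v_i := \gamma_i'(L) \in T_q M$. Note $v_1 \neq v_2$, since equality of initial tangent vectors (at $q$, reversing direction) would force $\gamma_1 = \gamma_2$ by uniqueness of geodesics with prescribed initial data.

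The ``backward'' one-sided limit along $\gamma_1$ is immediate: since $\gamma_1|_{[0,L]}$ is distance minimal, every sub-arc $\gamma_1|_{[0,L+t]}$ for $t \leq 0$ small is also distance minimal, so $d(p,\gamma_1(L+t)) = L+t$, and hence
\begin{displaymath}
\lim_{t \to 0^-} \frac{f(\gamma_1(L+t)) - f(q)}{t} = \lim_{t \to 0^-} \frac{(L+t)^2 - L^2}{t} = 2L.
\end{displaymath}

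The heart of the argument is a strictly shorter comparison path on the forward side. For $t>0$ small, I will travel from $p$ along $\gamma_2$ to $\gamma_2(L-t)$ (length $L-t$) and then follow a minimizing geodesic from $\gamma_2(L-t)$ to $\gamma_1(L+t)$, rounding off the corner at $q$. In a normal coordinate chart at $q$ we have $\gamma_2(L-t) \leftrightarrow -t v_2 + O(t^2)$ and $\gamma_1(L+t) \leftrightarrow t v_1 + O(t^2)$, so the second segment has length
\begin{displaymath}
|t v_1 + t v_2| + O(t^2) = t\sqrt{2(1 + \langle v_1, v_2\rangle)} + O(t^2).
\end{displaymath}
Since $v_1, v_2$ are distinct unit vectors, $\langle v_1, v_2\rangle < 1$, so $\alpha := \sqrt{2(1+\langle v_1,v_2\rangle)} - 1 < 1$. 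The total comparison length is $L + \alpha t + O(t^2)$, giving $d(p,\gamma_1(L+t)) \leq L + \alpha t + O(t^2)$, and squaring yields
\begin{displaymath}
\limsup_{t \to 0^+} \frac{f(\gamma_1(L+t)) - L^2}{t} \leq 2L\alpha < 2L.
\end{displaymath}

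Comparing, the left limit equals $2L$ while the right limit is bounded above by $2L\alpha < 2L$, so they cannot coincide and the directional derivative of $f$ at $q$ in the direction of $\gamma_1$ does not exist. The same argument with the roles of $\gamma_1$ and $\gamma_2$ swapped handles $\gamma_2$. The one delicate step is the normal-coordinate estimate that converts the qualitative fact $v_1 \neq v_2$ into the quantitative linear-in-$t$ savings; the rest is elementary one-variable bookkeeping.
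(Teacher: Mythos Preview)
Your proof is correct and follows essentially the same strategy as the paper: compute the backward one-sided derivative along $\gamma_1$ exactly as $2L$, then bound the forward one-sided derivative strictly below $2L$ via a broken comparison path that backtracks along the \emph{other} geodesic and cuts the corner at $q$. The only cosmetic differences are that the paper keeps the backtrack length $\varepsilon$ fixed and differentiates in the forward parameter (invoking a cosine-law estimate for small geodesic triangles to obtain the sharper bound $2L\cos\omega$), whereas you couple $\varepsilon = t$ and use normal coordinates to get $2L(\sqrt{2(1+\cos\omega)}-1)$; either estimate suffices.
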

\bigskip 
\noindent For some special point of the cut locus of $p$ we can improve \Cref{thm:CharacterizationOfCutLocusInTermsOfConjugatePoint}.
\begin{thm}\cite[Theorem 4.4]{Kob67}\label{thm:ExistenseOfClosedGeodesic-1}
    Let $q$ be a cut point of $p$ and we assume that it is the closest point of $p$. Then $q$ is either conjugate to $p$ along a minimal geodesic joining these two points, or $q$ is the mid-point of a closed geodesic starting and ending at $p$.
\end{thm}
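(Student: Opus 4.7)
My plan is to prove the contrapositive: assuming $q$ is \emph{not} conjugate to $p$ along any minimizing geodesic, I will construct a closed geodesic through $p$ with $q$ as its midpoint. By \Cref{thm:CharacterizationOfCutLocusInTermsOfConjugatePoint}, the non-conjugacy hypothesis produces at least two distinct unit-speed minimizing geodesics $\gamma_1, \gamma_2: [0,\ell]\to M$ from $p$ to $q$, where $\ell := d(p,q)$. To assemble the desired closed geodesic, it suffices to establish the two identities
\begin{displaymath}
    \gamma_1'(\ell) = -\gamma_2'(\ell) \qquad \text{and} \qquad \gamma_1'(0) = -\gamma_2'(0).
\end{displaymath}
The first identity forces the geodesic extension of $\gamma_1$ past $q$ to agree with the time-reversal of $\gamma_2$, yielding a single smooth geodesic of length $2\ell$ running from $p$ back to $p$; the second makes the resulting loop smooth at $p$, so that the curve is genuinely a closed geodesic. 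Swapping the roles of $p$ and $q$ reduces the second identity to the first, so the real content is the claim $\gamma_1'(\ell) = -\gamma_2'(\ell)$.

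To prove this claim I will exploit the local smooth structure coming from non-conjugacy. Since $\exp_p$ is a local diffeomorphism near each $\ell\gamma_i'(0)\in T_pM$, it admits smooth local inverses $v_1, v_2$ on a common neighborhood $W$ of $q$, giving two length functions $f_i(q') := |v_i(q')|$ for $q'\in W$. Each $f_i$ is smooth, and a standard first-variation calculation identifies $\nabla f_i(q) = \gamma_i'(\ell)$. I then consider the locus
\begin{displaymath}
    \Sigma := \{q'\in W : f_1(q') = f_2(q')\},
\end{displaymath}
consisting of points joined to $p$ by two geodesics of equal length through the two branches, so that $\Sigma \subseteq \mathrm{Se}(p) \subseteq \mathrm{Cu}(p)$. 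Since $\gamma_1'(\ell) \neq \gamma_2'(\ell)$ (otherwise the two branches would coincide, contradicting the distinctness of $\gamma_1$ and $\gamma_2$), the implicit function theorem presents $\Sigma$ locally as a smooth hypersurface through $q$ with normal direction $\gamma_1'(\ell) - \gamma_2'(\ell)$.

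Now I suppose, for contradiction, that $\gamma_1'(\ell) \neq -\gamma_2'(\ell)$. Then $\gamma_1'(\ell)$ is not parallel to $\gamma_1'(\ell) - \gamma_2'(\ell)$, so $\nabla f_1(q)$ has a nonzero component tangent to $\Sigma$. Consequently $f_1|_\Sigma$ is not at a local minimum at $q$, and I can find $q'\in \Sigma$ arbitrarily close to $q$ with $f_1(q') < \ell$. Since $d(p,q') \le f_1(q') < \ell$ and $q'\in \mathrm{Cu}(p)$, this exhibits a cut point strictly closer to $p$ than $q$, contradicting the minimality of $\ell$.

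The main technical obstacle is the identification $\nabla f_i(q) = \gamma_i'(\ell)$: although geometrically intuitive, it requires carefully unpacking the derivative of the inverse exponential map at a non-conjugate point and justifying that the two local branches assemble into genuinely smooth functions $f_1, f_2$ on a common domain. Once this calculus is in hand, the rest of the argument reduces to elementary vector geometry in $T_qM$: $\gamma_1'(\ell)$ is parallel to $\gamma_1'(\ell) - \gamma_2'(\ell)$ if and only if $\gamma_2'(\ell) \in \{+\gamma_1'(\ell), -\gamma_1'(\ell)\}$, with the $+$ case ruled out by distinctness and the $-$ case being precisely the identity we seek.
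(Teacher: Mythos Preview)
The paper does not actually prove this theorem; it merely quotes it from Kobayashi \cite[Theorem 4.4]{Kob67}, so there is no ``paper's own proof'' to compare against. I will therefore assess your argument on its own.

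Your strategy for the identity $\gamma_1'(\ell)=-\gamma_2'(\ell)$ is the standard one and is essentially sound, but one step is asserted too quickly. You claim $\Sigma\subseteq\mathrm{Se}(p)$ on the grounds that points of $\Sigma$ admit two geodesics to $p$ of equal length; however, membership in $\mathrm{Se}(p)$ requires these geodesics to be \emph{minimizing}, which is not automatic. The fix uses precisely the hypothesis that $q$ is the \emph{nearest} cut point: since $\ell=d(p,\mathrm{Cu}(p))$, every unit vector $v\in T_pM$ satisfies $s(v)\ge\ell$, so any geodesic from $p$ of length strictly less than $\ell$ is minimal. Hence for the $q'\in\Sigma$ you produce with $f_1(q')=f_2(q')<\ell$, both branch geodesics are genuinely minimizing, and $q'\in\mathrm{Se}(p)\subseteq\mathrm{Cu}(p)$ as you need. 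You should make this explicit.

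The second identity $\gamma_1'(0)=-\gamma_2'(0)$ is where there is a real problem. Your ``swap the roles of $p$ and $q$'' reduction requires that $p$ be the nearest cut point of $q$, i.e.\ $d(q,\mathrm{Cu}(q))=\ell$, and nothing in the hypotheses guarantees this; the symmetry ``$q\in\mathrm{Cu}(p)\Leftrightarrow p\in\mathrm{Cu}(q)$'' does not extend to nearest cut points. In fact this smoothness at $p$ is \emph{not} part of the present statement: compare the very next result, \Cref{thm:ExistenseOfClosedGeodesic-2}, which explicitly adds the words ``starting and ending \emph{smoothly} at $p$'' under the extra hypothesis that $p$ minimizes $d(\cdot,\mathrm{Cu}(\cdot))$. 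In the present theorem the phrase ``closed geodesic starting and ending at $p$'' is to be read as a geodesic loop (smooth at $q$, possibly with a corner at $p$), and for that only $\gamma_1'(\ell)=-\gamma_2'(\ell)$ is required. So drop the second identity altogether; your argument for the first, once the gap above is closed, already proves the theorem.
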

\bigskip 
We can even make the above theorem sharper if we provide an additional condition. 

\vspace{0.3cm}
\begin{thm}\cite[Theorem 4.5]{Kob67}\label{thm:ExistenseOfClosedGeodesic-2}
    Let $p$ be any point in $M$ such that $d(p,\cutn[p])$ is the smallest and $q$ be any cut point closest to $p$. Then either $q$ is conjugate to $p$  with respect to a distance minimal geodesic joining $p$ and $q$ or $q$ is the mid-point of a closed geodesic starting and ending smoothly at $p$.
\end{thm}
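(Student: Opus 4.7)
By Theorem 4.4 above, either $q$ is the first conjugate point of $p$ along some distance-minimal geodesic, in which case the first alternative of the conclusion already holds, or there exist two distinct distance-minimal geodesics $\gamma_1, \gamma_2 \colon [0,L] \to M$ from $p$ to $q$ (with $L = d(p,q)$) satisfying $\gamma_1'(L) = -\gamma_2'(L)$, so that the concatenation $\gamma_1 \cdot \gamma_2^{-1}$ is a piecewise geodesic loop at $p$, smooth at its midpoint $q$. It suffices to upgrade this second case to smoothness at $p$: one must show $\gamma_1'(0) = -\gamma_2'(0)$.

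Suppose for contradiction that $\gamma_1'(0) \neq -\gamma_2'(0)$, so $v := \gamma_1'(0) + \gamma_2'(0)$ is a nonzero vector at $p$ and the angle $\theta$ between $\gamma_1'(0)$ and $\gamma_2'(0)$ is strictly less than $\pi$. Since we are in the non-conjugate case of Theorem 4.4, $q$ is not conjugate to $p$ along either $\gamma_i$, so $\exp_p$ is a local diffeomorphism near $L\gamma_i'(0)$. Hence for small $\epsilon > 0$ and $p_\epsilon := \exp_p(\epsilon v)$, the implicit function theorem produces two distinct geodesics $\gamma_i^\epsilon$ from $p_\epsilon$ to $q$ lying close to $\gamma_i$. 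The first-variation formula yields their lengths as
\begin{displaymath}
L_i(\epsilon) \;=\; L - \epsilon \langle v, \gamma_i'(0) \rangle + O(\epsilon^2) \;=\; L - \epsilon(1+\cos\theta) + O(\epsilon^2),
\end{displaymath}
both strictly less than $L$ for sufficiently small $\epsilon$.

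The crux is then a case analysis showing that $d(p_\epsilon, \cutn[p_\epsilon]) < L$, contradicting the global minimality hypothesis $d(p, \cutn[p]) = L$. If both $\gamma_i^\epsilon$ are distance-minimal from $p_\epsilon$ to $q$, then $q$ is joined to $p_\epsilon$ by two distinct distance-minimal geodesics, so $q \in \mathrm{Se}(p_\epsilon) \subseteq \cutn[p_\epsilon]$ at distance strictly less than $L$. Otherwise, at least one $\gamma_i^\epsilon$ fails to be distance-minimal from $p_\epsilon$ to $q$, and along that geodesic the distance-minimizing property must break at some parameter $T < L_i(\epsilon) < L$, producing a cut point of $p_\epsilon$ at distance less than $L$. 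Either alternative contradicts the hypothesis, forcing $\gamma_1'(0) = -\gamma_2'(0)$, whence $\gamma_1 \cdot \gamma_2^{-1}$ is a closed geodesic smooth at both $p$ and $q$. The delicate part is this final case analysis: it is precisely the \emph{global} rather than merely local minimization of the injectivity radius at $p$ that allows a first-variation perturbation to produce a contradicting cut point at a nearby base point.
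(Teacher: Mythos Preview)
The paper does not supply its own proof of this statement; it is quoted from Kobayashi with the explicit remark that ``we will not provide proofs. Instead, we will mention references for each.'' So there is no in-paper argument to compare against. Your sketch is the classical Klingenberg argument and is essentially correct.

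One small imprecision: you justify the existence of the perturbed geodesics $\gamma_i^\epsilon$ from $p_\epsilon$ to $q$ by saying ``$\exp_p$ is a local diffeomorphism near $L\gamma_i'(0)$.'' But $\exp_p$ governs geodesics issuing from $p$, not from $p_\epsilon$. What you actually need is that the map $(x,w)\mapsto (x,\exp_x w)$ on $TM$ is a local diffeomorphism near $(p,L\gamma_i'(0))$, which follows from the non-conjugacy hypothesis; equivalently, use that $\exp_q$ is a local diffeomorphism near $-L\gamma_i'(L)$ (conjugacy is symmetric) to produce geodesics from $q$ back to $p_\epsilon$. Either route closes the gap. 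The rest of the argument, including the first-variation computation and the final dichotomy, is sound.
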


	\chapter[Cut locus of a submanifold]{Cut locus of a submanifold}
\minitoc

\hf The cut locus of a point plays an important role in analyzing the local structure of a Riemannian manifold $M$. In the last chapter we had studied cut locus of a point, conjugate locus and some of their properties. Similarly, one can ask about the notion of cut locus for a non-empty subset of $M$. In order to give a similar definition, we need to first define distance minimal geodesics joining a point $p\in M$ to a subset $N\subseteq M$. If $q\in N$ such that the geodesic is distance minimal geodesic joining $p$ and $q$, and it minimizes the distance between the set $N$ and the point $p$ then we call such a geodesic a distance minimal geodesic. Now the cut locus of $N$ consists of all points $q\in M$ such that there exists a distance minimal geodesic which fails to be minimal beyond $q$. Conjugate locus is termed as focal locus if we replace point with submanifold. In this chapter we will study the cut locus of a subset, in particular, a submanifold. We will motivate the results based on some examples and the proofs will be discussed in the subsequent chapters.

\section{Cut locus of submanifolds}\label{sec:cutLocusOfSubmanifolds}
\hfb In order to have a definition of the cut locus for a submanifold (or a subset), we need to generalize the notion of a minimal geodesic.
\begin{defn}\label{distmin}\index{distance minimal geodesic}\index{$N$-geodesic}
    A geodesic $\gamma $ is called a \emph{distance minimal geodesic} joining $N$ to $p$ if there exists $q\in N$ such that $\gamma$ is a minimal geodesic joining $q$ to $p$ and $l(\gamma)= d(p,N) $. We will refer to such geodesics as \textit{$N$-geodesics}.\index{$N$-geodesic}
\end{defn}
\vspace{0.3cm}
\noindent If $N$ is an embedded submanifold, then an $N$-geodesic is necessarily orthogonal to $N$. This follows from the first variational principle. We are ready to define the cut locus for $N\subset M$. 

\begin{defn}[Cut locus of a subset]\index{cut locus of a subset}\label{cutlocus1}
    Let $M$ be a Riemannian manifold and $N$ be any non-empty subset of $M.$ If $\cutn$ denotes the \emph{cut locus of $N$}, then we say that $q\in \cutn $ if and only if there exists a distance minimal geodesic joining $N$ to $q$ such that any extension of it beyond $q$ is not a distance minimal geodesic.
\end{defn}
\bigskip
\begin{eg}\label{eg:Example-cutLocusOfx-Axis}
    Let $M=\mathbb{R}^2$ with the Euclidean metric and $N$ be the $x$-axis. Then the cut locus of $N$ will be empty. If we shoot any geodesic, which are straight lines, perpendicular to $x$-axis, these will never fail to be distance minimal and hence the cut locus will be empty.
    \begin{figure}[!htb]
        \centering
    \def\svgwidth{0.5\columnwidth}
\begingroup%
  \makeatletter%
  \providecommand\color[2][]{%
    \errmessage{(Inkscape) Color is used for the text in Inkscape, but the package 'color.sty' is not loaded}%
    \renewcommand\color[2][]{}%
  }%
  \providecommand\transparent[1]{%
    \errmessage{(Inkscape) Transparency is used (non-zero) for the text in Inkscape, but the package 'transparent.sty' is not loaded}%
    \renewcommand\transparent[1]{}%
  }%
  \providecommand\rotatebox[2]{#2}%
  \newcommand*\fsize{\dimexpr\f@size pt\relax}%
  \newcommand*\lineheight[1]{\fontsize{\fsize}{#1\fsize}\selectfont}%
  \ifx\svgwidth\undefined%
    \setlength{\unitlength}{422.340909bp}%
    \ifx\svgscale\undefined%
      \relax%
    \else%
      \setlength{\unitlength}{\unitlength * \real{\svgscale}}%
    \fi%
  \else%
    \setlength{\unitlength}{\svgwidth}%
  \fi%
  \global\let\svgwidth\undefined%
  \global\let\svgscale\undefined%
  \makeatother%
  \begin{picture}(1,0.44860352)%
    \lineheight{1}%
    \setlength\tabcolsep{0pt}%
    \put(0,0){\includegraphics[width=\unitlength,page=1]{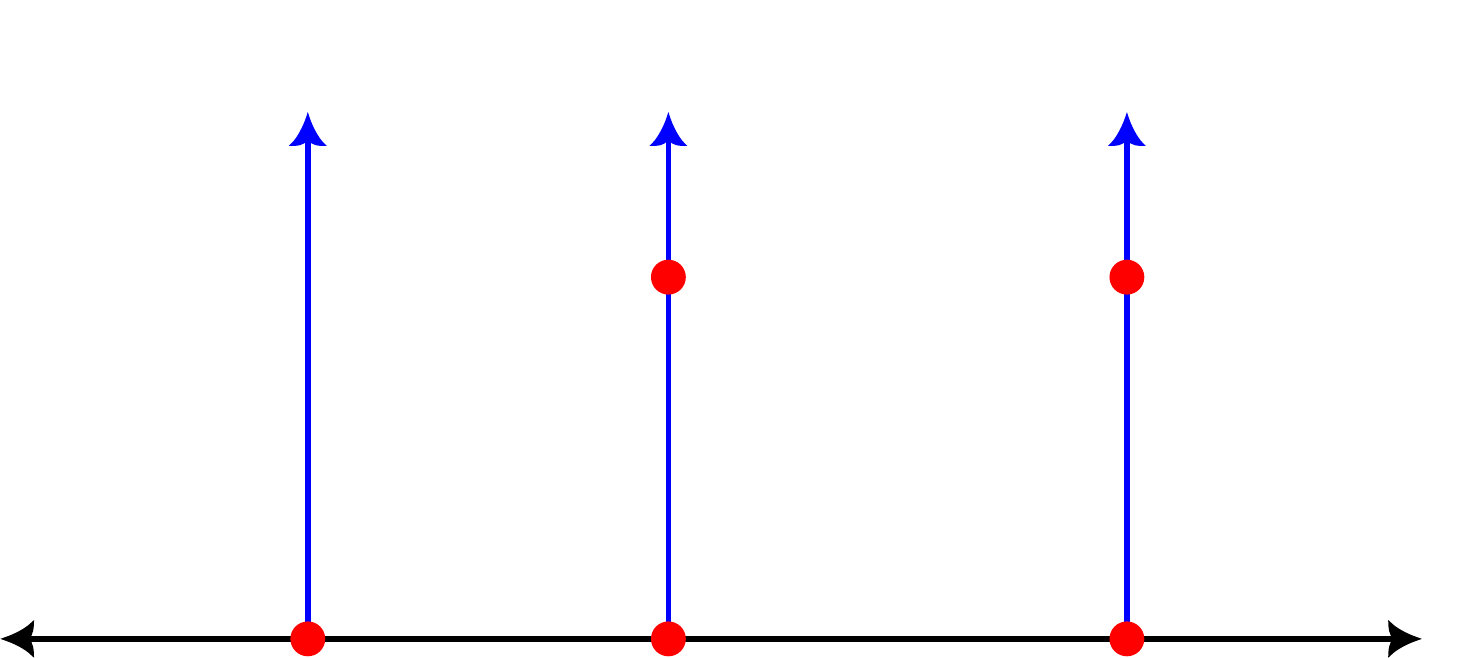}}%
    \put(0.89333997,0.03362074){\makebox(0,0)[lt]{\lineheight{1.25}\smash{\begin{tabular}[t]{l}$N$\end{tabular}}}}%
    \put(0,0){\includegraphics[width=\unitlength,page=2]{Example-CutLocusOfx-Axis.pdf}}%
    \put(0.09519414,0.40653885){\makebox(0,0)[lt]{\lineheight{1.25}\smash{\begin{tabular}[t]{l}Geodesics never fails to minimize\end{tabular}}}}%
  \end{picture}%
\endgroup%

        \caption{Cut locus of $x$-axis in $\mathbb{R}^2$ }
    \end{figure}
\end{eg}
\vspace{0.3cm}

\begin{eg}\label{eg:Example-cutLocusOfCircle}
    Let $M=\mathbb{R}^{n+1}$ with the Euclidean metric and $N=\mathbb{S}^n$. Then the cut locus will be the center of the sphere. Note that if we shoot a minimal geodesic from any point of $\mathbb{S}^n$, then it fails its minimizing property beyond the origin (see \Cref{fig:Example-CutLocusCircle}). Hence, $\bf{0}$ is a cut point. To see this is the only cut point, we start with any point $\mathbf{a}$ other than origin. Consider a distance minimal geodesic $\gamma$ starting at $\mathbb{S}^n$ to $\mathbf{a}$.  
    \begin{equation*}\label{eq:lineJoiningTwoPoints}
        \gamma(t) = (1-t) \frac{\mathbf{a}}{\left\|\mathbf{a}\right\|}+ t \mathbf{a},~0\le t\le 1.
    \end{equation*}
    Note that 
    \begin{displaymath}
        l(\gamma)=d(P,Q) = \left|1-\left\|\mathbf{a}\right\|\right|.
    \end{displaymath}
    Let $\epsilon=\frac{\left\|\mathbf{a}\right\|}{2}$. Consider the point $R=\gamma(1+\epsilon)$. We have
    \begin{align*}
        l(\gamma)\big|_{[0,1+\epsilon]} = d(P,R).
    \end{align*}
    \begin{figure}[!htbp]
        \centering
        \begin{subfigure}{.45\textwidth}
    \def\svgwidth{0.9\columnwidth}
\begingroup%
  \makeatletter%
  \providecommand\color[2][]{%
    \errmessage{(Inkscape) Color is used for the text in Inkscape, but the package 'color.sty' is not loaded}%
    \renewcommand\color[2][]{}%
  }%
  \providecommand\transparent[1]{%
    \errmessage{(Inkscape) Transparency is used (non-zero) for the text in Inkscape, but the package 'transparent.sty' is not loaded}%
    \renewcommand\transparent[1]{}%
  }%
  \providecommand\rotatebox[2]{#2}%
  \newcommand*\fsize{\dimexpr\f@size pt\relax}%
  \newcommand*\lineheight[1]{\fontsize{\fsize}{#1\fsize}\selectfont}%
  \ifx\svgwidth\undefined%
    \setlength{\unitlength}{270.62126917bp}%
    \ifx\svgscale\undefined%
      \relax%
    \else%
      \setlength{\unitlength}{\unitlength * \real{\svgscale}}%
    \fi%
  \else%
    \setlength{\unitlength}{\svgwidth}%
  \fi%
  \global\let\svgwidth\undefined%
  \global\let\svgscale\undefined%
  \makeatother%
  \begin{picture}(1,0.79078738)%
    \lineheight{1}%
    \setlength\tabcolsep{0pt}%
    \put(0,0){\includegraphics[width=\unitlength,page=1]{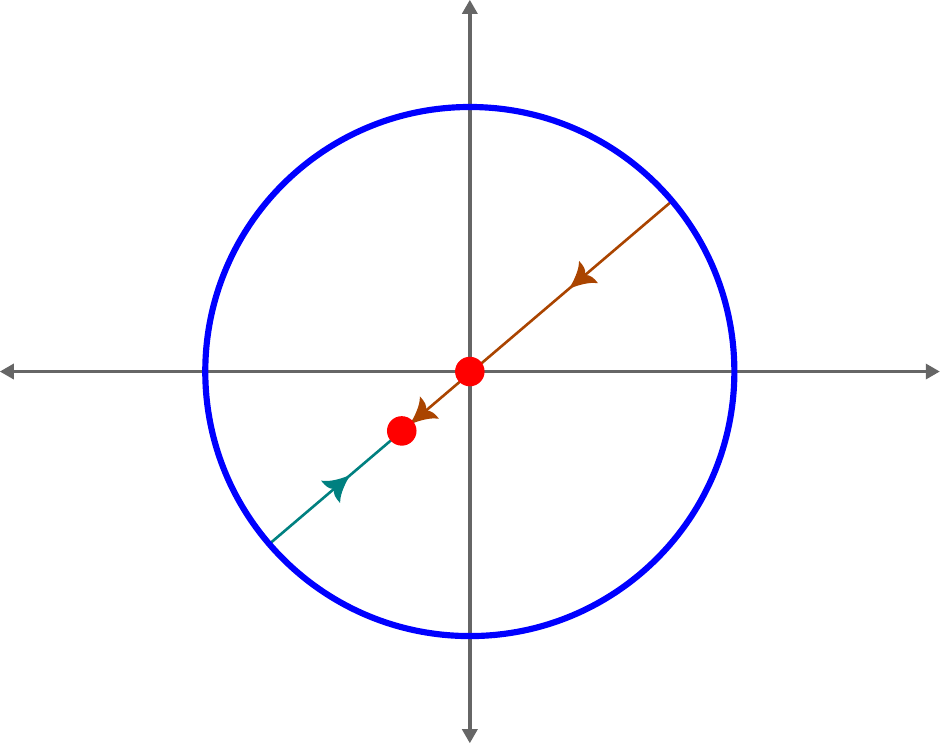}}%
    \put(0.53162493,0.33206525){\makebox(0,0)[lt]{\lineheight{1.25}\smash{\begin{tabular}[t]{l}$(0,0)$\end{tabular}}}}%
  \end{picture}%
\endgroup%

            \caption{$(0,0)$ is a cut point}
            \label{fig:Example-CutLocusCircle}
        \end{subfigure}
        \begin{subfigure}{.45\textwidth}
    \def\svgwidth{0.9\columnwidth}
\begingroup%
  \makeatletter%
  \providecommand\color[2][]{%
    \errmessage{(Inkscape) Color is used for the text in Inkscape, but the package 'color.sty' is not loaded}%
    \renewcommand\color[2][]{}%
  }%
  \providecommand\transparent[1]{%
    \errmessage{(Inkscape) Transparency is used (non-zero) for the text in Inkscape, but the package 'transparent.sty' is not loaded}%
    \renewcommand\transparent[1]{}%
  }%
  \providecommand\rotatebox[2]{#2}%
  \newcommand*\fsize{\dimexpr\f@size pt\relax}%
  \newcommand*\lineheight[1]{\fontsize{\fsize}{#1\fsize}\selectfont}%
  \ifx\svgwidth\undefined%
    \setlength{\unitlength}{270.62126917bp}%
    \ifx\svgscale\undefined%
      \relax%
    \else%
      \setlength{\unitlength}{\unitlength * \real{\svgscale}}%
    \fi%
  \else%
    \setlength{\unitlength}{\svgwidth}%
  \fi%
  \global\let\svgwidth\undefined%
  \global\let\svgscale\undefined%
  \makeatother%
  \begin{picture}(1,0.79078738)%
    \lineheight{1}%
    \setlength\tabcolsep{0pt}%
    \put(0,0){\includegraphics[width=\unitlength,page=1]{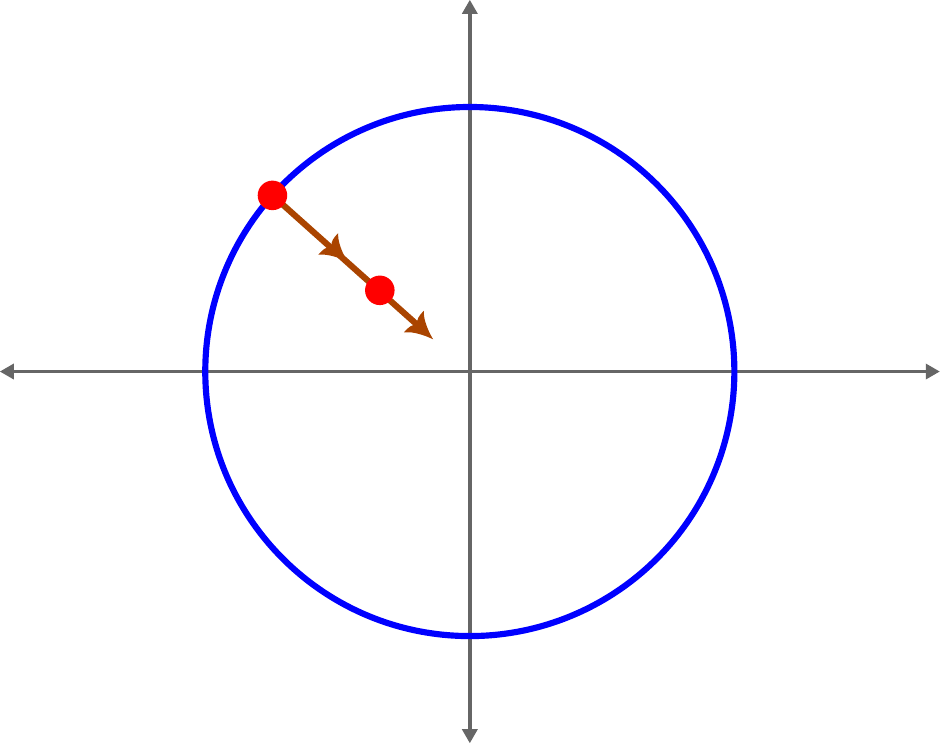}}%
    \put(0.26202462,0.60877853){\makebox(0,0)[lt]{\lineheight{1.25}\smash{\begin{tabular}[t]{l}$P$\end{tabular}}}}%
    \put(0.4019754,0.50614482){\makebox(0,0)[lt]{\lineheight{1.25}\smash{\begin{tabular}[t]{l}$\bf{a}$\end{tabular}}}}%
    \put(0.31436806,0.47474264){\color[rgb]{0.66666667,0.26666667,0}\makebox(0,0)[lt]{\lineheight{1.25}\smash{\begin{tabular}[t]{l}$\gamma$\end{tabular}}}}%
    \put(0,0){\includegraphics[width=\unitlength,page=2]{Example-CutLocusCircle-02.pdf}}%
    \put(0.47292787,0.41736657){\makebox(0,0)[lt]{\lineheight{1.25}\smash{\begin{tabular}[t]{l}$R$\end{tabular}}}}%
  \end{picture}%
\endgroup%

            \caption{No other points are cut points}
        \end{subfigure}
        \caption{Cut locus of $\mathbb{S}^1$ in $\mathbb{R}^2$}
    \end{figure}
    \noindent Now we will show that $d(\mathbb{S}^n,R)$ is same as the length of $\gamma$. Note that $R=(1+\epsilon)\mathbf{a}-\frac{\epsilon \mathbf{a}}{\left\|\mathbf{a}\right\|}$ which simplifies to $\frac{\mathbf{a}}{2}(1-\left\|\mathbf{a}\right\|) $. Note that 
    \begin{align*}
        d^2(\mathbb{S}^n,R) & = \inf_{\mathbf{x}\in \mathbb{S}^n} d^2 \left(\mathbf{x},\frac{\mathbf{a}}{2}(1-\left\|\mathbf{a}\right\|)\right) 
        \\[1ex]
        & = \min \left\{\left\|\mathbf{x}-{\mathbf{v}}\right\|^2:\|\mathbf{x}\|^2=1\right\},~\mathbf{v}=\frac{\mathbf{a}}{2}(1-\left\|\mathbf{a}\right\|).
    \end{align*}
    Set
    \begin{align*}
        f(\mathbf{x})=\left\|\mathbf{x}-\mathbf{v}\right\|^2,~g(\mathbf{x})= \left\|\mathbf{x}\right\|^2-1.
    \end{align*}
    So we want to minimize $f$ such that $g(\mathbf{x})=0$. 
    \begin{align*}
       \nabla f(\mathbf{x}) - \lambda \nabla g(\mathbf{x})= 0 & \implies (\mathbf{x}-\mathbf{v}) - \lambda \mathbf{x} = 0  \implies \mathbf{x} = \dfrac{\mathbf{v}}{1-\lambda}.
    \end{align*}
    Note that  the above quantity is well defined as $\mathbf{v}\neq 0$. Now we will use the given constrain,
    \begin{align*}
        \left\|\mathbf{x}\right\|=1 & \implies \left\|\mathbf{v}\right\|=|1-\lambda| \\
        & \implies \lambda = 1 \pm \left\|\mathbf{v}\right\| \\
        & \implies \mathbf{x} = \pm \frac{\mathbf{v}}{\left\|\mathbf{v}\right\|} = \pm \frac{\mathbf{a}}{\left\|\mathbf{a}\right\|}.
    \end{align*}
    The point $\frac{\mathbf{a}}{\left\|\mathbf{a}\right\|}$ corresponds to the minima and note that $P=\frac{\mathbf{a}}{\left\|\mathbf{a}\right\|}$. This proves the claim.
\end{eg}

\begin{eg}\label{eg:cutLocusOfS0}
    Let $M=\mathbb{S}^2$ with the round metric and $N=\mathbb{S}^0=\{\mathbf{e}_3,-\mathbf{e}_3\}$, where $\mathbf{e}_3=(0,0,1)$. We claim that the cut locus is the equator, $\{(x,y,0):x^2+y^2=1\}$. Note that if $\gamma$ is an $N$-geodesic (great circle) starting at the North Pole, then it remains distance minimal until it hits the equator circle. As soon as it goes beyond the circle, we can find another geodesic $\eta$ from the South Pole which is shorter and hence $\gamma$ is no longer distance minimal, see \Cref{fig:Example-CutLocus-SphereS0}. 
    \begin{figure}[!htb]
        \centering
    \def\svgwidth{0.45\columnwidth}
    \import{./figures/}{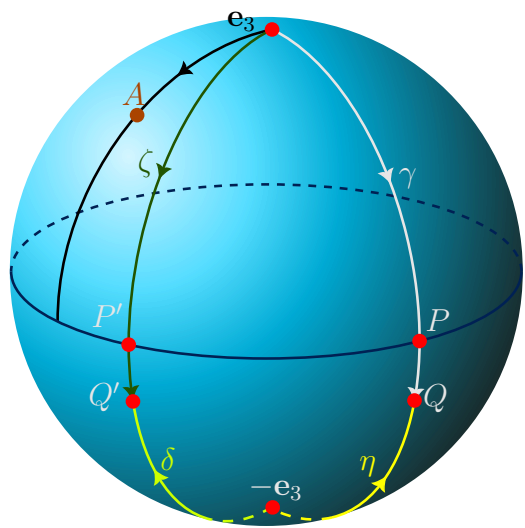}

        \caption{Cut locus of $\mathbb{S}^0$ in $2$-sphere \label{fig:Example-CutLocus-SphereS0}}
    \end{figure}

    \noindent Therefore, the equator circle is in the cut locus. We also note that any other point not on the equator is either on the top or bottom hemisphere. In either of the cases, a minimal geodesic does not fail its distance minimal property beyond the point. So the cut locus is precisely the circle with $z=0$. By the same argument it follows that the cut locus of $\mathbb{S}^0$ in $\mathbb{S}^n$ is
    \begin{displaymath}
        \mathbb{S}^{n-1}= \left\{(x_0,\cdots,x_{n-1},0): x_0^2+\cdots+x_{n-1}^2=1\right\}.
    \end{displaymath}
\end{eg}

\begin{eg}[Cut locus of equator in $2$-sphere]
    Let $M=\mathbb{S}^2$ with the round metric and $N=\mathbb{S}^1=\{(x,y,0):x^2+y^2=1\}$. The cut locus of $N$ is $\mathbb{S}^0=\left\{\mathbf{e}_3,-\mathbf{e}_3\right\}$. The argument is similar as above.
\end{eg}


\begin{eg}[Join induced by cut locus]\label{join}
    Let $\mathbb{S}_i^k \hookrightarrow \mathbb{S}^n$ denote the embedding of the $k$-sphere in the first $k+1$ coordinates while $\mathbb{S}^{n-k-1}_l$ denote the embedding of the $(n-k-1)$-sphere in the last $n-k$ coordinates. It can be seen that $\textup{Cu}(\mathbb{S}_i^k)=\mathbb{S}_l^{n-k-1}$. In fact, starting at a point $p\in \mathbb{S}^k_i$ and travelling along a unit speed geodesic in a direction normal to $T_p\mathbb{S}^k_i$, we obtain a cut point at a distance $\pi/2$ from $\mathbb{S}^k_i$.
    \begin{figure}[!htpb]
        \centering
    \def\svgwidth{0.45\columnwidth}
    \import{./figures/}{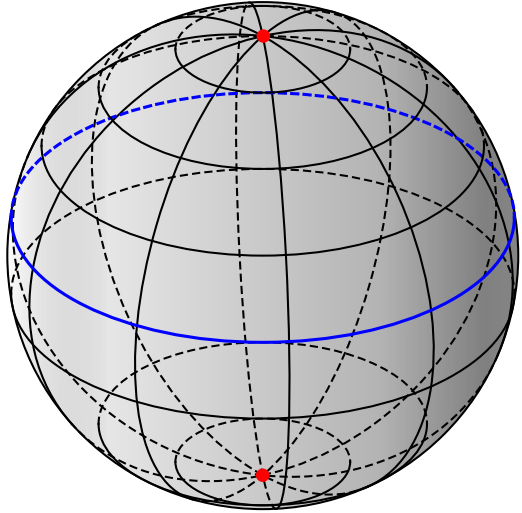}

        \caption{The cut locus of the equator in $\mathbb{S}^2$\label{fig:S2_equator_cutlocus}}
    \end{figure}
    
    \hf Moreover, in this case $\textup{Cu}(\mathbb{S}_l^{n-k-1})=\mathbb{S}_i^{k}$ and the $n$-sphere $\mathbb{S}^n$ can be expressed as the union of geodesic segments joining $\mathbb{S}_i^k$ to $\mathbb{S}_l^{n-k-1}$. This is a geometric variant of the fact that the $n$-sphere is the (topological) join of $\mathbb{S}^k$ and $\mathbb{S}^{n-k-1}$. We also observe that $\mathbb{S}^n - \mathbb{S}_l^{n-k-1}$ deforms to $\mathbb{S}_i^k$ while $\mathbb{S}^n - \mathbb{S}_i^{k}$ deforms to $\mathbb{S}_l^{n-k-1}$. 
    
    \hf In our example, let $\nu_i^{n-k}$ and $\nu_l^{k+1}$ denote the normal bundles of $\mathbb{S}_i^k$ and $\mathbb{S}_l^{n-k-1}$ respectively. We may express $\mathbb{S}^n$ as the union of normal disk bundles $D(\nu_i)$ and $D(\nu_l)$. These disk bundles are trivial and are glued along their common boundary $\mathbb{S}^k_i\times \mathbb{S}_l^{n-k-1}$ to produce $\mathbb{S}^n$. Moreover, $\mathbb{S}^k_i$ is an analytic submanifold of the real analytic Riemannian manifold $\mathbb{S}^n$ with the round metric. There is a generalization of this phenomenon  \cite[Lemmas 1.3-1.5, Theorem 3.1]{Omo68}.
    \begin{thm}[Omori 1968]
        Let $M$ be a compact, connected, real analytic Riemannian manifold which has an analytic submanifold $N$ such that the cut  point of $N$ with respect to every geodesic, which starts from $N$ and whose initial  direction is orthogonal to $N$ has a constant distance $\pi$ from $N$. Then $N'=\cutn$ is an analytic submanifold and $M$ has a decomposition $M = DN\cup_{\varphi} DN'$, where $DN,  DN'$ are normal disk bundles of $N, N'$ respectively.
    \end{thm}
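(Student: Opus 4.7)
The plan is to realize $M$ as a gluing of two closed normal disk bundles of radius $\pi$, one around $N$ and one around $N'=\cutn$. Let $\overline{D}_\pi(\nu)$ and $S_\pi(\nu)$ denote the closed disk and sphere bundles of radius $\pi$ inside $\nu$. First, by compactness of $M$ (hence geodesic completeness) and the hypothesis that every unit-speed normal geodesic from $N$ is distance minimal on $[0,\pi]$, the restricted normal exponential $\widetilde{\exp}_\nu:\overline{D}_\pi(\nu)\to M$ is a well-defined continuous surjection that is an analytic diffeomorphism from the open disk bundle onto $M\setminus N'$ and carries $S_\pi(\nu)$ onto $N'$. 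Surjectivity follows because any point of $M$ is joined to $N$ by a distance-minimizing normal geodesic of length at most $\pi$.

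Second, I would establish that $N'$ is an analytic submanifold. The restriction $e_\pi:S_\pi(\nu)\to M$ is a real analytic map between real analytic manifolds whose image is $N'$. The subtle point is to rule out rank drops: any point $v_0$ at which $de_\pi$ drops rank would produce a non-trivial Jacobi field along $\gamma_{v_0}$ vanishing at time $\pi$, hence a focal point of $N$ at distance $\le \pi$; analyticity then forces focal points to appear strictly inside $[0,\pi]$ at nearby normal directions as well, which together with the characterization in \Cref{thm:CharacterizationOfCutLocusInTermsOfConjugatePoint} contradicts the assumption that the cut distance along every normal geodesic is exactly $\pi$. Consequently $e_\pi$ has constant rank, and by the analytic constant-rank theorem $N'$ is an analytic submanifold of $M$ and $e_\pi:S_\pi(\nu)\to N'$ is an analytic fiber bundle.

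Third, I would verify the symmetric statement for $N'$. For $q\in N'$ and $v\in e_\pi^{-1}(q)$, a first-variation argument applied to the fiber structure of $e_\pi$ shows that $-\gamma_v'(\pi)$ is orthogonal to $T_qN'$. Retracing geodesics from $N'$ into $M$ and invoking the hypothesis together with paragraph two gives $\cutn[N']=N$ and a distance-minimality window of length $\pi$ for every normal $N'$-geodesic, so $\widetilde{\exp}_{\nu'}:\overline{D}_\pi(\nu')\to M$ is also a surjection that is an analytic diffeomorphism off $N$. Define $\varphi:S_\pi(\nu)\to S_\pi(\nu')$ by sending $v\in (T_pN)^\perp$ to $-\gamma_v'(\pi)\in (T_{e_\pi(v)}N')^\perp$. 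The two disk bundles cover $M$, overlap only along their boundary images (both equal to $N'$ via $\widetilde{\exp}_\nu$ and to $N$ via $\widetilde{\exp}_{\nu'}$ resp.), and glue along $\varphi$ to produce $M=DN\cup_\varphi DN'$. The main obstacle is the constant-rank step in paragraph two: without real analyticity the cut locus can be highly singular, and passing from ``all cut points at the same distance'' to ``$N'$ is a smooth submanifold'' genuinely requires the analytic category together with a careful Jacobi-field analysis at the cut time.
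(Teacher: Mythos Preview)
The paper does not prove this theorem; it is quoted verbatim inside Example~\ref{join} as a known result of Omori, with the citation \cite[Lemmas 1.3--1.5, Theorem 3.1]{Omo68}, to explain the phenomenon observed for $\mathbb{S}^k_i\hookrightarrow\mathbb{S}^n$. So there is no proof in the paper to compare your proposal against.

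Your outline follows the natural strategy, but the constant-rank step in your second paragraph has a genuine gap. You argue that a rank drop of $e_\pi$ at $v_0$ produces a focal point of $N$ at distance $\le\pi$, and that ``analyticity then forces focal points to appear strictly inside $[0,\pi]$ at nearby normal directions,'' contradicting the cut-time hypothesis. This inference is false: take $M=\mathbb{S}^n$ with the round metric and $N=\{p\}$. Every unit normal direction has its first focal point at distance exactly $\pi$, the cut time is $\pi$ in every direction, and $e_\pi:S_\pi(\nu)\to M$ is the constant map to $-p$, so $de_\pi$ has rank $0$ everywhere. Focal points sitting exactly at the cut time are fully compatible with the hypothesis, and nearby focal times need not drop below $\pi$. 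What you actually need is that the rank of $e_\pi$ is \emph{constant} on $S_\pi(\nu)$ (possibly less than maximal), not that it is everywhere full; your argument as written tries to rule out focal points at $\pi$ altogether, which is too strong. Omori's proof obtains the analytic submanifold structure of $N'$ by a more delicate route, and your sketch would need a genuine replacement for this step before the gluing in your third paragraph can proceed.
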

\end{eg}

\subsection{Separating set}\label{subsec:separatingSet}
\hfb In all the examples in the previous section, we observed the cut locus of any submanifold $N$ is same as the set of all points which has at least two minimal geodesics joining $N$ to the point. This leads to the following definition.

\begin{defn}[Separating set]\label{defn:SeparatingSet}\index{separating set}
    Let $N$ be a subset of a Riemannian manifold $M$. The set $\sen$, called the \textit{separating set}, consists of all points $q\in M$ such that at least two distance minimal geodesics from $N$ to $q$ exist.
\end{defn}
\begin{figure}[!htpb]
    \centering
    \def\svgwidth{0.5\columnwidth}
\begingroup%
  \makeatletter%
  \providecommand\color[2][]{%
    \errmessage{(Inkscape) Color is used for the text in Inkscape, but the package 'color.sty' is not loaded}%
    \renewcommand\color[2][]{}%
  }%
  \providecommand\transparent[1]{%
    \errmessage{(Inkscape) Transparency is used (non-zero) for the text in Inkscape, but the package 'transparent.sty' is not loaded}%
    \renewcommand\transparent[1]{}%
  }%
  \providecommand\rotatebox[2]{#2}%
  \newcommand*\fsize{\dimexpr\f@size pt\relax}%
  \newcommand*\lineheight[1]{\fontsize{\fsize}{#1\fsize}\selectfont}%
  \ifx\svgwidth\undefined%
    \setlength{\unitlength}{347.61040635bp}%
    \ifx\svgscale\undefined%
      \relax%
    \else%
      \setlength{\unitlength}{\unitlength * \real{\svgscale}}%
    \fi%
  \else%
    \setlength{\unitlength}{\svgwidth}%
  \fi%
  \global\let\svgwidth\undefined%
  \global\let\svgscale\undefined%
  \makeatother%
  \begin{picture}(1,0.81136593)%
    \lineheight{1}%
    \setlength\tabcolsep{0pt}%
    \put(0,0){\includegraphics[width=\unitlength,page=1]{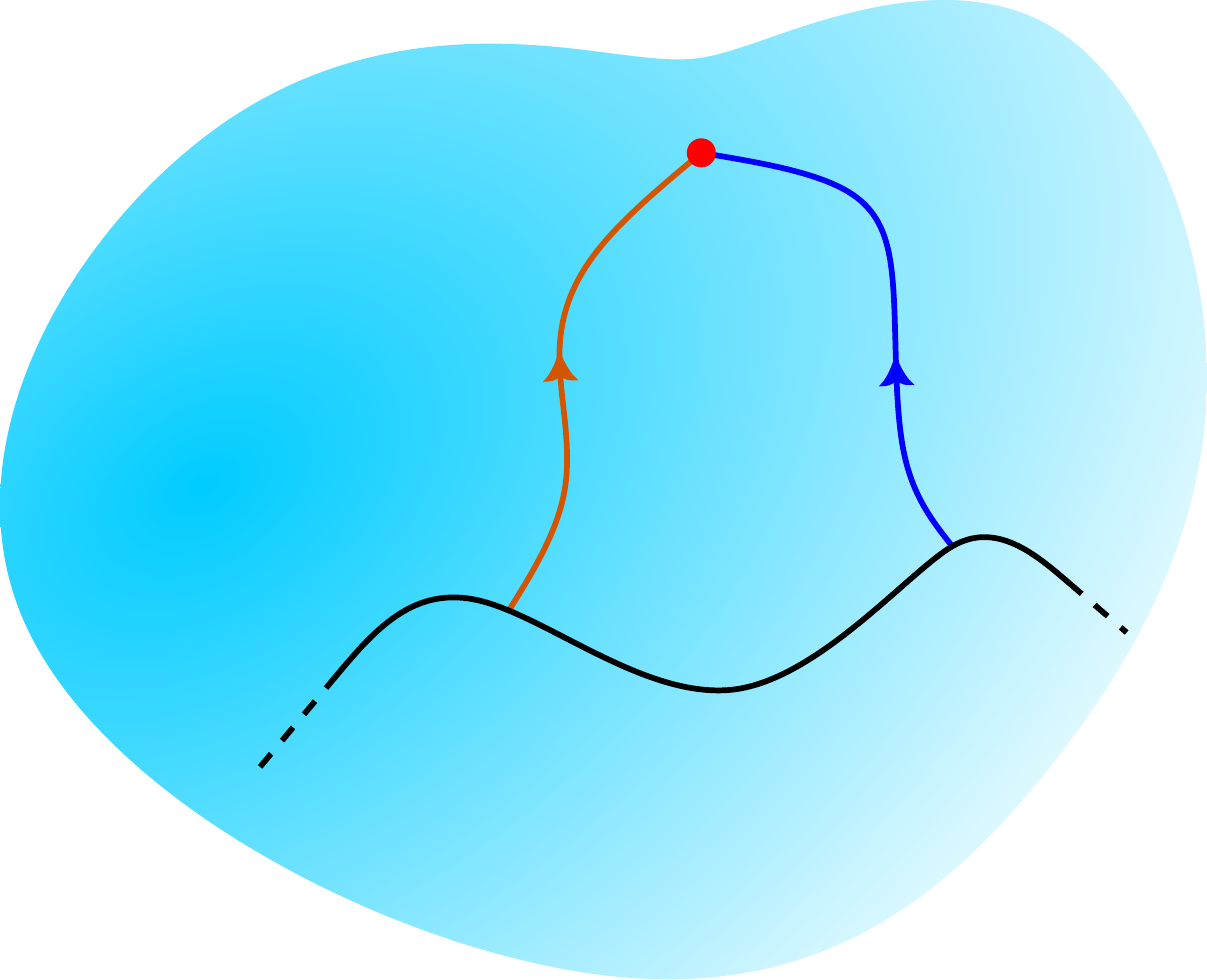}}%
    \put(0.18256893,0.50747148){\makebox(0,0)[lt]{\lineheight{1.25}\smash{\begin{tabular}[t]{l}$M$\end{tabular}}}}%
    \put(0.54743335,0.18709238){\makebox(0,0)[lt]{\lineheight{1.25}\smash{\begin{tabular}[t]{l}$N$\end{tabular}}}}%
    \put(0.42248357,0.51284121){\color[rgb]{0.83137255,0.33333333,0}\makebox(0,0)[lt]{\lineheight{1.25}\smash{\begin{tabular}[t]{l}$\gamma$\end{tabular}}}}%
    \put(0.75686376,0.52296249){\color[rgb]{0,0,1}\makebox(0,0)[lt]{\lineheight{1.25}\smash{\begin{tabular}[t]{l}$\eta$\end{tabular}}}}%
    \put(0.54960043,0.70292147){\makebox(0,0)[lt]{\lineheight{1.25}\smash{\begin{tabular}[t]{l}$P$\end{tabular}}}}%
  \end{picture}%
\endgroup%

    \caption{Separating set of $N$ \label{fig:SeparatingSet}}
\end{figure}

\vspace{0.3cm}
\hf The following example shows that for a given submanifold $N\subseteq M$, the separating set need not be same as the cut locus.

\begin{eg}[Cut locus of ellipse]\label{eg:CutLocusOfEllipse}
    Let $M=\mathbb{R}^2$ with the Euclidean metric and $N= \left\{(x,y):\frac{x^2}{a^2}+\frac{y^2}{b^2}=1\right\}$ for some non-zero real numbers $a$ and $b$ with $a\neq b$.
    \begin{figure}[!htb]
        \centering
    \def\svgwidth{0.5\columnwidth}
\begingroup%
  \makeatletter%
  \providecommand\color[2][]{%
    \errmessage{(Inkscape) Color is used for the text in Inkscape, but the package 'color.sty' is not loaded}%
    \renewcommand\color[2][]{}%
  }%
  \providecommand\transparent[1]{%
    \errmessage{(Inkscape) Transparency is used (non-zero) for the text in Inkscape, but the package 'transparent.sty' is not loaded}%
    \renewcommand\transparent[1]{}%
  }%
  \providecommand\rotatebox[2]{#2}%
  \newcommand*\fsize{\dimexpr\f@size pt\relax}%
  \newcommand*\lineheight[1]{\fontsize{\fsize}{#1\fsize}\selectfont}%
  \ifx\svgwidth\undefined%
    \setlength{\unitlength}{482.82491037bp}%
    \ifx\svgscale\undefined%
      \relax%
    \else%
      \setlength{\unitlength}{\unitlength * \real{\svgscale}}%
    \fi%
  \else%
    \setlength{\unitlength}{\svgwidth}%
  \fi%
  \global\let\svgwidth\undefined%
  \global\let\svgscale\undefined%
  \makeatother%
  \begin{picture}(1,0.69454453)%
    \lineheight{1}%
    \setlength\tabcolsep{0pt}%
    \put(0,0){\includegraphics[width=\unitlength,page=1]{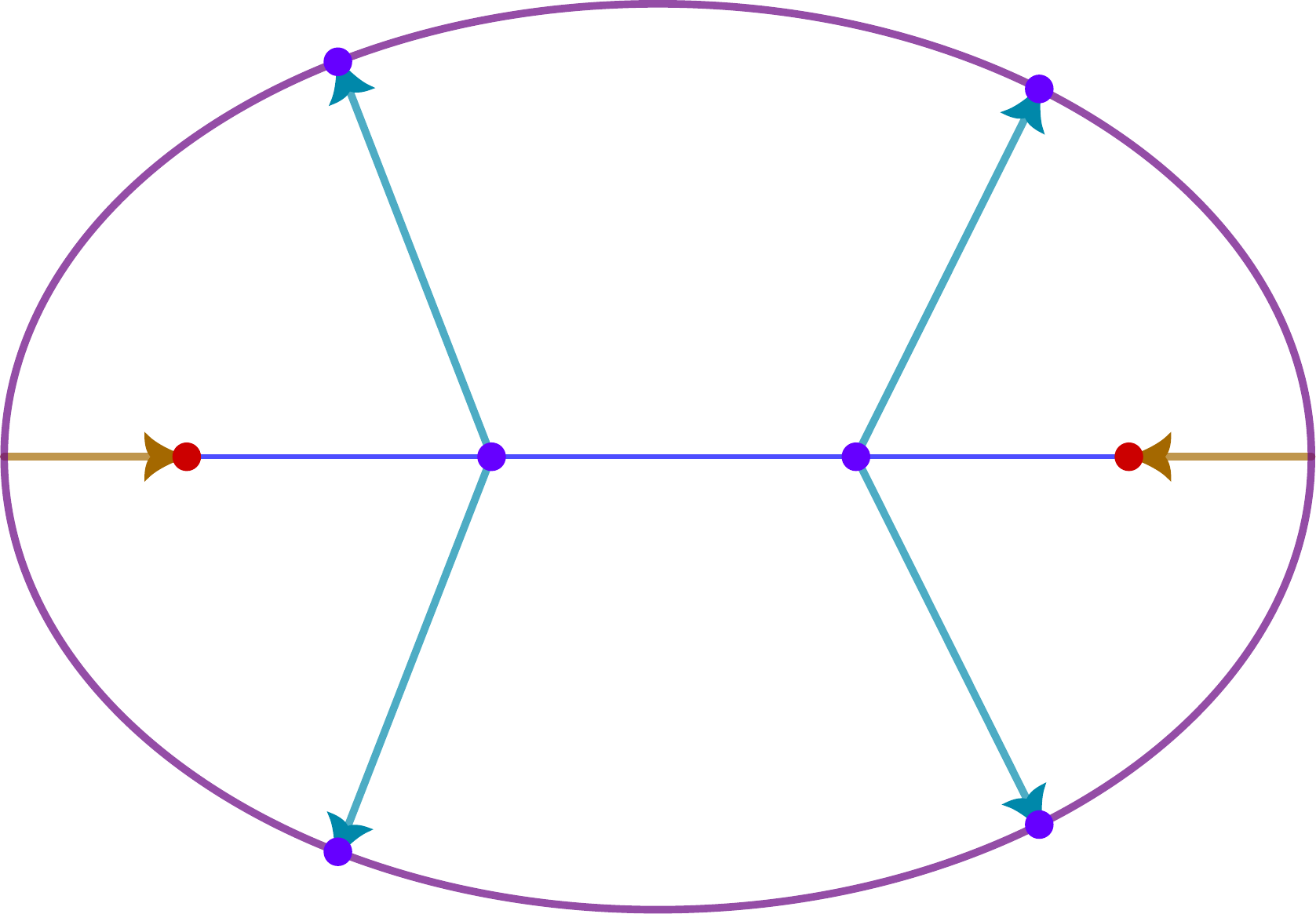}}%
    \put(0.11885568,0.2921624){\makebox(0,0)[lt]{\lineheight{1.25}\smash{\begin{tabular}[t]{l}$A$\end{tabular}}}}%
    \put(0.84077071,0.28535783){\makebox(0,0)[lt]{\lineheight{1.25}\smash{\begin{tabular}[t]{l}$B$\end{tabular}}}}%
    \put(0.36148978,0.2921624){\makebox(0,0)[lt]{\lineheight{1.25}\smash{\begin{tabular}[t]{l}$C$\end{tabular}}}}%
  \end{picture}%
\endgroup%

        \caption{$\cutn$ and $\sen$ \label{fig:Example-CutLocusEllipse}}
    \end{figure}
    Let $A=(-a,0)$ and $B=(a,0)$ be two foci of the ellipse. Note that for any point $C=(x,0)$ with $x\in (-a,a)$, we have two $N$-geodesics joining $N$ to $C$. Hence, all the points are separating point (see \Cref{fig:Example-CutLocusEllipse}). However, the two foci are not separating points, but they are in the cut locus. So $\sen \neq \cutn$. 
\end{eg}

\vspace{0.3cm}
\noindent Note that $\sen\subset \cutn$. Although the sets $\sen$ and $\cutn$ are not same, in general, we can ask whether including the limit points of $\sen$ make them equal. In the next chapter, we will see that indeed this is the case, and we have $\overline{\sen}=\cutn$ (\Cref{thm:SeClosureIsCutLocus}). \label{Page:SeClosureIsCu} This, in particular, proves that cut locus is a closed set. In general, the cut locus of a subset need not be closed, as illustrated by the following example \cite{TaSa16}.
\begin{eg}[Sabau-Tanaka 2016]
    Consider $\rbb^2$ with the Euclidean inner product. Let $\curlybracket{\theta_n}$, with $\theta_1\in (0,\pi)$, be a decreasing sequence converging to $0$. Let $\overline{B(\mathbf{0},1)}$ be the closed unit ball centered at $(0,0)$. Suppose $B_n:=B(q_n,1)$ is the open ball with radius $1$ and centered at $q_n$. We have chosen $q_n$ such that it does not belong to $\overline{B(\mathbf{0},1)}$ and denotes the center of the circle passing through $p_n=(\cos\theta_n,\sin\theta_n)$ and $p_{n+1} = (\cos\theta_{n+1},\sin\theta_{n+1})$. Define $ N\subset \rbb^2$ by
    \begin{displaymath}
        N \defeq \overline{B(\mathbf{0},1)}\setminus \cup_{n=1}^\infty B(q_n,1).
    \end{displaymath}
    \begin{figure}[!htpb]
        \centering
        \includegraphics[scale=0.7]{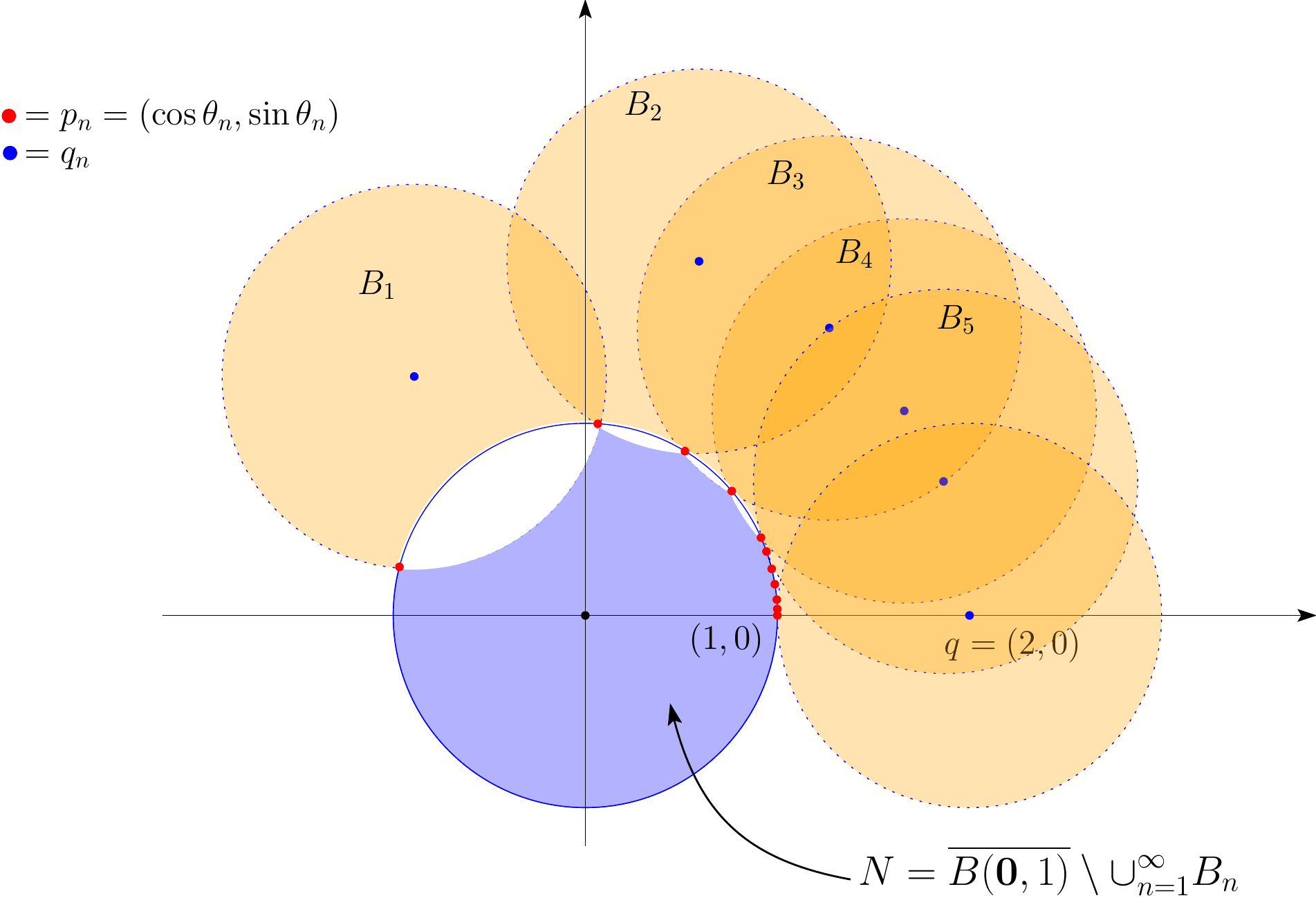}
        \caption{Cut locus need not be closed}
        \label{fig:CutLocusNotClosed}
    \end{figure}
    
    \noindent Note that $N$ is a closed set and the sequence $\curlybracket{q_n}$ of cut points of $N$ converges to the point $(2,0)$. However, $(2,0)$ is not a cut point of $N$.
\end{eg}

\hf Using the characterization of cut locus in terms of the separating set, we will list some more examples. Most of the justification is provided by the help of pictures. 

\begin{eg}[Cut locus of $k$ points on the unit circle]
    Let $M=\mathbb{S}^1$ with the round metric and $N=\{A_1,A_2,A_3\}$. Then the cut locus will be $\{B_{12},B_{23},B_{31}\}$, see \Cref{fig:Example-CutLocus-CircleThreePoints}.
    \begin{figure}[!htpb]
        \centering
    \def\svgwidth{0.45\columnwidth}
\begingroup%
  \makeatletter%
  \providecommand\color[2][]{%
    \errmessage{(Inkscape) Color is used for the text in Inkscape, but the package 'color.sty' is not loaded}%
    \renewcommand\color[2][]{}%
  }%
  \providecommand\transparent[1]{%
    \errmessage{(Inkscape) Transparency is used (non-zero) for the text in Inkscape, but the package 'transparent.sty' is not loaded}%
    \renewcommand\transparent[1]{}%
  }%
  \providecommand\rotatebox[2]{#2}%
  \newcommand*\fsize{\dimexpr\f@size pt\relax}%
  \newcommand*\lineheight[1]{\fontsize{\fsize}{#1\fsize}\selectfont}%
  \ifx\svgwidth\undefined%
    \setlength{\unitlength}{258.31026573bp}%
    \ifx\svgscale\undefined%
      \relax%
    \else%
      \setlength{\unitlength}{\unitlength * \real{\svgscale}}%
    \fi%
  \else%
    \setlength{\unitlength}{\svgwidth}%
  \fi%
  \global\let\svgwidth\undefined%
  \global\let\svgscale\undefined%
  \makeatother%
  \begin{picture}(1,1.00020321)%
    \lineheight{1}%
    \setlength\tabcolsep{0pt}%
    \put(0,0){\includegraphics[width=\unitlength,page=1]{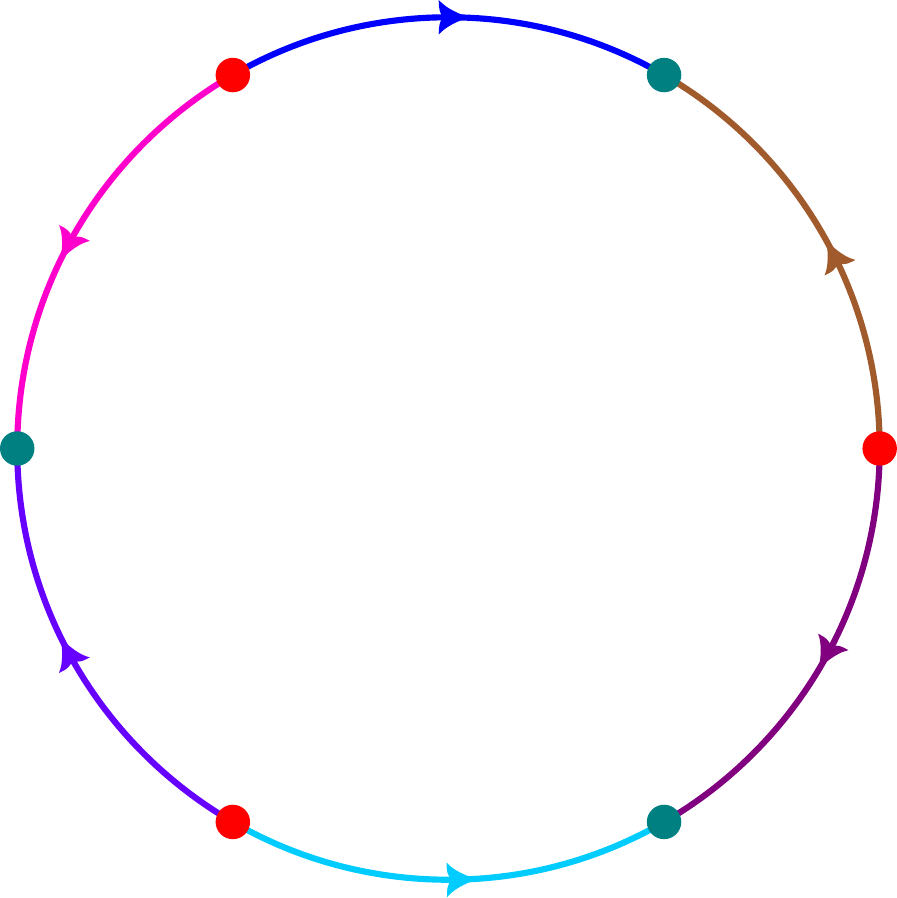}}%
    \put(0.27549432,0.86308076){\makebox(0,0)[lt]{\lineheight{1.25}\smash{\begin{tabular}[t]{l}$A_1$\end{tabular}}}}%
    \put(0.86805031,0.48510767){\makebox(0,0)[lt]{\lineheight{1.25}\smash{\begin{tabular}[t]{l}$A_2$\end{tabular}}}}%
    \put(0.26252413,0.10923888){\makebox(0,0)[lt]{\lineheight{1.25}\smash{\begin{tabular}[t]{l}$A_3$\end{tabular}}}}%
    \put(0.65356311,0.84282437){\makebox(0,0)[lt]{\lineheight{1.25}\smash{\begin{tabular}[t]{l}$B_{12}$\end{tabular}}}}%
    \put(0.62184548,0.12089804){\makebox(0,0)[lt]{\lineheight{1.25}\smash{\begin{tabular}[t]{l}$B_{23}$\end{tabular}}}}%
    \put(0.04491605,0.48562759){\makebox(0,0)[lt]{\lineheight{1.25}\smash{\begin{tabular}[t]{l}$B_{31}$\end{tabular}}}}%
  \end{picture}%
\endgroup%

        \caption{Cut locus of three points on unit circle \label{fig:Example-CutLocus-CircleThreePoints}}
    \end{figure}
    The above example can be generalized for any $k$-points on $\mathbb{S}^1$. The cut locus of $\{A_1,A_2,\cdots,A_k\}$ will be $\{B_{12},B_{23},\cdots, B_{k1}\}$ where $B_{i i+1}$ is the mid-point of $A_i$  and $A_{i+1}$.
\end{eg}

\begin{eg}[Cut locus of $k$ points on $\mathbb{S}^2$]\label{eg:cutLocusOfkPoints}
    Let $A_1,A_2$ and $A_3$ be three points on the equator. The cut locus will be half great circles passing through the mid-points $B_{12},B_{23}$ and $B_{31}$, see \Cref{fig:Example-CutLocus-SphereThreePoints}. In fact, all these semicircles are the separating set of $\{A_1,A_2,A_3\}$, being closed the closure is itself. So, the cut locus is homotopic to wedge of two circles. 
    \begin{figure}[!htpb]
        \centering
        \begin{subfigure}{0.65\textwidth}
            \centering
    \def\svgwidth{0.55\columnwidth}
\begingroup%
  \makeatletter%
  \providecommand\color[2][]{%
    \errmessage{(Inkscape) Color is used for the text in Inkscape, but the package 'color.sty' is not loaded}%
    \renewcommand\color[2][]{}%
  }%
  \providecommand\transparent[1]{%
    \errmessage{(Inkscape) Transparency is used (non-zero) for the text in Inkscape, but the package 'transparent.sty' is not loaded}%
    \renewcommand\transparent[1]{}%
  }%
  \providecommand\rotatebox[2]{#2}%
  \newcommand*\fsize{\dimexpr\f@size pt\relax}%
  \newcommand*\lineheight[1]{\fontsize{\fsize}{#1\fsize}\selectfont}%
  \ifx\svgwidth\undefined%
    \setlength{\unitlength}{315.65975742bp}%
    \ifx\svgscale\undefined%
      \relax%
    \else%
      \setlength{\unitlength}{\unitlength * \real{\svgscale}}%
    \fi%
  \else%
    \setlength{\unitlength}{\svgwidth}%
  \fi%
  \global\let\svgwidth\undefined%
  \global\let\svgscale\undefined%
  \makeatother%
  \begin{picture}(1,0.95039041)%
    \lineheight{1}%
    \setlength\tabcolsep{0pt}%
    \put(0,0){\includegraphics[width=\unitlength,page=1]{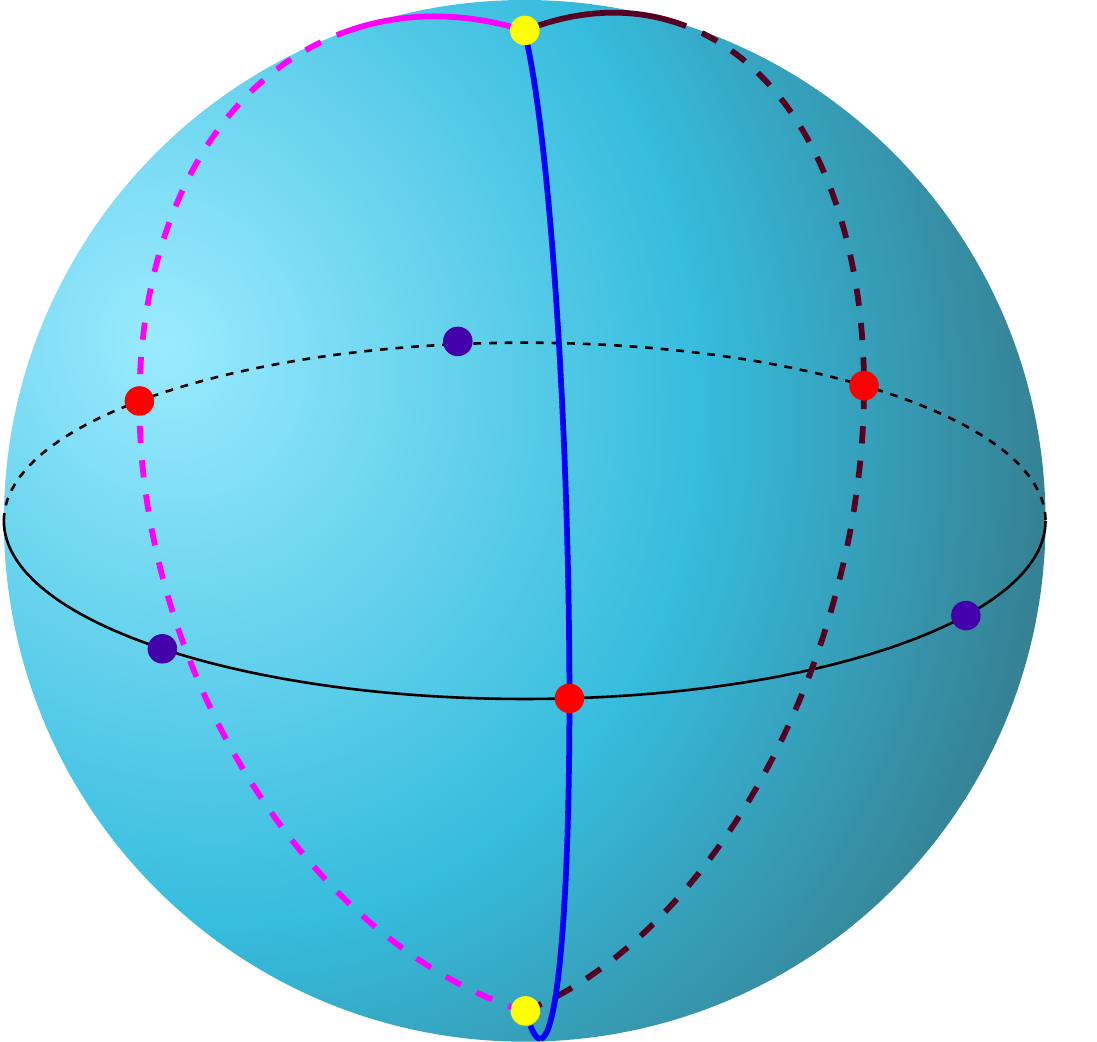}}%
    \put(0.05906534,0.30494103){\makebox(0,0)[lt]{\lineheight{1.25}\smash{\begin{tabular}[t]{l}$A_2$\end{tabular}}}}%
    \put(0.89202312,0.34727297){\makebox(0,0)[lt]{\lineheight{1.25}\smash{\begin{tabular}[t]{l}$A_1$\end{tabular}}}}%
    \put(0.35363588,0.67614496){\makebox(0,0)[lt]{\lineheight{1.25}\smash{\begin{tabular}[t]{l}$A_3$\end{tabular}}}}%
    \put(0.80176122,0.59851656){\makebox(0,0)[lt]{\lineheight{1.25}\smash{\begin{tabular}[t]{l}$B_{13}$\end{tabular}}}}%
    \put(-0.0027101,0.60610041){\makebox(0,0)[lt]{\lineheight{1.25}\smash{\begin{tabular}[t]{l}$B_{23}$\end{tabular}}}}%
    \put(0.53301482,0.25668591){\makebox(0,0)[lt]{\lineheight{1.25}\smash{\begin{tabular}[t]{l}$B_{12}$\end{tabular}}}}%
  \end{picture}%
\endgroup%

            \caption{Cut locus of three points in $\mathbb{S}^2$}
            \label{fig:Example-CutLocus-SphereThreePoints}
        \end{subfigure}
        \begin{subfigure}{0.25\textwidth}
            \centering
    \def\svgwidth{0.8\columnwidth}
    \import{./figures/}{Example-CutLocus-SphereThreePoints-02.pdf_tex}

            \caption{Cut locus is homotopic equivalent to $\mathbb{S}^1\vee \mathbb{S}^1$ }
            \label{fig:Example-CutLocus-SphereThreePoints-02}
        \end{subfigure}
        \caption{Cut locus of three points in $\mathbb{S}^2$}
    \end{figure}
    The same can be generalized for $k$-points on the equator of $\mathbb{S}^2$ to conclude that the cut locus is homotopic to $\vee_{k-1}\mathbb{S}^1$.  Similarly, one can show that cut locus of $k$-points in $\mathbb{S}^n$ is homotopic to $\vee_{k-1}\mathbb{S}^{n-1}$. In this example also, the separating set is same as the cut locus as the separating set is closed. 
\end{eg}

\hf The above example, in particular, shows that cut locus need not be a manifold.

\begin{eg}
    Let $M=\mathbb{R}^2$ with the Euclidean metric and $N$ be the wedge of two circles. The cut locus of $N$ consists of centers of these two circles and the $y$-axis with origin removed, see \Cref{fig:Example-CutLocus-Figure8}.
    \begin{figure}[!htb]
        \centering
    \def\svgwidth{0.6\columnwidth}
\begingroup%
  \makeatletter%
  \providecommand\color[2][]{%
    \errmessage{(Inkscape) Color is used for the text in Inkscape, but the package 'color.sty' is not loaded}%
    \renewcommand\color[2][]{}%
  }%
  \providecommand\transparent[1]{%
    \errmessage{(Inkscape) Transparency is used (non-zero) for the text in Inkscape, but the package 'transparent.sty' is not loaded}%
    \renewcommand\transparent[1]{}%
  }%
  \providecommand\rotatebox[2]{#2}%
  \newcommand*\fsize{\dimexpr\f@size pt\relax}%
  \newcommand*\lineheight[1]{\fontsize{\fsize}{#1\fsize}\selectfont}%
  \ifx\svgwidth\undefined%
    \setlength{\unitlength}{291.56121963bp}%
    \ifx\svgscale\undefined%
      \relax%
    \else%
      \setlength{\unitlength}{\unitlength * \real{\svgscale}}%
    \fi%
  \else%
    \setlength{\unitlength}{\svgwidth}%
  \fi%
  \global\let\svgwidth\undefined%
  \global\let\svgscale\undefined%
  \makeatother%
  \begin{picture}(1,0.79351313)%
    \lineheight{1}%
    \setlength\tabcolsep{0pt}%
    \put(0,0){\includegraphics[width=\unitlength,page=1]{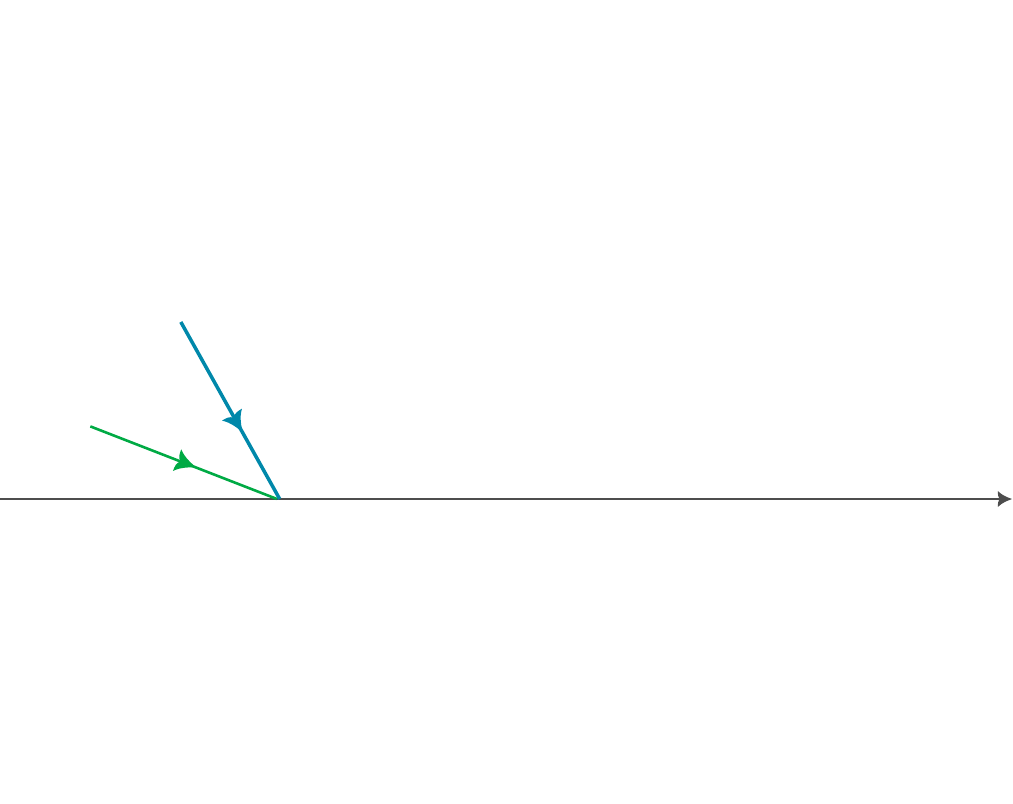}}%
    \put(0.23565501,0.23654977){\makebox(0,0)[lt]{\lineheight{1.25}\smash{\begin{tabular}[t]{l}$C_1$\end{tabular}}}}%
    \put(0.65298758,0.23654977){\makebox(0,0)[lt]{\lineheight{1.25}\smash{\begin{tabular}[t]{l}$C_2$\end{tabular}}}}%
    \put(0,0){\includegraphics[width=\unitlength,page=2]{Example-CutLocus-Figure8.pdf}}%
    \put(0.22461246,0.40040641){\color[rgb]{0,0.53333333,0.66666667}\makebox(0,0)[lt]{\lineheight{1.25}\smash{\begin{tabular}[t]{l}$\gamma$\end{tabular}}}}%
    \put(0,0){\includegraphics[width=\unitlength,page=3]{Example-CutLocus-Figure8.pdf}}%
    \put(0.10745245,0.3231126){\color[rgb]{0,0.66666667,0.26666667}\makebox(0,0)[lt]{\lineheight{1.25}\smash{\begin{tabular}[t]{l}$\eta$\end{tabular}}}}%
    \put(0,0){\includegraphics[width=\unitlength,page=4]{Example-CutLocus-Figure8.pdf}}%
  \end{picture}%
\endgroup%

        \caption{Cut locus of wedge of two circles in $\mathbb{R}^2$}
        \label{fig:Example-CutLocus-Figure8}
    \end{figure}
    In fact, if we take any other point then that is not a cut point as any geodesic, a straight line, never fails its distance minimal property. This example, also shows that the cut locus of a subset need not be a closed set.
\end{eg}

\begin{eg}
    Let $M$ be the cylinder $\mathbb{S}^1\times \mathbb{R}$ with the product metric. Let $N=\{\mathbf{v}\}\times \mathbb{R}$ for some $\mathbf{v} \in \mathbb{S}^1$. Then cut locus of $N$ is $\{-\mathbf{v}\}\times \mathbb{R}$, see \Cref{fig:Example-CutLocus-Cylinder-Submanifold}. 
    \begin{figure}[!htpb]
        \centering
    \def\svgwidth{0.35\columnwidth}
\begingroup%
  \makeatletter%
  \providecommand\color[2][]{%
    \errmessage{(Inkscape) Color is used for the text in Inkscape, but the package 'color.sty' is not loaded}%
    \renewcommand\color[2][]{}%
  }%
  \providecommand\transparent[1]{%
    \errmessage{(Inkscape) Transparency is used (non-zero) for the text in Inkscape, but the package 'transparent.sty' is not loaded}%
    \renewcommand\transparent[1]{}%
  }%
  \providecommand\rotatebox[2]{#2}%
  \newcommand*\fsize{\dimexpr\f@size pt\relax}%
  \newcommand*\lineheight[1]{\fontsize{\fsize}{#1\fsize}\selectfont}%
  \ifx\svgwidth\undefined%
    \setlength{\unitlength}{561.39963163bp}%
    \ifx\svgscale\undefined%
      \relax%
    \else%
      \setlength{\unitlength}{\unitlength * \real{\svgscale}}%
    \fi%
  \else%
    \setlength{\unitlength}{\svgwidth}%
  \fi%
  \global\let\svgwidth\undefined%
  \global\let\svgscale\undefined%
  \makeatother%
  \begin{picture}(1,1.30820871)%
    \lineheight{1}%
    \setlength\tabcolsep{0pt}%
    \put(0,0){\includegraphics[width=\unitlength,page=1]{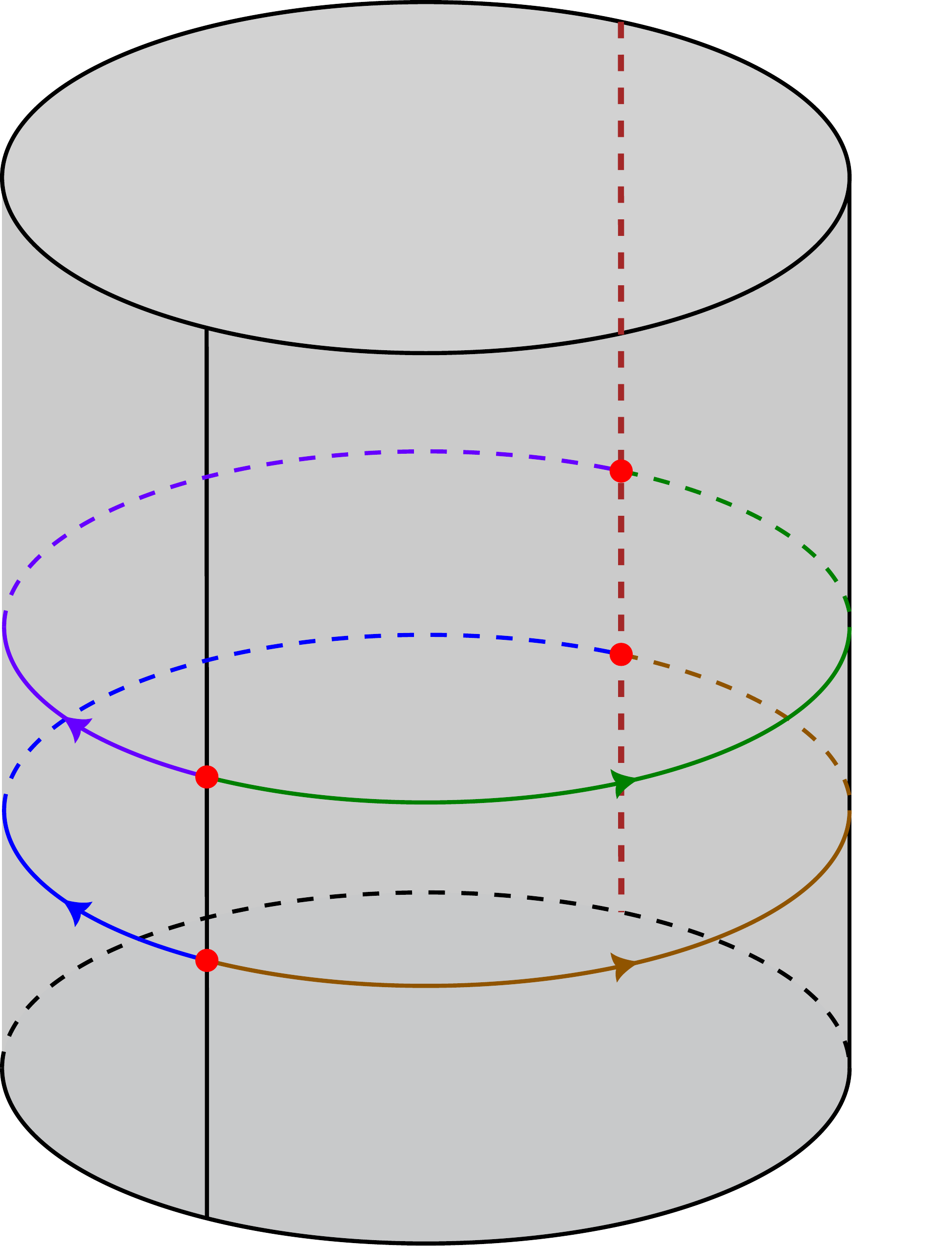}}%
    \put(0.11116984,0.87103526){\makebox(0,0)[lt]{\lineheight{1.25}\smash{\begin{tabular}[t]{l}$N$\end{tabular}}}}%
    \put(0,0){\includegraphics[width=\unitlength,page=2]{Example-CutLocus-Cylinder-Submanifold.pdf}}%
    \put(0.92042767,0.84920362){\color[rgb]{0.64313725,0.15686275,0.15686275}\makebox(0,0)[lt]{\lineheight{1.25}\smash{\begin{tabular}[t]{l}$\cutn$\end{tabular}}}}%
    \put(0.12206094,0.25548188){\makebox(0,0)[lt]{\lineheight{1.25}\smash{\begin{tabular}[t]{l}$\bf{v}$\end{tabular}}}}%
  \end{picture}%
\endgroup%

        \caption{Cut locus of a line on cylinder \label{fig:Example-CutLocus-Cylinder-Submanifold}}
    \end{figure}
    
\end{eg}

\section{An illuminating example}\label{Sec:IlluminatingExample}
\hfb Let $ M = M(n,\rbb) $ be the set of $n\times n$ matrices, and $ N=O(n,\rbb) $ be the set of all orthogonal $n\times n$ matrices. Let $A,B\in M(n,\rbb)$. We fix the standard flat Euclidean metric on $M(n,\rbb)$ by identifying it with $\mathbb{R}^{n^2}$. This induces a distance function given by
\begin{equation*}\label{Example 1 Eq: 1}
	d(A,B) \defeq \sqrt{\trace{(A-B)^T(A-B)}}
\end{equation*}
Consider the distance squared function
\begin{displaymath}
    f: GL(n,\rbb)\to \rbb,~~ A\mapsto d^2(A, O(n,\rbb)).
\end{displaymath}
In order to study this function, we want a closed formula for it.
\begin{lemma}
    The function $f$ can be explicitly expressed as 
    \begin{equation}\label{eq:d2On}
        f(A) = n + \trace{A^TA} - 2\trace{\sqrt{A^TA}}.
    \end{equation} 
\end{lemma}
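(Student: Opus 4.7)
The plan is to expand the squared distance, reduce the minimization to maximizing $\operatorname{tr}(Q^T A)$ over $Q \in O(n,\mathbb{R})$, and evaluate this maximum using the polar decomposition.

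First I would expand, for any $Q \in O(n,\mathbb{R})$,
\begin{equation*}
	d^2(A,Q) = \operatorname{tr}\!\bigl((A-Q)^T(A-Q)\bigr) = \operatorname{tr}(A^T A) + \operatorname{tr}(Q^T Q) - 2\operatorname{tr}(Q^T A).
\end{equation*}
Since $Q^T Q = I_n$, this becomes $\operatorname{tr}(A^T A) + n - 2\operatorname{tr}(Q^T A)$. Thus computing $f(A)$ is equivalent to maximizing the linear functional $Q \mapsto \operatorname{tr}(Q^T A)$ on the compact set $O(n,\mathbb{R})$, and \eqref{eq:d2On} will follow once we identify the maximum as $\operatorname{tr}(\sqrt{A^T A})$.

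Next I would invoke the polar decomposition of $A \in GL(n,\mathbb{R})$, writing $A = OP$ where $O \in O(n,\mathbb{R})$ and $P = \sqrt{A^T A}$ is symmetric and positive definite. For $Q \in O(n,\mathbb{R})$, set $R \defeq Q^T O \in O(n,\mathbb{R})$; then $\operatorname{tr}(Q^T A) = \operatorname{tr}(RP)$. Diagonalizing $P = V \Lambda V^T$ with $\Lambda = \operatorname{diag}(\lambda_1,\ldots,\lambda_n)$ and $\lambda_i > 0$, and letting $S \defeq V^T R V \in O(n,\mathbb{R})$, we obtain
\begin{equation*}
	\operatorname{tr}(RP) = \operatorname{tr}(S \Lambda) = \sum_{i=1}^n S_{ii} \lambda_i.
\end{equation*}
Since $S$ is orthogonal, its columns are unit vectors, so $|S_{ii}| \le 1$; combined with $\lambda_i > 0$, this gives $\operatorname{tr}(RP) \le \sum_i \lambda_i = \operatorname{tr}(P) = \operatorname{tr}(\sqrt{A^T A})$, with equality iff $S_{ii} = 1$ for every $i$, which (since $S$ is orthogonal) forces $S = I_n$, i.e.\ $Q = O$.

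Combining these two steps yields
\begin{equation*}
	f(A) = \operatorname{tr}(A^T A) + n - 2 \operatorname{tr}(\sqrt{A^T A}),
\end{equation*}
and simultaneously identifies the closest orthogonal matrix to $A$ as the orthogonal factor $O = A \bigl(\sqrt{A^T A}\bigr)^{-1}$ of the polar decomposition, which matches the discussion of the deformation mentioned earlier in the chapter. There is no real obstacle here; the only subtle point is the inequality $|S_{ii}| \le 1$ for $S \in O(n,\mathbb{R})$ and the positivity of the $\lambda_i$ (which uses $A \in GL(n,\mathbb{R})$ so that $P$ is invertible, though the same argument with SVD would extend the formula to all of $M(n,\mathbb{R})$ if needed).
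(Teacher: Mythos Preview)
Your proof is correct and follows essentially the same approach as the paper: expand the squared distance, reduce to maximizing $\operatorname{tr}(Q^TA)$ over $O(n,\mathbb{R})$, and use the inequality $|S_{ii}|\le 1$ for orthogonal $S$ against a positive diagonal matrix. The only cosmetic difference is that the paper first applies SVD and then identifies the minimizer via polar decomposition, whereas you go directly through the polar decomposition and then diagonalize $P=\sqrt{A^TA}$; these are equivalent reorderings of the same argument, and the paper likewise extends the formula to singular $A$ by density and continuity of $A\mapsto\sqrt{A^TA}$, as you note parenthetically.
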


\begin{proof}
 Let $A\in GL(n,\rbb)$ be any invertible matrix. Then,
\begin{align}
    f(A) & =  \inf_{B\in O(n,\rbb)}d^2(A,B)\nonumber
    \\
         & =  \inf_{B\in O(n,\rbb)} \|A-B\|^2\nonumber
         \\
         & =  \inf_{B\in O(n,\rbb)} \trace{(A-B)^T(A-B)}\nonumber
         \\
         & =  \inf_{B\in O(n,\rbb)} \trace{A^TA-A^TB-B^TA+B^TB}\nonumber
         \\
         & =  \inf_{B\in O(n,\rbb)} \big[\trace{A^TA}-\trace{A^TB}-\trace{B^TA}+\trace{B^TB} \big]\nonumber
         \\
         & =  \trace{A^TA} + \inf_{B\in O(n,\rbb)}\big[- 2\trace{A^TB} \big] + n \nonumber
         \\
         & =  \trace{A^TA} -2\sup_{B\in O(n,\rbb)} \trace{A^TB} + n.  \label{maxdist}
\end{align}
The problem of computing $f(A)$ is equivalent to maximizing the function
\begin{displaymath}
    h_A: O(n,\rbb)\to \rbb, B\mapsto \trace{A^TB}.
\end{displaymath} 
\paragraph{\blue{Case I:}} $A$ is a diagonal matrix with positive entries. Then, 
\begin{align*}
    \left|h_A(B)\right|  = \left|\trace{A^TB}\right|
     = \left|\sum_{i=1}^n a_{ii} b_{ii} \right| 
     \le \sum_{i=1}^n \abs{a_{ii}b_{ii}}
      & \le \sum_{i=1}^n a_{ii}
    = \trace{A^T} 
    = h_A(I).
\end{align*}
Thus, one of the maximizer is $B=I.$
\paragraph{\blue{Case II:}}  For any non-singular matrix $A$, we will use the \textit{singular value decomposition} (SVD). \index{singular value decomposition} Write $A=UDV^T$, where $U$ and $V$ are $n\times n$ orthogonal matrices and $D$ is a diagonal matrix with positive entries. For any $B\in O(n,\rbb)$ using the cyclic property of the trace we have
\begin{align}
    \trace{A^TB}  & = \trace{VDU^TB} = \trace{D(U^TBV)}.
\end{align}
Since $U^TBV$ is an orthogonal matrix, maximizing over $B$ reduces to the earlier observation that $B$ will be a maximizer if $U^TBV = I$, which implies $B = UV^T$. 

\vspace{0.3cm}
\hf Since $A$ is invertible, by the polar decomposition, there exists an orthogonal matrix $Q$ and a symmetric positive definite matrix $S=\sqrt{A^TA}$ such that $A=QS$. As $S$ is symmetric matrix we can diagonalize it, that is, $S = P\tilde{D}P^T$, where $P\in O(n,\rbb)$ and $\tilde{D}$ is a diagonal matrix. Thus,
\begin{displaymath}
    A = QS = Q P\tilde{D}P^T.
\end{displaymath}
Set $U=QP$, $V=P$ to obtain the SVD of $A$. In particular, the minimizer is given by
$$B=Q=A\big(\sqrt{A^TA}\big)^{-1}.$$
Therefore, 
	\begin{equation*}
		f(A) = n+\trace{A^TA}  - 2\,\trace{\sqrt{A^TA}}
	\end{equation*}
for invertible matrices. 

\vspace{0.3cm}
\hf To find out $f(A)$ for a non-invertible matrix $A$, we note that $GL(n,\rbb)$ is dense in $M(n,\rbb)$ and that $\sqrt{A^TA}$ is well-defined for $A\in M(n,\rbb)$. The continuity of the map $A\mapsto \sqrt{A^T A}$ on $M(n,\rbb)$ implies that the same formula \eqref{eq:d2On} for $f$ applies to $A$ as well. 
\end{proof}

\noindent In order to understand the differentiability of $f$, it suffices to analyze the function $A\mapsto \trace{\sqrt{A^TA}}$. 
\begin{lemma}
	The map $g : M(n,\rbb)\to \rbb, ~A\mapsto \trace{\sqrt{A^TA}}$ is differentiable if and only if $A$ is invertible. 
\end{lemma}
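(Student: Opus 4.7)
The plan is to split into the two directions. For the forward direction ($A$ invertible implies $g$ differentiable), I will use smoothness of the matrix square root on the cone of positive definite symmetric matrices. The map $A \mapsto A^T A$ is polynomial (hence smooth) from $M(n,\mathbb{R})$ into the space of symmetric matrices, and when $A \in GL(n,\mathbb{R})$ its image lies in the open cone $\mathrm{Sym}^+(n)$ of positive definite symmetric matrices. The squaring map $\mathrm{Sym}^+(n) \to \mathrm{Sym}^+(n)$, $S \mapsto S^2$, is smooth and its derivative $X \mapsto SX + XS$ is invertible on symmetric matrices for any $S \in \mathrm{Sym}^+(n)$ (a standard Lyapunov-type argument: if $SX + XS = 0$ and $S$ is positive definite, then diagonalizing $S$ forces $X = 0$). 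By the inverse function theorem, the square root $S \mapsto \sqrt{S}$ is smooth on $\mathrm{Sym}^+(n)$. Composing with the linear functional $\mathrm{tr}$ gives smoothness of $g$ on $GL(n,\mathbb{R})$.

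For the reverse direction, I will produce, at any singular $A$, an explicit direction along which $g$ fails to be differentiable. Let $A \in M(n,\mathbb{R})$ have rank $k < n$ and fix a singular value decomposition $A = U D V^T$ with $D = \mathrm{diag}(\sigma_1,\ldots,\sigma_k,0,\ldots,0)$ and $\sigma_i > 0$. Set
\begin{equation*}
    E \defeq U \, \mathrm{diag}(0,\ldots,0,1,\ldots,1) \, V^T,
\end{equation*}
where the nonzero block is the lower-right $(n-k)\times(n-k)$ identity. Then for $t \in \mathbb{R}$,
\begin{equation*}
    A + tE = U \, \mathrm{diag}(\sigma_1,\ldots,\sigma_k,t,\ldots,t) \, V^T,
\end{equation*}
so $(A+tE)^T(A+tE) = V \, \mathrm{diag}(\sigma_1^2,\ldots,\sigma_k^2,t^2,\ldots,t^2) \, V^T$, and therefore
\begin{equation*}
    g(A + tE) = \sum_{i=1}^k \sigma_i + (n-k)|t|.
\end{equation*}
The scalar function $t \mapsto g(A+tE)$ is not differentiable at $t = 0$ because of the $(n-k)|t|$ term (and $n - k \geq 1$ since $A$ is singular). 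If $g$ were differentiable at $A$ with derivative $dg_A \in \mathrm{Hom}(M(n,\mathbb{R}),\mathbb{R})$, the chain rule would force $t \mapsto g(A+tE)$ to be differentiable at $0$ with derivative $dg_A(E)$, which contradicts the kink above.

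The two observations together establish the equivalence. The only step requiring genuine care is the smoothness of the square root on $\mathrm{Sym}^+(n)$; everything else reduces to the SVD computation, which is entirely elementary. I expect no obstacles beyond being clean about the SVD bookkeeping in the non-differentiability direction.
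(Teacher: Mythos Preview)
Your proposal is correct and follows essentially the same approach as the paper: the forward direction uses the inverse function theorem on the squaring map $S\mapsto S^2$ on positive definite symmetric matrices (with the same Lyapunov/eigenvector argument that $SX+XS=0$ forces $X=0$), and the converse direction perturbs a singular $A$ along its null directions via the SVD to produce a $|t|$-type kink in $t\mapsto g(A+tE)$. Your choice $E=U\,\mathrm{diag}(0,\ldots,0,1,\ldots,1)\,V^T$ (including the $V^T$ factor) makes the computation of $(A+tE)^T(A+tE)$ cleaner than the paper's written version.
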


\begin{proof}
    Let $A$ be an invertible matrix. We will prove that the function $g$ is differentiable at $A$. Let $\cali{P}$  be the set of all positive definite matrices which is an open subset of the set of all symmetric matrices $\cali{S}$. We will prove that the map 
    \begin{displaymath}
        r : \cali{P}\to \cali{P}, ~ A\mapsto \sqrt{A}
    \end{displaymath}
    is differentiable. Define a function 
    \begin{displaymath}
        s : \cali{P}\to \cali{P},~ A\mapsto A^2.
    \end{displaymath}
    We will show that $s$ is a diffeomorphism and from the inverse function theorem $r$ will be differentiable. In order to show that $s$ is a diffeomorphism, we claim that for $A\in \cali{P},~ds_{A}:T_{A}\cali{P}\to T_{A^2}\cali{P}$ is injective. Note that $\cali{P}$ is an open subset of a vector space $\cali{S}$ and therefore, $T_{A}\cali{P}\isom \cali{S} \isom T_{A^2}\cali{P}.$ So, take $B\in \cali{S}$ such that $ds_{A}(B) = 0$. We will show that $B = 0.$ Recall that $ds_{A}(B) = AB +BA.$ Now choose an orthonormal basis $\{\vbf_1,\vbf_2,\cdots,\vbf_n\}$ of eigenspace of $A$ and $A\vbf_{i} = \lambda_{i} \vbf_i$ ($\lambda_i>0$). Then,
    \begin{displaymath}
        A(B\vbf_i) = -BA\vbf_i = -B\lambda_i\vbf_i = -\lambda i(B\vbf_i)
    \end{displaymath} 
    which implies $Bv_{i}$ is also an eigenvector of $A$ with eigenvalue $-\lambda_{i}<0$. Hence, $Bv_{i} = 0$ which implies $B=0$.\\
    \hspace{0.7cm} For the converse, we will show that if $A$ is a singular matrix, then the map $g$ is not directional differentiable. Let $A$ be a singular matrix. Using the singular value decomposition, we write 
    \begin{displaymath}
        A = U \begin{pmatrix}
            D & 0 \\ 
            0   & 0_k
        \end{pmatrix} V^T,    
    \end{displaymath}
    where $D$ is a $(n-k)\times (n-k)$ diagonal matrix with positive entries. If 
    \begin{displaymath}
        B = U \begin{pmatrix}
            0_{n-k} & 0 \\
            0 & I_k
        \end{pmatrix}    
    \end{displaymath}
    then we claim that $g$ is not differentiable in the direction of $B$. Since 
    \begin{displaymath}
        \sqrt{(A+tB)^T(A+tB)} = V \begin{pmatrix}
            D & 0 \\
            0 & I_k|t|
        \end{pmatrix} V^T
    \end{displaymath}
    the limit 
    \begin{align*}
        \lim_{t\to 0} \dfrac{g(A+tB) - g(A)}{t} & = \lim_{t\to 0} \dfrac{\trace{V\begin{pmatrix}
            D & 0 \\
            0 & I_k|t|
        \end{pmatrix} V^T} - \trace{V \begin{pmatrix}
            D & 0 \\
            0 & 0_k
        \end{pmatrix} V^T}}{t}\\
        & = k \lim_{t\to 0} \dfrac{|t|}{t} 
    \end{align*}
    does not exist and hence the function $g$ is not differentiable.
\end{proof}

\begin{prop}\label{prop:derivativeOfSqrtMatrixMap}
    If $A$ is an invertible matrix, then 
    \begin{equation}\label{derivativeOfSquareRoot}
        dg_A(H) = \innerprod{A\paran{\sqrt{A^TA}}^{-1}}{H},
    \end{equation}
    where $H$ is a symmetric matrix of order $n$.
\end{prop}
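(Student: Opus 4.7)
The plan is to compute $dg_A(H)$ by differentiating the defining relation $S^2 = A^T A$, where $S \defeq \sqrt{A^T A}$, and then using a clever multiplication by $S^{-1}$ together with cyclicity of trace to isolate $\trace{T}$, where $T$ is the directional derivative of the matrix square root.

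First, I would set up the chain rule. Write $g = \trace \circ r \circ F$, where $F(A) = A^T A$ maps $GL(n,\R)$ into the cone $\cali{P}$ of positive definite symmetric matrices, and $r(B) = \sqrt{B}$ is the square root map on $\cali{P}$ (which was shown to be differentiable in the previous lemma). Clearly $dF_A(H) = H^T A + A^T H$. Letting $T \defeq dr_{F(A)}(dF_A(H))$ be the directional derivative of $S$ in direction $H$, differentiating $S^2 = A^T A$ gives the Sylvester-type equation
\begin{equation*}
    T S + S T \;=\; H^T A + A^T H.
\end{equation*}
Since $dg_A(H) = \trace{T}$, the task reduces to extracting $\trace{T}$ from this equation.

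The key step is to multiply the equation on the left by $S^{-1}$ (well-defined since $S$ is positive definite) to obtain
\begin{equation*}
    S^{-1} T S + T \;=\; S^{-1} H^T A + S^{-1} A^T H.
\end{equation*}
Taking traces and using cyclicity to write $\trace{S^{-1} T S} = \trace{T}$, one gets
\begin{equation*}
    2\trace{T} \;=\; \trace{S^{-1} H^T A} + \trace{S^{-1} A^T H}.
\end{equation*}
Next I would show both summands on the right are equal. Since $S$ is symmetric, so is $S^{-1}$, and using $\trace{X} = \trace{X^T}$ together with cyclicity,
\begin{equation*}
    \trace{S^{-1} H^T A} = \trace{(S^{-1} H^T A)^T} = \trace{A^T H S^{-1}} = \trace{S^{-1} A^T H}.
\end{equation*}
Hence $\trace{T} = \trace{S^{-1} A^T H} = \trace{(A S^{-1})^T H} = \innerprod{A(\sqrt{A^T A})^{-1}}{H}$, which is the required formula.

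The only potentially subtle point is the legitimacy of the chain rule computation, which hinges on the differentiability of $r$ established just before this proposition; given that, everything is a short manipulation. The main obstacle, or rather the one insight that makes the proof go through, is realizing that premultiplying by $S^{-1}$ turns the Sylvester equation into something whose trace immediately returns $2\trace{T}$, since then the rest is bookkeeping with trace identities and symmetry of $S$.
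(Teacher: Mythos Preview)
Your argument is correct and is in fact more elementary than the paper's. The paper first establishes \Cref{lem:derivativeOfSquareRoot}, an explicit integral representation $d\psi_B(K)=\int_0^\infty e^{-t\sqrt{B}}Ke^{-t\sqrt{B}}\,dt$ for the derivative of the square root, and then plugs this into $dg_A(H)=\trace{d\psi_{A^TA}(H^TA+A^TH)}$; the trace is pulled inside the integral, cyclicity is used termwise, and the integral $\int_0^\infty e^{-2t\sqrt{A^TA}}\,dt=\tfrac{1}{2}(\sqrt{A^TA})^{-1}$ is evaluated explicitly to arrive at the same inner product.

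What you do differently is bypass the integral formula entirely: you only use the Sylvester relation $TS+ST=H^TA+A^TH$ (which the paper also derives, as \eqref{eq: sylvester}, en route to the integral formula), premultiply by $S^{-1}$, and exploit cyclicity to read off $\trace{T}$ directly. This is shorter and avoids the convergence estimate for the matrix integral. The paper's approach has the advantage of actually identifying the derivative of the square root as a matrix, which could be reused elsewhere; your approach trades that information for a quicker path to the scalar quantity $\trace{T}$ that is all the proposition needs. Note also that neither argument uses the hypothesis that $H$ is symmetric, so the formula holds for arbitrary $H\in M(n,\rbb)$.
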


\vspace{0.3cm}
\noindent The following lemma along with chain rule will prove the above proposition. 
\begin{lemma} \label{lem:derivativeOfSquareRoot}
    Let $A$ be a positive definite matrix and $\psi:A\mapsto \sqrt{A}$. Then 
    \begin{equation*}\label{eq: sqrtderivative}
        d\psi_A(H) = \int_0^\infty e^{-t\sqrt{A}}He^{-t\sqrt{A}}~dt,
    \end{equation*}
    for any symmetric matrix $H.$
\end{lemma}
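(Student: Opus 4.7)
The plan is to turn the defining identity $\psi(A)^2=A$ into a linear equation for $L:=d\psi_A(H)$, and then verify that the proposed integral is its unique solution.

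\textbf{Step 1: Derive a Sylvester-type equation.} From the preceding lemma, $\psi$ is differentiable on the open set $\mathcal{P}$ of positive definite matrices (as the inverse of the diffeomorphism $s(B)=B^2$). Let $S:=\sqrt{A}$. Differentiating the identity $\psi(B)^2=B$ at $B=A$ in the direction $H\in\mathcal{S}$ gives
$$SL+LS=H, \qquad L\in\mathcal{S}.$$
Since the tangent space to $\mathcal{P}\subset \mathcal{S}$ is $\mathcal{S}$, $L$ is automatically symmetric.

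\textbf{Step 2: Uniqueness.} I would show that $L\mapsto SL+LS$ is injective on $\mathcal{S}$ when $S$ is positive definite. Write $S=T^{2}$ with $T>0$. If $SM+MS=0$ with $M$ symmetric, then
$$0=\trace{(SM+MS)M}=2\trace{SM^{2}}=2\trace{T^{2}M^{2}}=2\,\|TM\|_{F}^{2},$$
forcing $TM=0$, hence $M=0$ since $T$ is invertible. So the Sylvester equation has at most one symmetric solution.

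\textbf{Step 3: Verify the integral formula.} Define
$$\widetilde{L}:=\int_0^\infty e^{-tS}He^{-tS}\,dt.$$
Let $\lambda>0$ be the smallest eigenvalue of $S$; then $\|e^{-tS}\|\le e^{-\lambda t}$ in operator norm, so the integrand is bounded by $\|H\|e^{-2\lambda t}$ and the integral converges absolutely (and differentiation under the integral and interchange with $S\cdot,\cdot S$ are justified). Using $\tfrac{d}{dt}e^{-tS}=-Se^{-tS}=-e^{-tS}S$,
$$S\widetilde{L}+\widetilde{L}S=-\int_0^\infty\frac{d}{dt}\bigl(e^{-tS}He^{-tS}\bigr)\,dt=-\Bigl[e^{-tS}He^{-tS}\Bigr]_0^\infty=H,$$
since $e^{-tS}\to 0$ as $t\to\infty$. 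Symmetry of $\widetilde{L}$ is clear from symmetry of $H$ and $e^{-tS}$. By Step 2, $L=\widetilde{L}$, which is the claim.

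The only real obstacle is bookkeeping: justifying that $\psi$ is differentiable (already in hand from the previous lemma via the inverse function theorem), and justifying the interchange of $S$ with the integral plus differentiation under the integral sign. Both follow from the exponential decay bound $\|e^{-tS}He^{-tS}\|\le \|H\|e^{-2\lambda t}$, so no delicate analysis is required. The algebraic heart of the proof is the one-line telescoping identity $Se^{-tS}He^{-tS}+e^{-tS}He^{-tS}S=-\tfrac{d}{dt}(e^{-tS}He^{-tS})$.
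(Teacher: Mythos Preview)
Your proof is correct and follows essentially the same route as the paper: differentiate $\psi(A)^2=A$ to obtain the Sylvester equation $SL+LS=H$, show the integral converges via the exponential decay bound, verify it satisfies the equation, and conclude by uniqueness. The only cosmetic difference is in Step~2: the paper appeals to the injectivity of $B\mapsto SB+BS$ already established in the preceding lemma (via an eigenvector argument) under the phrase ``uniqueness of the derivative'', whereas you supply a self-contained trace computation.
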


\begin{proof}
    As $\psi(A)\cdot \psi(A) = A$, differentiating at $A$ we obtain
    \begin{equation}\label{eq: sylvester}
            d\psi_A(H)\psi(A) + \psi(A)d\psi_A(H) = H.
    \end{equation}
    We will show the following:
    \begin{enumerate}
        \item [(i)] For any positive definite matrix $X$ and for any symmetric matrix $Y$ the integral 
        \begin{equation}\label{eq: matsqrtderivative}
            \int_0^\infty e^{-tX}Ye^{-tX}~\mathrm{d}t
        \end{equation}
        converges. We note that the eigenvalues of $e^{-tX}$ are $e^{-t\lambda_j}$, where $\lambda_j$ are the eigenvalues of $X$. Since $X$ is a positive definite matrix, each of the $\lambda_j$ is positive. Without loss of generality, we assume that $\lambda=\lambda_1$ is the smallest eigenvalue of $X$. Then we have
        \begin{align*}
            e^{-t \lambda_j} \le e^{-t \lambda} & \implies \left\|e^{-tX}\right\| = e^{-t \lambda}.
        \end{align*} 
        where  $\|\cdot\|$ is the operator norm.  Therefore, the operator norm of the integrand in \eqref{eq: matsqrtderivative} is bounded by $2e^{-t \lambda}\|Y\|$, which is an integrable function. Hence, the integral given by \eqref{eq: matsqrtderivative} converges. 
    
        \item [(ii)] The matrix $d\psi_A(H)$ satisfies \eqref{eq: sylvester}. Observe that 
        \begin{align*}
                & \paran{\int_0^\infty e^{-t\sqrt{A}}\cdot H\cdot e^{-t\sqrt{A}} \mathrm{d}t } \sqrt{A} + \sqrt{A} \paran{\int_0^\infty e^{-t\sqrt{A}}\cdot H\cdot e^{-t\sqrt{A}}~\mathrm{d}t} 
            \\
            = & \int_0^\infty\paran{e^{-t\sqrt{A}}\cdot H\cdot e^{-t\sqrt{A}}  \sqrt{A} + \sqrt{A} e^{-t\sqrt{A}}\cdot H\cdot e^{-t\sqrt{A}}}~\mathrm{d}t
            \\
            = & \int_0^\infty-\paran{e^{-t\sqrt{A}} H e^{-t\sqrt{A}}}'~\mathrm{d}t = H.
        \end{align*}
    \end{enumerate}
    From (i), (ii) and the uniqueness of the derivative, the lemma is proved. 
\end{proof}

\vspace{0.3cm}
\noindent We now give a proof of \Cref{prop:derivativeOfSqrtMatrixMap}.
\begin{proof}[Proof of \Cref{prop:derivativeOfSqrtMatrixMap}]
    Note that using \Cref{lem:derivativeOfSquareRoot}, for any symmetric matrix $H$ we have
    \begin{equation}\label{eq:derivativeOfSquareRoot}
        dg_A(H) = \trace{\int_0^\infty e^{-t\sqrt{A^TA}}\paran{A^TH+H^TA}e^{-t\sqrt{A^TA}}~dt}.
    \end{equation}
    Let us simplify the above expression to get the desired result.
    \begin{align*}
        dg_A(H) & = \int_0^\infty \squarebracket{\trace{e^{-2t\sqrt{A^TA}}H^TA} + \trace{e^{-2t\sqrt{A^TA}}A^TH}}~dt
        \\[1ex] 
        & = \trace{\int_0^\infty e^{-2t \sqrt{A^TA}}H^TA~dt} + \trace{\int_0^\infty e^{-2t \sqrt{A^TA}}A^TH~dt}
        \\[1ex]
        & = \trace{\squarebracket{\int_0^\infty e^{-2t \sqrt{A^TA}}~dt}H^TA} + \trace{\squarebracket{\int_0^\infty e^{-2t \sqrt{A^TA}}~dt}A^TH}
        \\[1ex]
        \begin{split}
            & = \trace{\squarebracket{-\dfrac{\paran{\sqrt{A^TA}}^{-1}}{2} \int_0^\infty \dfrac{d}{dt}e^{-2t\sqrt{A^TA}}~dt}H^TA} 
            \\[0.5ex]
            & \kern 1cm + \trace{\squarebracket{-\dfrac{\paran{\sqrt{A^TA}}^{-1}}{2} \int_0^\infty \dfrac{d}{dt}e^{-2t\sqrt{A^TA}}~dt}A^TH}
        \end{split}
        \\[1ex]
        & = \trace{\dfrac{\paran{\sqrt{A^TA}}^{-1}}{2}H^TA}  + \trace{\dfrac{\paran{\sqrt{A^TA}}^{-1}}{2}A^TH}
        \\[1ex]
        & =  \dfrac{1}{2} \trace{\paran{\sqrt{A^TA}}^{-1}A^TH} + \dfrac{1}{2}  \trace{A^TH\paran{\sqrt{A^TA}}^{-1}}
        \\
        & =  \trace{\paran{\sqrt{A^TA}}^{-1}A^TH}
         = \innerprod{A\paran{\sqrt{A^TA}}^{-1}}{H}
    \end{align*}
    Thus, 
    \begin{equation*} \label{differentialOfSquareRootFinal}
	    dg_A(H) = \innerprod{A\paran{\sqrt{A^TA}}^{-1}}{H}.
    \end{equation*}
\end{proof}
\vspace{0.3cm}
\hspace*{0.5cm}For  any $A\in GL(n,\rbb)$
\begin{equation*}\label{gradfForGL}
    df_A  = 2A-2A\paran{\sqrt{A^TA}}^{-1}=-2A \paran{\sqrt{A^TA}^{-1}-I}.
\end{equation*}
Hence, the negative gradient of the function $f$, restricted to $GL(n,\rbb)$ is given by 
\begin{displaymath}
	-\grad f\big|_A = 2A\paran{\sqrt{A^TA}^{-1}-I}.
\end{displaymath}
The critical points are orthogonal matrices. If $\gamma(t)$ is an integral curve of $-\grad f$ initialized at $A$, then $\gamma(0)=A$ and 
\begin{equation}
\dfrac{d\gamma}{dt} =-2\gamma(t)+2\gamma(t)\paran{\sqrt{\gamma(t)^T\gamma(t)}}^{-1}=-2\gamma(t)+2\paran{\gamma(t)^T}^{-1}\sqrt{\gamma(t)^T\gamma(t)} \label{eq: 4.5}.
\end{equation}
Take the test solution of \eqref{eq: 4.5} given by
\begin{equation}\label{eq: 4.6}
\gamma(t)  = Ae^{-2t} + (1-e^{-2t})\paran{A^T}^{-1}\sqrt{A^TA} = Ae^{-2t} + (1-e^{-2t})A\paran{\sqrt{A^TA}}^{-1}.
\end{equation}
\noindent In order to show that $\gamma(t)$ satisfies \eqref{eq: 4.5}, note that
\begin{align*}
    \begin{split}
		\gamma(t)^T\gamma(t) & = \squarebracket{Ae^{-2t}+(1-e^{-2t})\paran{A^T}^{-1}\sqrt{A^TA}}^T\\
        & \qquad \qquad \squarebracket{Ae^{-2t}+(1-e^{-2t})\paran{A^T}^{-1}\sqrt{A^TA}}  
    \end{split}
    \\[1ex]
    \begin{split}
        & = \squarebracket{A^Te^{-2t} +(1-e^{-2t})\sqrt{A^TA} A^{-1}} \\
        & \qquad \qquad \squarebracket{Ae^{-2t}+(1-e^{-2t})\paran{A^T}^{-1}\sqrt{A^TA}}
    \end{split}
    \\[1ex]
    \begin{split}
        & = A^TAe^{-4t} + 2e^{-2t}(1-e^{-2t})\sqrt{A^TA}\\
        & \qquad \qquad +(1-e^{-2t})^2\paran{\sqrt{A^TA} A^{-1} \aTransInverse \sqrt{A^TA}}
    \end{split}
    \\[1ex] 
    \begin{split}
        & = A^TAe^{-4t} + 2e^{-2t}(1-e^{-2t})\sqrt{A^TA}\\
        & \qquad \qquad +(1-e^{-2t})^2\paran{\sqrt{A^TA} \paran{A^TA}^{-1} \sqrt{A^TA}}
    \end{split}
    \\[1ex]
    \begin{split}
        & = A^TAe^{-4t} + 2e^{-2t}(1-e^{-2t})\sqrt{A^TA}\\
        & \qquad \qquad +(1-e^{-2t})^2\paran{\sqrt{A^TA} \sqrtATransAInverse \sqrtATransAInverse \sqrt{A^TA}}
    \end{split}
    \\[1ex] 
    & = A^TAe^{-4t} + 2e^{-2t}(1-e^{-2t})\sqrt{A^TA} + \paran{1-e^{-2t}}^2I\\
    & = \paran{\sqrt{A^TA}~e^{-2t}+\paran{1-e^{-2t}}I}^2.
\end{align*}
Thus,
\begin{displaymath}
    \gamma(t)^T\gamma(t) = \paran{\sqrt{A^TA}~e^{-2t}+\paran{1-e^{-2t}}I}^2
\end{displaymath}
and hence
\begin{displaymath}
    \paran{\sqrt{\gamma(t)^T\gamma(t)}}^T = \paran{\sqrt{A^TA}A^{-1}\gamma(t)}^T = \gamma(t)^T\aTransInverse \sqrt{A^TA}
\end{displaymath}

\noindent This implies that 
\begin{align*}
    \sqrt{\gamma(t)^T\gamma(t)} & = \sqrt{A^TA}\paran{e^{-2t}I+\sqrtATransAInverse \paran{1-e^{-2t}}}\nonumber\\
    \implies \sqrt{\gamma(t)^t\gamma(t)} & = \sqrt{A^TA}A^{-1}\gamma(t)\nonumber\\
    \implies \paran{\sqrt{\gamma(t)^T\gamma(t)}}^T & = \paran{\sqrt{A^TA}A^{-1}\gamma(t)}^T = \gamma(t)^T\aTransInverse \sqrt{A^TA}\nonumber\\
    \implies \paran{\gamma(t)^T}^{-1} \sqrt{\gamma(t)^T\gamma} & = \aTransInverse\sqrt{A^TA}.
\end{align*}
\noindent The right hand side of \eqref{eq: 4.5}, with the test solution, can be simplified to 
\begin{displaymath}
-2Ae^{-2t} + 2e^{-2t}\aTransInverse \sqrt{A^TA}
\end{displaymath}
which is the derivative of $\gamma$. Thus, $\gamma(t)$, as defined in \eqref{eq: 4.6}, is the required flow line which deforms $GL(n,\rbb)$ to $O(n,\rbb).$ In particular, $GL^+(n,\rbb)$ deforms to $SO(n,\rbb)$ and other component of $GL(n,\rbb)$ deforms to $O(n,\rbb)\setminus SO(n,\rbb)$. We note, however, that this deformation takes infinite time to perform the retraction.
\begin{rem}
A modified curve
\begin{equation}\label{GLdefOver1}
\eta(t)=A(1-t)+tA\paran{\sqrt{A^TA}}^{-1}
\end{equation}
with the same image as $\gamma$, defines an actual deformation retraction of $GL(n,\rbb)$ to $O(n,\rbb)$. Apart from its origin via the distance function, this is a geometric deformation in the following sense. Given $A\in GL(n,\rbb)$, consider its columns as an ordered basis. This deformation deforms the ordered basis according to the length of the basis vectors and mutual angles between pairs of basis vectors in a geometrically uniform manner. This is in sharp contrast with Gram-Schmidt orthogonalization, also a deformation of $GL(n,\rbb)$ to $O(n,\rbb)$, which is asymmetric as it never changes the direction of the first column, the modified second column only depends on the first two columns and so on.
\end{rem}
\hspace*{0.5cm}We now show that $f$ is Morse-Bott.\index{Morse-Bott functions} The tangent space $T_I O(n,\rbb)$ consists of skew-symmetric matrices while the normal vectors at $I_n$ are the symmetric matrices. As left translation by an orthogonal matrix is an isometry of $M(n,\rbb)$, normal vectors at $A\in O(n,\rbb)$ are of the form $AW$ for symmetric matrices $W$. Since 
\begin{displaymath}
	df_A(H) = 2\innerprod{A}{H}-2\innerprod{A\big(\sqrt{A^TA}\big)^{-1}}{H}
\end{displaymath}
the relevant Hessian is 
\begin{equation*}
	\hess(f)_A(H,H') = \lim_{t\to 0}\dfrac{df_{A+tH'}(H)-df_A(H)}{t} 
\end{equation*}
with $H=AW, H'=AW'$ and symmetric matrices $W,W'$. Solving the Hessian expression, we have
\begin{align*}
	\hess(f)_A(H,H') 
		& = \lim_{t\to 0} \left(\dfrac{2\innerprod{A+tH'}{H} - 2\innerprod{(A+tH')\sqrt{(A+tH')(A+tH')^T}^{-1}}{H}}{t}\right.
		\\
		& \qquad \qquad \qquad \left. -\dfrac{2\innerprod{A}{H}-2\innerprod{A\sqrt{A^TA}^{-1}}{H}}{t} \right)
	\\[1ex]
	\begin{split}
		& = \lim_{t\to 0} \left( \dfrac{2\innerprod{A}{H}+2t\innerprod{H'}{H}-2\innerprod{A\sqrt{(A+tH')^T(A+tH')}^{-1}}{H}}{t} \right. 
		\\
		& \kern -.5cm \left. -\dfrac{2t\innerprod{H'\sqrt{(A+tH')^T(A+tH')}^{-1}}{H} + 2\innerprod{A}{H}-2\innerprod{A\sqrt{A^TA}^{-1}}{H}}{t} \right)
	\end{split}
	\\[1ex]
	\begin{split}
		& = \lim_{t\to 0} \left( \dfrac{2t\innerprod{H'}{H}-2t\innerprod{H'\sqrt{(A+tH')^T(A+tH')}^{-1}}{H}}{t} \right. 
		\\ 
		& \qquad \qquad \left. \dfrac{-2\innerprod{A\sqrt{(A+tH')^T(A+tH')}^{-1}}{H}+2\innerprod{A\sqrt{A^TA}^{-1}}{H}}{t} \right)
	\end{split}
	\\[1ex]
	\begin{split}
		& = \lim_{t\to 0} 2\cancel{t}\paran{\dfrac{\innerprod{H'}{H}-\innerprod{H'\sqrt{(A+tH')^T(A+tH')}^{-1}}{H}}{\cancel{t}}}
		\\
		& \qquad  -2\lim_{t\to 0} \paran{\dfrac{\innerprod{A\sqrt{(A+tH')^T(A+tH')}^{-1}}{H}-\innerprod{A\sqrt{A^TA}^{-1}}{H}}{t}}
	\end{split}
	\\[1ex]
	\begin{split}
		& = 2\innerprod{H'}{H} - 2\innerprod{H'\cancelto{I}{\sqrt{A^TA}^{-1}}}{H} -2 \innerprod{A\paran{Dg^{-1}}_A(H')}{H}
	\end{split}
	\\[1ex]
	& = -2\innerprod{A\cdot \frac{A^TH'+H'^TA}{2}}{H} = \innerprod{H'}{H} + \innerprod{AH'^TA}{H} 
	\\
	& = \innerprod{H'}{H} + \innerprod{A(AW')^TA}{AW} =  \innerprod{H'}{H} + \innerprod{AW'^T}{AW}
	\\
	& = \trace{H'^TH} + \trace{W'^TW} = \trace{H'^TH} + \trace{H'^TH} = 2~\trace{H'^TH}.
\end{align*}

\noindent Thus, the Hessian is, 
\begin{equation*}
\hess(f)_A(H,H') =2\,\trace{H^TH'}=2\left\langle H, H'\right\rangle.
\end{equation*}

\noindent Therefore, the Hessian matrix \index{Hessian matrix} restricted to $(T_A O(n,\rbb))^\perp$ is $2I_{\frac{n(n+1)}{2}}$. This is a recurring feature of distance squared functions associated to embedded submanifolds (see Proposition \ref{dsq-Fermi}).

\vspace{0.3cm}
\hf There is a relationship between the local homology of cut loci and the reduced $\check{\textup{C}}$ech cohomology\index{\v{C}ech cohomology} of the \textit{link} of a point in the cut locus. This is due to Hebda \cite[Theorem 1.4 and the remark following it]{Heb83}.
\begin{defn}
    Let $N$ be an embedded submanifold of a complete smooth Riemannian manifold $M$. For each $q\in \cutn$, consider the set $\Lambda(q,N)$ of unit tangent vectors at $q$ so that the associated geodesics realize the distance between $q$ and $N$. This set is called the \textit{link} of $q$ \index{link of a point} with respect to $N$.

    \hf The set of points in $N$ obtained by the end points of the geodesics associated to $\Lambda(q,N)$ will be called the \textit{equidistant set},\index{equidistant set} denoted by $\mathrm{Eq}(q,N)$, of $q$ with respect to $N$. 
\end{defn}
\vspace{0.3cm}
\noindent Since the equidistant set \index{equidistant set} $\mathrm{Eq}(q,N)$, consisting of points which realize the distance $d(q,N)$, is obtained by exponentiating the points in $\Lambda(q,N)$, there is a natural surjection map from $\Lambda(q,N)$ to $\mathrm{Eq}(q,N)$. 
\begin{thm}[Hebda 1983]
    Let $N$ be a properly embedded submanifold of a complete Riemannian manifold $M$ of dimension $n$. If $q\in \cutn$ and $v$ is an element of $\Lambda:=\Lambda(q,N)$, then for any abelian group $G$ there is an isomorphism
    \begin{equation}\label{duality}
        \check{H}^i(\Lambda,v;G)\cong H_{n-1-i}(\cutn,\cutn -q;G).
    \end{equation}
\end{thm}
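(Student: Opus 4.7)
The plan is to reduce to a local statement in a small geodesic ball around $q$ and apply Alexander--Čech duality in the boundary sphere.

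\textbf{Localization.} I would first choose $\epsilon > 0$ small enough that the closed ball $\overline{B}_\epsilon(q)$ is disjoint from $N$ and lies inside a normal neighborhood of $q$ on which $\exp_q$ restricts to a diffeomorphism. Excision then gives
\begin{equation*}
    H_{n-1-i}(\cutn, \cutn - q; G) \cong H_{n-1-i}\bigl(\cutn \cap \overline{B}_\epsilon,\, (\cutn \cap \overline{B}_\epsilon) - q;\, G\bigr).
\end{equation*}
Write $A_\epsilon := \cutn \cap \partial B_\epsilon$ for the ``link'' of $q$ inside $\cutn$.

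\textbf{Cone structure at $q$.} I would show that, after passing to Čech theory and taking the inverse limit as $\epsilon \to 0$, the pair $(\cutn \cap \overline{B}_\epsilon,\, (\cutn \cap \overline{B}_\epsilon) - q)$ has the same homology as the pair $(C(A_\epsilon),\, A_\epsilon)$, where $C(A_\epsilon)$ denotes the cone on $A_\epsilon$ with apex $q$. This would yield
\begin{equation*}
    H_{n-1-i}\bigl(\cutn \cap \overline{B}_\epsilon,\, (\cutn \cap \overline{B}_\epsilon) - q;\, G\bigr) \cong \widetilde{H}_{n-2-i}(A_\epsilon; G).
\end{equation*}
The justification combines the density $\overline{\sen} = \cutn$ established earlier in the excerpt with a careful analysis of how $N$-geodesics accumulate at $q$: geodesic rays emanating from $q$ into $\cutn$ provide the cone structure in a sufficiently strong approximate sense for the Čech computation.

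\textbf{Link identification.} Next I would identify $\partial B_\epsilon - A_\epsilon$ with $\Lambda$ up to pointed homotopy. The construction uses the gradient flow of $d_N^2$ furnished by \Cref{thm:ThmA} on $M - \cutn$: for each $x \in \partial B_\epsilon - A_\epsilon$ the flow line lies on the unique minimizing $N$-geodesic through $x$, and as $x$ is moved toward $q$ this geodesic limits to an $N$-geodesic at $q$ whose initial velocity is in $\Lambda$. Conversely, for each $v \in \Lambda$ the point $\exp_q(\delta v)$ belongs to $B_\epsilon - \cutn$ for small $\delta > 0$ and hence provides an inverse assignment. These maps assemble into a basepoint-preserving homotopy equivalence $\partial B_\epsilon - A_\epsilon \simeq \Lambda$.

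\textbf{Alexander--Čech duality and conclusion.} Finally I would invoke Alexander duality for the compact subset $A_\epsilon$ of the sphere $\partial B_\epsilon \cong S^{n-1}$:
\begin{equation*}
    \widetilde{H}_{n-2-i}(A_\epsilon; G) \;\cong\; \widetilde{\check{H}}^{i}(\partial B_\epsilon - A_\epsilon;\, G) \;\cong\; \check{H}^{i}(\Lambda, v;\, G).
\end{equation*}
Chaining this with the isomorphisms from the first three steps produces the desired
\begin{equation*}
    H_{n-1-i}(\cutn, \cutn - q; G) \;\cong\; \check{H}^{i}(\Lambda, v;\, G).
\end{equation*}
The main obstacle is the cone argument in the second step: since the cut locus can be extremely wild in general---need not be locally contractible, triangulable, or CW---a literal cone retraction is unavailable, and one must instead work with Čech theory over cofinal systems of neighborhoods of $q$ in $\cutn$. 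It is precisely this wildness that forces Čech cohomology (rather than singular) to appear on the right-hand side.
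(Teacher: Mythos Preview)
The paper does not supply a proof of this theorem; it is stated as a citation to Hebda \cite[Theorem 1.4 and the remark following it]{Heb83} and used as a black box (for instance in \Cref{locsinghom}). So there is no ``paper's own proof'' to compare your proposal against.

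That said, your outline follows the right architecture --- localize near $q$, relate the complement of the cut locus in a small sphere to $\Lambda$, and invoke Alexander duality --- which is indeed the shape of Hebda's original argument. Two points deserve care if you pursue this. First, your step~2 (cone structure on $\cutn\cap\overline{B}_\epsilon$) is, as you yourself flag, the genuine crux: a literal cone retraction is not available, and Hebda circumvents it by working with the pair $(\overline{B}_\epsilon,\overline{B}_\epsilon-\{q\})$ and duality for the compact set $\cutn\cap\overline{B}_\epsilon$ inside the ball, rather than first reducing to the boundary sphere. Second, the map in your step~3 is not well defined as written: you describe a \emph{limit} of $N$-geodesics as $x\to q$, but that limit need not exist uniquely for a fixed $x\in\partial B_\epsilon - A_\epsilon$; one must instead build the homotopy equivalence between $\partial B_\epsilon - A_\epsilon$ (or better, $\overline{B}_\epsilon-(\cutn\cup\{q\})$) and $\Lambda$ more carefully, via the continuity of the unique nearest-point projection to $N$ off the cut locus.
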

\vspace{0.3cm}
We are interested in computing $\Lambda(A,O(n,\R))$ for singular matrices $A$. Note that geodesics in $M(n,\R)$, initialized at $A$, are straight lines and any two such geodesics can never meet other than at $A$. Therefore, there is a natural identification between the link and the equidistant set of $A$. 
\begin{lemma}\cite[Lemma 2.15]{BaPr21}\label{link-sing}
    If $A\in M(n,\R)$ is singular of rank $k$, then $\mathrm{Eq}(A,O(n,\R))$ is homeomorphic to $O(n-k,\R)$.
\end{lemma}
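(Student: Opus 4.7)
The plan is to identify $\mathrm{Eq}(A, O(n,\R))$ with the set of global maximizers of $h_A : O(n,\R) \to \R$, $B \mapsto \mathrm{tr}(A^T B)$, and then parameterize these maximizers explicitly via a singular value decomposition of $A$. Since geodesics in $M(n,\R)$ issuing from $A$ are straight lines, the equidistant set is exactly the set of orthogonal matrices closest to $A$; from \eqref{maxdist}, $d^2(A,B) = \mathrm{tr}(A^T A) + n - 2 h_A(B)$ for $B \in O(n,\R)$, so minimizing the distance is equivalent to maximizing $h_A$.

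Fix an SVD $A = UDV^T$ with $U, V \in O(n,\R)$ and $D = \mathrm{diag}(d_1,\ldots,d_k,0,\ldots,0)$, where $d_1,\ldots,d_k > 0$ since $A$ has rank $k$. By the cyclicity of trace, $h_A(B) = \mathrm{tr}(DC)$ where $C := U^T B V \in O(n,\R)$. Since $B \mapsto U^T B V$ is a self-homeomorphism of $O(n,\R)$, it suffices to describe the $C \in O(n,\R)$ that maximize $\mathrm{tr}(DC) = \sum_{i=1}^k d_i C_{ii}$.

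The main step, which I regard as the principal obstacle, is the following characterization of these maximizers. Using $|C_{ii}| \le 1$ for orthogonal $C$, the supremum equals $\sum_{i=1}^k d_i$ and is attained if and only if $C_{ii} = 1$ for every $i = 1,\ldots,k$. For each such $i$, the $i$-th row of $C$ is a unit vector with $i$-th entry equal to $1$, forcing the other entries of that row to vanish; applying the same reasoning to columns shows that the first $k$ rows and first $k$ columns of $C$ coincide with those of the identity. Consequently $C$ has block form $\spmat{I_k & 0 \\ 0 & Q}$, and orthonormality of the remaining $n-k$ rows forces $Q \in O(n-k,\R)$.

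Finally, the map $\Phi : O(n-k,\R) \to \mathrm{Eq}(A, O(n,\R))$ defined by $\Phi(Q) = U\,\spmat{I_k & 0 \\ 0 & Q}\,V^T$ is linear (hence continuous) in the entries of $Q$, bijective by the preceding paragraph, and has continuous inverse given by sending $B$ to the bottom-right $(n-k)\times(n-k)$ block of $U^T B V$. Hence $\Phi$ is the desired homeomorphism.
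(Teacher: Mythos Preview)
Your proof is correct and follows essentially the same approach as the paper: reduce via \eqref{maxdist} to maximizing $h_A$, use an SVD to pass to $\trace{DC}$ with $C=U^TBV$, and characterize the maximizers as block matrices $\spmat{I_k & 0\\ 0 & Q}$ with $Q\in O(n-k,\R)$. You are slightly more explicit than the paper in spelling out the homeomorphism $\Phi$ and its inverse, but the argument is otherwise the same.
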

\begin{proof}
    Using the singular value decomposition, \index{singular value decomposition} we write $A=UDV^T$, where $U,V\in O(n,\R)$ and $D$ is a diagonal matrix with entries the eigenvalues of $\sqrt{A^T A}$. If we specify that the diagonal entries of $D$ are arranged in decreasing order, then $D$ is unique. Moreover, as $A$ has rank $k<n$, the first $k$ diagonal entries of $D$ are positive while the last $n-k$ diagonal entries are zero. In order to find the matrices in $O(n,\R)$ which realize the distance $d(A,O(n,\R))$, by \eqref{maxdist}, it suffices to find $B\in O(n,\R)$ such that 
    \begin{displaymath}
        \sup_{B\in O(n,\R)} \trace{A^T B}=\sup_{B\in O(n,\R)} \trace{VDU^T B}=\sup_{B\in O(n,\R)} \trace{DU^T BV}
    \end{displaymath}
    is maximized. However, $U^TBV\in O(n,\R)$ has orthonormal rows and the specific form of $D$ implies that the maximum is attained if and only if $U^T BV$ has $e_1,\ldots,e_k$ as the first $k$ rows, in order. Therefore, $U^T BV$ is a block orthogonal matrix, with blocks of $I_k$ and $C\in O(n-k,\R)$, i.e., $B\in U(I_k \times O(n-k,\R))V^T$.
\end{proof}
\begin{cor}\label{locsinghom}
    Let $\mathrm{Sing}$ denote the space of singular matrices in $M(n,\R)$. If $A\in \mathrm{Sing}$ is of rank $k<n$, then for any abelian group $G$ there is an isomorphism
    \begin{equation}\label{locsing}
        H_{n^2-1-i}(\mathrm{Sing},\mathrm{Sing} -A;G) \cong \widetilde{H}^i(O(n-k,\R);G).
    \end{equation}
\end{cor}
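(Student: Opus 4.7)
The plan is to assemble the corollary directly from Hebda's duality, the identification of the cut locus of $O(n,\R)$ inside $M(n,\R)$ as $\mathrm{Sing}$, and the link computation of Lemma \ref{link-sing}. Concretely, I would apply \eqref{duality} with the ambient manifold $M = M(n,\R)$ of (real) dimension $n^2$, the submanifold $N = O(n,\R)$, and the point $q = A \in \cutn[O(n,\R)] = \mathrm{Sing}$. This immediately yields
\begin{equation*}
  \check{H}^i\bigl(\Lambda(A, O(n,\R)),\, v;\, G\bigr) \;\cong\; H_{n^2-1-i}(\mathrm{Sing},\, \mathrm{Sing}-A;\, G),
\end{equation*}
so it only remains to identify the left-hand side with $\widetilde{H}^i(O(n-k,\R); G)$.

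The first step in that identification is to argue that the link $\Lambda(A, O(n,\R))$ is actually homeomorphic (not merely surjected upon) by the equidistant set $\mathrm{Eq}(A, O(n,\R))$. This is where the flatness of the Euclidean metric on $M(n,\R)$ enters crucially: geodesics are straight lines, and two distinct straight lines through $A$ never meet again. Hence the normal exponential map, restricted to unit vectors in $\Lambda$, is injective, so the natural surjection $\Lambda \twoheadrightarrow \mathrm{Eq}$ is a continuous bijection. Since $\Lambda$ sits in a sphere (compact) and $\mathrm{Eq}$ is Hausdorff, this bijection is a homeomorphism. Then Lemma \ref{link-sing} gives $\Lambda(A, O(n,\R)) \homeo O(n-k, \R)$.

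The final step is cohomological bookkeeping. Because $O(n-k,\R)$ is a smooth compact manifold, hence a finite CW complex, its Čech cohomology agrees with singular cohomology, and the relative cohomology with respect to the basepoint $v$ reduces to the reduced cohomology:
\begin{equation*}
  \check{H}^i\bigl(\Lambda(A,O(n,\R)),\, v;\, G\bigr) \;\cong\; \widetilde{H}^i\bigl(O(n-k,\R);\, G\bigr).
\end{equation*}
Combining this with the displayed consequence of \eqref{duality} yields \eqref{locsing}.

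I do not anticipate a serious obstacle: the three inputs (Hebda's duality, the identification $\cutn[O(n,\R)] = \mathrm{Sing}$, and Lemma \ref{link-sing}) do essentially all of the work. The only point requiring a sentence of care is the upgrade from the set-theoretic surjection $\Lambda \to \mathrm{Eq}$ to a homeomorphism, which is where one genuinely uses that $M(n,\R)$ is a flat Euclidean space rather than a general Riemannian manifold.
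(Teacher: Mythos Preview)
Your proposal is correct and follows essentially the same route as the paper: apply Hebda's duality \eqref{duality} with $M=M(n,\R)$, $N=O(n,\R)$, $q=A$, then use the identification $\Lambda(A,O(n,\R))\cong\mathrm{Eq}(A,O(n,\R))\cong O(n-k,\R)$ from Lemma~\ref{link-sing} together with the coincidence of \v{C}ech and singular cohomology on the manifold $O(n-k,\R)$. The paper makes the $\Lambda\cong\mathrm{Eq}$ step (via flatness of $M(n,\R)$) in the paragraph immediately preceding Lemma~\ref{link-sing} rather than inside the proof of the corollary, and adds the incidental observation that $\mathrm{Sing}$ is star-convex, but otherwise the arguments coincide.
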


\begin{proof}
    It follows from \Cref{link-sing} that $\Lambda(A,O(n,\R))\cong O(n-k,\R)$ if $A$ has rank $k$. Since $O(n-k,\R)$ is a manifold, $\check{\textup{C}}$ech and singular cohomology groups are isomorphic. The space $\mathrm{Sing}$ is a star-convex set, whence all homotopy and homology groups are that of a point. Applying \eqref{duality} in our case, we obtain an isomorphism
    \begin{equation*}
        H_{n^2-1-i}(\mathrm{Sing},\mathrm{Sing} -A)\cong\widetilde{H}^i(O(n-k,\R))
    \end{equation*}
    between reduced cohomology and local homology \index{local homology} groups. In particular, the local homology of the cut locus at $A$ detects the rank of $A$. 
\end{proof}

\begin{rem}
    Note that for a smooth manifold, the relative homology group $H_k(M,M-p)$ does not depend on the point $p$; it is in fact isomorphic to $H_k(\mathbb{R}^m,\mathbb{R}^m-\mathbf{0})$, where $m$ is the dimension of $M$. However, the above result shows that $H_{n^2-1-i}(\mathrm{Sing},\mathrm{Sing}-A;G)$ does depend on $A$ (it depends on the rank of $A$). This is happening because $\mathrm{Sing}$ is not a smooth manifold. It is the zero set of the determinant map $\det:M(n,\mathbb{R})\to \mathbb{R}$.
\end{rem}

\noindent For a computation for $\tilde{H}^i(O(n-k,\mathbb{R});\mathbb{Z})$, we refer the reader to \cite[\S 3.D]{Hat02}.

\vspace{0.3cm}
\noindent Similar computations hold for $U(n,\C)$ and singular $n\times n$ complex matrices.

\begin{thm}
    Let $M(n,\mathbb{C})$ denotes the set of all $n\times n$ complex matrices and $U(n)$ denotes the set of all $n\times n$ unitary matrices. Then we have
    \begin{enumerate}[(i)]
        \item $\cutn[U(n)] = \mathrm{Sing} = \text{set of all singular matrices in $M(n,\mathbb{C})$}$ 
        \item If $A\in M(n,\mathbb{C})$ is singular of rank $k<n$, then $\mathrm{Eq}(A,U(n))$ is homeomorphic to $U(n-k)$.
        \item If $A\in M(n,\mathbb{C})$ is singular of rank $k<n$, then for any abelian group $G$ there is an isomorphism
        \begin{displaymath}
            H_{n^2-1-i}(\mathrm{Sing},\mathrm{Sing} -A;G) \cong \widetilde{H}^i(U(n-k);G).
        \end{displaymath}  
    \end{enumerate}
\end{thm}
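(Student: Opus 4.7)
The plan is to mirror the treatment of $O(n,\R)\subset M(n,\R)$ carried out in \S\ref{Sec:IlluminatingExample}, replacing the real SVD by the complex one and using the real inner product $\langle X,Y\rangle = \operatorname{Re}\operatorname{tr}(X^\ast Y)$ on $M(n,\C)\cong \R^{2n^2}$. First, I would establish the analogue of \eqref{maxdist}: for $A\in M(n,\C)$ and $B\in U(n)$, since $B^\ast B = I$, expansion gives
\begin{displaymath}
d^2(A,B) = \operatorname{tr}(A^\ast A) - 2\operatorname{Re}\operatorname{tr}(A^\ast B) + n,
\end{displaymath}
so $d^2(A,U(n))$ is controlled by $\sup_{B\in U(n)}\operatorname{Re}\operatorname{tr}(A^\ast B)$.

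Next I would apply complex SVD, writing $A = UDV^\ast$ with $U,V\in U(n)$ and $D=\operatorname{diag}(\sigma_1,\ldots,\sigma_n)$, $\sigma_1\ge \cdots\ge \sigma_n\ge 0$. Then $\operatorname{Re}\operatorname{tr}(A^\ast B) = \operatorname{Re}\operatorname{tr}(D Q)$ where $Q = U^\ast B V\in U(n)$, and since $|Q_{ii}|\le 1$ we get $\operatorname{Re}\operatorname{tr}(DQ)\le \sum\sigma_i = \operatorname{tr}(\sqrt{A^\ast A})$, with equality iff $\operatorname{Re}(Q_{ii})=1$, i.e.\ $Q_{ii}=1$, whenever $\sigma_i>0$. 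For such a unitary $Q$ with a string of unit diagonal entries, the corresponding rows and columns are forced to be the standard basis vectors. Thus when $A$ has rank $k<n$, the set of maximizers is exactly $U\bigl(I_k\oplus U(n-k)\bigr)V^\ast \cong U(n-k)$, giving part~(ii). When $k=n$ the maximizer $Q=I$ is unique, producing a single minimizer $B=A(\sqrt{A^\ast A})^{-1}$; the straight-line geodesic $t\mapsto B+t(A-B)$ is then an $N$-geodesic whose extension beyond $A$ remains distance-minimizing (the distance to $U(n)$ along it grows monotonically in $t$, by the same computation as for the $O(n,\R)$ case), so $A\notin\cutn[U(n)]$. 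Combined with the fact that rank-$k<n$ matrices lie in the separating set $\operatorname{Se}(U(n))$, this yields $\operatorname{Se}(U(n))=\operatorname{Sing}$; since $\operatorname{Sing}$ is closed (the zero set of $\det$), Theorem~\ref{thm:SeClosureIsCutLocus} gives $\cutn[U(n)]=\overline{\operatorname{Se}(U(n))}=\operatorname{Sing}$, proving~(i).

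For part~(iii), geodesics in $M(n,\C)$ are straight lines and two such lines from $A$ meet only at $A$, so the natural surjection $\Lambda(A,U(n))\twoheadrightarrow \operatorname{Eq}(A,U(n))$ is a homeomorphism; by~(ii) both are homeomorphic to $U(n-k)$. The set $\operatorname{Sing}$ is star-convex with respect to the origin (scalar multiples of a singular matrix remain singular), hence contractible, so all absolute homology/homotopy vanishes in the relevant sense. Applying Hebda's duality isomorphism \eqref{duality} with $\Lambda\cong U(n-k)$ (a manifold, so \v{C}ech and singular cohomology agree) yields the stated identification between local homology at $A$ and reduced cohomology of $U(n-k)$.

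The main obstacle is a bookkeeping issue rather than a conceptual one: checking that the real-case proof of $\operatorname{Se}=\operatorname{Sing}$ ports cleanly when $|Q_{ii}|$ is complex-valued (where the rigidity is that $\operatorname{Re}(Q_{ii})=1$ for a unitary $Q$ forces $Q_{ii}=1$, not merely $|Q_{ii}|=1$), and verifying that for $A\in GL(n,\C)$ the extension of the $N$-geodesic past $A$ does not accidentally pick up shorter competitors from elsewhere on $U(n)$. The latter reduces to the uniqueness of the polar decomposition $A = QS$ with $Q\in U(n)$, $S$ positive-definite Hermitian, which holds in the complex setting just as in the real one; the former is a direct unitarity argument.
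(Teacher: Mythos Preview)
Your proposal is correct and follows exactly the route the paper intends: the paper does not give a separate proof of this theorem but simply notes that ``Similar computations hold for $U(n,\C)$ and singular $n\times n$ complex matrices,'' i.e., one should mirror the $O(n,\R)$ argument of \S\ref{Sec:IlluminatingExample} (the SVD reduction for the maximizer, Lemma~\ref{link-sing}, and Corollary~\ref{locsinghom}) with the obvious complex substitutions. Your identification of the one genuine new wrinkle---that $\operatorname{Re}(Q_{ii})=1$ (rather than $|Q_{ii}|=1$) is what forces $Q_{ii}=1$ for a unitary $Q$---is exactly the check needed to make the port go through.
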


\vspace{0.3cm}
\hf We end this chapter by mentioning some properties of the cut locus and separating set with the help of the listed examples. In the next chapter, we will prove these results.
\begin{enumerate}[(P1)]
    \item For a submanifold $N$, the cut locus is the closure of the separating set.
    \item The distance squared function $d^2(N,\cdot)$ from a submanifold $N$ is not differentiable on the separating set $\sen$. 
    \item The distance squared function from $N$ is a Morse-Bott function with $N$ as its critical submanifold.
    \item The complement of $\cutn$ deformation retracts to $N$. Also, the complement of $N$ deforms to the cut locus of $N$.
\end{enumerate}

\chapter{Geometric and topological nature of cut locus}\label{ch:GeometricViewpointOfCutLocus}
\minitoc
\hf The objective of this chapter is to analyze the geometric and topological properties of cut locus of submanifolds. In particular, we will study relations between the distance squared function from a submanifold, the cut locus of submanifold and Thom space of the normal bundle of the submanifold. We will also prove that the distance squared function is a Morse-Bott function. Results in this chapter are based on joint work with Basu \cite{BaPr21}.

\hf A result due to Wolter \cite[Lemma 1]{Wol79} may be generalized to prove (\Cref{Lmm: singdsq}) that the distance squared function from a submanifold is not differentiable on the separating set. This result may be well known to experts, but we provide a proof, following Wolter, which is elementary.

\section{Regularity of distance squared function}\label{sec:RegularityOfDistanceSquaredFunction}
\hfb Recall \Cref{defn:DeltaMap}, where we have defined the $\Delta$ map. The following proposition describes $\Delta$ in terms of the distance function on the Riemannian manifold $M$ from the submanifold $N$.

\vspace{0.3cm}
\begin{prop}\label{dsq-Fermi}
    Let $U$ be a neighbourhood of $N$ such that each point in $U$ admits a unique unit speed $N$-geodesic. If $p\in U$, then 
    \begin{displaymath}
        \Delta(p) = \dist(N,p).
    \end{displaymath}
\end{prop}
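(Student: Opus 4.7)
The plan is to adapt a Fermi coordinate system to the unique distance-minimizing $N$-geodesic landing at $p$ and then read off $\Delta(p)$ directly from \Cref{Lemma: geodesic in fermi}. By hypothesis, there is a unique unit-speed $N$-geodesic $\gamma:[0,L]\to M$ with $\gamma(0)=q\in N$ and $\gamma(L)=p$, where $L=\dist(N,p)$. Since $\gamma$ is distance-minimal from $N$, the first variation formula forces $\gamma'(0)\in (T_qN)^\perp$, and by construction $\|\gamma'(0)\|=1$.

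Next, I would build Fermi coordinates $(x_1,\ldots,x_n)$ centered at $q$ by choosing an orthonormal frame $\mathcal{E}_{k+1},\ldots,\mathcal{E}_n$ of $\nu$ over a neighbourhood $\mathcal{O}\subseteq N$ of $q$ with $\mathcal{E}_{k+1}(q)=\gamma'(0)$; such a frame exists since any unit normal at a point can be extended to a local orthonormal frame of $\nu$. \Cref{Lemma: geodesic in fermi} then gives
\[
(x_j\circ\gamma)(t) \;=\; t\,\delta_{j(k+1)} \qquad (1\le j\le n)
\]
on the portion of $\gamma$ lying in the Fermi chart. Evaluating at $t=L$ yields $x_{k+1}(p)=L$ and $x_i(p)=0$ for $i\in\{k+1,\ldots,n\}\setminus\{k+1\}$, so
\[
\Delta(p)^2 \;=\; \sum_{i=k+1}^{n} x_i(p)^2 \;=\; L^2,
\]
hence $\Delta(p)=L=\dist(N,p)$. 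The preceding lemma on the independence of $\Delta$ from the choice of orthonormal frame ensures that selecting this particular frame entails no loss of generality.

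The main technical obstacle I anticipate is verifying that the Fermi chart at $q$ is actually defined at $p$, equivalently that $(q,L\gamma'(0))$ lies in the maximal domain $\mathcal{U}_N\subseteq \nu$ on which $\exp_\nu$ is a diffeomorphism; without this one cannot invoke \Cref{Lemma: geodesic in fermi} at $t=L$. This is exactly what the uniform tubular hypothesis on $U$ buys: uniqueness of the unit-speed $N$-geodesic to each point of $U$ forces $\exp_\nu$ to be injective on $\exp_\nu^{-1}(U)\cap \nu$, and combined with the local diffeomorphism property of $\exp_\nu$ along the zero section this gives $\exp_\nu^{-1}(U)\subseteq \mathcal{U}_N$ (shrinking $U$ if necessary). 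Once this inclusion is in hand, the calculation above finishes the proof with no further work.
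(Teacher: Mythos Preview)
Your argument is correct and follows essentially the same route as the paper: choose the unique $N$-geodesic to $p$, center Fermi coordinates at its foot with $\mathcal{E}_{k+1}$ aligned to $\gamma'(0)$, and read off $\Delta(p)=t_0=\dist(N,p)$ from \Cref{Lemma: geodesic in fermi}. Your additional discussion of why $p$ lies in the Fermi chart (i.e., $(q,L\gamma'(0))\in\mathcal{U}_N$) is a point the paper leaves implicit.
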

\vspace{0.1cm}
\begin{proof}
    Since the expression of $\Delta$ is independent of the choice of the Fermi coordinates, we will make a special choice of the Fermi coordinates $(x_1,\cdots,x_n)$. For $p\in U$, choose the unique unit speed $N$-geodesic $\gamma$ joining $p$ to $N$. This geodesic meets $N$ orthogonally at $\gamma(0)=p'$. Choose $t_0$ such that $\gamma(t_0)= p$.
    \begin{figure}[!htpb]
        \centering
        \includegraphics[width=0.85\textwidth]{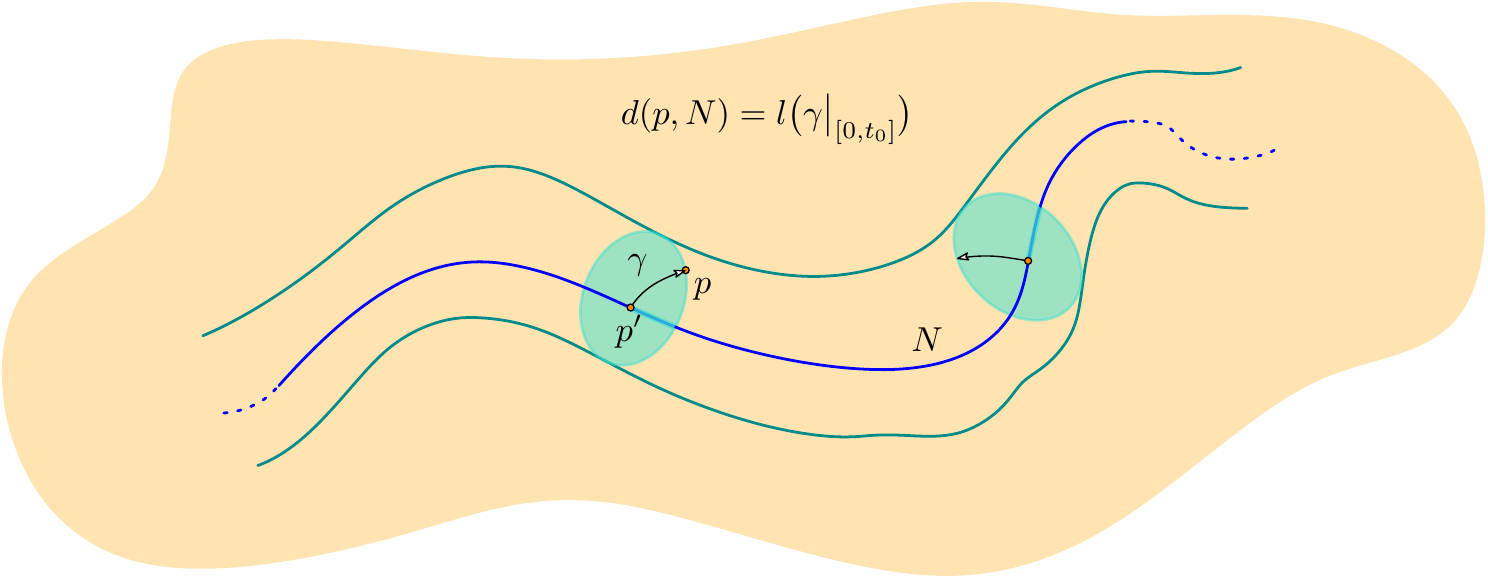}
        \caption{Distance via Fermi coordinates}
        \label{fig:distancesquared}
    \end{figure}
    \noindent According to \autoref{Lemma: geodesic in fermi}, there is a system of Fermi coordinates $(x_1,\cdots,x_n)$ centered at $p'$ such that $x_i(\gamma(t)) = t\delta_{i(k+1)}$. The sequence of equalities 
    \begin{displaymath}
        \Delta(p) = x_{k+1}(\gamma(t_0)) = t_0 = \dist(p,N)
    \end{displaymath}
    complete the proof.
\end{proof}

\begin{cor}\label{dsq-MB}
    Consider the distance squared function with respect to a submanifold $N$ in $M$. The Hessian of the distance squared function at the critical submanifold $N$ is non-degenerate in the normal direction. 
\end{cor}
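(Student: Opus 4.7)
The plan is to use the Fermi-coordinate formula for the distance squared function established in \Cref{dsq-Fermi}, and then read off the Hessian directly. First I would fix a point $p \in N$ and choose a system of Fermi coordinates $(x_1,\ldots,x_n)$ centered at $p$, where $x_1,\ldots,x_k$ parametrize $N$ near $p$ and $x_{k+1},\ldots,x_n$ are the normal coordinates coming from orthonormal sections of $\nu$. By \Cref{dsq-Fermi}, on a tubular neighborhood $U$ of $N$ (small enough that every point admits a unique unit-speed $N$-geodesic) we have the clean local expression
\begin{equation*}
    f(x_1,\ldots,x_n) \;=\; \Delta(x_1,\ldots,x_n)^2 \;=\; \sum_{i=k+1}^{n} x_i^{\,2}.
\end{equation*}

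Next I would verify that $N$ is indeed the critical set of $f$ inside $U$. The partial derivatives are $\partial f/\partial x_l = 0$ for $1 \le l \le k$ and $\partial f/\partial x_i = 2 x_i$ for $k+1 \le i \le n$, so $df_p = 0$ precisely when $x_{k+1} = \cdots = x_n = 0$, i.e.\ exactly on $N$. At such a critical point $p$ the Hessian is a well-defined bilinear form on $T_p M$, and in these coordinates it is represented by the block matrix
\begin{equation*}
    \hess_p(f) \;=\;
    \begin{pmatrix}
        0_{k\times k} & 0 \\
        0 & 2\, I_{(n-k)\times (n-k)}
    \end{pmatrix}.
\end{equation*}

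Finally, I would invoke \Cref{Lemma: geodesic in fermi}, which tells us that at $p\in N$ the coordinate vectors $\partial/\partial x_1,\ldots,\partial/\partial x_k$ span $T_p N$ while $\partial/\partial x_{k+1},\ldots,\partial/\partial x_n$ span $(T_p N)^{\perp}$. Restricting the Hessian block above to the normal space therefore yields $2 I_{(n-k)\times(n-k)}$, which is positive definite and in particular non-degenerate. Since $p \in N$ was arbitrary, this shows $N$ is a non-degenerate critical submanifold of $f$ in the sense of \Cref{defn:nonDegenerateCriticalSubmanifolds}. There is no real obstacle here beyond choosing the right coordinates: the entire content of the corollary is packaged into \Cref{dsq-Fermi}, which reduces $f$ near $N$ to a sum of squares in the normal directions, making the Hessian computation trivial.
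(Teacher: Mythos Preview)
Your proof is correct and follows exactly the approach the paper intends: the corollary is stated immediately after \Cref{dsq-Fermi} with no separate proof, precisely because the Fermi-coordinate expression $f=\sum_{i=k+1}^n x_i^2$ makes the Hessian computation immediate. You have simply written out the details the paper leaves implicit.
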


\vspace{0.3cm}
\hf Towards the regularity of distance squared function, the following observation will be useful. It is a routine generalization of \cite[Lemma~1]{Wol79}.

\begin{lemma}\cite[Lemma 3.7]{BaPr21} \label{Lmm: singdsq}
    Let $M$ be a connected, complete Riemannian manifold and $N$ be an embedded submanifold of $M$. Suppose two $N$-geodesics exist joining $N$ to $q\in M$. Then $d^2(N,\cdot):M\to \rbb$ has no directional derivative at $q$ for vectors in direction of those two $N$-geodesics. 
\end{lemma}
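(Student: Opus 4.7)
Set $T=d(N,q)$ and parameterize the two $N$-geodesics by arc length so that $\gamma_i(0)\in N$ and $\gamma_i(T)=q$ for $i=1,2$. The plan is to fix the direction $v=\gamma_1'(T)\in T_qM$ and to show that the one-sided limits at $s=0$ of the difference quotient of $f=d^2(N,\cdot)$ along the curve $c(s)=\exp_q(sv)=\gamma_1(T+s)$ disagree; by symmetry the same argument treats the direction $\gamma_2'(T)$.

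\smallskip

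For the left limit I would first check that $d(N,c(s))=T+s$ exactly when $s<0$. The inequality $d(N,c(s))\le T+s$ is immediate along $\gamma_1$. For the reverse, any path from $N$ to $c(s)$ of length strictly less than $T+s$ could be concatenated with the arc of $\gamma_1$ from $c(s)$ to $q$ (of length $|s|$), producing a path from $N$ to $q$ of length less than $T$ and contradicting $d(N,q)=T$. Thus $f(c(s))=(T+s)^2$ for small $s<0$, and the left derivative of $s\mapsto f(c(s))$ at $0$ equals $2T$.

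\smallskip

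For the right limit I would exploit the second geodesic $\gamma_2$ to build a competing family of paths from $N$ to $c(s)$. Concretely, construct a smooth variation $\Gamma:[0,T]\times(-\epsilon,\epsilon)\to M$ with $\Gamma(\cdot,0)=\gamma_2$, $\Gamma(0,s)\equiv \gamma_2(0)\in N$, and $\Gamma(T,s)=c(s)$; such $\Gamma$ exists because $\exp_q$ is a local diffeomorphism near the origin, so $c$ can be lifted to a variation field along $\gamma_2$. Writing $L(s)$ for the length of $\Gamma(\cdot,s)$, the first variation formula applied to the geodesic $\gamma_2$, with variation field $V$ satisfying $V(0)=0$ and $V(T)=v$, yields
\begin{displaymath}
L'(0)=\langle v,\gamma_2'(T)\rangle=\langle\gamma_1'(T),\gamma_2'(T)\rangle,
\end{displaymath}
so $L(s)=T+s\,\langle\gamma_1'(T),\gamma_2'(T)\rangle+O(s^2)$. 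Uniqueness of geodesics with prescribed $1$-jet at $q$ forces $\gamma_1'(T)\ne\gamma_2'(T)$, and hence $\langle\gamma_1'(T),\gamma_2'(T)\rangle<1$ because both are unit vectors. Combining $d(N,c(s))\le L(s)$ with this strict inequality gives
\begin{displaymath}
\limsup_{s\to 0^+}\frac{f(c(s))-T^2}{s}\le 2T\,\langle\gamma_1'(T),\gamma_2'(T)\rangle<2T=\lim_{s\to 0^-}\frac{f(c(s))-T^2}{s},
\end{displaymath}
so the two-sided directional derivative of $f$ at $q$ in the direction $v$ cannot exist.

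\smallskip

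The main obstacle I anticipate is the bookkeeping of the first variation formula with one endpoint pinned inside $N$ and the other transported along $c$, together with verifying that the $O(s^2)$ remainder does not spoil the strict gap between the two one-sided limits. A pleasant feature of the approach is that it avoids any assumption about smoothness of the point-wise distance $d(\gamma_2(0),\cdot)$ at $q$ (which may well fail, since $q$ can lie in the cut locus of $\gamma_2(0)$): only the variation-based one-sided upper bound on the length is used. This is precisely the ingredient by which Wolter's point-based argument \cite[Lemma~1]{Wol79} generalizes to $N$-geodesics.
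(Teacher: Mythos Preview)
Your proof is correct and follows the same overall strategy as the paper: both compute the left derivative along the chosen geodesic direction to be $2T$ by the trivial observation $d(N,\gamma_1(T+s))=T+s$ for $s\le 0$, and both bound the right derivative strictly below $2T$ by using the \emph{other} $N$-geodesic to produce shorter competing paths, obtaining the bound $2T\cos\omega$ where $\omega$ is the angle between $\gamma_1'(T)$ and $\gamma_2'(T)$.

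The difference is in the technical device used for that right-side bound. The paper fixes an auxiliary point $\gamma_1(l-\varepsilon)$, bounds $d(N,\gamma_2(l+\tau))$ by the broken path through this point, and then invokes a local cosine law for small geodesic triangles (citing \cite{Sha07,DaDe18}) to differentiate $d(\gamma_1(l-\varepsilon),\gamma_2(l+\tau))$ at $\tau=0$. You instead build a genuine variation of $\gamma_2$ with one endpoint pinned in $N$ and the other sliding along $c(s)=\gamma_1(T+s)$, and read off $L'(0)=\langle\gamma_1'(T),\gamma_2'(T)\rangle$ directly from the first variation formula. Your route is the more standard Riemannian-geometry argument and avoids the somewhat specialized cosine-law estimate; the paper's route is more elementary in the sense that it stays at the level of distance comparisons without invoking variation formulas. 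Either way the $O(s^2)$ remainder is harmless, exactly as you note, since the gap $2T(1-\cos\omega)>0$ is first order.
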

\begin{proof}
    Let us assume that all the geodesics are arc-length parametrized. Let $\gamma_i:[0,\hat{t}]\to M,~~i=1,2$,  be two distinct geodesics with $\gamma_1(0), \gamma_2(0)\in N$ and $\gamma_1(l)=q=\gamma_2(l)$, where $l=d(N,q)$ and $0<l<\hat{t}$. Suppose that the two geodesics start at $p_1$ and $p_2$ and so $d(p_1,q)=l=d(p_2,q)$. Note that the directional derivative of $d^2$ at $q$ in the direction of $\gamma_i'(q)$ from the left is given by
    \begin{align*}
        (d^2)'_-(q) & := \lim_{\varepsilon\to 0^+} \dfrac{(d(N,\gamma_i(l)))^2-(d(N,\gamma_i(l-\varepsilon)))^2}{\varepsilon}\\
		& = \lim_{\varepsilon\to 0} \dfrac{(d(p_i,\gamma_i(l)))^2-(d(p_i,\gamma_i(l-\varepsilon)))^2}{\varepsilon}\\
        & = \lim_{\varepsilon\to 0^+} \dfrac{l^2-(l-\varepsilon)^2}{\varepsilon}\\
		& = \lim_{\varepsilon\to 0} \dfrac{l^2-l^2+2l\varepsilon-\varepsilon^2}{\varepsilon}\\
		& = \lim_{\varepsilon\to 0}\dfrac{2l\varepsilon-\varepsilon^2}{\varepsilon}\\
        & = 2l.
    \end{align*}
    Next, we claim that the derivative of the same function from the right is strictly bounded above by $2l$. Let $\omega\in(0,\pi]$ be the angle between the two geodesics $\gamma_1$ and $\gamma_2$ at $q$.  Define the function,
    \begin{equation*}
        u(\tau)\defeq d(N,\gamma_1(l-\varepsilon))+d(\gamma_1(l-\varepsilon),\gamma_2(\tau+l)).
    \end{equation*} 
    \begin{figure}[h]
        \centering
        \includegraphics[scale=1]{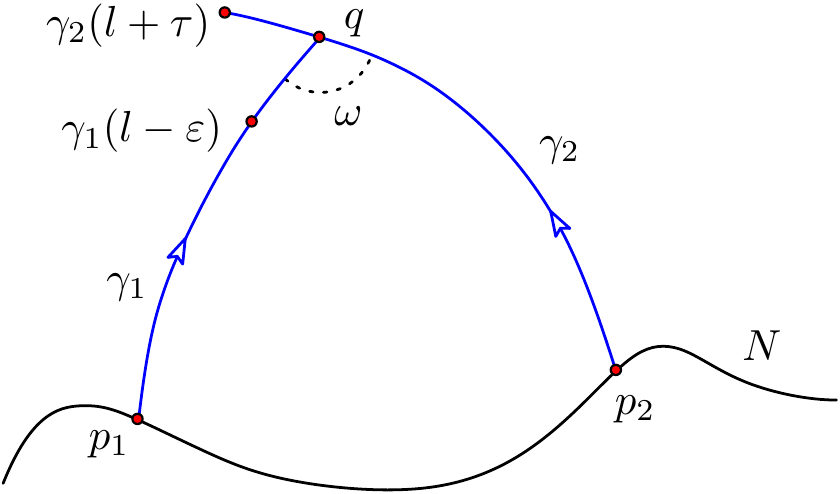}
        \caption{When two $N$-geodesics meet}
        \label{fig: notDifferentiable}
    \end{figure}
    By triangle inequality, we observe that
    \begin{equation*}
    f(\tau)\defeq (u(\tau))^2 \ge d^2(p_1,\gamma_2(\tau+l)) \ge d^2(N,\gamma_2(\tau+l)),
    \end{equation*}
    and equality holds at $\tau=0$ and $(u(0))^2=d^2(N,q)=l^2$. Thus, in order to prove the claim, it suffices to show that the derivative of $f$ from right, at $\tau=0$, is bounded below by $2l$. We need to invoke a version of the cosine law for small geodesic triangles. Although this may be well-known to experts, we will use the version that appears in \cite{Sha07} (also see \cite[Lemma 2.4]{DaDe18} for a detailed proof). In our case, this means that 
    \begin{displaymath}
        d^2(\gamma_1(l-\varepsilon),\gamma_2(\tau+l))=\ep^2+\tau^2+2\ep\tau \cos \omega +K(\tau)\ep^2\tau^2
    \end{displaymath}
    where $|K(\tau)|$ is bounded, and the side lengths are sufficiently small. Note that we are considering geodesic triangles with two vertices constant and the varying vertex being $\gamma_2(l+\tau)$. It follows from taking a square root and then expanding in powers of $\tau$ that
    \begin{displaymath}
        d(\gamma_1(l-\varepsilon),\gamma_2(\tau+l))=\sqrt{\varepsilon^2+\tau^2+2\varepsilon\tau\cos\omega}~(1+O(\tau^2)).
    \end{displaymath}
    It follows that 
    \begin{displaymath}
        u(\tau)=l-\ep+\sqrt{\varepsilon^2+\tau^2+2\varepsilon\tau\cos\omega}~(1+O(\tau^2)).
    \end{displaymath} 
    Therefore, $u'_+(0)=\cos \omega=d'_+(\gamma_1(l-\varepsilon),\gamma_2(l))$. Observe that 
    \begin{align*}
        f'_+(\tau)\Big|_{\tau=0} & = 2d(N,\gamma_1(l-\varepsilon))d'_+(\gamma_1(l-\varepsilon),\gamma_2(l)) \\
        & \kern 3cm +2d(\gamma_1(l-\varepsilon),\gamma_2(l))d'_+(\gamma_1(l-
    \varepsilon),\gamma_2(l))\\
        & = 2d(N,\gamma_1(l-\varepsilon))\cos \omega+2d(\gamma_1(l-\varepsilon),\gamma_2(l))\cos\omega\\
		& = 2\cos \omega\big[ d(N,\gamma_1(l-\varepsilon))+d(\gamma_1(l-\varepsilon),\gamma_2(l))) \big]\\
		& = 2\cos\omega \big[ d(N,\gamma_1(l-\varepsilon))+d(\gamma_1(l-\varepsilon),\gamma_1(l))) \big] \\
        & =2d(N,\gamma_1(l))\cos\omega<2l.
    \end{align*}
    Thus, we have proved the claim and subsequently the result.
\end{proof}

\hf The above lemma shows that $d^2$ is smooth away from the cut locus. The following example suggests that $d^2$ can be differentiable at points in $\cutn -\sen$ (see \Cref{defn:SeparatingSet}) but not twice differentiable.

\vspace{0.3cm}
\begin{eg}[Cut locus of an ellipse]\label{eg:CutLocusOfEllipse-2}
    We discuss the regularity of the distance squared function from an ellipse $x^2/a^2+y^2/b^2=1$ (with $a>b>0$) in $\R^2$. For a discussion of the cut locus for ellipses inside $\mathbb{S}^2$ and ellipsoids, see \cite[pages 90-91]{Heb95}. Let $(x_0,y_0)$ be a point inside the ellipse lying in the first quadrant. The point closest to $(x_0,y_0)$ and lying on the ellipse is given by 
    \begin{displaymath}
        x=\frac{a^2x_0}{t+a^2},\,\,y=\frac{b^2y_0}{t+b^2},
    \end{displaymath}
    where $t$ is the unique root of the quartic
    \begin{displaymath}
        \left(\frac{ax_0}{t+a^2}\right)^2+\left(\frac{by_0}{t+b^2}\right)^2=1
    \end{displaymath}
    in the interval $(-b^2,\infty)$. Given $(\alpha,\beta)$ with $\beta>0$, we set $P_\ep(\alpha,\beta)=(\frac{a^2-b^2}{a}+\ep\alpha,\ep\beta)$; this defines a straight line passing through $P_0(\alpha,\beta)$ in the direction of $(\alpha,\beta)$. For $\ep>0$, $P_\ep(\alpha,\beta)$ lies in the first quadrant and we denote by $t=t(\ep)$ be the unique relevant root of the quartic
    \begin{displaymath}
        \left(\frac{a(\frac{a^2-b^2}{a}+\ep\alpha)}{t+a^2}\right)^2+\left(\frac{b\ep\beta}{t+b^2}\right)^2=1.
    \end{displaymath}
    Simplifying this after dividing by $\ep$ and taking a limit $\ep\to 0^+$, we obtain
    \begin{displaymath}
        \frac{2a\alpha}{a^2-b^2}=\lim_{\ep\to 0^+}\left(\Big(\frac{2}{a^2-b^2}\Big)\frac{t+b^2}{\ep}-b^2\beta^2 \frac{\ep}{(t+b^2)^2}\right).
    \end{displaymath}
    On the other hand, the point $Q_\ep(\alpha,\beta)$ on the ellipse closest to $P_\ep(\alpha,\beta)$ is given by
    \begin{figure}[!htpb]
        \centering
        \includegraphics[scale=0.7]{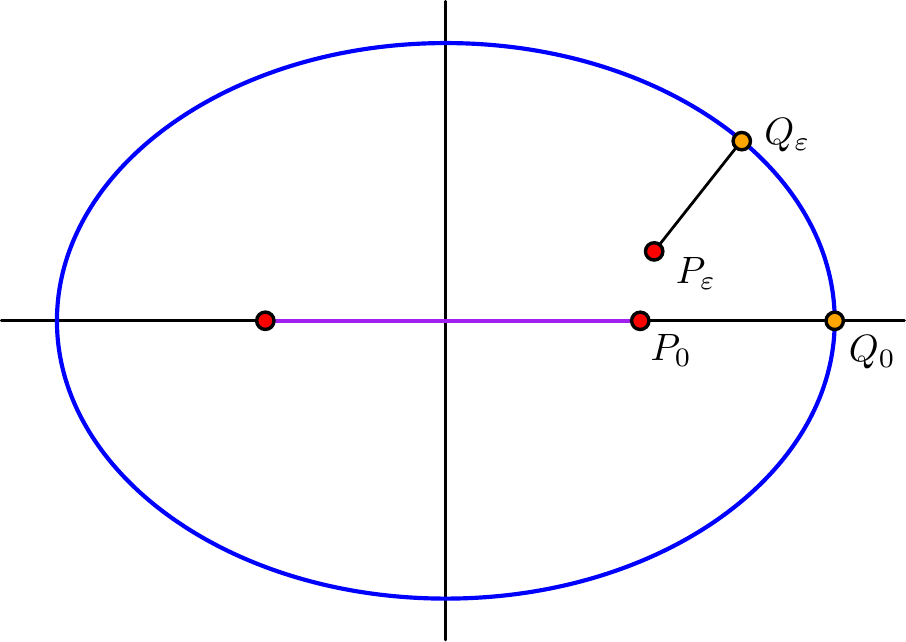}
        \caption{Cut locus of an ellipse}
    \end{figure}
    \begin{displaymath}
        x_\ep=\frac{a^2(\frac{a^2-b^2}{a}+\ep\alpha)}{t+a^2},\,\,y_\ep=\frac{b^2 \ep\beta}{t+b^2}.
    \end{displaymath}
    It follows that
    \begin{equation}
        d_\ep^2(\alpha,\beta):=d^2(P_\ep,Q_\ep)=\frac{t^2}{a^2}\left(\frac{a^2-b^2+a\ep\alpha}{t+a^2}\right)^2+\frac{t^2}{b^2}\left(\frac{b\ep\beta}{t+b^2}\right)^2\label{dsqellip}
    \end{equation}
    Using $t(0)=-b^2$ and simplifications lead us to the following
\begin{align*}
    \lim_{\ep\to 0^+}\frac{d_\ep^2-d_0^2}{\ep} & =\frac{2ab^4\alpha}{a^2(a^2-b^2)}-\lim_{\ep\to 0^+}\left(\frac{(t+b^2)(a^2b^2-a^2 t+2b^2t)}{\ep(t+a^2)^2}-\beta^2 \frac{t^2\ep}{(t+b^2)^2}\right)\\
    & = \frac{2ab^4\alpha}{a^2(a^2-b^2)}-\frac{2b^2}{a^2-b^2}\lim_{\ep\to 0}\frac{t+b^2}{\ep}+\beta^2 b^4\lim_{\ep\to 0}\frac{\ep}{(t+b^2)^2}\\
    & = \frac{2ab^4\alpha}{a^2(a^2-b^2)}-\frac{2ab^2\alpha}{a^2-b^2}=-\frac{2b^2\alpha}{a}.
\end{align*}
    
\hf On the other hand, for $\ep<0$, the point $P_\ep(\alpha,\beta)$ lies in the fourth quadrant. By symmetry, the distance between $P_\ep(\alpha,\beta)$ and $Q_\ep(\alpha,\beta)$ is the same as that between $P_{-\ep}(-\alpha,\beta)$ and $Q_{-\ep}(-\alpha,\beta)$. However, it is seen that 
    \begin{displaymath}
        d^2(P_{-\ep}(-\alpha,\beta), Q_{-\ep}(-\alpha,\beta))=d^2_{-\ep}(-\alpha,\beta)
    \end{displaymath}
    as defined in \eqref{dsqellip}. Therefore,
    \begin{align*}
        \lim_{\ep\to 0^-}\frac{d^2(P_{\ep}(\alpha,\beta), Q_{\ep}(\alpha,\beta))-d^2(P_{0}(\alpha,\beta), Q_{0}(\alpha,\beta))}{\ep} & =  \lim_{\ep\to 0^-}\frac{d^2 _{-\ep}(-\alpha,\beta)-d^2 _0(-\alpha,\beta)}{\ep}\\
        & =  -\lim_{-\ep\to 0^+}\frac{d^2 _{-\ep}(-\alpha,\beta)-d^2 _0(-\alpha,\beta)}{-\ep}\\
        & = -\frac{2b^2\alpha}{a},
    \end{align*}
    where the last equality follows from the right hand derivative of $d^2$, as computed previously.
    
    \vspace{0.1cm}
    \hf When $\beta=0$ we would like to compute $d^2_\ep(\alpha,0)$. If $\ep>0$, then 
    \begin{equation}
        \label{dsqP01}d^2_\ep(\alpha,0)=(b^2/a-\ep\alpha)^2=\frac{b^4}{a^2}-\frac{2b^2\alpha\ep}{a}+\alpha^2\ep^2.
    \end{equation}
    On the other hand, if $\ep<0$ is sufficiently small, then there are two points on the ellipse closest to $P_\ep(\alpha,0)=(\frac{a^2-b^2}{a}+\ep\alpha,0)$, with exactly one on the first quadrant, say $Q_\ep$. Since the segment $P_\ep Q_\ep$ must be orthogonal to the tangent to the ellipse at $Q_\ep$, we obtain the coordinates for $Q_\ep$:
    \begin{displaymath}
        x_\ep=\frac{a^2(\frac{a^2-b^2}{a}+\ep\alpha)}{a^2-b^2},\,\,y^2_\ep=b^2\bigg(1-\frac{x_\ep^2}{a^2}\bigg),\,\,y_\ep>0.
    \end{displaymath}
    We may compute the distance
    \begin{equation}
        \label{dsqP02}d^2_{\ep}(\alpha,0):=(d(P_\ep,Q_\ep))^2=\frac{b^4}{a^2}-\frac{2b^2\alpha\ep}{a}-\frac{b^2\alpha^2\ep^2}{a^2-b^2},
    \end{equation}
    where $\ep<0$. Combining \eqref{dsqP01} and \eqref{dsqP02} we conclude that $d^2$ is differentiable at $P_0=(\frac{a^2-b^2}{a},0)$, a point in $\cutn$ but not in $\sen$. However, comparing the quadratic part of $d^2$ in \eqref{dsqP01},\eqref{dsqP02} we conclude that $d^2$ is not twice differentiable at $P_0$. 
\end{eg}

\section{Characterizations of \texorpdfstring{\cutn}{Cu(N)}}\label{sec:characterizationOfCutLocus}
\hfb Let $(M,g)$ be a complete Riemannian manifold with distance function $d$. The exponential map \index{exponential map} at $p\in M$
\begin{displaymath}
    \exp_p:T_p M \to M
\end{displaymath}
is defined on the tangent space of $M$. Moreover, there exists minimal geodesic joining any two points in $M$. However, not all geodesics are distance realizing. Given $v\in T_p M$ with $\|v\|=1$, let $\gamma_v$ be the geodesic initialized at $p$ with velocity $v$. Let $S(TM)$ denote the unit tangent bundle and let $ [0,\infty]$ be the one point compactification of $ [0,\infty)$. Define 
\begin{displaymath}
    s:S(TM)\to [0,\infty],\,\,s(v):=\sup\{t\in [0,\infty)\,|\,\gamma_v |_{[0,t]}\,\,\textup{is minimal}\}.
\end{displaymath}

\begin{defn}[Cut Locus]\label{cutlocus2}
    Let $M$ be a complete, connected Riemannian manifold. If $s(v)<\infty$ for some $v\in S(T_pM)$, then $\textup{exp}_p(s(v)v)$ is called a \textit{cut point}. The collection of cut points is defined to be the cut locus of $p$.
\end{defn}

\vspace{0.3cm}
\hf As geodesics are locally distance realizing, $s(v)> 0$ for any $v\in S(TM)$. The following result \cite[Proposition 4.1]{Sak96} will be important for the underlying ideas in its proof.
\begin{prop}\label{scts}
    The  map $s:S(TM)\to [0,\infty], u\mapsto s(u)$ is continuous.
\end{prop}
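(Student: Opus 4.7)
I would prove continuity by establishing upper and lower semi-continuity separately, using the characterization (Theorem 2.3.6 of the excerpt) that a cut point is either a first conjugate point or is reached by two distinct minimal geodesics.

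\textbf{Upper semi-continuity.} Suppose $u_n \to v$ in $S(TM)$ with $\limsup s(u_n) > s(v)$. Extracting a subsequence, assume $s(u_n) \to T'' > s(v)$; in particular there is some $\tau$ with $s(v) < \tau < T''$ and $s(u_n) > \tau$ for $n$ large. Then $\gamma_{u_n}|_{[0,\tau]}$ is minimal, so $d(\pi(u_n), \exp_{\pi(u_n)}(\tau u_n)) = \tau$. Passing to the limit, $d(\pi(v), \gamma_v(\tau)) = \tau$, i.e.\ $\gamma_v|_{[0,\tau]}$ is minimal. This contradicts $s(v) < \tau$. (If $s(v) = \infty$ there is nothing to prove.)

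\textbf{Lower semi-continuity.} Suppose $\liminf s(u_n) < s(v)$, and choose a subsequence with $s(u_n) \to T' < s(v)$. A neighborhood of the zero section of $TM$ is contained in regular points of $\exp$, so $s$ is bounded below by a positive number near $v$; hence $T' > 0$. Passing to a subsequence we may assume one of the two alternatives of the cut-point dichotomy holds for every $n$.

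\emph{Conjugate case.} If $\gamma_{u_n}(s(u_n))$ is the first conjugate point along $\gamma_{u_n}$, then $d(\exp_{\pi(u_n)})_{s(u_n)u_n}$ is singular. Since the set of pairs $(p,w)\in TM$ at which $d\exp_p$ fails to be surjective is closed (it is the zero set of a continuous determinant built from Jacobi fields), the limit $(\pi(v), T'v)$ also lies in this set. Thus $\gamma_v(T')$ is conjugate to $\pi(v)$ along $\gamma_v$, so $\gamma_v$ cannot be minimal beyond $T'$, forcing $s(v) \le T'$, a contradiction.

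\emph{Separating case.} There exist $w_n \in S(T_{\pi(u_n)}M)$ with $w_n \neq u_n$ such that $\gamma_{w_n}(s(u_n)) = \gamma_{u_n}(s(u_n))$ and $\gamma_{w_n}|_{[0,s(u_n)]}$ is minimal. Using local triviality of $S(TM)$, pass to a subsequence with $w_n \to w \in S(T_{\pi(v)}M)$. If $w \neq v$, then by continuity $\gamma_w(T') = \gamma_v(T')$ and both geodesics are minimal, so $\gamma_v(T') \in \se[v] \subset \cutn[\pi(v)]$, forcing $s(v) \leq T'$, a contradiction.

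\textbf{The hard subcase: $w = v$.} Here is the delicate point: both directions $u_n$ and $w_n$ tend to the same limit $v$, so we cannot read off two distinct minimal geodesics at $v$ directly. I would argue instead that this forces conjugacy in the limit. Consider the map $\mathrm{Exp}\colon TM \to M\times M$, $(p,X) \mapsto (p, \exp_p X)$; its derivative at $(p,X)$ is non-singular iff $(d\exp_p)_X$ is. If $(d\exp_{\pi(v)})_{T'v}$ were non-singular, then $\mathrm{Exp}$ would be a local diffeomorphism at $(\pi(v), T'v)$, hence injective on a neighborhood $U$ of this point. But for $n$ large, both $(\pi(u_n), s(u_n)u_n)$ and $(\pi(u_n), s(u_n)w_n)$ lie in $U$, are distinct (since $u_n \neq w_n$ and $s(u_n) > 0$), yet have the same image under $\mathrm{Exp}$, a contradiction. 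Hence $(d\exp_{\pi(v)})_{T'v}$ is singular, and we conclude exactly as in the conjugate case that $s(v) \le T'$.

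Combining both semi-continuities gives continuity of $s$ as a map into the one-point compactification $[0,\infty]$; the point at infinity is accommodated because if $s(v) = \infty$, upper semi-continuity is vacuous and lower semi-continuity shows $s(u_n) \to \infty$ by the same argument applied to any threshold $T$.
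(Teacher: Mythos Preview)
Your proof is correct and matches the approach the paper takes. The paper does not actually prove \Cref{scts} in place---it cites \cite[Proposition~4.1]{Sak96} and only remarks that the proof relies on the dichotomy of \Cref{thm:CharacterizationOfCutLocusInTermsOfConjugatePoint}---but it does prove in full detail the analogous submanifold statement (\Cref{snucts}), and that proof is essentially identical to yours: one shows any accumulation point $T$ of $s(u_n)$ satisfies $T\le s(v)$ by continuity of the distance function (your upper semi-continuity), and then uses the cut-point dichotomy to rule out $T<s(v)$; the delicate subcase where the second minimal directions $w_n$ collapse onto $v$ is handled exactly as you do, by observing that $\mathrm{Exp}:TM\to M\times M$ (the paper writes the bundle analogue $\Phi:\nu\to M\times M$) would be a local diffeomorphism near $T v$ if there were no conjugacy, contradicting the existence of distinct preimages $s(u_n)u_n$ and $s(u_n)w_n$ with the same image.

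The only organizational difference is that you explicitly separate upper and lower semi-continuity, whereas the paper's proof of \Cref{snucts} takes an arbitrary accumulation point and shows it equals $\rho(u)$; the content is the same.
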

\vspace{0.3cm}
\noindent The proof relies on a characterization of $s(v)$ provided $s(v)<\infty$, (\Cref{thm:CharacterizationOfCutLocusInTermsOfConjugatePoint}). A positive real number $T$ is $s(v)$ if and only if $\gamma_v:[0,T]$ is minimal and at least one of the following holds:\\
\hf (i) $\gamma_v(T)$ is the first conjugate point of $p$ along $\gamma_v$,\\
\hf (ii) there exists $u\in S(T_pM), u\neq v$ such that $\gamma_u(T)=\gamma_v(T)$.

\begin{rem}
    If $M$ is compact, then it has bounded diameter, which implies that $s(v)<\infty$ for any $v\in S(TM)$. The converse is also true: if $M$ is complete and connected with $s(v)<\infty$ for any $v\in S(TM)$, then $M$ has bounded diameter, whence it is compact. 
\end{rem}

\subsection{Characterization in terms of focal points}
\hfb We shall be concerned with closed Riemannian manifolds in what follows. Let $N$ be an embedded submanifold inside a 
closed, i.e., compact without boundary, manifold $M$. Let $\nu$ denote the normal bundle of $N$ in $M$ with $D(\nu)$ denoting the unit disk bundle. In the context of $S(\nu)$, the unit normal bundle and the cut locus of $N$, distance minimal geodesics or $N$-geodesics are relevant (see \Cref{distmin,cutlocus1}). We want to consider 
\begin{equation}\label{snu}
    \rho:S(\nu)\to [0,\infty),\,\,\rho(v):=\sup\{t\in [0,\infty)\,|\,\gamma_v |_{[0,t]}\,\,\textup{is an $N$-geodesic}\}.
\end{equation}
Notice that $0<\rho(v)\leq s(v)$ for any $v\in S(\nu)$. In the special case when $N=\{p\}$, $\rho$ is simply the restriction of $s$ to $T_p M$. The continuity of $\rho$ requires a result similar to \Cref{thm:CharacterizationOfCutLocusInTermsOfConjugatePoint}, which requires the definition of focal points.  
\begin{defn}[Focal point]\index{focal point}\label{defn:focalPoint}
    Let $p\in N$ and $(p,v)\in S(\nu)$. We say that $v$ is a \emph{tangent focal point}\index{tangent focal point} of $N$ if $d (\exp_\nu)_v$ is not of full rank. If $\gamma$ is a geodesic from $0$ to $v$ in $\nu_p$, then $\exp_\nu(v)$ is called a \emph{focal point of $N$ along $\exp_\nu(\gamma)$}.
\end{defn}
\vspace{0.3cm}
\begin{figure}[!htb]
    \centering
    \def\svgwidth{0.5\columnwidth}
    \import{./figures/}{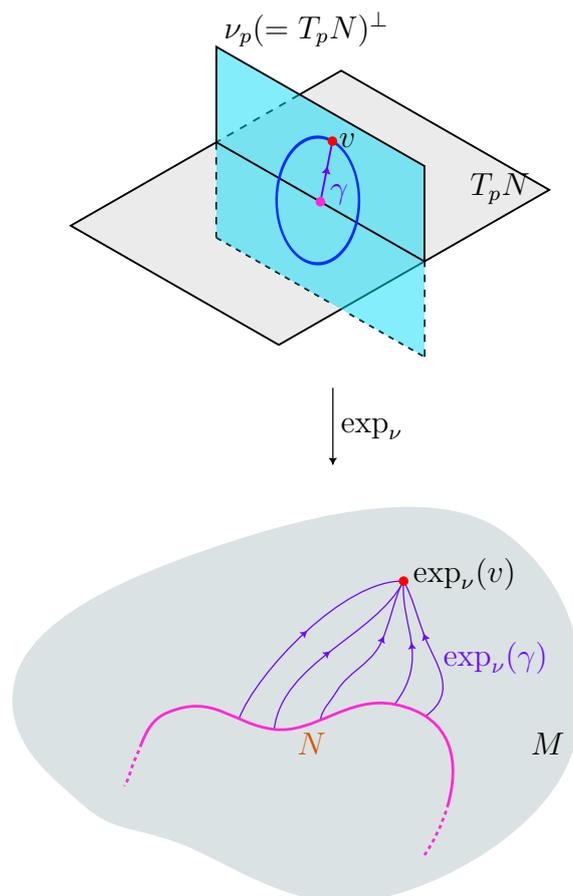}

    \caption{Focal points\label{fig:FocalPoints}}
\end{figure}

\hf The nullity of $d\exp_\nu$ at $v$ is called the \textit{multiplicity} of a focal point.\index{muliplicity of focal point} If it is one, we say it the \textit{first focal point}.\index{first focal point}  Analogous to \Cref{thm:CharacterizationOfCutLocusInTermsOfConjugatePoint}, we have the following result.
\begin{thm}\label{thm:CharacterizationOfCutLocusInTermsOfFocalPoint}
    Let $u\in S_p(\nu)$. A positive real number $T$ is $\rho(u)$ if and only if $\gamma_u:[0,T]$ is an $N$-geodesic and at least one of the following holds:
    \begin{enumerate}[(i)]
        \item $\gamma_u(T)$ is the first focal point of $N$ along $\gamma_u$,
        \item there exists $v\in S(\nu)$ with $v\neq u$ such that $\gamma_v(T)=\gamma_u(T)$.
    \end{enumerate}
\end{thm}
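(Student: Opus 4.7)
The plan is to prove both directions separately, modeling the argument on the classical pointwise case (\Cref{thm:CharacterizationOfCutLocusInTermsOfConjugatePoint}) but replacing $\exp_p$ throughout with the normal exponential map $\exp_\nu$, and conjugate points with focal points.

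\textbf{Backward direction.} Assume $\gamma_u|_{[0,T]}$ is an $N$-geodesic. Then automatically $\rho(u)\geq T$, so it suffices to show that for every $\ep>0$ the curve $\gamma_u|_{[0,T+\ep]}$ fails to minimize the distance from $N$. Case (ii) is the easy one: if $v\neq u$ yields $\gamma_v(T)=\gamma_u(T)$, then the concatenation of $\gamma_v|_{[0,T]}$ with $\gamma_u|_{[T,T+\ep]}$ has a corner at $\gamma_u(T)$ (since $\gamma_u,\gamma_v$ are distinct geodesics meeting there), and smoothing this corner via a short geodesic shortcut produces a curve from $N$ to $\gamma_u(T+\ep)$ of length strictly less than $T+\ep$. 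Case (i) is the classical focal point argument: if $Tu$ is a tangent focal point of $N$, then $d(\exp_\nu)_{Tu}$ has non-trivial kernel, so by a standard Jacobi field computation (the analogue of the argument that geodesics stop minimizing past the first conjugate point) we can construct a proper variation of $\gamma_u|_{[0,T+\ep]}$ through curves starting on $N$ whose length is strictly less than $T+\ep$.

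\textbf{Forward direction.} Assume $T=\rho(u)$. That $\gamma_u|_{[0,T]}$ is itself an $N$-geodesic follows by continuity of the length functional and a limit argument: taking $t_n\uparrow T$, each $\gamma_u|_{[0,t_n]}$ is an $N$-geodesic, and $d(N,\gamma_u(T))=\lim t_n=T=l(\gamma_u|_{[0,T]})$. The substantive claim is that (i) or (ii) must hold. Pick a sequence $t_n\downarrow T$. By the definition of $\rho$, for each $n$ there is $(p_n,w_n)\in S(\nu)$ with $p_n\neq\exp_\nu(0\cdot u)$ allowed and $\gamma_{w_n}|_{[0,t_n']}$ a distance-minimal $N$-geodesic to $\gamma_u(t_n)$ with $t_n'\leq t_n$. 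Using that $N$ is closed and $S(\nu)$ is locally compact (together with the completeness of $M$), we may extract a subsequence so that $(p_n,w_n)\to (p,w)\in S(\nu)$ and $t_n'\to T'\leq T$. By continuity of $\exp_\nu$, we obtain $\gamma_w(T')=\gamma_u(T)$, and since $d(N,\gamma_u(T))=T$, we must have $T'=T$.

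\textbf{The dichotomy.} Now either $(p,w)\neq(\exp_\nu(0\cdot u),u)$ in $S(\nu)$, in which case $v:=w$ satisfies (ii); or $(p,w)=(\exp_\nu(0\cdot u),u)$, meaning the sequence of distinct unit vectors $w_n$ converges to $u$ itself. In this second case I want to conclude that $Tu$ is a tangent focal point, giving (i). The idea is that if $d(\exp_\nu)_{Tu}$ were injective, then by the inverse function theorem $\exp_\nu$ would be a local diffeomorphism on a neighbourhood of $Tu$ in $\nu$, so nearby vectors $t_n w_n\to Tu$ would map to distinct points for $n$ large, contradicting $\exp_\nu(t_n'w_n)=\gamma_u(t_n)$ being on the fixed curve image while $\exp_\nu(tu)$ for $t$ near $T$ is also on that curve but parametrizing it injectively in $t$. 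A careful version of this argument (taking the sequence $(t_n'w_n-t_n u)$ and normalizing to extract a direction in the kernel) shows that $d(\exp_\nu)_{Tu}$ must annihilate some non-zero vector, hence $Tu$ is a tangent focal point.

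\textbf{Main obstacle.} The delicate step is exactly this last injectivity-versus-focal-point dichotomy: one has to rule out the case where the ``shortcut'' $N$-geodesics $\gamma_{w_n}$ collapse onto $\gamma_u$ without $d(\exp_\nu)_{Tu}$ degenerating. This is the place where Jacobi field estimates along $\gamma_u$ (or equivalently, a careful Taylor expansion of $\exp_\nu$ near $Tu$) are needed; the rest of the argument is a direct adaptation of the pointwise result.
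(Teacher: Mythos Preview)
Your proposal is correct and follows essentially the same route as the paper: the backward direction via a shortcut/corner argument for (ii) and the focal-point index argument for (i), and the forward direction via a limit of $N$-geodesics to points $\gamma_u(t_n)$ just past $T$, followed by the dichotomy on whether the limit direction equals $u$.

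One remark on your ``Main obstacle'': you are over-worrying. No Jacobi field estimates or Taylor expansions are needed at that step. The inverse function theorem argument you already sketched is the complete proof, and it is exactly what the paper does. Concretely: assume $\gamma_u(T)$ is \emph{not} the first focal point, so $d(\exp_\nu)_{Tu}$ is nonsingular and $\exp_\nu$ restricts to a diffeomorphism on a neighbourhood $\tilde U$ of $Tu$ in $\nu$. For $n$ large, both $t_n u$ and $t_n' w_n$ lie in $\tilde U$ and map under $\exp_\nu$ to the same point $\gamma_u(t_n)$; injectivity forces $t_n u = t_n' w_n$, whence $t_n = t_n'$ (take norms), contradicting $t_n' = d(N,\gamma_u(t_n)) < t_n$. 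So in the case $w=u$ you derive a contradiction directly, which is the contrapositive of your formulation (``if $w=u$ then $Tu$ is focal''). No kernel-extraction or normalized-difference argument is required.
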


\vspace{0.3cm}
\noindent In order to prove the above theorem, we need the following observations.

\vspace{0.3cm}
\noindent \textbf{Observation A}\,\textup{\cite[Lemma 2.11, page 96]{Sak96}}\,\,\textit{Let $N$ be a submanifold of a Riemannian manifold $M$ and $\gamma:[t_0,\infty)\to M$ a geodesic emanating perpendicularly from $N$. If $\gamma(t_1)$ is the first focal point of $N$ along $\gamma$, then for $t>t_1$, $\gamma|_{[t_0,t_1]}$ cannot be an $N$-geodesic, i.e., $L\paran{\gamma|_{[t_0,t]}}>d(N,\gamma(t))$.}\\[0.2cm]
\noindent Recall that a sequence $\{\gamma_n\}$ of geodesics, defined on closed intervals, is said to converge to a geodesic $\gamma$ if $\gamma_{n}(0)\to \gamma(0)$ and $\gamma_n'(0)\to \gamma'(0)$. It follows from the continuity of the exponential map that if $t_n\to t$, then $\gamma_n(t_n)\to \gamma(t)$.\\[0.3cm]
\textbf{Observation B}\,\,\textit{Let $\gamma_n$ be a sequence of unit speed $N$-geodesics joining $p_n=\gamma_n(0)$ to $q_n=\gamma_n(t_n)$. If $\gamma_n$ converges to a geodesic $\gamma$ and $t_n\to l$, then $\gamma$ is a unit speed $N$-geodesic joining $p=\lim_n p_n$ to $q \defeq \gamma(l)=\lim_n \gamma_n(t_n)$.}
\begin{proof}
    The unit normal bundle $S(\nu)$ is closed. Since $\gamma_n'(0)\to \gamma'(0)$, it follows that $\gamma'(0)\in S(\nu)$. Note that
        \begin{displaymath}
        d(N,q) = \lim_{n\to \infty} d(N,q_n) = \lim_{n\to \infty} d(p_n,q_n)= \lim_{n\to \infty} t_n = l = L\paran{\gamma|_{[0,l]}}
        \end{displaymath}
    implies that $\gamma$ is an $N$-geodesic.
\end{proof}

\begin{proof}[Proof of \Cref{thm:CharacterizationOfCutLocusInTermsOfFocalPoint}]
    If $\gamma_u(t)$ is the first focal point of $N$ along $\gamma_u$, then Observation A implies that $\gamma_u$ cannot be minimal beyond this value. If (ii) holds, then we need to show that for sufficiently small $\varepsilon>0~$ $\gamma_u|_{[0,T+\varepsilon]}$ is not minimal. Suppose, on the contrary, that $\gamma_u$ is minimal beyond $T$. Take a minimal geodesic $\beta$ joining $\gamma_{v}(T-\varepsilon)$ to $\gamma_{u}(T+\varepsilon)$. Observe that,
	\begin{align*}
	    2\varepsilon & = d\paran{\gamma_{u}(T+\varepsilon),\gamma_u(T)} + d\paran{\gamma_v(T),\gamma_v(T-\varepsilon)} \\
        & > d\paran{\gamma_u(T+\varepsilon),\gamma_v(T-\varepsilon)}.
	\end{align*}
    If $p,q,r\in M$ such that $d(p,q)+d(q,r) = d(p,r)$ and there exist the shortest normal geodesic $\gamma_{1}$ and $\gamma_{2}$ joining $p$ to $q$ and $q$ to $r$, respectively, then $\gamma_{1}\cup \gamma_2$ is smooth at $q$ and defines a shortest normal geodesic joining $p$ to $r$. Therefore, we have
    \begin{align*}
		L(\gamma_v|_{[0,T-\varepsilon]}\cup \beta) & = T-\varepsilon + d(\gamma_v(T-\varepsilon),\gamma_u(T+\varepsilon)) \\
        & < T + \varepsilon = L(\gamma_u|_{[0,T+\varepsilon]}).
	\end{align*}
    This contradiction establishes that $\gamma_u|_{[0,T+\varepsilon]}$ is not minimal.
    
    \vspace{0.1cm}
    \hf For the converse, set $T=\rho(u)$ and observe that $\gamma_u|_{[0,T]}$ is an $N$-geodesic. Assuming that $q\defeq \gamma_{u}(T)$ is not the first focal point of $N$ along $\gamma_{u}$, we will prove that (ii) holds. Let $p=\gamma_u(0)$ and choose a neighbourhood $\tilde{U}$ of $Tu$ in $\nu$ such that $\exp_{\nu}|_{\tilde{U}}$ is a diffeomorphism. For sufficiently large $n$, $q_{n}\defeq \gamma_{u}\paran{T+1/n}\in \exp_{\nu}(\tilde{U})$. Take $N$-geodesics $\gamma_{n}$ parametrized by arc-length joining $p_n$ to $q_{n}$ and set $u_{n}\defeq \dot{\gamma}_{n}(0)\in S((T_{p_n}N)^\perp)$. Since $S((T_{p_n}N)^\perp)$ is compact, by passing to a subsequence, we may assume that $u_{n}$ converges to $v\in S(N_{p})$. By Observation B, 
	\begin{displaymath}
	    \gamma_v(T) = \lim_{n\to \infty} \gamma_{u_n}\paran{T+\textstyle{\frac{1}{n}}} = \gamma_u(T).
	\end{displaymath}
    If $v = u$, then for sufficiently large $n$, $d(p,q_n)u_{n} \in \tilde{U}$, whence 
	\begin{displaymath}
	    \paran{T+\textstyle{\frac{1}{n}}}u = d(p,q_n)u_n.
	\end{displaymath}
    Taking absolute values on both sides imply $T+1/n>d(p,q_n)$. This contradiction implies $v\neq u$.
\end{proof}
\hf We now will prove that the map $\rho$ defined in \ref{snu} is a continuous function.
\begin{prop}\label{snucts}
    The map $\rho:S(\nu)\to [0,\infty)$, as defined in \eqref{snu}, is continuous.
\end{prop}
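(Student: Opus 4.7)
The plan is to imitate the strategy used for $s$ in \Cref{scts} and verify continuity by separately establishing upper and lower semicontinuity of $\rho$, in both cases using the characterization of $\rho(u)$ afforded by \Cref{thm:CharacterizationOfCutLocusInTermsOfFocalPoint}. Let $u_n \to u$ in $S(\nu)$. Because $M$ is complete and $N$ is compact, $\rho$ is bounded, so after passing to subsequences we may assume $\rho(u_n)$ converges.

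For upper semicontinuity, set $T := \limsup_n \rho(u_n)$ and extract a subsequence with $\rho(u_n) \to T$. Each $\gamma_{u_n}|_{[0,\rho(u_n)]}$ is a unit-speed $N$-geodesic, and since $u_n \to u$ the geodesics $\gamma_{u_n}$ converge to $\gamma_u$. Observation B then gives that $\gamma_u|_{[0,T]}$ is an $N$-geodesic, so $\rho(u) \geq T$, which is exactly $\limsup_n \rho(u_n) \leq \rho(u)$.

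For lower semicontinuity, suppose for contradiction that $T := \liminf_n \rho(u_n) < \rho(u)$, and pass to a subsequence realizing the $\liminf$. By \Cref{thm:CharacterizationOfCutLocusInTermsOfFocalPoint}, for each $n$ either (i) $\gamma_{u_n}(\rho(u_n))$ is the first focal point of $N$ along $\gamma_{u_n}$, or (ii) there exists $v_n \in S(\nu)$ with $v_n \neq u_n$ and $\gamma_{v_n}(\rho(u_n)) = \gamma_{u_n}(\rho(u_n))$. Passing to a further subsequence we may assume the same alternative holds for all $n$. In case (i), continuity of $\exp_\nu$ together with the fact that the vanishing of $\det(d\exp_\nu)$ is a closed condition forces $\gamma_u(T)$ to be a focal point of $N$ along $\gamma_u$; since $\gamma_u|_{[0,T]}$ is minimal by the argument of the previous paragraph, this must in fact be the \emph{first} focal point along $\gamma_u$, and then Observation A yields $\rho(u) \leq T$, a contradiction.

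In case (ii), compactness of $S(\nu)$ lets us extract a limit $v_n \to v \in S(\nu)$, and passing to the limit gives $\gamma_v(T) = \gamma_u(T)$. If $v \neq u$, then the shortening argument used in the proof of \Cref{thm:CharacterizationOfCutLocusInTermsOfFocalPoint} (forming $\gamma_v|_{[0,T-\epsilon]}$ concatenated with a minimal geodesic $\beta$ to $\gamma_u(T+\epsilon)$) shows $\gamma_u$ cannot be minimal past $T$, so $\rho(u) \leq T$, again a contradiction. The main obstacle is the remaining subcase $v = u$: here $v_n \to u$ and $u_n \to u$ with $v_n \neq u_n$, yet $\exp_\nu(\rho(u_n) v_n) = \exp_\nu(\rho(u_n) u_n)$ with $\rho(u_n) v_n$ and $\rho(u_n) u_n$ both converging to $Tu \in \nu$. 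This prevents $\exp_\nu$ from being injective on any neighbourhood of $Tu$ in $\nu$, which forces $d(\exp_\nu)_{Tu}$ to be singular by the inverse function theorem; equivalently, $\gamma_u(T)$ is a focal point of $N$ along $\gamma_u$. Minimality of $\gamma_u|_{[0,T]}$ upgrades this to being the first focal point, and Observation A again gives $\rho(u) \leq T$, completing the contradiction and hence the proof.
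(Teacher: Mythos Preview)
Your proof is correct and follows essentially the same approach as the paper's: one inequality comes from Observation~B (limits of $N$-geodesics are $N$-geodesics), and the reverse inequality comes from the characterization in \Cref{thm:CharacterizationOfCutLocusInTermsOfFocalPoint} via the same case split, including the inverse-function-theorem argument in the subcase $v=u$. The only organizational difference is that you phrase it as upper/lower semicontinuity whereas the paper shows directly that every accumulation point $T$ of $\{\rho(u_n)\}$ equals $\rho(u)$; the ingredients are identical.
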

\begin{proof}
    We will prove that $\rho(u_n)\to \rho(u)$ whenever $(p_n,u_n)\to (p,u)$ in the unit normal bundle $S(\nu)$. Let $T$ be any accumulation point of the sequence $\{\rho(u_n)\}$ including $\infty$. By Observation B, $\gamma_{u}|_{[0,T]}$ is an $N$-geodesic and hence $T\le \rho(u)$. If $T=+\infty$, we are done. So let us assume that $T<+\infty$. From \Cref{thm:CharacterizationOfCutLocusInTermsOfFocalPoint}, at least one of the following holds for infinitely many $n$.
    \begin{enumerate}[(i)]
        \item The sequence $\rho(u_n)$ is the first focal point to $N$ along $\gamma_{u_n}$
        \item  there exist $v_{n}\in S(\nu)$,  $v_{n}\neq u_n$ with $\gamma_{u_n}\paran{\rho(u_n)} = \gamma_{v_n}\paran{\rho(u_n)}$.
    \end{enumerate}

    \hf If (i) is true for infinitely many $n$, then choose infinitely many unit vectors $\{w_n\}$ which belong to the kernel $\ker \paran{D\exp_{\nu}(\rho(u_n)u_n)}$ and are contained in a compact subset of $S(\nu)$. Choose a convergent subsequence whose limit $w$ is contained in $\ker \paran{D\exp_{\nu}{(Tu)}}$. Since $w\neq 0$, the rank of $D\exp_{\nu}{(Tu)}$ is less than $\dim M$. Thus, $\gamma_{u}(T)$ is the first focal point of $N$ along $\gamma_u$ and $T=\rho(u)$.

    \vspace{0.1cm}
    \hf If (ii) is true for infinitely many $n$, then we may assume that $v_{n}\to v\in S(\nu)$. If $v\neq u$, then \Cref{thm:CharacterizationOfCutLocusInTermsOfFocalPoint} (ii) holds for $T$, whence $T=\rho(u)$. If $v=u$, we claim that $\gamma_{u}(T)$ is the first focal point of $N$ along $\gamma_{u}$. If not, then the map $\exp_{\nu}$ is regular at $Tu\in \nu$ and hence the map 
    \begin{displaymath}
        \Phi:\nu\to M\times M,~(p,u)\mapsto (p,\exp_\nu(p,u))
    \end{displaymath}
    is regular at $Tu$. Therefore, $\Phi$ is a diffeomorphism if restricted to an open neighbourhood $\tilde{U}$ of $Tu$ in $\nu$. Since $v=u$, which implies for sufficiently large $n$,~ $(p_n,\rho(u_n)u_n)$ and $(p_n,\rho(u_n)v_n)$ belong to $\tilde{U}$ and are different. On the other hand, by assumption $\Phi(\rho(u_n)u_n) = \Phi(\rho(u_n)v_n) $, which is a contradiction. Therefore, $\gamma_u(T)$ is the first focal point and $T=\rho(u)$. 
\end{proof}

\subsection{Characterization in terms of separating set}
\hfb Recall that the separating set \index{separating set} of $N$, $\sen$, consists of all points $q\in M$ such that at least two distance minimal geodesics from $N$ to $q$ exist. If $q\in \sen$ but $q\not\in \cutn$, then we have \Cref{fig: notDifferentiable}, i.e., $\gamma_1$ is an $N$-geodesic beyond $q$ while $\gamma_2$ is another $N$-geodesic for $q$. The triangle inequality applied to $\gamma_1(0)$, $q=\gamma_1(l)$ and $\gamma_2(l+\tau)$ implies that 
\begin{displaymath}
    d(\gamma_2(l+\tau),N)< l+\tau
\end{displaymath}
while for $\tau$ small enough $d(\gamma_2(l+\tau),N)=l+\tau$ as $\gamma_2$ is an $N$-geodesic beyond $q$. This contradiction establishes the well-known fact $\sen\subseteq \cutn$. In quite a few examples, these two sets are equal (see \Cref{eg:CutLocusOfEllipse} where these two are not same). In the case of $M=\mathbb{S}^n$ with $N=\{p\}$, the set $\sen$ consists of $-p$. There is an infinite family of minimal geodesics joining $p$ to $-p$. An appropriate choice of a pair of such minimal geodesics would create a loop, which is permissible in the definition of $\sen$. According to \Cref{thm:CharacterizationOfCutLocusInTermsOfFocalPoint}, a cut point is either a first focal point of $N$ along a geodesic or it is a separating point. We will now prove our one of the observations in the last chapter, that  cut locus of a submanifold $N$ is the closure of separating set of $N$. 
\begin{thm}\label{thm: Theorem 2}\label{thm:SeClosureIsCutLocus}
    Let $\cutn$ be the cut locus of a compact submanifold $N$  of a  complete Riemannian manifold $M$. The subset $\sen$ of $\cutn$ is dense in $\cutn.$
\end{thm}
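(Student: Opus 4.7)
The plan is to argue by contradiction, leveraging the focal point characterization \Cref{thm:CharacterizationOfCutLocusInTermsOfFocalPoint} and the continuity of $\rho$ (\Cref{snucts}). Starting from an arbitrary cut point $q \in \cutn$, I will try to build a sequence in $\sen$ converging to $q$.

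First, if $q \in \sen$, then $q \in \overline{\sen}$ trivially, so I reduce to $q \in \cutn \setminus \sen$. Since $N$ is compact, some minimal $N$-geodesic $\gamma_v$ reaches $q$ at $t = l := \rho(v)$; as $q \notin \sen$, this $\gamma_v$ is the \emph{unique} minimal $N$-geodesic to $q$, and by \Cref{thm:CharacterizationOfCutLocusInTermsOfFocalPoint}, $q$ must therefore be the first focal point of $N$ along $\gamma_v$. Suppose for contradiction that $q$ admits an open neighborhood $U$ with $U \cap \sen = \emptyset$.

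Next, consider $q_n := \gamma_v(l + 1/n)$, which lies in $U$ for $n$ large. Because $q$ is a cut point along $\gamma_v$, the extension $\gamma_v|_{[0,\,l+1/n]}$ fails to be distance minimal, so $l_n := d(q_n, N) < l + 1/n$ and there exists a minimal $N$-geodesic $\eta_n$ to $q_n$ distinct from $\gamma_v|_{[0,\,l+1/n]}$. Setting $v_n := \eta_n'(0) \in S(\nu)$, the absence of separating points in $U$ forces $\eta_n$ to be the unique minimal $N$-geodesic to $q_n$, and in particular $v_n \neq v$. By compactness of $S(\nu)$, I pass to a subsequence with $v_n \to v'$; Observation B then shows that $\gamma_{v'}$ is a minimal $N$-geodesic to $q$, and uniqueness at $q$ forces $v' = v$. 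Thus $v_n \to v$ with $v_n \neq v$, and \Cref{snucts} gives $\rho(v_n) \to \rho(v) = l$, so the cut points $r_n := \gamma_{v_n}(\rho(v_n))$ satisfy $r_n \to q$.

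The main obstacle is converting this convergence into a concrete contradiction. Two cases arise: if $r_n \in \sen$ for infinitely many $n$, the proof is complete; otherwise $r_n \in \cutn \setminus \sen$ for all large $n$, and \Cref{thm:CharacterizationOfCutLocusInTermsOfFocalPoint} forces each $r_n$ to be the first focal point of $N$ along its unique minimal $N$-geodesic $\eta_n$. In the latter case the focal tangent vectors $\rho(v_n)\, v_n$ accumulate at the focal tangent vector $lv$. The hardest step is to exploit the rank drop of $d\exp_\nu$ at $lv$ (the defining property of $q$ being focal) together with the absence of separating points in $U$ to produce two distinct unit normal vectors whose $N$-geodesics realize a common point in $U$, giving a separating point in $U$ and contradicting $U \cap \sen = \emptyset$. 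Concretely, I expect to use the nonzero Jacobi field along $\gamma_v$ vanishing at $t=0$ and $t=l$ to generate a one-parameter family of $N$-geodesics whose endpoints coincide with $q$ to first order, and then combine this infinitesimal coincidence with the convergence $v_n \to v$ (together with a continuity/interpolation argument in $S(\nu)$) to upgrade it to a genuine coincidence of two minimal $N$-geodesics reaching a common nearby point, producing the desired element of $\sen$ inside $U$.
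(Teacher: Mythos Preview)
Your setup through the construction of $q_n = \gamma_v(l+1/n)$, the minimal $N$-geodesics $\eta_n$ with directions $v_n \neq v$, and the convergence $v_n \to v$ is correct and closely parallels the paper's initial moves. The genuine gap is exactly where you flag it: the ``hardest step.'' Once you land in the case where every $r_n = \gamma_{v_n}(\rho(v_n))$ is a first focal point rather than a separating point, your proposed mechanism --- using the Jacobi field at $q$ to produce a one-parameter family coinciding with $q$ \emph{to first order}, and then ``upgrading'' via interpolation to an honest coincidence of two minimal $N$-geodesics --- is not a rigorous argument, and I do not see how to make it one. An infinitesimal coincidence (rank drop of $d\exp_\nu$) does not by itself produce two distinct minimal geodesics to a common nearby point; that is precisely the distinction between focal and separating behavior. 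Nothing in your data (focal points $r_n$ accumulating at a focal point $q$) forces a separating point to appear.

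The paper sidesteps this difficulty by a topological argument rather than a differential one. It builds a small cone $\costar$ in $\nu$ over a disk neighborhood of $v$ in $S(\nu)$, rescaled by $\rho$, and observes a dichotomy: either $\exp_\nu$ fails to be injective on $\costar$ for every radius (in which case the two preimages give separating points converging to $q$), or $\exp_\nu|_{\costar}$ is a homeomorphism onto its image for some small radius. In the latter case, since $\rho(v)v$ lies on the \emph{boundary} of the topological ball $\costar$, the point $q$ lies on the boundary of the image, and every ball $B(q,1/n)$ contains points $q_n$ \emph{outside} $\exp_\nu(\costar)$. Minimal $N$-geodesics to these $q_n$ must start with directions outside the cone's base; a subsequential limit gives an $N$-geodesic to $q$ with initial direction $\tilde{x} \neq v$, contradicting $q \notin \sen$. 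The key idea you are missing is this boundary/invariance-of-domain step, which replaces your attempted analytic upgrade with a purely topological obstruction.
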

\begin{proof}
    Let $q\in \cutn $ but not in $\sen$. Choose an $N$-geodesic $\gamma$ joining $N$ to $q$ such that any extension of $\gamma$ is not an $N$-geodesic. This geodesic $\gamma$ is unique as $q\notin \sen$. We may write $\gamma(t) = \exp_\nu(tx)$, where $\gamma(0)=p\in N$ and $\gamma'(0)=x_0\in S(\nu_p)$. It follows from the definition of $\rho$ that $q=\exp_\nu\paran{\rho(x_0)x_0}$. We need to show that every neighborhood of $q$ in $\cutn$ must intersect $\sen$. Suppose it is false. Let $\delta>0$ and consider $\Ball$, the closed ball with center $x_0$ and radius $\delta$. Define the cone
    \begin{equation*}
        \co\! \defeq \curlybracket{tx:0\le t\le 1,~ x\in \Ball\cap S(\nu)}.
    \end{equation*}
    Since $\Ball\cap S(\nu)$ is homeomorphic to a closed $(n-1)$-ball for sufficiently small $\delta$, the cone will be homeomorphic to a closed Euclidean $n$-ball.
    \begin{figure}[H] 
        \centering 
        \includegraphics[angle=270, scale=0.8]{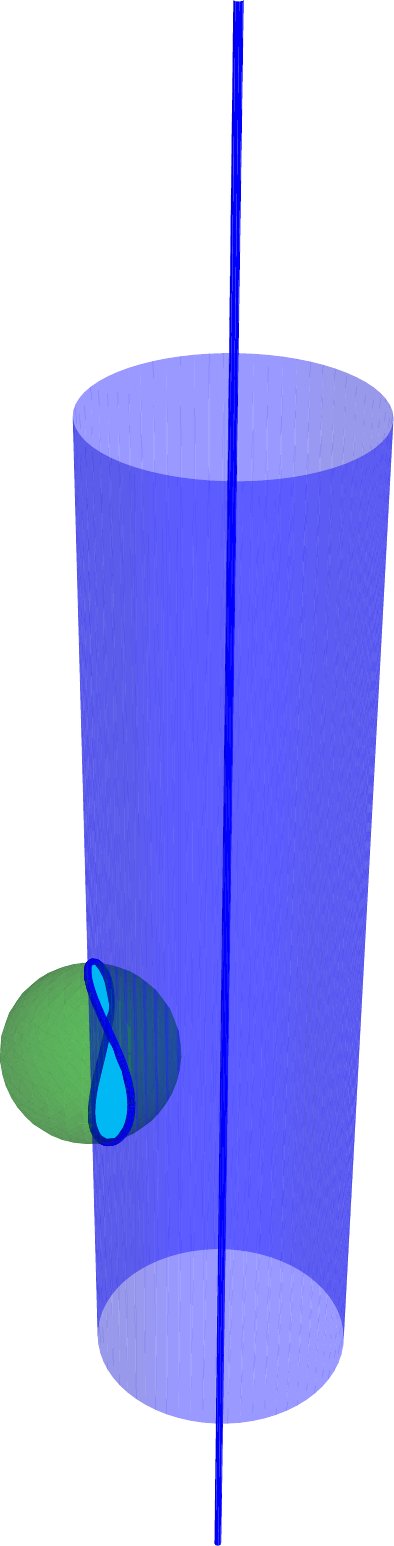} 
        \caption{\co} 
        \label{fig: cone} 
    \end{figure}
    \noindent Similarly, define another cone
    \begin{equation*}
        \costar\! \defeq \big\{\rho\paran{ x/\norm{x}}x\,|\, x\in \co\!, ~x\neq 0\big\}\cup \{0\}.
    \end{equation*} 
    Note that $\rho(x_0)$ is finite. As $\rho$ is continuous, due to \Cref{snucts}, for sufficiently small $\delta$ the term $\rho\paran{x/\norm{x}}$ is still finite, whence \costar is well defined. We claim that \costar is also homeomorphic to a closed Euclidean $(n-k)$-ball. Indeed,  a non-zero $x\in \co\!$ implies $x=\lambda \hat{x}$, for some $\lambda \in (0,1]$ and $\hat{x}\in \Ball\cap S(\nu)$. Since $\rho(\hat{x})x = \lambda \rho(\hat{x})\hat{x}$, it follows that \costar is the cone of the set
    \begin{displaymath}
        \{\rho(\hat{x})\hat{x}\,|\,\hat{x}\in \Ball\cap S(\nu)\},
    \end{displaymath}
    which is homeomorphic to $\Ball\cap S(\nu)$. Now we have a dichotomy: 
    \begin{enumerate}[(a)]
        \item For a fixed small $\delta>0$, the restriction of $\exp_\nu$ to \costar is a homeomorphism to its image because it is injective, or
        \item For any $\delta>0$, the restriction of $\exp_\nu$ to \costar is not injective.
    \end{enumerate}
    If (b) holds, choose $v_n\neq w_n\in \mathrm{Co}^\star(x_0,\frac{1}{n})$ such that these map to $q_n$ under $\exp_\nu$. Thus,  $q_n\in \sen$ and compactness of $S(\nu)$ ensures that $q_n$ converges to $q$. If (a) holds, then let $B(q,\varepsilon)$ denote the open ball in $M$ centered at $q$ with radius $\varepsilon>0$. We claim that it intersects the complement of $\exp_\nu(\costar\!\!)$ in $M$. But it is true as $\rho(x_0)x_0$ lies on the boundary of \costar and hence it has a neighborhood in \costar which is homeomorphic to a closed $n$-dimensional Euclidean half plane. Since $\exp_{\nu}$ restricted to \costar was a homeomorphism, the open ball $B(q,\varepsilon)$ must intersect the points outside the image of $\exp_{\nu}(\costar\!\!)$. 

    \vspace{0.2cm}
    \hf Now take $\varepsilon = \frac{1}{n}$. For each $n$, there exists $q_{n}\in B\paran{q,\frac{1}{n}}$ such that $q_{n}\notin \exp_{\nu}(\costar\!\!)$. Since $M$ is complete, for each point $q_{n}$ let $\gamma_{n}$ be a $N$-geodesics joining $p_n\in N$ to $q_{n}$. We may invoke the following result from Buseman's book \cite[Theorem 5.16, page 24]{Bus55}. Let $\curlybracket{\gamma_{n}}$ be a sequence of rectifiable curves in a finitely compact set $X$ and the lengths $\ell(\gamma_n)$ are bounded. If the initial points $p_{n}$ of $\gamma_{n}$ forms a bounded set, then $\curlybracket{\gamma_n}$ contains a subsequence $\gamma_{n_k}$ which converges uniformly to a rectifiable curve $\tilde{\gamma}$ in $X$ and 
    \begin{equation*}\label{Fact A eqn: 1} 
        \ell(\tilde{\gamma}) \le \lim\inf \ell\paran{\gamma_{n_k}}
    \end{equation*}
    Since $\{p_n\}$ lie in the compact set $N$, we obtain a rectifiable curve $\tilde{\gamma}$ such that 
    \begin{displaymath}
        \ell(\tilde{\gamma})\leq  \lim\inf \ell\paran{\gamma_{n_k}}=\lim_k \ell(\gamma_{n_k})=\lim_k d(q_{n_k},N)=d(q,N).
    \end{displaymath}
    Thus, $\tilde{\gamma}$ is actually an $N$-geodesic joining $p'=\lim_k p_{n_k}$ to $q$  and the unit tangent vectors $x_{n_k}=\gamma_{n_k}'(0)$ at $p_{n_k}$ converges to the unit tangent vector $\tilde{x}=\tilde{\gamma}'(0)$ at $p'$.  Since $x_{0}$ is an interior point of the set $\Ball\cap S(\nu)$, any sequence in $S(\nu)$ converging to $x_{0}$ must eventually lie in \co\!. According to our choice, $q_{n_k}\notin \exp_{\nu}(\costar\!\!)$ and $x_{n_k}$ all lie outside of \co\!. Hence $x_{0}\neq \tilde{x}$ and $\gamma \neq \tilde{\gamma}$. Thus, there are two distinct $N$-geodesics $\gamma$ and $\tilde{\gamma}$ joining $N$ to $q$, a contradiction to $q\notin \sen$. This completes the proof.
\end{proof}

\section{Topological properties}\label{sec:topologicalProperties}
\hfb In this section we will study the structure of the cut locus and the relation of cut locus to the Thom space. We will also see various applications of this relation.
\subsection{Structure of the cut locus}
\hfb On a complete Riemannian manifold it is very difficult to analyze the structure of cut locus of a point or a submanifold. The main problem is that cut locus is not $C^1$-smooth\index{$C^1$-smooth}. For example, the authors in \cite{GlSi78} showed that cut locus of a point is not always triangulable\index{triangulable}. Regarding the question of cut loci being triangulable, we recall the result \cite{Buc77} that the cut locus (of a point) of a real analytic Riemannian manifold (of dimension $d$) is a simplicial complex of dimension at most $d-1$. It follows, without much changes, that the result holds for cut loci of submanifolds as well. Hence, we attribute the following result to Buchner. 

\begin{thm}[Buchner 1977]\label{Buchner}
    Let $N$ be an analytic submanifold of a real analytic manifold $M$. If $M$ is of dimension $d$, then the cut locus $\cutn$ is a simplicial complex of dimension at most $d-1$.
\end{thm}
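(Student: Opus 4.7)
The strategy is to exhibit $\cutn$ as a semi-analytic subset of $M$ and then invoke \L{}ojasiewicz's triangulation theorem, which asserts that every semi-analytic subset of a real analytic manifold admits a compatible semi-analytic triangulation. The dimension bound will then follow from the observation that $M-\cutn$ is dense in $M$.

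First I would use the identity $\cutn=\overline{\sen}$ established in \Cref{thm:SeClosureIsCutLocus} together with the characterization in \Cref{thm:CharacterizationOfCutLocusInTermsOfFocalPoint}: every point of $\cutn$ is either a first focal point of $N$ along some $N$-geodesic, or else a separating point with two distinct $N$-geodesics reaching it. Passing to the unit normal bundle $S(\nu)$, I would consider the real analytic map
\[
F\colon S(\nu)\times[0,\infty)\to M,\qquad (v,t)\mapsto \exp_\nu(tv),
\]
whose analyticity follows from analyticity of the Riemannian metric. The tangent focal locus is then the analytic subvariety $\{\det d_v\exp_\nu=0\}$ of $\nu$, while the separating data are parametrized by the analytic subset
\[
W\defeq\big\{(v_1,v_2,t)\in S(\nu)^{\times 2}\times[0,\infty)~\big|~ v_1\neq v_2,\ \exp_\nu(tv_1)=\exp_\nu(tv_2)\big\}.
\]

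Next, I would stratify these loci according to the dimension of $\ker d\exp_\nu$ and the number of minimizing vectors hitting a given point, intersecting with the closed condition $t=\rho(v)$; the latter is semi-analytic thanks to the continuity of $\rho$ from \Cref{snucts} together with the analytic characterization of focal points. Projecting along $\exp_\nu$ gives a locally finite decomposition $\cutn=\bigsqcup_j S_j$ into semi-analytic strata, and taking closures shows that $\cutn$ itself is semi-analytic in $M$. Applying \L{}ojasiewicz's triangulation theorem to this semi-analytic set furnishes the desired simplicial complex structure. For the dimension bound, note that $\cutn$ is the image under $v\mapsto\exp_\nu(\rho(v)v)$ of a subset of the $(d-1)$-dimensional manifold $S(\nu)$; hence $\cutn$ has Lebesgue measure zero and, in particular, empty interior in the $d$-manifold $M$. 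A simplicial complex embedded in $M$ with empty interior cannot contain a top-dimensional simplex, so $\dim\cutn\leq d-1$.

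The main obstacle will be the careful bookkeeping required to confirm semi-analyticity, since the image of a semi-analytic set under a proper analytic map is a priori only subanalytic. Following Buchner, this is handled by stratifying the source so that the fiber structure is locally constant on each stratum, allowing one to realize the image pieces as sets cut out by analytic equations and strict or weak inequalities. A secondary subtlety is the interplay between the focal locus and the separating locus along their common boundary, which is controlled by the same stratification machinery. The passage from the point case to the submanifold case requires only that one replace $\exp_p$ by $\exp_\nu$ and $S(T_pM)$ by $S(\nu)$ throughout; the analytic structure is preserved, and Buchner's original argument carries over with minor modifications.
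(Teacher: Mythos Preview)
Your overall architecture---reduce to (sub)analyticity and then invoke a triangulation theorem---is indeed how Buchner's argument ends, but the route you sketch differs from the one the paper indicates and, more importantly, contains a real gap at the key step.

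\textbf{Comparison with the paper.} The paper does not reprove the theorem; it lists the modifications needed to carry Buchner's original argument from a point to a submanifold. Those modifications are all about the \emph{broken geodesic} model: one replaces Milnor's finite-dimensional path space $\Omega(t_0,\ldots,t_k)^s$ by the analogous space $\Omega_N(t_0,\ldots,t_k)^s$ of piecewise geodesics issuing from $N$, and gives it an analytic structure via the evaluation map into $N\times M\times\cdots\times M$. The whole point of this device is that the energy (hence length) becomes an \emph{analytic function on a finite-dimensional analytic manifold}, so the global minimality condition ``$\gamma$ realises $d(N,q)$'' can be rewritten as a finite system of analytic equalities and inequalities. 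That is what produces the (sub)analytic structure on $\cutn$.

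\textbf{The gap in your plan.} You bypass the broken geodesic space and work directly with $F\colon S(\nu)\times[0,\infty)\to M$. That map is analytic, and the tangent focal set $\{\det d\exp_\nu=0\}$ is analytic; so far so good. The trouble is your assertion that the graph condition $t=\rho(v)$ is semi-analytic ``thanks to the continuity of $\rho$ \ldots together with the analytic characterization of focal points.'' Continuity of $\rho$ gives you nothing beyond closedness; it certainly does not imply semi-analyticity. And the focal characterization only controls the \emph{first} alternative in \Cref{thm:CharacterizationOfCutLocusInTermsOfFocalPoint}; at a separating cut point $\rho(v)$ is strictly smaller than the first focal time, and is determined instead by the \emph{global} condition ``there is another $N$-geodesic of the same length to the same endpoint.'' Encoding that condition analytically is exactly the hard part, and it is not visible from $\exp_\nu$ alone: you need a finite-dimensional ambient space on which the competing path lengths are analytic functions so that ``$\ell(\gamma)\le \ell(\eta)$ for all competitors $\eta$'' becomes an analytic inequality. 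This is precisely what the broken geodesic apparatus supplies. Without it, your set $W$ and your strata $S_j$ are defined using the non-analytic predicate ``minimizing,'' and the projection step yields at best a subanalytic set only \emph{after} you have already established analyticity upstairs---which you have not.

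In short: your endgame (\L{}ojasiewicz/Hardt triangulation, plus the easy $\dim\le d-1$ bound via $S(\nu)$) is fine, but to get there you need to replace the unjustified claim about $\rho$ with the finite-dimensional broken geodesic reduction that Buchner actually uses and that the paper points to.
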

\vspace{0.3cm}
\noindent The obvious modifications to the proof by Buchner are the following:
\begin{enumerate}[(i)]
    \item Choose $\ep$ to be such that there is a unique geodesic from $p$ to $q$ if $d(p,q)<\ep$ and if $d(N,q)<\ep$, then there is a unique $N$-geodesic to $q$;
    \item  Consider the set $\Omega_N(t_0,t_1,\ldots,t_k)$, the space of piecewise broken geodesics starting at $N$, and define $\Omega_N(t_0,t_1,\ldots,t_k)^s$ analogously;
    \item The map
    \begin{displaymath}
        \Omega_N(t_0,t_1,\ldots,t_k)^s\to N\times M\times \cdots\times M,\,\,\omega\mapsto (\omega(t_0),\omega(t_1),\ldots,\omega(t_k))
    \end{displaymath}
    determines an analytic structure on $\Omega_N(t_0,t_1,\ldots,t_k)^s$.
\end{enumerate}
The remainder of the proof works essentially verbatim.

\begin{rem}
    As we have seen in \Cref{join}, the dimension of the cut locus of a $k$-dimensional submanifold can be of dimension $d-k-1$. However, generically, we may not expect this to be true. In fact, for real analytic knots (except the unknot) in $\mathbb{S}^3$, it is always the case that the cut locus cannot be homotopic to a (connected) $1$-dimensional simplicial complex (cf \Cref{codim2}). 
\end{rem}

\subsection{Thom space via cut locus}\label{Sec: Thom}
\hfb Recall that the \textit{Thom space} \index{Thom space} $\textup{Th}(E)$ of a real vector bundle \index{vector bundle} $E\to B$ of rank $k$ is $D(E)/S(E)$, where it is understood that we have chosen a Euclidean metric on $E$. If $B$ is compact, then the Thom space $\textup{Th}(E)$ is the one-point compactification \index{one-point compactification} of $E$. In general, we compactify the fibres and then collapse the section at infinity to a point to obtain $\textup{Th}(E)$. Thus, Thom spaces obtained via two different metrics are homeomorphic. We start with a similar exponential map which is obtained by the $\rho$ map, and we named it \textit{rescaled exponential map}. \index{rescaled exponential map}  

\begin{defn}[Rescaled Exponential]\index{rescaled exponential map}
    The \textit{rescaled exponential} or $\rho$-exponential map is defined to be 
    \begin{displaymath}
        \widetilde{\textup{exp}}:D(\nu)\to M,\,\,(p,v)\mapsto \left\{
        \begin{array}{rl}
            \textup{exp}_p(\rho(\hat{v})v) & \textup{if $v=\|v\|\hat{v}\neq 0$}\\
            p & \textup{if $v=0$.}
        \end{array}\right.
    \end{displaymath}
\end{defn}

\vspace{0.3cm}
\noindent We are now ready to prove the main result of this section.

\begin{thm}\label{Thomsp}
    Let $N$ be an embedded submanifold inside a closed, connected Riemannian manifold $M$. If $\nu$ denotes the normal bundle\index{normal bundle} of $N$ in $M$, then there is a homeomorphism 
    \begin{displaymath}
        \widetilde{\textup{exp}}:D(\nu)/S(\nu)\stackrel{\cong}{\longrightarrow} M/\cutn.
    \end{displaymath}
\end{thm}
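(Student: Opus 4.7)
The plan is to show that $\widetilde{\exp}$ descends to a continuous bijection $D(\nu)/S(\nu)\to M/\cutn$, and then to upgrade this to a homeomorphism by a compactness argument. Continuity of $\widetilde{\exp}$ on $D(\nu)$ is straightforward: away from the zero section it follows from the continuity of $\rho$ established in \Cref{snucts} together with the smoothness of $\exp_\nu$; on the zero section it follows from $\rho$ being bounded on the compact $S(\nu)$, which forces $\|v\|\rho(\hat v)\to 0$ as $v\to 0$.

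The key step will be to show both $\widetilde{\exp}(S(\nu))\subseteq \cutn$ and $\widetilde{\exp}^{-1}(\cutn)=S(\nu)$. The first inclusion is immediate from the definition of $\rho$ and \Cref{thm:CharacterizationOfCutLocusInTermsOfFocalPoint}, since $\widetilde{\exp}(p,u)=\gamma_u(\rho(u))$ is a cut point. The reverse inclusion is the hard part. I would argue by contradiction: suppose $q=\gamma_{\hat v}(s)$ with $s:=\|v\|\rho(\hat v)<\rho(\hat v)$ lies in $\cutn$. Then some $N$-geodesic $\gamma_w$ to $q$ satisfies $\rho(w)=s$, and the strict inequality $s<\rho(\hat v)$ forces $w\neq \hat v$, so $q\in \sen$. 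Taking $q_\epsilon:=\gamma_{\hat v}(s+\epsilon)$ for small $\epsilon>0$, the concatenation of $\gamma_w$ with any minimizing geodesic from $q$ to $q_\epsilon$ has length at most $s+\epsilon=d(N,q_\epsilon)$, and therefore must be a minimizer, hence a smooth geodesic. Uniqueness of short minimizing geodesics at $q$ (within the injectivity radius) then forces the tangents of $\gamma_w$ and $\gamma_{\hat v}$ at $q$ to coincide, so $w=\hat v$, a contradiction.

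Next I would verify that $\widetilde{\exp}$ restricts to a bijection $D(\nu)\setminus S(\nu)\to M\setminus \cutn$. For surjectivity, any $q\in M\setminus \cutn$ admits an $N$-geodesic $\gamma_u$ with $s=d(N,q)<\rho(u)$, so $(p,(s/\rho(u))u)$ is a preimage in the open disk bundle. For injectivity, a point outside $\cutn$ has a unique $N$-geodesic to it (otherwise it would lie in $\sen\subseteq\cutn$), which determines the basepoint and direction; the prescribed length $\|v\|\rho(\hat v)=d(N,q)$ then determines $\|v\|$.

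Putting the pieces together, $\widetilde{\exp}$ induces a continuous bijection $D(\nu)/S(\nu)\to M/\cutn$. The domain is compact because $N$ is closed in the closed manifold $M$, and the target is Hausdorff because $\cutn$ is closed in $M$ by \Cref{thm:SeClosureIsCutLocus}. A continuous bijection from a compact space to a Hausdorff space is automatically a homeomorphism, which finishes the proof. The central obstacle throughout is the identity $\widetilde{\exp}^{-1}(\cutn)=S(\nu)$: the subtlety is ruling out that a point in the relative interior of an $N$-geodesic can simultaneously belong to $\cutn$, and the small-extension argument with a shortcut through $\gamma_w$ is the mechanism that makes this possible.
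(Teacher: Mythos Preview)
Your proof is correct and follows essentially the same strategy as the paper: establish that $\widetilde{\exp}$ induces a continuous bijection and then invoke the compact-to-Hausdorff argument. The only organizational difference is that the paper handles injectivity in one stroke by showing that any two distinct preimages of a point must both lie on $S(\nu)$, citing \Cref{thm:CharacterizationOfCutLocusInTermsOfFocalPoint} directly, whereas you first prove $\widetilde{\exp}^{-1}(\cutn)=S(\nu)$ via the broken-geodesic shortcut argument (which is precisely the content of the ``if'' direction of condition (ii) in that theorem) and then argue injectivity on the open disk separately; the substance is the same.
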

\begin{proof}
    It follows from \Cref{snucts} that the rescaled exponential is continuous. Moreover, $\widetilde{\textup{exp}}$ is surjective and $\widetilde{\textup{exp}}(S(\nu))=\cutn$. If there exists $(p,v)\neq (q,w)\in D(\nu)$ such that 
    \begin{displaymath}
        \widetilde{\textup{exp}}(p,v)=\widetilde{\textup{exp}}(q,w)=p',
    \end{displaymath}
    then $d(p',N)$ can be computed in two ways to obtain 
    \begin{displaymath}
        d(p',N)=\rho(\hat{v})\|v\|=\rho(\hat{w})\|w\|.
    \end{displaymath}
    Thus, $T=d(p',N)$ is a number such that $\gamma_v:[0,T]$ is an $N$-geodesic and $\gamma_v(T)=\gamma_w(T)=p'$. By \Cref{thm:CharacterizationOfCutLocusInTermsOfFocalPoint}, we conclude that $T=\rho(\hat{v})=\rho(\hat{w})$, whence $\|v\|=\|w\|=1$. Therefore, $\widetilde{\textup{exp}}$ is injective on the interior of $D(\nu)$. \\
    \hfb As $\cutn$ is closed and $M$ is a compact metric space, the quotient space $M/\cutn$ is Hausdorff. As the quotient $D(\nu)/S(\nu)$ is compact, standard topological arguments imply the map induced by the rescaled exponential is a homeomorphism.
\end{proof}

\vspace{0.3cm}
\hf We will now revisit a basic property of Thom space via its connection to the cut locus. It can be seen that 
\begin{equation}\label{cutprod}
    \mathrm{Cu}(N_1\times N_2)=(\mathrm{Cu}(N_1)\times M_2)\cup (M_1\times \mathrm{Cu}(N_2))
\end{equation}
for an embedding $N_1\times N_2$ inside $M_1\times M_2$. If $\nu_j$ is the normal bundle of $N_j$ inside $M_j$, then \Cref{Thomsp} along with \eqref{cutprod} implies that
\begin{align*}
    \textup{Th}(\nu_1\oplus \nu_2) & \cong \frac{M_1\times M_2}{(M_1\times \mathrm{Cu}(N_2))\cup (\mathrm{Cu}(N_1)\times M_2)}
    \\[1ex]
    & \cong \frac{M_1/\mathrm{Cu}(N_1)\times M_2/\mathrm{Cu}(N_2)}{M_1/\mathrm{Cu}(N_1)\vee M_2/\mathrm{Cu}(N_2)}
    \\[1ex]
    & \cong \textup{Th}(\nu_1)\wedge \textup{Th}(\nu_2).
\end{align*}
Let $N=N_1\sqcup N_2$ be a disjoint union of connected manifolds of the same dimension. If $N\hookrightarrow M$, then let $\nu_j$ denote the normal bundle of $N_j$ in $M$. If $\nu$ is the normal bundle of $N$ in $M$, then 
\begin{equation}\label{Thomwedge}
    \textup{Th}(\nu)\cong \textup{Th}(\nu_1)\vee \textup{Th}(\nu_2).
\end{equation}
This implies that
\begin{displaymath}
    M/\cutn \cong M/\mathrm{Cu}(N_1)\vee M/\mathrm{Cu}(N_2).
\end{displaymath}
\begin{eg}\label{eg:CutLocusOfLink}
    Consider the two circles
    \begin{displaymath}
        N_1=\{(\cos t,\sin t,0,0)\,|\,t\in\R\},\,\,N_2=\{(0,0,\cos t,\sin t)\,|\,t\in\R\}
    \end{displaymath}
    in $\mathbb{S}^3$. The link $N:=N_1\sqcup N_2$ has linking number $1$. We claim that the cut locus
    \begin{displaymath}
        \cutn=\left\{\frac{1}{\sqrt{2}}(\cos s,\sin s, \cos t,\sin t)\,|\,s,t\in\R\right\}
    \end{displaymath}
    is a torus. We will prove the claim by showing that the above set is separating set of $N$. As it is closed, the claim will follow by using \Cref{thm:SeClosureIsCutLocus}. Let $P=(a,b,0,0)\in N$ with $a^2+b^2=1$. Note that 
    \begin{displaymath}
        T_P\mathbb{S}^3 \cong T_PN\directsum \left(T_PN\right)^\perp \cong \spn\{(-b,a,0,0)\} \directsum \spn \{\mathbf{e}_3,\mathbf{e}_4\},
    \end{displaymath}  
    where $\mathbf{e}_3=(0,0,1,0)$ and $\mathbf{e}_4=(0,0,0,1)$. We take any unit vector at $P$ which is perpendicular to $T_PN$, say $\mathbf{v}=(0,0,\cos \theta, \sin \theta)$. An $N$-geodesic starting at $P$ in the direction of $\mathbf{v}$ will be
    \begin{displaymath}
        \gamma(t)=P\cos t+\mathbf{v} \sin t,~= (a\cos t,b\cos t,\cos \theta \sin t, \sin \theta \sin t),~ ~0\le t \le \pi.
    \end{displaymath} 
    We have 
    \begin{displaymath}
        d(\gamma(t),N) = \inf_{X\in N}d(\gamma(t),X) = \min\left\{\inf_{X\in N_1}d(\gamma(t),X), \inf_{X\in N_2}d(\gamma(t),X),\right\}.
    \end{displaymath}
    
    \noindent Look at \Cref{fig:DistanceFromLink} and note that 
    \begin{displaymath}
        d(X,\gamma(t)) = \cos^{-1}(X\cdot \gamma(t)).
    \end{displaymath}
    \begin{figure}[!htb]
        \centering
    \def\svgwidth{0.3\columnwidth}
\begingroup%
  \makeatletter%
  \providecommand\color[2][]{%
    \errmessage{(Inkscape) Color is used for the text in Inkscape, but the package 'color.sty' is not loaded}%
    \renewcommand\color[2][]{}%
  }%
  \providecommand\transparent[1]{%
    \errmessage{(Inkscape) Transparency is used (non-zero) for the text in Inkscape, but the package 'transparent.sty' is not loaded}%
    \renewcommand\transparent[1]{}%
  }%
  \providecommand\rotatebox[2]{#2}%
  \newcommand*\fsize{\dimexpr\f@size pt\relax}%
  \newcommand*\lineheight[1]{\fontsize{\fsize}{#1\fsize}\selectfont}%
  \ifx\svgwidth\undefined%
    \setlength{\unitlength}{184.97258014bp}%
    \ifx\svgscale\undefined%
      \relax%
    \else%
      \setlength{\unitlength}{\unitlength * \real{\svgscale}}%
    \fi%
  \else%
    \setlength{\unitlength}{\svgwidth}%
  \fi%
  \global\let\svgwidth\undefined%
  \global\let\svgscale\undefined%
  \makeatother%
  \begin{picture}(1,1.68762091)%
    \lineheight{1}%
    \setlength\tabcolsep{0pt}%
    \put(0,0){\includegraphics[width=\unitlength,page=1]{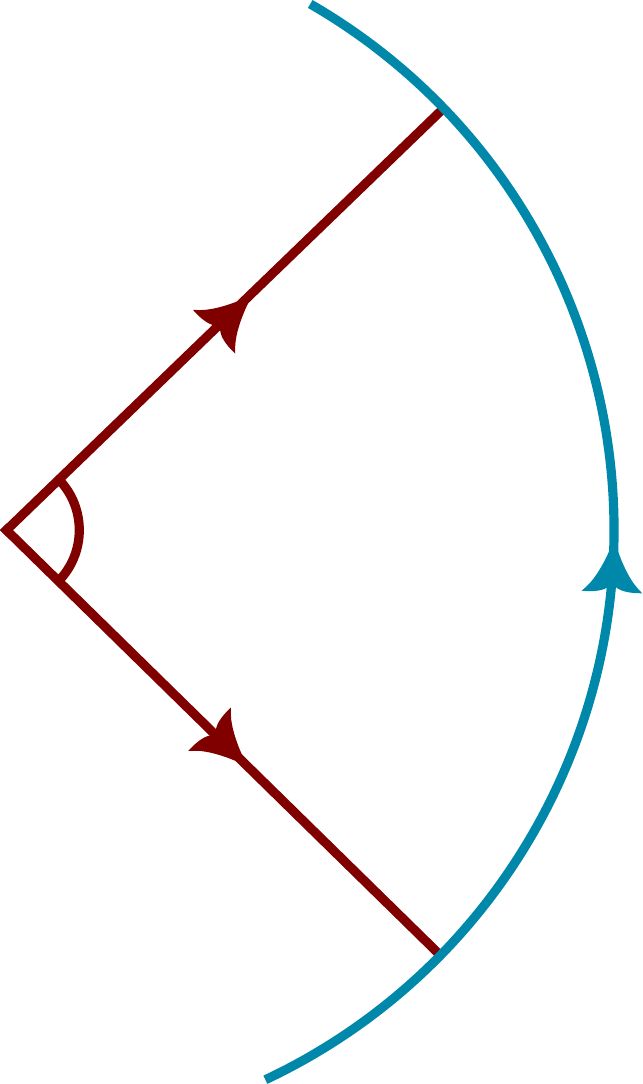}}%
    \put(0.75216063,0.18131511){\makebox(0,0)[lt]{\lineheight{1.25}\smash{\begin{tabular}[t]{l}$X$\end{tabular}}}}%
    \put(0.74529824,1.5081435){\makebox(0,0)[lt]{\lineheight{1.25}\smash{\begin{tabular}[t]{l}$\gamma(t)$\end{tabular}}}}%
    \put(0,0){\includegraphics[width=\unitlength,page=2]{DistanceFromLink.pdf}}%
    \put(0.158948,0.84229764){\makebox(0,0)[lt]{\lineheight{1.25}\smash{\begin{tabular}[t]{l}$\theta$\end{tabular}}}}%
  \end{picture}%
\endgroup%

        \caption{Distance of $X$ to $\gamma(t)$  \label{fig:DistanceFromLink}}
    \end{figure}
    \noindent Therefore, the problem of finding the distance of $N$ to $\gamma(t)$ is equivalent to maximizing the dot product $X\cdot \gamma(t)$.
    Let $X\in N_1$ and $X=(x,y,0,0),~x^2+y^2=1$. Then 
    \begin{displaymath}
        X\cdot \gamma(t) = ax\cos t+by\cos t.
    \end{displaymath} 
    Maximizing the above such that $x^2+y^2=1$ by the method of Lagrange multiplier, we have
    \begin{displaymath}
        x = \dfrac{a\cos t}{|\cos t|}, \text{ and } y = \dfrac{b\cos t}{|\cos t|}.
    \end{displaymath}
    Note that the above expression is well-defined, as if $\cos t =0$, then $X\cdot \gamma(t)=0$. The maximum value of $X\cdot \gamma(t)$ will be $|\cos t|$ and this is achieved by only one point of $N_1$. Similarly, if we maximize the dot product over $N_2$, we get the maximum value $|\sin t|$, which is also obtained by a single maxima. Thus, $\gamma(t)$ will be a separating point if and only if 
    \begin{displaymath}
        |\cos t| =|\sin t| \implies t = \dfrac{\pi}{4},\dfrac{3\pi}{4}.
    \end{displaymath}
    Therefore, the separating points will be
    \begin{displaymath}
        \left\{\frac{1}{\sqrt{2}}(\cos s,\sin s,\cos \theta,\sin \theta):s,\theta\in \mathbb{R}\right\}.
    \end{displaymath}
    \vspace{0.3cm}
    \hf Note that $\mathrm{Cu}(N_1)=N_2$ and vice-versa as well as 
    \begin{displaymath}
        \mathbb{S}^3/\mathrm{Cu}(N_j)\cong (S^1\times S^2)/(S^1\times \infty)
    \end{displaymath}
    where $S^1\times S^2$ is the fibrewise compatification of the normal bundle of $N_j$. We conclude that 
    \begin{displaymath}
        \mathbb{S}^3/\cutn\cong \Big(\frac{S^1\times S^2}{S^1\times \infty}\Big)\vee \Big(\frac{S^1\times S^2}{S^1\times \infty}\Big).
    \end{displaymath}
\end{eg}

\hf There are some topological similarities between $\cutn$ and $M-N$. We recall that a topological pair $(X, A)$ is called a \textit{good pair\index{good pair}} if $A$ is closed in $X$ and there is an open subset $U\subseteq X$ with $A\subseteq U$ such that $A$ is a strong deformation retract in $U$.
\begin{lemma}\label{defretM-N}
The cut locus $\cutn$ is a strong deformation retract of $M-N$. In particular, $(M,\cutn)$ is a good pair and the number of path components of $\cutn$ equals that of $M-N$.
\end{lemma}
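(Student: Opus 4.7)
The plan is to transport the standard radial strong deformation retract of the punctured disk bundle onto its sphere bundle through the rescaled exponential map of \Cref{Thomsp}, obtaining the desired retraction on $M-N$.

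First I would assemble three properties of $\widetilde{\exp}:D(\nu)\to M$. By \Cref{Thomsp} it induces a homeomorphism $D(\nu)/S(\nu)\xrightarrow{\cong} M/\cutn$; in particular $\widetilde{\exp}$ is a continuous surjection, a bijection $\mathrm{int}(D(\nu))\to M-\cutn$, and it surjects $S(\nu)$ onto $\cutn$. Next, I would check that $\widetilde{\exp}^{-1}(N)$ is exactly the zero section $0_\nu$: for nonzero $v=\|v\|\hat v\in D(\nu)_p$, one has $\widetilde{\exp}(p,v)=\gamma_{\hat v}(\rho(\hat v)\|v\|)$, which sits at distance $\rho(\hat v)\|v\|>0$ from $N$ along the $N$-geodesic $\gamma_{\hat v}$, hence is not in $N$. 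Finally, since $\widetilde{\exp}$ is a continuous surjection from a compact space to a Hausdorff space it is a closed, hence a quotient, map; and because $D(\nu)\setminus 0_\nu$ is the saturated open preimage of $M\setminus N$, the restriction
\[
\pi\defeq \widetilde{\exp}\big|_{D(\nu)\setminus 0_\nu}:D(\nu)\setminus 0_\nu\twoheadrightarrow M\setminus N
\]
is itself a quotient map, whose nontrivial identifications take place only on $S(\nu)$.

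Then I would consider the radial strong deformation retract
\[
H:(D(\nu)\setminus 0_\nu)\times [0,1]\to D(\nu)\setminus 0_\nu,\qquad H\big((p,v),t\big)=\Big(p,\big((1-t)+t/\|v\|\big)v\Big),
\]
which fixes $S(\nu)$ pointwise, satisfies $H_0=\mathrm{id}$, and has $H_1(D(\nu)\setminus 0_\nu)\subseteq S(\nu)$. The step requiring the most care is the descent through $\pi$. If $\pi(p,v)=\pi(q,w)$, then either at least one of $(p,v),(q,w)$ lies in $\mathrm{int}(D(\nu))$ (in which case injectivity there forces $(p,v)=(q,w)$), or both lie in $S(\nu)$ (in which case $H_t$ fixes them for every $t$); in either case $\pi\circ H_t(p,v)=\pi\circ H_t(q,w)$. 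Because $[0,1]$ is compact Hausdorff and $\pi$ is a quotient map, $\pi\times\mathrm{id}_{[0,1]}$ is a quotient map as well, so the induced $\bar H:(M\setminus N)\times[0,1]\to M\setminus N$ is automatically continuous. By construction $\bar H_0=\mathrm{id}$, $\bar H_1(M\setminus N)\subseteq\cutn$, and $\bar H_t$ fixes $\cutn$ pointwise, exhibiting $\cutn$ as a strong deformation retract of $M\setminus N$.

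The remaining assertions are then formal. Any point of $\cutn$ is the endpoint of a nontrivial $N$-geodesic, whence $d_N>0$ there and $\cutn\cap N=\emptyset$; the closed set $\cutn$ is contained in the open neighborhood $U\defeq M\setminus N$ on which it is a strong deformation retract, so $(M,\cutn)$ is a good pair. A strong deformation retract is a homotopy equivalence, inducing a bijection on $\pi_0$, which yields the path component count. The only real pitfall is ensuring the descent is legitimate, and this comes down to the two facts supplied by \Cref{Thomsp}: that $\widetilde{\exp}$ identifies points only inside $S(\nu)$, and that $S(\nu)$ is precisely the locus on which the radial homotopy is the identity.
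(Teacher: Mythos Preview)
Your proof is correct and produces exactly the same deformation as the paper's: the paper writes down $H(q,t)=\exp_\nu\big[(t\rho(\hat v)+(1-t)\|v\|)\hat v\big]$ for $v=\exp_\nu^{-1}(q)$ when $q\notin\cutn$ and $H(q,t)=q$ on $\cutn$, which is precisely what your radial homotopy on $D(\nu)\setminus 0_\nu$ descends to under $\widetilde{\exp}$. Your packaging via the quotient map from \Cref{Thomsp} makes the continuity at points of $\cutn$ transparent, whereas the paper simply asserts it follows from continuity of $\rho$; but the underlying idea---push points outward along their unique $N$-geodesic until they reach the cut locus---is identical.
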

\begin{proof}
    Consider the map $H:(M-N)\times [0,1]\to M-N$ defined via the normal exponential map 
    \begin{align*}
        H(q,t) = \begin{cases}
            \exp_\nu\left[\left\{t \cdot \rho\paran{\frac{\exp_\nu^{-1}(q) }{\norm{\exp_\nu^{-1}(q)}}}+(1-t)\norm{\exp_\nu^{-1}(q)}\right\}\frac{\exp_\nu^{-1}(q)}{\norm{\exp_\nu^{-1}(q)}}\right]  & \text{ if } q\notin \cutn \\[1ex]
            q & \text{ if } q\in \cutn.
        \end{cases}
    \end{align*}
    If $q\in M - (\cutn\cup N)$, then let $\gamma$ be the unique $N$-geodesic joining $N$ to $q$. The path $H(q,t)$ is the image of this geodesic from $q$ to the first cut point along $\gamma$. The continuity of $\rho$ implies that $H$ is continuous. It also satisfies $H(q,0)=q$ and $H(q,1)\in\cutn$. The claims about good pair and path components are clear.
\end{proof}

\begin{rem}
    The above lemma, in particular, shows that the homotopy type of the cut locus of a submanifold is independent of the choice of the Riemannian metric.
\end{rem}

\begin{cor}\label{htpy-cut-locus}
    If two embeddings $f,g:N\to M$ are ambient isotopic, then $\mathrm{Cu}(f(N))$ and $\mathrm{Cu}(g(N))$ are homotopy equivalent.
\end{cor}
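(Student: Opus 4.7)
The plan is to reduce the statement to the metric-independence of the homotopy type of the cut locus, which is the remark immediately following \Cref{defretM-N}. First I would unwrap the hypothesis: by definition of ambient isotopy, there is a smooth family $\{\varphi_t\}_{t\in[0,1]}$ of diffeomorphisms of $M$ with $\varphi_0=\mathrm{id}_M$ and $\varphi_1\circ f=g$. Setting $h\defeq \varphi_1$, I obtain a single diffeomorphism of $M$ carrying $f(N)$ onto $g(N)$; the isotopy itself plays no further role, only the endpoint.

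Next, let $g_M$ denote the given Riemannian metric on $M$ and form the pullback metric $h^{\ast}g_M$. By construction $h\colon (M,h^{\ast}g_M)\to (M,g_M)$ is an isometry. Isometries preserve arc-length, geodesics, and hence the notion of distance-minimality (\Cref{distmin}), so $h$ sends $N$-geodesics for the source $f(N)$ bijectively to $N$-geodesics for $g(N)=h(f(N))$, and cut points to cut points. Writing $\mathrm{Cu}_{g_1}(A)$ for the cut locus of $A$ with respect to a metric $g_1$, this yields a homeomorphism
\begin{displaymath}
    h\bigl|_{\mathrm{Cu}_{h^{\ast}g_M}(f(N))}\colon \mathrm{Cu}_{h^{\ast}g_M}(f(N))\xrightarrow{\;\cong\;}\mathrm{Cu}_{g_M}(g(N)).
\end{displaymath}

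Finally, the remark following \Cref{defretM-N} asserts that the homotopy type of the cut locus of a fixed submanifold is independent of the choice of Riemannian metric on the ambient manifold: indeed, both $\mathrm{Cu}_{g_M}(f(N))$ and $\mathrm{Cu}_{h^{\ast}g_M}(f(N))$ are strong deformation retracts of the intrinsic space $M-f(N)$. Chaining,
\begin{displaymath}
    \mathrm{Cu}_{g_M}(f(N))\;\simeq\; \mathrm{Cu}_{h^{\ast}g_M}(f(N))\;\cong\;\mathrm{Cu}_{g_M}(g(N)),
\end{displaymath}
giving the desired homotopy equivalence. The argument is short; the only conceptual point to verify is that the deformation-retract construction in \Cref{defretM-N} really produces a retraction onto the metric-independent target $M-f(N)$, so that the two invocations (with $g_M$ and with $h^{\ast}g_M$) land in the same space. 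Since $h^{\ast}g_M$ is again a complete Riemannian metric on $M$ and $f(N)$ is closed, this is automatic, and there is no genuine obstacle.
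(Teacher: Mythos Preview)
Your argument is correct and is essentially the same as the paper's: both extract the single diffeomorphism $h=\varphi_1$ carrying $f(N)$ onto $g(N)$ and then invoke \Cref{defretM-N} to pass through the homotopy type of $M-f(N)\cong M-g(N)$. Your detour through the pullback metric $h^\ast g_M$ is a slightly more explicit rephrasing of the paper's one-line observation that $\varphi$ gives a homeomorphism of the complements, but the underlying idea and the key lemma are identical.
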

\begin{proof}
    The hypothesis implies that there is a diffeomorphism $\varphi: M\to M$ such that $\varphi(f(N))=g(N)$. Thus, $M-\mathrm{Cu}(f(N))$ is homeomorphic to $M-\mathrm{Cu}(g(N))$ and the claim follows from the lemma above. Note that in the smooth category, the notion of isotopic and ambient isotopic are equivalent (refer to \S 8.1 of the book \cite{Hir76}). Thus, the same conclusion holds if we assume that the embeddings are isotopic.
\end{proof}

\begin{rem}
    Without the assumption of $M$ being closed, the above result fails to be true. One may consider $M=S^1\times \R$ with the natural product metric and $N=S^1$. In fact, the universal cover of $M$ is $\R\times \R$ while that of $N$ is $\R$. If we choose a periodic curve in $\R^2$ which is isotopic to the $x$-axis and has non-empty cut locus in $\R^2$, then we may pass via the covering map to obtain an embedding $g$ of $N$ isotopic to the embedding $f$ identifying $N$ with $S^1\times \{0\}$. For this pair, $\mathrm{Cu}(f(N))=\varnothing$ while $\mathrm{Cu}(g(N))\neq \varnothing$.
\end{rem}

\hf Several other identifications between topological invariants can be explored. For instance, if $\iota:N^k\hookrightarrow M^d$ is as before such that $M-N$ is path connected, then 
\begin{equation}\label{isopi}
    \iota_\ast:\pi_j(\cutn)\stackrel{\cong}{\longrightarrow}\pi_j(M)
\end{equation}
if $0\leq j\leq d-k-2$ while $\iota_\ast$ is a surjection for $j=d-k-1$. The proof of this relies on a general position argument, i.e., being able to find a homotopy of the sphere that avoids $N$, followed by \Cref{defretM-N}. Surjectivity of $\iota_\ast$ if $j\leq d-k-1$ is imposed by the requirement that a sphere $S^j$ in general position must not intersect $N^k$. Injectivity of $\iota$ for $j\leq d-k-2$ is imposed by the condition that a homotopy $S^j\times [0,1]$ in general position must not intersect $N^k$. This observation \eqref{isopi} generalizes a result in \cite[Proposition 4.5 (1)]{Sak96}.

\vspace{0.2cm}
\hf The inclusion $i:\cutn\hookrightarrow M$ induces a long exact sequence in homology
\begin{displaymath}
    \cdots \to H_j(\cutn)\stackrel{i_\ast}{\longrightarrow} H_j(M)\to H_j(M,\cutn)\stackrel{\partial}{\longrightarrow} H_{j-1}(\cutn)\to \cdots
\end{displaymath}
As $(M,\cutn)$ is a good pair (cf \Cref{defretM-N}), we replace the relative homology of $(M,\cutn)$ with reduced homology of $M/\cutn\cong \textup{Th}(\nu)$. This results in the following long exact sequence
\begin{equation}\label{lesThom}
    \cdots \to H_j(\cutn)\stackrel{i_\ast}{\longrightarrow} H_j(M)\stackrel{q}{\longrightarrow} \widetilde{H}_j(\textup{Th}(\nu))\stackrel{\partial}{\longrightarrow} H_{j-1}(\cutn)\to \cdots
\end{equation}
If $N=\{p\}$ is a point, then $\textup{Th}(\nu)=S^d$ and \eqref{lesThom} imply isomorphisms
\begin{displaymath}
    i_\ast:H_j(\textup{Cu}(p))\stackrel{\cong}{\longrightarrow} H_j(M),\,\,i^\ast:H^j(M)\stackrel{\cong}{\longrightarrow} H^j(\textup{Cu}(p))
\end{displaymath}
for $j\neq d,d-1$ (cf \cite[Proposition 4.5 (2)]{Sak96}). 

\begin{rem}
    The long exact sequence \eqref{lesThom} can be interpreted as the dual to the long exact sequence in cohomology of the pair $(M,N)$. If $N=N_1\sqcup \cdots\sqcup N_l$ is a disjoint union of submanifolds of dimension $k_1, \ldots, k_l$ respectively, then the Thom isomorphism implies that 
    \begin{displaymath}
        \widetilde{H}_j(\textup{Th}(\nu))\cong \widetilde{H}_j(\textup{Th}(\nu_1))\oplus \cdots\oplus \widetilde{H}_j(\textup{Th}(\nu_l))\cong H_{j-(d-k_1)}(N_1)\oplus\cdots\oplus H_{j-(d-k_l)}(N_l),
    \end{displaymath}
    where $\nu_j$ is the normal bundle of $N_j$. Applying Poincar\'{e} duality to each $N_j$, we obtain isomorphisms
    \begin{displaymath}
        \widetilde{H}_j(\textup{Th}(\nu))\cong \oplus_{i=1}^l H^{d-j}(N_i)= H^{d-j}(N).
    \end{displaymath}
    Poincar\'{e}-Lefschetz duality applied to the pair $(M,N)$ provides isomorphisms
    \begin{equation}\label{MNcutn}
        \check{H}^j(M,N)\cong H_{d-j}(M-N).
    \end{equation}
    As $M$ and $N$ are triangulable, $\check{\textup{C}}$ech cohomology may be replaced by singular cohomology. Since $M-N$ deforms to $\cutn$ by \Cref{defretM-N}, we have isomorphisms
    \begin{equation}\label{MNcutn2}
        H^j(M,N)\cong H_{d-j}(\cutn).
    \end{equation}
    Combining all these isomorphisms, we obtain the long exact sequence in cohomology for $(M,N)$ from \eqref{lesThom}. 
\end{rem}

\begin{lemma}\label{homcutn}
    Let $N$ be a closed submanifold of $M$ with $l$ components. If $M$ has dimension $d$, then $H_{d-1}(\cutn)$ is free abelian of rank $l-1$ and $H_{d-j}(\cutn)\cong H^j(M)$ if $j-2\geq k$, where $k$ is the maximum of the dimension of the components of $N$.
\end{lemma}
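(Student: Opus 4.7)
The natural strategy is to feed the long exact sequence \eqref{lesThom} together with the Thom isomorphism $\widetilde{H}_j(\mathrm{Th}(\nu)) \cong H^{d-j}(N)$ derived in the remark immediately preceding the lemma. Equivalently, the duality isomorphism $H^j(M,N) \cong H_{d-j}(\mathrm{Cu}(N))$ from \eqref{MNcutn2} translates both assertions into statements about the cohomology long exact sequence of the pair $(M,N)$, so I would work on that side.

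For the second assertion, I would extract the segment
\begin{equation*}
H^{j-1}(N) \longrightarrow H^j(M,N) \longrightarrow H^j(M) \longrightarrow H^j(N).
\end{equation*}
Each component of $N$ is a closed manifold of dimension at most $k$, so $H^i(N) = 0$ whenever $i > k$. The hypothesis $j \geq k+2$ simultaneously kills $H^{j-1}(N)$ and $H^j(N)$, forcing the middle arrow to be an isomorphism. This yields $H_{d-j}(\mathrm{Cu}(N)) \cong H^j(M,N) \cong H^j(M)$, as desired.

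For the first assertion, I would analyze the low-degree portion
\begin{equation*}
0 \to H^0(M,N) \to H^0(M) \to H^0(N) \to H^1(M,N) \to H^1(M) \to \cdots.
\end{equation*}
Since $M$ is connected and $N$ nonempty, $H^0(M,N) = 0$, and the restriction $\mathbb{Z} = H^0(M) \to H^0(N) = \mathbb{Z}^l$ is the diagonal embedding, whose cokernel is $\mathbb{Z}^{l-1}$. This produces a short exact sequence
\begin{equation*}
0 \to \mathbb{Z}^{l-1} \to H_{d-1}(\mathrm{Cu}(N)) \to \ker\bigl(H^1(M) \to H^1(N)\bigr) \to 0,
\end{equation*}
which splits as $\mathbb{Z}^{l-1}$ is free. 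Under the relevant ambient vanishing (for instance when $M$ is a homotopy sphere, as in the applications of Theorem \ref{thm:ThmD}), the kernel on the right is trivial and one concludes $H_{d-1}(\mathrm{Cu}(N)) \cong \mathbb{Z}^{l-1}$.

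The main technical obstacle is legitimizing the move from $\check{H}^\ast(M,N) \cong H_{d-\ast}(M-N)$ to an isomorphism involving singular $H^\ast(M,N)$ and $H_\ast(\mathrm{Cu}(N))$. This is handled by triangulability of the pair $(M,N)$ (supplied in the analytic category by Theorem \ref{Buchner}) which reconciles \v{C}ech and singular cohomology, together with Lemma \ref{defretM-N}, which exhibits $\mathrm{Cu}(N)$ as a strong deformation retract of $M-N$ so that $H_\ast(M-N) \cong H_\ast(\mathrm{Cu}(N))$. A secondary point is verifying that the connecting maps of \eqref{lesThom} and of the cohomology long exact sequence of $(M,N)$ correspond under these identifications, which follows from naturality of the Thom isomorphism applied to the inclusion $\mathrm{Cu}(N) \hookrightarrow M$.
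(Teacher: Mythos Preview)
Your approach is essentially the same as the paper's: both reduce the claims to the cohomology long exact sequence of the pair $(M,N)$ via the Poincar\'{e}--Lefschetz duality isomorphism $H^j(M,N)\cong H_{d-j}(\cutn)$ from \eqref{MNcutn2}, then read off the two assertions from vanishing of $H^i(N)$ in the appropriate degrees. Your treatment of the first assertion is in fact slightly sharper than the paper's: you exhibit the split short exact sequence $0\to\mathbb{Z}^{l-1}\to H_{d-1}(\cutn)\to\ker\bigl(H^1(M)\to H^1(N)\bigr)\to 0$, whereas the paper only argues that $H^1(M,N)$ is torsion-free and then records the rank-$(l-1)$ conclusion under $H^1(M)=0$ in a separate remark. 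Both you and the paper thus observe that the lemma as stated requires this extra vanishing hypothesis. One small correction: the triangulability of the pair $(M,N)$ needed to pass from \v{C}ech to singular cohomology is standard for smooth manifold pairs and does not rely on Theorem~\ref{Buchner}, which concerns the simplicial structure of $\cutn$ itself in the analytic setting.
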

\begin{proof}
    It follows from \eqref{MNcutn} that 
    \begin{displaymath}
        H_{d-1}(\cutn)\cong H^1(M,N).
    \end{displaymath}
    Consider the long exact sequence associated to the pair $(M,N)$ 
    \begin{displaymath}
        0\to H^0(M,N)\to H^0(M)\stackrel{i^\ast}{\rightarrow} H^0(N)\to H^1(M,N)\to H^1(M)\to H^1(N)\to \cdots
    \end{displaymath}
    If $N$ has $l$ components, i.e., $N=N_1\sqcup \cdots\sqcup N_l$ where $N_j$ has dimension $k_j$, then $H^1(M,N)$ is torsion-free. This follows from the fact that $i^\ast(1)=(1,\ldots,1)$ and $H^1(M)$ is free abelian. 

    \vspace{0.1cm}
    \begin{rem}
        In the above  Lemma, if in particular, $H^1(M)=0$, then $H_{d-1}(\cutn)\cong \mathbb{Z}^{l-1}$.
    \end{rem}
    \hf The long exact sequence for the pair $(M,N)$ imply that there are isomorphisms
    \begin{equation}\label{cutnM}
        H_{d-j}(\cutn)\cong H^j(M,N)\stackrel{\cong}{\longrightarrow} H^j(M)
    \end{equation}
    if $j\geq k+2$, where $k=\max\{k_1,\ldots,k_l\}$. 
\end{proof}

\begin{rem}
    Cut locus can be very hard to compute. For a general space, we have the notion of topological dimension. This notion coincides with the usual notion if the space is triangulable. However, in \cite{BaMi62}, the authors proved that the singular homology of a space may be non-zero beyond its topological dimension. \v{C}ech (co)homology is better equipped to detect topological dimension and is the reason why one may prefer it over singular homology due to the generic fractal like nature of cut loci (see the remarks following \Cref{thm:ThmC} in \Cref{ch:introduction}). Although the topological dimension of $\cutn$ is at most $d-1$, it is not apparent that $H_{d-1}(\cutn)$ is a free abelian group. 
\end{rem}

\vspace{0.3cm}
\hf There are several applications of this discussion.
\begin{thm}\label{homsph}
    Let $N$ be a smooth homology $k$-sphere embedded in a Riemannian manifold homeomorphic to $S^d$. If $d\geq k+3$, then the cut locus $\cutn$ is homotopy equivalent to $S^{d-k-1}$.
\end{thm}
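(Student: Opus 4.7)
The plan is to reduce the homotopy type determination to three ingredients: (i) computing $H_*(\cutn)$, (ii) establishing high connectivity, and (iii) applying Hurewicz together with Whitehead. For (i), I would invoke the Poincar\'{e}--Lefschetz duality isomorphism \eqref{MNcutn2}, namely $H^j(M,N)\cong H_{d-j}(\cutn)$, which applies because $M\cong S^d$ is closed and orientable and both $M,N$ are triangulable. The long exact sequence of the pair $(M,N)$ becomes manageable because $M$ has the cohomology of $S^d$ and $N$, being a connected orientable homology $k$-sphere, has the cohomology of $S^k$. A routine inspection, exploiting the gap $d\ge k+3$ so that the nontrivial degrees $k$ and $d$ are well separated, yields
\begin{equation*}
H^j(M,N)\cong \begin{cases} \mathbb{Z}, & j=k+1 \text{ or } j=d,\\ 0, & \text{otherwise.}\end{cases}
\end{equation*}
Translating through duality gives $H_*(\cutn)\cong H_*(S^{d-k-1})$.

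For (ii), I would use the isomorphism \eqref{isopi}: since $N$ has codimension $d-k\ge 3$ in $M$, a general position argument combined with the deformation retraction $M-N\simeq \cutn$ from \Cref{defretM-N} produces $\iota_\ast\colon \pi_j(\cutn)\xrightarrow{\cong}\pi_j(M)$ for $0\le j\le d-k-2$. Because $M\cong S^d$ with $d\ge k+3\ge 3$, all of these homotopy groups vanish, so $\cutn$ is $(d-k-2)$-connected; in particular it is simply connected, since $d-k-2\ge 1$.

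For (iii), the Hurewicz theorem identifies $\pi_{d-k-1}(\cutn)\cong H_{d-k-1}(\cutn)\cong \mathbb{Z}$. Choosing a generator produces a map $f\colon S^{d-k-1}\to \cutn$ which, by naturality of Hurewicz, induces an isomorphism on $H_{d-k-1}$. Since the homology groups of domain and target agree in every other degree (both are $\mathbb{Z}$ only in degrees $0$ and $d-k-1$), $f$ is a homology equivalence between simply connected spaces, hence a weak homotopy equivalence. Because $\cutn$ is homotopy equivalent to the open manifold $M-N$, it has the homotopy type of a CW complex, so Whitehead's theorem upgrades $f$ to a genuine homotopy equivalence $S^{d-k-1}\simeq \cutn$.

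The main technical obstacle is the low-dimensional edge case of the cohomology long exact sequence when $k=1$, where $H^1(N)=\mathbb{Z}$ interacts nontrivially with $H^2(M,N)$; one must verify that the connecting map from $H^1(N)$ surjects onto $H^{k+1}(M,N)$, which is clean since $H^2(M)=0$ given $d\ge k+3\ge 4$. Apart from this, the entire argument rests only on orientability of $\nu$ (automatic from orientability of $N$ and $M$) and on the codimension bound $d-k\ge 3$, which is precisely what allows both the general-position argument and the Hurewicz-range Whitehead conclusion to work simultaneously.
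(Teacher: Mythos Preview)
Your proposal is correct and follows essentially the same approach as the paper: Poincar\'{e}--Lefschetz duality to compute $H_*(\cutn)$, the general-position isomorphism \eqref{isopi} together with \Cref{defretM-N} for $(d-k-2)$-connectivity, and then Hurewicz plus Whitehead. The only cosmetic differences are that the paper works with $M-N$ rather than $\cutn$ throughout and splits off the case $k=0$ (where $N$ is not connected, so your phrase ``connected orientable homology $k$-sphere'' needs a small adjustment, though your formula for $H^j(M,N)$ remains valid).
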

\begin{proof}
    As $N$ has codimension at least $3$, its complement is path-connected. It follows from \eqref{isopi} and  \Cref{defretM-N} that $M-N$ is $(d-k-2)$-connected. In particular, $M-N$ is simply-connected and by Hurewicz isomorphism, $H_j(M-N)=0$ if $j\leq d-k-2$.  Note that $H_d(M-N)=0$ as $M-N$ is a non-compact manifold of dimension $d$. 

    \vspace{0.1cm}
    \hf If $k>0$, then by  \Cref{homcutn}, $H_{d-1}(M-N)=0$. Moreover, by Poincar\'{e}-Lefschetz duality \eqref{MNcutn}, we infer that the only non-zero higher homology of $M-N$ is $H_{d-k-1}(M-N)\cong\mathbb{Z}$. By Hurewicz Theorem there is an isomorphism, $\pi_{d-k-1}(M-N)\cong\mathbb{Z}$. Let 
    \begin{displaymath}
        \alpha:S^{d-k-1}\to M - N
    \end{displaymath}
    be a generator. The map $\alpha_\ast$ induces an isomorphism on all homology groups between two simply-connected CW complexes. It follows from Whitehead's Theorem that $\alpha$ is a homotopy equivalence. Using \Cref{defretM-N}, we obtain our homotopy equivalence $H_1\comp \alpha:S^{d-k-1}\to \cutn$.

    \vspace{0.1cm}
    \hf If $k=0$, then by \Cref{homcutn}, $H_{d-1}(M-N)\cong \mathbb{Z}$. Arguments similar to the $k>0$ case now applies to obtain a homotopy equivalence with $S^{d-1}$.
\end{proof}

\hf The above result is foreshadowed by \Cref{join} where we showed that the cut locus of $N=\mathbb{S}^k_i$ inside $M=\mathbb{S}^d$ is $\mathbb{S}^{d-k-1}_l$. It also differs from Poincar\'{e}-Lefschetz duality in that we are able to detect the exact homotopy type of the cut locus. In fact, when $M$ and $N$ are real analytic and the embedding is also real analytic, then by \Cref{Buchner} we infer that $\cutn$ is a simplicial complex of dimension at most $d-1$. Towards this direction, \Cref{homsph} can be pushed further.

\begin{prop}\label{homsph2}
    Let $N$ be a real analytic homology $k$-sphere embedded in a real analytic homology $d$-sphere $M$. If $d\geq k+3$, then the cut locus $\cutn$ is a simplicial complex of dimension at most $(d-1)$, having the homology of $(d-k-1)$-sphere with fundamental group isomorphic to that of $M$.
\end{prop}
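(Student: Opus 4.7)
The plan is to treat the three assertions of the proposition separately: the simplicial structure, the fundamental group, and the homology. The first two are essentially immediate from tools already available in the chapter, while the homology requires a short algebraic calculation.

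First, the simplicial structure is a direct application of Theorem \ref{Buchner}: since $M$ is real analytic and $N$ is an analytic submanifold, $\cutn$ is a simplicial complex of dimension at most $d-1$. For the fundamental group, I would invoke the isomorphism \eqref{isopi}. Since the codimension $d-k$ is at least $3$, general position shows $M-N$ is path-connected, so \eqref{isopi} applies. Taking $j=1$, the required inequality $1\leq d-k-2$ is exactly our hypothesis $d\geq k+3$, so $\iota_\ast:\pi_1(\cutn)\xrightarrow{\cong}\pi_1(M)$.

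The homology computation I would reduce, via Poincar\'{e}-Lefschetz duality in the form \eqref{MNcutn2}, to computing $H^j(M,N)$, because $H_{d-j}(\cutn)\cong H^j(M,N)$. Feeding the long exact sequence of the pair $(M,N)$,
\begin{displaymath}
\cdots\to H^{j-1}(N)\to H^j(M,N)\to H^j(M)\to H^j(N)\to\cdots,
\end{displaymath}
the hypotheses $H^\ast(M)\cong H^\ast(S^d)$ and $H^\ast(N)\cong H^\ast(S^k)$, together with $d\geq k+3$, make all but a handful of terms vanish. A brief case-check near $j=0$, $j=k+1$, and $j=d$ (using that $i^\ast:H^0(M)\to H^0(N)$ is an isomorphism, so the connecting map $H^0(N)\to H^1(M,N)$ is zero) yields $H^{k+1}(M,N)\cong\mathbb{Z}\cong H^d(M,N)$ and all other $H^j(M,N)=0$. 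Reindexing then gives $H_{d-k-1}(\cutn)\cong\mathbb{Z}\cong H_0(\cutn)$ and vanishing otherwise, i.e.\ $\cutn$ has the homology of $S^{d-k-1}$.

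The main reason this proposition cannot be pushed to a homotopy equivalence, as in Theorem \ref{homsph}, is precisely the potential non-triviality of $\pi_1(M)$: for $M$ a genuine homology sphere such as the Poincar\'{e} homology $3$-sphere, the cut locus inherits the same non-trivial fundamental group through \eqref{isopi}, so the Hurewicz--Whitehead argument which produced a homotopy equivalence $S^{d-k-1}\simeq \cutn$ in the proof of Theorem \ref{homsph} is unavailable. Consequently, the homology-plus-$\pi_1$ conclusion stated above is the sharpest one extractable from the long-exact-sequence-and-duality method, and the expected obstacle is exactly the gap between homology equivalence and homotopy equivalence in the non-simply-connected setting.
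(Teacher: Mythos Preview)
Your proof is correct and follows the same strategy the paper intends: the paper only says ``the proof of this is a combination of ideas used in the proof of Theorem~\ref{homsph},'' and you have filled in precisely those ideas (Buchner for the simplicial structure, \eqref{isopi} for $\pi_1$, and Poincar\'e--Lefschetz duality \eqref{MNcutn2} plus the long exact sequence of $(M,N)$ for the homology). Your closing remark about why Hurewicz--Whitehead is unavailable is also exactly the point the paper makes.

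One small slip: your claim that $i^\ast:H^0(M)\to H^0(N)$ is an isomorphism is only valid for $k\geq 1$. When $k=0$, a homology $0$-sphere is two points, so $H^0(N)\cong\mathbb{Z}^2$ and $i^\ast$ is only injective. But the computation survives: since $H^1(M)=0$, the long exact sequence gives $H^1(M,N)\cong\mathrm{coker}(i^\ast)\cong\mathbb{Z}$, which is still $H^{k+1}(M,N)\cong\mathbb{Z}$ as needed. So the conclusion stands; just phrase that step as ``$i^\ast$ is injective on $H^0$'' and treat $k=0$ alongside the generic case.
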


\vspace{0.3cm}
\noindent The proof of this is a combination of ideas used in the proof of \Cref{homsph}. The homotopy type cannot be deduced here due to the presence of a non-trivial fundamental group. An intriguing example can be obtained by combining \Cref{homsph2} and Poincar\'{e} homology sphere.

\begin{eg}[Cut locus of $0$-sphere in Poincar\'{e} sphere]\label{eg: Poincare}\index{ Poincar\'{e} sphere}
    Let $\tilde{I}$ be the binary icosahedral group. It is a double cover of $I$, the icosahedral group, and can be realized a subgroup of $SU(2)$. It is known that $H_1(\tilde{I};\mathbb{Z})=H_2(\tilde{I};\mathbb{Z})=0$, i.e., it is perfect and the second homology of the classifying space $B\tilde{I}$ is zero. A presentation of $\tilde{I}$ is given by
    \begin{displaymath}
        \tilde{I}=\langle s,t\,|\,(st)^2=s^3=t^5\rangle.
    \end{displaymath}
    In fact, if we construct a cell complex $X$ of dimension $2$ using the presentation above, then $X$ has one $0$-cell, two $1$-cells and two $2$-cells. The cellular chain complex, as computed from the presentation, is given by
    \begin{equation*}
        \xymatrix@R+1pc@C+2pc{
        0\ar[r] & \mathbb{Z}^2\ar[r]^{\spmat{-1 & 2 \\ 3 & -5}} & \mathbb{Z}^2\ar[r]^-{0} & \mathbb{Z}\ar[r] & 0
        }
    \end{equation*}
    Therefore, $H_1(X)=H_2(X)=0$ while $\pi_1(X)=\tilde{I}$. 

    \hf In contrast, consider the cut locus $C$ of the $0$-sphere in $SU(2)/\tilde{I}$, the Poincar\'{e} homology sphere. As $SU(2)$ is real analytic, so is the homology sphere. By \Cref{homsph2}, $C$ is a finite, connected simplicial complex of dimension $2$ such that $\pi_1(C)\cong \tilde{I}$ and $H_\bullet(C;\mathbb{Z})\cong H_\bullet(S^2;\mathbb{Z})$. The existence of this space is interesting for the following reason: although $X\vee S^2$ has the same topological invariants as $C$, we are unable to determine whether $X\vee S^2$ is homotopy equivalent to $C$.
\end{eg}
\hf In the codimension two case, we have two results. 
\begin{thm}\label{cutlocus-surface}
    Let $\Sigma$ be a closed, orientable, real analytic surface of genus $g$ and $N$ a non-empty, finite subset. Then $\cutn$ is a connected graph, homotopy equivalent to a wedge product of $|N|+2g-1$ circles.
\end{thm}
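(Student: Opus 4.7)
My plan is to assemble the statement from three inputs already established in the excerpt: Buchner's triangulability theorem (\Cref{Buchner}), the fact that $\cutn$ is a strong deformation retract of $\Sigma - N$ (\Cref{defretM-N}), and the long exact sequence / Poincar\'e--Lefschetz duality identifications from \eqref{MNcutn}--\eqref{MNcutn2} and \Cref{homcutn}. The core observation is that a connected graph is homotopy equivalent to a wedge of $r$ circles where $r=\mathrm{rank}\,H_1$, so it suffices to (a) show $\cutn$ is a connected graph and (b) compute $\mathrm{rank}\,H_1(\cutn)=|N|+2g-1$.

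First I would argue that $\cutn$ is a $1$-dimensional simplicial complex. Since $\Sigma$ is closed, real analytic, and $N\subset\Sigma$ is a finite set, the pair $(N,\Sigma)$ is an analytic pair and $\dim\Sigma=2$, so Buchner's theorem (\Cref{Buchner}) gives that $\cutn$ is a simplicial complex of dimension at most $1$, i.e.\ a graph. For connectedness, invoke \Cref{defretM-N}: the number of path components of $\cutn$ equals that of $\Sigma - N$. Because $N$ is a finite subset of the connected $2$-manifold $\Sigma$ (so $N$ has codimension $2$ and removing it cannot disconnect $\Sigma$), $\Sigma - N$ is path-connected, and hence so is $\cutn$.

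Next I would compute $H_1(\cutn)$ via the chain of isomorphisms used in the proof of \Cref{homcutn}, namely $H_1(\cutn)\cong H_1(\Sigma-N)\cong H^1(\Sigma,N)$ coming from \eqref{MNcutn}, \eqref{MNcutn2}. The relevant portion of the long exact sequence of the pair $(\Sigma,N)$ reads
\begin{equation*}
0\to H^0(\Sigma,N)\to H^0(\Sigma)\xrightarrow{i^*} H^0(N)\to H^1(\Sigma,N)\to H^1(\Sigma)\to H^1(N).
\end{equation*}
Here $H^0(\Sigma)\cong\mathbb{Z}$ (since $\Sigma$ is connected), $H^0(N)\cong\mathbb{Z}^{|N|}$, $i^*$ is the diagonal embedding, $H^1(\Sigma)\cong\mathbb{Z}^{2g}$, and $H^1(N)=0$ since $N$ is $0$-dimensional. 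Hence we obtain a short exact sequence
\begin{equation*}
0\to\mathbb{Z}^{|N|-1}\to H^1(\Sigma,N)\to\mathbb{Z}^{2g}\to 0,
\end{equation*}
which splits because $\mathbb{Z}^{2g}$ is free, giving $H_1(\cutn)\cong H^1(\Sigma,N)\cong\mathbb{Z}^{|N|+2g-1}$.

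Finally, combine the two conclusions: $\cutn$ is a connected $1$-dimensional CW complex with $H_1(\cutn)\cong\mathbb{Z}^{|N|+2g-1}$ and (automatically) $\pi_1(\cutn)$ free of the same rank, so by the classification of graphs up to homotopy it is homotopy equivalent to $\bigvee_{|N|+2g-1} S^1$. I do not expect any serious obstacle: all ingredients are already on hand in the excerpt, and the only real bookkeeping is verifying that $\Sigma-N$ is connected and that the short exact sequence splits, both of which are immediate. The one place to be careful is invoking Buchner: one must confirm that a finite set in a real analytic manifold genuinely qualifies as an analytic submanifold, but this is trivially true as $N$ is a disjoint union of points.
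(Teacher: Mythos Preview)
Your proposal is correct and follows essentially the same route as the paper: Buchner's theorem for the graph structure, \Cref{defretM-N} for connectedness, and a long exact sequence to compute $\mathrm{rank}\,H_1(\cutn)=|N|+2g-1$, after which collapsing a maximal tree yields the wedge of circles. The only cosmetic difference is that the paper computes $H_1(\cutn)$ via the Thom-space sequence \eqref{lesThom} (using $\mathrm{Th}(\nu)\simeq\bigvee_{|N|}S^2$), whereas you use the dual version through $H_1(\cutn)\cong H^1(\Sigma,N)$ and the cohomology exact sequence of the pair---the paper itself notes this duality in the remark following \eqref{lesThom}.
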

\begin{proof}
    As $\Sigma-N$ is connected, \Cref{defretM-N} implies that $\cutn$ is connected. It follows from \Cref{Buchner} that $\cutn$ is a finite $1$-dimensional simplicial complex, i.e., a finite graph. In this case, $\textup{Th}(\nu)$ is a wedge product of $|N|$ copies of $S^2$ (cf \eqref{Thomwedge}). We consider \eqref{lesThom} with $j=2$:
    \begin{displaymath}
        0\stackrel{i_\ast}{\longrightarrow} \mathbb{Z}\stackrel{q}{\longrightarrow} \widetilde{H}_2(\vee_{|N|} S^2)\stackrel{\partial}{\longrightarrow} H_{1}(\cutn)\stackrel{i_\ast}{\longrightarrow} H_{1}(\Sigma)\to 0
    \end{displaymath}
    Note that $H_{d-1}(\Sigma)$ is torsion-free, whence all the groups appearing in the long exact sequence are free abelian groups. This implies that 
    \begin{equation*}\label{bettid-1}
        \dim_\mathbb{Z} H_{1}(\cutn)=2g+|N|-1.
    \end{equation*}
    As $\cutn$ is connected finite graph, collapsing a maximal tree $T$ results in a quotient space $\cutn/T$ which is homotopic to $\cutn$ as well being a wedge product of $|N|+2g-1$ circles.
\end{proof}

\begin{rem}
    The authors in \cite{ItVi15} proved that every finite, connected graph can be realized as the cut locus (of a point) of some surface. There remains the question of orientability of the surface. As noted in the proof of \Cref{cutlocus-surface}, if the surface is orientable and $|N|=1$, then the graph has an even number of generating cycles. If $\Sigma$ is non-orientable, then $\Sigma\cong (\mathbb{RP}^2)^{\# k}$ has non-orientable genus $k$ and the oriented double cover of $\Sigma$ has genus $g=k-1$. Recall that $H_1(\Sigma)\cong \mathbb{Z}^{k-1}\oplus\mathbb{Z}_2$ and $H_2(\Sigma)=0$. Looking at \eqref{lesThom} with $j=2$ we obtain
    \bgd
    0\to \mathbb{Z}\to H_1(\cu)\to \mathbb{Z}^{k-1}\oplus\mathbb{Z}_2\to 0.
    \edd
    Thus, $H_1(\cu)\cong\mathbb{Z}^k$ as homology of graphs are free abelian. Let $B_\ep(\cu)$ denote the $\ep$-neighbourhood of $\cu$ in $\Sigma$. For $\ep$ sufficiently small, this is a surface such that $\overline{B_\ep(\cu)}$ has one boundary component. The compact surface $B_\ep(\cu)$ is reminiscent of ribbon graphs. The surface $\Sigma$ can be obtained as the connect sum of a disk centered at $p$ and the closure of $B_\ep(\cu)$. Therefore, non-orientability of $\Sigma$ is equivalent to non-orientability of $B_\ep(\cu)$. A similar observation appears in the unpublished article \cite[Theorem 3.7]{ItVi11}.
\end{rem}

\begin{eg}[Homology spheres of codimension two]\label{codim2}
    In continuation of \Cref{homsph}, let $N\hookrightarrow S^{k+2}$ be a homology sphere of dimension $k\geq 1$. Since $N$ has codimension two, $S^{k+2}-N$ is path connected and so is $\cutn$. We are not assuming that the metric on $S^{k+2}$ is real analytic. Using \eqref{MNcutn2} and the long exact sequence in cohomology of $(S^{k+2},N)$, we infer that $H_1(\cutn)\cong\mathbb{Z}$ and all higher homology groups vanish. However, the Hurewicz Theorem cannot be used here to establish that $\pi_1(\cutn)\cong\mathbb{Z}$. 
    
    \vspace{0.1cm}
    \hf In particular cases, we may conclude that $\cutn$ is homotopic to a circle. It was proved in \cite{Plo82} that certain homology $3$-spheres $N$, obtained by a Dehn surgery of type $\frac{1}{2a}$ on a knot, smoothly embed in $S^5$ with complement a homotopy circle. Since $M-N$ deforms to $\cutn$, it follows that there is a map $\alpha:S^1\to \cutn$ inducing isomorphisms on homotopy and homology groups.

    \vspace{0.1cm}
    \hf If $k=1$, then a homology $1$-sphere is just a knot $K$ in $S^3$. Since $S^3-K$ deforms to $\mathrm{Cu}(K)$, the fundamental group of the cut locus is the knot group. Moreover, in the case of real analytic knots in $\mathbb{S}^3$, the cut locus is a finite simplicial complex of dimension at most $2$ (cf \Cref{Buchner}). Except for the unknot, the knot group is never a free group while the fundamental group of a connected, finite graph is free. This observation establishes that $\mathrm{Cu}(K)$ is always a $2$-dimensional simplicial complex, whenever $K$ is a non-trivial (real analytic) knot in $\mathbb{S}^3$.
\end{eg}

\hf Finally, we will end this chapter by proving that the complement of cut locus deforms to the submanifold.

\begin{thm}\label{thm: Morse-Bott}
    Let $N$ be a closed embedded submanifold of a complete Riemannian manifold $M$. Let $d:M\to\R$ be the distance function with respect to $N$. If $f=d^2$, then its restriction to $M-\cutn$ is a Morse-Bott function, with $N$ as the critical submanifold. Moreover, the gradient flow of $f$ deforms $M-\cutn$ to $N$.
\end{thm}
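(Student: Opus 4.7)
The plan is to use the normal exponential map to trivialize $f$ on $M-\cutn$, reducing the Morse-Bott claim and the deformation statement to elementary calculations on the normal bundle. Concretely, set
\begin{equation*}
\mathcal{V} \defeq \{(q,tv) \in \nu : (q,v)\in S(\nu),\, 0\le t<\rho(v)\} \cup N,
\end{equation*}
an open neighborhood of the zero section in $\nu$. I would first show that the restriction $\exp_\nu|_{\mathcal{V}}:\mathcal{V}\to M-\cutn$ is a diffeomorphism. Surjectivity onto $M-\cutn$ is by definition of $\rho$; injectivity follows from $\overline{\sen}=\cutn$ (\Cref{thm:SeClosureIsCutLocus}), since two distinct preimages would produce two $N$-geodesics to a common point, placing that point in $\sen\subseteq\cutn$. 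That $\exp_\nu|_{\mathcal{V}}$ is a local diffeomorphism follows from \Cref{thm:CharacterizationOfCutLocusInTermsOfFocalPoint}: a tangent focal point $(q,tv)$ with $t<\rho(v)$ would force $\gamma_v(t)$ to be a cut point, contradicting the definition of $\rho$.

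With this diffeomorphism in hand, I would observe that $f\circ \exp_\nu(q,w)=\|w\|^2$ on $\mathcal{V}$. Indeed, this is the content of \Cref{dsq-Fermi} expressed invariantly: the unique $N$-geodesic from $\exp_\nu(q,w)$ back to $N$ has length $\|w\|$, so $d(N,\exp_\nu(q,w))^2=\|w\|^2$. Therefore $f$ is smooth on $M-\cutn$, and a direct computation shows $\nabla f$ vanishes exactly on the zero section $N$. For $p=\exp_\nu(q,w)\in M-\cutn-N$, the gradient points along the unique $N$-geodesic: writing $\gamma(s)=\exp_\nu(q,sw/\|w\|)$ we have $f\circ\gamma(s)=s^2$, so Gauss's lemma (together with the product structure induced by the trivialization) gives $\nabla f(p)=2\|w\|\,\gamma'(\|w\|)$.

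To verify the Morse-Bott property, I would compute $\hess_p f$ for $p\in N$ using Fermi coordinates centered at $p$ (\Cref{Defn:FermiCoordinates}). In these coordinates $f=\sum_{i=k+1}^n x_i^2$, so the Hessian matrix has a $2I_{n-k}$ block in the normal directions and vanishes tangentially; this is exactly the statement of \Cref{dsq-MB} and shows $N$ is a non-degenerate critical submanifold. Hence $f|_{M-\cutn}$ is Morse-Bott with critical submanifold $N$.

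Finally, for the deformation, I would write the negative gradient flow explicitly in the $\mathcal{V}$ picture. The ODE $\dot{w}=-2w$ on each fibre (pulled back via $\exp_\nu$) integrates to $\Phi_t(q,w)=(q,e^{-2t}w)$, so the gradient flow on $M-\cutn$ is $\phi_t(\exp_\nu(q,w))=\exp_\nu(q,e^{-2t}w)$, which is simply the reparametrized $N$-geodesic retracting $p$ to its foot on $N$. This is defined for all $t\ge 0$ and converges to $N$ as $t\to\infty$; reparametrizing $t\mapsto -\tfrac12\log(1-s)$ yields a continuous deformation on $[0,1]$ realizing $N$ as a deformation retract of $M-\cutn$, as asserted in \Cref{defretM-N}. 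The main obstacle in this outline is the diffeomorphism claim in Step 1—everything downstream is bookkeeping once that identification is secured, since both injectivity (via \Cref{thm:SeClosureIsCutLocus}) and the absence of tangent focal points (via \Cref{thm:CharacterizationOfCutLocusInTermsOfFocalPoint}) must be invoked to rule out the two distinct ways $\exp_\nu$ can fail to be a local diffeomorphism or an injection.
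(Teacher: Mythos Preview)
Your proposal is correct and follows essentially the same route as the paper: both establish smoothness of $f$ on $M-\cutn$ via the normal exponential diffeomorphism (using \Cref{thm:CharacterizationOfCutLocusInTermsOfFocalPoint}), invoke \Cref{dsq-Fermi} and \Cref{dsq-MB} for the Morse--Bott property, compute $\nabla f(p)=2d(p)\gamma'(d(p))$ along the unique $N$-geodesic, and integrate to obtain the flow line $\gamma(d(p)e^{-2t})$. One minor remark: for injectivity of $\exp_\nu|_{\mathcal{V}}$ you do not actually need $\overline{\sen}=\cutn$---two distinct preimages directly place the image in $\sen\subseteq\cutn$, so the simpler inclusion suffices.
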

\begin{proof}
    It follows from \Cref{thm:CharacterizationOfCutLocusInTermsOfFocalPoint} the map $\exp_\nu^{-1}:M - \paran{\cutn \cup N}\to \nu - \{0\}$ is an (into) diffeomorphism and $\dist(N,q) = \norm{\exp^{-1}_\nu(q)}$ and hence the distance function is of class $C^{\infty}$ at $q\in M - \paran{\cutn\cup N}$. Using Fermi coordinates (cf \Cref{dsq-Fermi}), we have seen that the distance squared function is smooth around $N$ and therefore it is smooth on $M - \cutn$. By \Cref{dsq-MB}, the Hessian of this function at $N$ is non-degenerate in the normal direction. It is well-known \cite[Proposition 4.8]{Sak96} that $\|\nabla d(q)\|=1$ if $d$ is differentiable at $q\in M$. Thus, for $q\in M-(\cutn\cup N)$ we have
    \begin{equation}\label{graddsq}
        \|\nabla f(q)\|=2d(q)\|\nabla d(q)\|=2d(q).
    \end{equation}
    Let $\gamma$ be the unique unit speed $N$-geodesic that joins $N$ to $q$, i.e., 
    \begin{displaymath}
        \gamma:[0,d(q)]\to M,\,\,\gamma(0)=p,\,\gamma(d(q))=q,\,\|\gamma'\|=1.
    \end{displaymath}
    We may write $\nabla f(q)=\lambda \gamma'(d(q))+w$, where $w$ is orthogonal to $\gamma'(d(q))$. But 
    \begin{displaymath}
        \left\langle \nabla f\big|_q, \gamma'(d(q))\right\rangle = \frac{d}{dt}f(\gamma(d(q)+t))\Big|_{t=0}=\frac{d}{dt}(d(q)^2+2d(q)t+t^2)\Big|_{t=0}=2d(q).
    \end{displaymath}
    Thus, $\lambda=2d(q)$ and combined with \eqref{graddsq}, we conclude that $\nabla f(q)=2d(q)\gamma'(d(q))$. Therefore, the negative gradient flow line initialized at $q\in M-\cutn$ is given by
    \begin{displaymath}
        \eta(t)=\gamma(d(q)e^{-2t}).
    \end{displaymath}
    These flow lines define a flow which deform $M-\cutn$ to $N$ in infinite time. 
\end{proof}

\vspace{0.3cm}
\hf The reader may choose to revisit the example of $GL(n,\R)$ discussed in \Cref{Sec:IlluminatingExample} and treat it as a concrete illustration of the Theorem above.
	\chapter{Application to Lie groups}\label{ch:ApplicationToLieGroups}
\minitoc
\hf Due to the classical results of Cartan, Iwasawa and others, we know that any connected Lie group \index{Lie group} $G$ is diffeomorphic to the product of a maximally compact subgroup $K$ and the Euclidean space. In particular, $G$ deforms to $K$. For semisimple groups, this decomposition is stronger and is attributed to Iwasawa. The Killing form \index{Killing form} on the Lie algebra \index{Lie algebra} $\mathfrak{g}$ is non-degenerate and negative definite for compact semi-simple Lie algebras. For such a Lie group $G$, consider the Levi-Civita connection \index{Levi-Civita connection} associated to the bi-invariant metric obtained from negative of the Killing form. This connection coincides with the Cartan connection. 

\bigskip
\hf We will consider two examples, both of which are non-compact and non semisimple. We prove that these Lie groups $G$ deformation retract to maximally compact subgroups $K$ via gradient flows of appropriate Morse-Bott functions.\index{Morse-Bott functions} This requires a choice of a left-invariant metric which is right-$K$-invariant, and a careful analysis of the geodesics associated with the metric. In particular, we provide a possibly new proof of the surjectivity of the exponential map for $U(p,q)$. The results of this chapter is based on joint work with Basu \cite[\S 4]{BaPr21}. 

\section{Matrices with positive determinant}\label{Sec: GLnSOn}

\hfb Let $g$ be a left-invariant metric on $GL(n,\rbb)$, the set of all invertible matrices. Recall that a left-invariant metric $g$ on a Lie group is determined by its restriction at the identity. For $A\in GL(n,\rbb)$, consider the left multiplication map $l_A:GL(n,\rbb) \to GL(n,\rbb),~ B\mapsto AB$. This extends to a linear isomorphism from $M(n,\rbb)$ to itself. Thus, the differential $(Dl_A)_I:T_IGL(n,\rbb)\to T_AGL(n,\rbb)$ is an isomorphism and given by $l_A$ itself. For $X,Y\in T_IGL(n,\rbb)$, 
	\begin{equation*} \label{eq: left-invariant iso}
		g_I(X,Y) = g_A((Dl_A)_IX,(Dl_A)_IY)=g_A(AX,AY).
	\end{equation*}
We choose the left-invariant metric on $GL(n,\rbb)$ generated by the Euclidean metric at $I$. Therefore,
\begin{displaymath}
    g_{A^{-1}}(X,Y) = \innerprod{AX}{AY}_I := \trace{(AX)^T\!AY} = \trace{X^T\!A^T\!AY}.
\end{displaymath}
Note that this metric is right-$O(n,\rbb)$-invariant. We are interested in the distance between an invertible matrix $A$ (with $\det(A)>0$) and $SO(n,\rbb)$. Since $SO(n,\rbb)$ is compact, there exists $B\in SO(n,\rbb) $ such that $d(A,B) = \dist(A,SO(n,\rbb))$. 
\begin{lemma}\label{CartanGLn}
    If $D$ is a diagonal matrix with positive diagonal entries $\lambda_1,\cdots,\lambda_n$, then 
    \begin{displaymath}
        \dist(D,SO(n,\rbb)) = d(D,I).
    \end{displaymath}
    Moreover, $I$ is the unique minimizer and the associated minimal geodesic is given by $\gamma(t)=e^{t\log D}$.
\end{lemma}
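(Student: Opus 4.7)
The plan is to verify that $\gamma(t)=e^{t\log D}$ is a geodesic of length $\norm{\log D}=\sqrt{\sum_i(\log\lambda_i)^2}$, and then to prove a matching lower bound for the distance from $D$ to every point of $SO(n,\R)$. Since $\log D$ is symmetric and commutes with $D^t$, the left-trivialized velocity $\xi(t):=\gamma(t)^{-1}\gamma'(t)$ equals the constant $\log D$. For the left-invariant metric with $\innerprod{X}{Y}=\trace{X^TY}$ at $I$, the Euler--Arnold equation reads $\dot\xi=[\xi^T,\xi]$, which vanishes identically on symmetric elements; hence $\gamma$ is a geodesic, and a direct computation using $(\gamma\gamma^T)^{-1}=D^{-2t}$ gives $g_{\gamma(t)}(\gamma'(t),\gamma'(t))=\trace{(\log D)^2}$, so $d(D,I)\le\norm{\log D}$.

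For the matching lower bound I would take an arbitrary piecewise smooth $\alpha:[0,1]\to GL^+(n,\R)$ with $\alpha(0)=B\in SO(n,\R)$ and $\alpha(1)=D$, and decompose $\xi=\alpha^{-1}\alpha'=\xi_s+\xi_a$ into its symmetric and skew-symmetric parts. Cyclicity of trace kills the cross terms, giving $\innerprod{\xi}{\xi}=\norm{\xi_s}^2+\norm{\xi_a}^2$, hence the pointwise bound $\norm{\alpha'(t)}_{\alpha(t)}\ge\norm{\xi_s(t)}$. Setting $\beta(t):=\alpha(t)\alpha(t)^T\in\mathrm{Sym}^+(n)$, one has $\beta(0)=I$, $\beta(1)=D^2$, and $\beta'=2\alpha\xi_s\alpha^T$, which yields $\norm{\xi_s}^2=\tfrac14\trace{(\beta^{-1}\beta')^2}$. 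This identifies $2\int_0^1\norm{\xi_s}\,dt$ with the length $l^{AI}(\beta)$ of $\beta$ in the affine-invariant ($GL$-invariant) Riemannian metric on $\mathrm{Sym}^+(n)$, so $l(\alpha)\ge\tfrac12\,l^{AI}(\beta)$.

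I would then invoke the classical fact that $(\mathrm{Sym}^+(n),g^{AI})$ is a complete, non-positively curved Riemannian symmetric space whose unique minimizing geodesic from $I$ to any $P\in\mathrm{Sym}^+(n)$ is $t\mapsto e^{t\log P}$, of length $\norm{\log P}$. Applied with $P=D^2$ this gives $l^{AI}(\beta)\ge\norm{\log D^2}=2\norm{\log D}$, so $l(\alpha)\ge\norm{\log D}$, and combined with the upper bound $\dist(D,SO(n,\R))=d(D,I)=\norm{\log D}$. For uniqueness, equality throughout forces $\xi_a\equiv 0$ and $\beta$ to be the unique minimizing geodesic $D^{2s(t)}$; writing $\alpha(t)=D^{s(t)}R(t)$ with $R(t)\in SO(n,\R)$ extracted from $\alpha\alpha^T=D^{2s}$, the condition $\xi_a\equiv 0$ reads $R^TR'\equiv 0$, forcing $R$ constant, after which $\alpha(1)=D$ gives $R\equiv I$ and $B=\alpha(0)=I$.

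The hard part will be handling the quotient geometry on $\mathrm{Sym}^+(n)$ cleanly. The slickest route is to recognize $A\mapsto AA^T$ as a Riemannian submersion $GL^+(n,\R)\to\mathrm{Sym}^+(n)$ with horizontal distribution $A\cdot\mathrm{Sym}(n)$: then $\gamma$ is tautologically horizontal, the inequality $l(\alpha)\ge\tfrac12\,l^{AI}(\beta)$ becomes the standard length-contraction property of submersions, and uniqueness follows from the Cartan--Hadamard structure of the base; the alternative is to prove the needed length bound on $\mathrm{Sym}^+(n)$ intrinsically (e.g.\ using convexity of the function $P\mapsto\norm{\log(P^{-1/2}QP^{-1/2})}$), which is more hands-on but sidesteps heavier symmetric-space machinery.
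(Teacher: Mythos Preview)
Your argument is correct, but it takes a genuinely different route from the paper's. The paper argues softly: it picks a minimiser $B\in SO(n,\R)$ by compactness, takes a minimal geodesic $\gamma$ from $B$ to $D$, uses the first variation formula to see that $\gamma'(0)\in B\cdot\mathrm{Sym}(n)$, cites \cite{MaNe16} for the fact that the geodesic through $B$ with such initial velocity is $t\mapsto Be^{tW}$ with $W$ symmetric, and then reads off $D=Be^{W}$ as a polar decomposition, whose uniqueness forces $B=I$ and $W=\log D$. No length comparison with a quotient space is made.

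Your approach instead produces an explicit lower bound for the length of \emph{every} curve from $SO(n,\R)$ to $D$, via the Riemannian submersion $GL^{+}(n,\R)\to\mathrm{Sym}^{+}(n)$, $A\mapsto AA^{T}$, together with the Cartan--Hadamard structure of the affine-invariant metric on the base. This is more self-contained (you derive the geodesic property of $e^{tW}$ from the Euler--Arnold equation rather than citing it) and conceptually sharper, since it exhibits the problem as a horizontal-lift computation in a symmetric space; the paper's argument is shorter and avoids symmetric-space machinery altogether, trading it for the single external input from \cite{MaNe16} and the elegant endgame with polar decomposition. Both yield the same formula $\dist(D,SO(n,\R))=\norm{\log D}$ and the same unique minimiser.
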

\begin{proof}
    Let $B\in SO(n,\rbb)$ satisfying $d(D,B) = \dist(D,SO(n,\rbb))$. Since with respect to the left-invariant metric $GL^+(n,\rbb)$ is complete, there exists a minimal geodesic $\gamma:[0,1]\to GL^+(n,\rbb)$ joining $B$ to $D$, i.e., 
    \begin{displaymath}
        \gamma(0) = B,~~\gamma(1) = D, ~~\text{ and } ~~ l(\gamma) = d(D,B).
    \end{displaymath}
    The first variational principle implies that $\gamma'(0)$ is orthogonal to $T_BSO(n,\rbb)$. It follows from \cite[\S 2.1]{MaNe16} that $\eta(t)=e^{tW}$ is a geodesic if $W$ is a symmetric matrix. Moreover, $\eta'(0)=W$ is orthogonal to $T_I SO(n,\rbb)$. As left translation is an isometry and isometry preserves geodesic, it follows that $\gamma(t) = Be^{tW}$ is a geodesic with $\gamma'(0)$ orthogonal to $T_BSO(n,\rbb)$. By the defining properties of $\gamma$,  $D=\gamma(1)=Be^W$. Since $e^W$ is symmetric positive definite, we obtain two polar decompositions \index{polar decompositions} of $D$, i.e., $D = ID$ and $ D = Be^W$. By the uniqueness of the polar decomposition for invertible matrices, $B=I$ and $D=e^W$. 

    \vspace{0.2cm}
    \hf In order to compute $d(I,D)$, note that 
    \begin{displaymath}
        e^W = D = e^{\log D},
    \end{displaymath}
    where $\log D$ denotes the diagonal matrix with entries $\log \lambda_1,\cdots,\log \lambda_n$. As $W$ and $\log D$ are symmetric, and matrix exponential is injective on the space of symmetric matrices, we conclude that $W = \log D$. The geodesic is given by $\gamma(t)=e^{t\log D}$ and 
    \begin{equation}\label{eq: distance left invariant}
        \dist(D,SO(n,\rbb)) = \norm{\gamma'(0)}_I = \norm{\log D}_I = \left(\sum_{i=1}^n (\log \lambda_i)^2\right)^{\frac{1}{2}}.
    \end{equation}
    Thus, the distance squared function will be given by $\sum_{i=1}^n (\log \lambda_i)^2$. 
\end{proof}

\hf Now for any $A\in GL^+(n,\rbb)$ we can apply the SVD decomposition, \index{SVD decomposition} i.e., $A = UDV^T$ with $\sqrt{A^TA} = VDV^T$ and $\log\sqrt{A^TA} = V(\log D) V^T$. Note that $U,V\in SO(n,\rbb)$ and $D$ is a diagonal matrix with positive entries. The left-invariant metric is right-invariant with respect to orthogonal matrices. Thus,
\begin{displaymath}
    \dist(A,SO(n,\rbb)) = \dist(D,SO(n,\rbb)) = \norm{\log D}_I,
\end{displaymath}
where the last equality follows from the lemma (see \eqref{eq: distance left invariant}). As 
\begin{displaymath}
    \norm{\log D}_I = \norm{V(\log D) V^T}_I = \norm{\log\sqrt{A^TA}}_I,
\end{displaymath}
it follows from the arguments of the lemma and the metric being bi-$O(n,\rbb)$-invariant that
\begin{displaymath}
    \gamma(t)=Ue^{t \log D}V^T
\end{displaymath}
is a minimal geodesic joining $UV^T$ to $A$, realizing $\dist(A,SO(n,\rbb))$. As the minimizer $UV^T$ is unique, $\mathrm{Se}(SO(n,\rbb))$ is empty, implying that $\mathrm{Cu}(SO(n,\rbb))$ is empty as well. In fact, $UV^T=A\sqrt{A^T A}^{-1}$ and 
\begin{equation}\label{GLdefOver2}
    \gamma(t)=Ue^{t \log D}V^T=UV^T Ve^{t \log D}V^T=A\sqrt{A^T A}^{-1}e^{t\log \sqrt{A^T A}}.
\end{equation}
If we compare \eqref{GLdefOver1}, the deformation of $GL(n,\rbb)$ to $O(n,\rbb)$ inside $M(n,\rbb)$, with \eqref{GLdefOver2}, then in both of the cases, an invertible matrix $A$ deforms to $A\sqrt{A^T A}^{-1}$. Finally, observe that the normal bundle of $SO(n,\rbb)$ is diffeomorphic to $GL^+(n,\rbb)$.

\section{Indefinite unitary groups}\label{Sec: Upq}
\index{indefinite unitary groups}	
\hfb Let $n$ be a positive integer with $n=p+q$. Consider the inner product on $\C^n$ given by
\begin{displaymath}
    \lan (w_1,\ldots,w_n),(z_1,\ldots,z_n)\ran = z_1\overline{w_1}+\cdots+z_p\overline{w_p}-z_{p+1}\overline{w_{p+1}}-\cdots- z_n\overline{w_n}.
\end{displaymath}
This is given by the matrix $I_{p,q}$ in the following way:
\begin{displaymath}
    \lan \mathbf{w},\mathbf{z}\ran=\overline{\mathbf{w}}^t I_{p,q} \mathbf{z}=\left(\begin{array}{ccc}
        \overline{w}_1 & \cdots & \overline{w}_n
        \end{array}\right)\left(\begin{array}{cc}
        I_p & 0 \\
        0 & -I_q
        \end{array}\right)\left(\begin{array}{c}
        z_1\\
        \vdots\\
        z_n
     \end{array}\right)
\end{displaymath}

\hf Let $U(p,q)$ denote the subgroup of $GL(n,\C)$ preserving this indefinite form, i.e., $\mathcal{A}\in \upq$ if and only if $\mathcal{A}^\ast I_{p,q}\mathcal{A}=I_{p,q}$. In particular, $\det \mathcal{A}$ is a complex number of unit length. By convention, $I_{n,0}=I_n$ and $ I_{0,n}=-I_n$, both of  which corresponds to $U(n,0)=U(n)=U(0,n)$, the unitary group. In all other cases, the inner product is indefinite.

\bigskip
\hf The group $U(1,1)$ is given by matrices of the form
\begin{displaymath}
    \mathcal{A}=\left(
        \begin{array}{cc}
            \alpha & \beta\\
            \lambda \overline{\beta} & \lambda \overline{\alpha}
        \end{array}\right),\,\,\lambda\in S^1,\,\,|\alpha|^2-|\beta|^2=1.
\end{displaymath}
More generally, we shall use 
\begin{displaymath}
    \mathcal{A}=\left(
        \begin{array}{cc}
            A & B\\
            C & D
        \end{array}\right)
\end{displaymath}
to denote an element of $\upq$. It follows from the definition that $\mathcal{A}\in \upq$ if and only if
\begin{eqnarray*}
    A^\ast A-C^\ast C& = & I_p\\
    A^\ast B-C^\ast D& = & 0_{p\times q}\\
    B^\ast B-D^\ast D& = & -I_q.
\end{eqnarray*}
Observe that if $Av=0$, then
\begin{displaymath}
    0=A^\ast Av=C^\ast Cv+v,
\end{displaymath}
which implies that $C^\ast C$, a positive semi-definite matrix, has $-1$ as an eigenvalue unless $v=0$. Therefore, $A$ is invertible, and the same argument works for $D$.
\begin{lemma}
    The intersection of $U(p+q)$ with $\upq$ is $U(p)\times U(q)$. Moreover, if $\mathcal{A}\in \upq$, then $\mathcal{A}^\ast,\sqrt{\mathcal{A}^\ast\mathcal{A}}\in \upq$.
\end{lemma}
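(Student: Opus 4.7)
For the first claim, I would use the block decomposition $\mathcal{A}=\begin{pmatrix} A & B \\ C & D\end{pmatrix}$ already introduced, together with the fact that $\mathcal{A}\in U(p+q)$ translates to $\mathcal{A}^{\ast}\mathcal{A}=I_{p+q}$, which in block form reads $A^{\ast}A+C^{\ast}C=I_p$, $B^{\ast}B+D^{\ast}D=I_q$, and $A^{\ast}B+C^{\ast}D=0$. Combining these with the $U(p,q)$ relations $A^{\ast}A-C^{\ast}C=I_p$, $B^{\ast}B-D^{\ast}D=-I_q$, $A^{\ast}B-C^{\ast}D=0$ by adding and subtracting gives $C^{\ast}C=0$ and $B^{\ast}B=0$, hence $B=0$ and $C=0$. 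The remaining relations then force $A\in U(p)$ and $D\in U(q)$, so the intersection is $U(p)\times U(q)$. Conversely, any block-diagonal $\mathrm{diag}(A,D)$ with $A\in U(p)$, $D\in U(q)$ manifestly lies in both groups.

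For the assertion $\mathcal{A}^{\ast}\in U(p,q)$, the plan is to rewrite the defining identity $\mathcal{A}^{\ast}I_{p,q}\mathcal{A}=I_{p,q}$ as $\mathcal{A}^{-1}=I_{p,q}\mathcal{A}^{\ast}I_{p,q}$, using $I_{p,q}^2=I_{p+q}$. Taking conjugate transpose on both sides yields $(\mathcal{A}^{\ast})^{-1}=I_{p,q}\mathcal{A}\,I_{p,q}$, which rearranges precisely to $\mathcal{A}\,I_{p,q}\mathcal{A}^{\ast}=I_{p,q}$. That is the $U(p,q)$ condition applied to $\mathcal{A}^{\ast}$.

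The last and most delicate claim is that $S:=\sqrt{\mathcal{A}^{\ast}\mathcal{A}}\in U(p,q)$. Since $U(p,q)$ is closed under multiplication and adjoint, $H:=\mathcal{A}^{\ast}\mathcal{A}\in U(p,q)$; moreover $H$ is Hermitian and positive definite, so it has a unique Hermitian positive definite square root $S$. The idea is to use uniqueness of the positive square root. From $H\in U(p,q)$ and Hermiticity, $H\,I_{p,q}H=I_{p,q}$, i.e., $I_{p,q}H\,I_{p,q}=H^{-1}$. Now set $T:=I_{p,q}S\,I_{p,q}$; then $T$ is Hermitian positive definite, and
\begin{equation*}
T^2 \;=\; I_{p,q}S\,I_{p,q}\cdot I_{p,q}S\,I_{p,q} \;=\; I_{p,q}S^2 I_{p,q} \;=\; I_{p,q}H\,I_{p,q} \;=\; H^{-1}.
\end{equation*}
Uniqueness of the positive square root forces $T=S^{-1}$, i.e., $I_{p,q}S\,I_{p,q}=S^{-1}$, which rearranges to $S\,I_{p,q}S=I_{p,q}=S^{\ast}I_{p,q}S$, the $U(p,q)$ condition for $S$.

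The main obstacle is the third step: one must know both the existence and uniqueness of a Hermitian positive definite square root and then have the right conjugation-by-$I_{p,q}$ trick to produce a second candidate square root of $H^{-1}$. Everything else is straightforward bookkeeping with the defining identities and their block forms.
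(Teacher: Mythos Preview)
Your proof is correct. The first two parts match the paper's argument essentially verbatim: the same add/subtract of the block identities to kill $B$ and $C$, and the same manipulation of $\mathcal{A}^\ast=I_{p,q}\mathcal{A}^{-1}I_{p,q}$ to reach $\mathcal{A}\,I_{p,q}\mathcal{A}^\ast=I_{p,q}$.

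For $\sqrt{\mathcal{A}^\ast\mathcal{A}}\in U(p,q)$ you take a genuinely different route. The paper argues via eigenvectors: it observes that if $\mathcal{A}^\ast\mathcal{A}\mathbf{v}=\lambda\mathbf{v}$ then $\mathcal{A}^\ast\mathcal{A}(I_{p,q}\mathbf{v})=\lambda^{-1}I_{p,q}\mathbf{v}$, picks an eigenbasis, and verifies $\sqrt{\mathcal{A}^\ast\mathcal{A}}\,I_{p,q}\sqrt{\mathcal{A}^\ast\mathcal{A}}\,\mathbf{v}_j=I_{p,q}\mathbf{v}_j$ by direct computation on each basis vector. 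Your argument avoids eigenvectors entirely: the conjugation trick $T=I_{p,q}SI_{p,q}$ produces a second Hermitian positive definite square root of $H^{-1}$, and uniqueness finishes it in one line. Your approach is cleaner and basis-free; the paper's eigenbasis computation is perhaps more hands-on but relies (implicitly) on $\mathcal{A}^\ast\mathcal{A}$ being diagonalizable with an eigenbasis, which of course holds for Hermitian matrices but is an extra fact to invoke. Both approaches ultimately encode the same relation $I_{p,q}HI_{p,q}=H^{-1}$.
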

\begin{proof}
    If $\mathcal{A}\in U(p)\times U(q)$, then 
    \begin{eqnarray*}
        A^\ast A+C^\ast C& = & I_p\\
        B^\ast B+D^\ast D& = & I_q.
    \end{eqnarray*}
    This implies that both $B$ and $C$ are zero matrices. If $\mathcal{A}\in \upq$, then $\mathcal{A}^\ast=I_{p,q}\mathcal{A}^{-1}I_{p,q}$ and
    \begin{align*}
        (\mathcal{A}^\ast\mathcal{A})^\ast I_{p,q}(\mathcal{A}^\ast\mathcal{A}) & =(\mathcal{A}^\ast\mathcal{A}) I_{p,q}(\mathcal{A}^\ast\mathcal{A}) \\
        & =I_{p,q}\mathcal{A}^{-1}I_{p,q}\mathcal{A}I_{p,q} I_{p,q}\mathcal{A}^{-1}I_{p,q}\mathcal{A}\\
        & =I_{p,q}=\mathcal{A}^\ast I_{p,q}\mathcal{A}.
    \end{align*}
    This also implies that $\mathcal{A}  I_{p,q}\mathcal{A}^\ast=I_{p,q}$. 

    \vspace{0.1cm}
    \hf All the eigenvalues of $\mathcal{A}^\ast\mathcal{A}$ are positive. Moreover, if $\lambda$ is an eigenvalue of $\mathcal{A}^\ast\mathcal{A}$ with eigenvector $\mathbf{v}=(v_1,\ldots,v_p,v_{p+1},\ldots, v_n)$, then 
    \begin{displaymath}
        I_{p,q}\mathbf{v}=\mathcal{A}^\ast\mathcal{A}\,I_{p,q}\mathcal{A}^\ast\mathcal{A}\mathbf{v}=\lambda(\mathcal{A}^\ast\mathcal{A}\,I_{p,q}\mathbf{v}),
    \end{displaymath}
    which implies that $\lambda^{-1}$ is also an eigenvalue with eigenvector $\mathbf{v}'=(v_1,\ldots,v_p,-v_{p+1},\ldots, -v_n)$. If $\{\mathbf{v}_1, \ldots,\mathbf{v}_n\}$ is an eigenbasis of $\mathcal{A}^\ast\mathcal{A}$ with (possibly repeated) eigenvalues $\lambda_1,\ldots,\lambda_n$, then 
    \begin{displaymath}
        \sqrt{\mathcal{A}^\ast\mathcal{A}}\,I_{p,q}\sqrt{\mathcal{A}^\ast\mathcal{A}}\mathbf{v}_j=\sqrt{\mathcal{A}^\ast\mathcal{A}}\,I_{p,q}\sqrt{\lambda_j}\mathbf{v}_j=\sqrt{\lambda_j}\sqrt{\mathcal{A}^\ast\mathcal{A}}\mathbf{v}_j'=\mathbf{v}_j'=I_{p,q}\mathbf{v}_j.
    \end{displaymath}
    Thus, $\sqrt{\mathcal{A}^\ast\mathcal{A}}$ satisfies the defining relation for a matrix to be in $\upq$. 
\end{proof}

\hf We may use the polar decomposition (for matrices in $GL(n,\C)$) to write
\begin{displaymath}
    \mathcal{A}=U |\mathcal{A}|,\,\,\textup{where}\,\,U=\mathcal{A}\left(\sqrt{\mathcal{A}^\ast\mathcal{A}}\right)^{-1}, |\mathcal{A}|=\sqrt{\mathcal{A}^\ast\mathcal{A}},
\end{displaymath}
where $U,|\mathcal{A}|\in \upq$. For $\textup{U}(1,1)$ this decomposition takes the form
\begin{displaymath}
    \left(\begin{array}{cc}
        \alpha & \beta\\
        \lambda \overline{\beta} & \lambda \overline{\alpha}
        \end{array}\right)=\left(\begin{array}{cc}
        \frac{\alpha}{|\alpha|} & 0\\
        0  & \lambda \frac{\overline{\alpha}}{|\alpha|}
        \end{array}\right)\left(\begin{array}{cc}
        |\alpha| & \frac{|\alpha|\beta}{\alpha}\\
        \frac{|\alpha| \overline{\beta}}{\overline{\alpha}} & |\alpha|
    \end{array}\right)
\end{displaymath}

\hf The Lie algebra $\mathfrak{u}_{p,q}$ is given by matrices $X\in M_n(\C)$ such that
\begin{displaymath}
    X^\ast I_{p,q}+I_{p,q}X=0.
\end{displaymath}
This is real Lie subalgebra of $M_{p+q}(\C)$. It contains the subalgebras $\mathfrak{u}_p, \mathfrak{u}_q$ as Lie algebras of the subgroups $U(p)\times I_q$ and $I_p \times U(q)$. Consider the inner product 
\begin{displaymath}
    \lan \cdot,\cdot\ran:\mathfrak{u}_{p,q}\times \mathfrak{u}_{p,q}\to\R,\,\,\,\lan X,Y\ran:=\textup{trace}(X^\ast Y).
\end{displaymath}

\begin{lemma}
    The inner product is symmetric and positive-definite. 
\end{lemma}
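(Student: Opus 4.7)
The plan is to establish three properties in sequence: (i) that $\lan X,Y\ran$ is real-valued on $\mathfrak{u}_{p,q}$, (ii) that it is symmetric, and (iii) that it is positive-definite. Since $\trace{X^\ast Y}$ is the familiar positive-definite Hermitian form on the complex vector space $M_n(\C)$, symmetry and positivity follow almost immediately once reality is known; reality is the only step where the defining relation of $\mathfrak{u}_{p,q}$ actually enters.

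For reality, I would first record the elementary identity
$$\overline{\trace{X^\ast Y}}=\trace{(X^\ast Y)^\ast}=\trace{Y^\ast X},$$
valid for all $X,Y\in M_n(\C)$. Reality therefore reduces to the identity $\trace{X^\ast Y}=\trace{Y^\ast X}$ for $X,Y\in\mathfrak{u}_{p,q}$. The defining relation $Z^\ast I_{p,q}+I_{p,q}Z=0$, combined with $I_{p,q}^2=I_n$, rearranges to $Z^\ast=-I_{p,q}ZI_{p,q}$. Substituting this expression into both sides gives
$$\trace{X^\ast Y}=-\trace{I_{p,q}XI_{p,q}Y},\qquad \trace{Y^\ast X}=-\trace{I_{p,q}YI_{p,q}X},$$
and the cyclicity of trace identifies the two right-hand sides, which yields reality.

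Once reality is in hand, symmetry is automatic: $\lan Y,X\ran=\overline{\lan X,Y\ran}=\lan X,Y\ran$. For positive-definiteness, the standard computation $\trace{X^\ast X}=\sum_{i,j}|X_{ij}|^2$ applies verbatim and gives nonnegativity with equality exactly when every entry of $X$ vanishes. There is no genuine obstacle in this argument; the only conceptual point worth emphasising is that the involution $Z\mapsto -I_{p,q}ZI_{p,q}$ on $\mathfrak{u}_{p,q}$ is precisely what forces the ambient Hermitian Frobenius pairing to restrict to a real-valued, hence symmetric, inner product on this real Lie subalgebra.
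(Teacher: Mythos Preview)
Your proof is correct and is essentially the same as the paper's: both use the relation $Z^\ast=-I_{p,q}ZI_{p,q}$ together with cyclicity of trace to identify $\trace{X^\ast Y}$ with $\trace{Y^\ast X}$, combine this with the general identity $\overline{\trace{X^\ast Y}}=\trace{Y^\ast X}$ to obtain reality and symmetry, and then invoke the standard positivity of $\trace{X^\ast X}$. The only difference is the order in which you apply the two observations, which is immaterial.
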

\begin{proof}
    Note that 
    \begin{displaymath}
        \lan X,Y\ran=\textup{trace}(-I_{p,q}XI_{p,q}Y)=\textup{trace}(-I_{p,q}YI_{p,q}X)=\lan Y,X\ran.
    \end{displaymath}
    Since $\overline{\lan X,Y\ran}=\lan Y,X\ran$ due to the invariance of trace under transpose, we conclude that the inner product is real and symmetric. It is positive-definite as $\lan X,X\ran=\textup{trace}(X^\ast X)\geq 0$ and equality holds if and only if $X$ is the zero matrix. 
\end{proof}

\hf The Riemannian metric obtained by left translations of $\lan\cdot,\cdot\ran$ will also be denoted by $\lan \cdot,\cdot\ran$. We shall analyze the geodesics for this metric. The Lie algebra $\mathfrak{u}_p\oplus \mathfrak{u}_q$ of $U(p)\times U(q)$ consists of 
\begin{displaymath}
    \left(
        \begin{array}{cc}
            A & 0 \\
            0 & D
        \end{array}\right),\,\,A+A^\ast=0,\,D+D^\ast =0.
\end{displaymath}
Let $\mathfrak{n}$ denote the orthogonal complement of $\mathfrak{u}_p\oplus \mathfrak{u}_q$ inside $\mathfrak{u}_{p,q}$. As $\mathfrak{n}$ is of (complex) dimension $pq$, and 
\begin{displaymath}
    \left\{\left(
        \begin{array}{cc}
            0 & B\\
            B^\ast & 0
        \end{array}\right)\,\Big|\,B\in M_{p,q}(\C)\right\}
\end{displaymath}
is contained in $\mathfrak{n}$, this is all of it. We may verify that
\begin{eqnarray*}
    \left[ \left(\begin{array}{cc}
    A & 0\\
    0 & D
    \end{array}\right),\left(\begin{array}{cc}
    0 & B\\
    B^\ast & 0
    \end{array}\right)\right] & = & \left(\begin{array}{cc}
    0 & AB-BD\\
    DB^\ast-B^\ast A & 0
    \end{array}\right)\in\mathfrak{n}\\
    \left[ \left(\begin{array}{cc}
    0 & B\\
    B^\ast & 0
    \end{array}\right),\left(\begin{array}{cc}
    0 & C\\
    C^\ast & 0
    \end{array}\right)\right] & = & \left(\begin{array}{cc}
    BC^\ast-CB^\ast & 0\\
    0 A & B^\ast C-C^\ast B
    \end{array}\right)\in\mathfrak{u}_p\oplus \mathfrak{u}_q.
\end{eqnarray*}
\begin{lemma}
    Let $\gamma$ be the integral curve, initialized at $e$, for a left-invariant vector field $Y$. This curve is a geodesic if $Y(e)$ either belongs to $\mathfrak{n}$ or to $\mathfrak{u}_p\oplus\mathfrak{u}_q$.
\end{lemma}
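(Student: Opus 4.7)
The plan is to invoke the standard criterion for one-parameter subgroups of a Lie group with a left-invariant metric to be geodesics, and then verify the criterion directly using trace identities. Recall that if $G$ carries a left-invariant metric $\langle\cdot,\cdot\rangle$, then for $X\in\mathfrak{g}$ the Koszul formula applied to left-invariant vector fields gives $\langle \nabla_X X, Z\rangle = -\langle [X,Z],X\rangle$ for every $Z\in\mathfrak{g}$. Consequently, $t\mapsto\exp(tX)$ is a geodesic through $e$ if and only if $\langle [X,Z],X\rangle=0$ for all $Z\in\mathfrak{g}$, i.e., $\mathrm{ad}_X$ is skew-adjoint in the direction of $X$. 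Equivalently, writing the inner product as $\langle P,Q\rangle=\mathrm{tr}(P^\ast Q)$, the claim reduces to $\mathrm{tr}(X^\ast[X,Z])=0$ for every $Z\in\mathfrak{u}_{p,q}$. A quick check using $Z^\ast I_{p,q}+I_{p,q}Z=0$ shows that every $Z\in\mathfrak{u}_{p,q}$ has the block form
\begin{displaymath}
Z=\begin{pmatrix} A' & B' \\ B'^{\,\ast} & D'\end{pmatrix},\qquad A'^{\,\ast}=-A',\ D'^{\,\ast}=-D'.
\end{displaymath}

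For $X=\begin{pmatrix}A&0\\0&D\end{pmatrix}\in\mathfrak{u}_p\oplus\mathfrak{u}_q$, a direct computation of $XZ-ZX$ shows that the diagonal blocks of $[X,Z]$ are the commutators $[A,A']$ and $[D,D']$. Since $X^\ast=-X$, we obtain
\begin{displaymath}
\mathrm{tr}(X^\ast[X,Z])=-\mathrm{tr}\bigl(A[A,A']\bigr)-\mathrm{tr}\bigl(D[D,D']\bigr),
\end{displaymath}
and cyclicity of the trace makes each summand vanish. Hence $X$ generates a geodesic.

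For $X=\begin{pmatrix}0&B\\B^\ast&0\end{pmatrix}\in\mathfrak{n}$, note that $X^\ast=X$, so the criterion becomes $\mathrm{tr}(X[X,Z])=0$. Multiplying out the blocks, the diagonal blocks of $X[X,Z]$ are $BB^\ast A'-BD'B^\ast$ and $B^\ast B D' - B^\ast A' B$. Applying the cyclic property to each of the four trace terms produces
\begin{displaymath}
\mathrm{tr}(BB^\ast A')-\mathrm{tr}(BD'B^\ast)+\mathrm{tr}(B^\ast B D')-\mathrm{tr}(B^\ast A' B)=0,
\end{displaymath}
so the two pairs cancel and the criterion is satisfied.

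The only non-routine step is remembering (and briefly justifying) the skew-adjointness characterization of geodesic one-parameter subgroups for a left-invariant metric; this is the point where one has to distinguish the present situation from the bi-invariant case, in which every one-parameter subgroup is a geodesic. Once the criterion is in place, the remainder is a bookkeeping exercise with $2\times 2$ block matrices and cyclicity of the trace, which is what I expect to be the main (purely computational) obstacle if done in full detail.
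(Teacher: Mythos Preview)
Your proposal is correct and follows essentially the same route as the paper: both reduce, via the Koszul formula for left-invariant fields, to showing $\langle Y,[X,Y]\rangle=0$ for all $X\in\mathfrak{u}_{p,q}$, and both verify this by block-matrix trace computations using cyclicity. The only organizational difference is that the paper splits the test vector $X$ according to the decomposition $\mathfrak{u}_{p,q}=(\mathfrak{u}_p\oplus\mathfrak{u}_q)\oplus\mathfrak{n}$ and uses the bracket relations $[\mathfrak{n},\mathfrak{n}]\subset\mathfrak{u}_p\oplus\mathfrak{u}_q$ to dispose of one case by orthogonality, whereas you take a generic $Z\in\mathfrak{u}_{p,q}$ and do the full block computation directly; this is a cosmetic difference.
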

\begin{proof}
    The Levi-Civita connection $\nabla$ is given by the Koszul formula
    \begin{displaymath}
        2\lan X,\nabla_Z Y\ran = Z\lan X,Y\ran+Y\lan X,Z\ran -X\lan Y,Z\ran +\lan Z,[X,Y]\ran+\lan Y,[X,Z]\ran - \lan X,[Y,Z]\ran. 
    \end{displaymath}
    Putting $Z=Y$ and $X$, two left-invariant vector fields, in the above, we obtain
    \begin{displaymath}
        \lan X, \nabla_Y Y\ran = \lan Y, [X,Y]\ran.
    \end{displaymath}
    To prove our claim, it suffices to show that $\nabla_Y Y=0$, ie, $\lan Y, [X,Y]\ran=0$ for any $X$. Let us assume that $Y(e)\in\mathfrak{n}$. If $X(e)\in\mathfrak{n}$, then $[X(e),Y(e)]\in \mathfrak{u}_p\oplus\mathfrak{u}_q$, which implies that $\lan Y(e), [X(e),Y(e)]\ran=0$. If $X(e)\in \mathfrak{u}_p\oplus\mathfrak{u}_q$, then
    \begin{eqnarray*}
        \lan Y, [X,Y]\ran & = &  \lan\left(\begin{array}{cc}
        0 & B\\
        B^\ast & 0
        \end{array}\right), \left(\begin{array}{cc}
        0 & AB-BD\\
        DB^\ast-B^\ast A & 0
        \end{array}\right)\ran\\
        & = & \textup{trace}\left(\begin{array}{cc}
        B(DB^\ast-B^\ast A) & 0\\
        0 & B^\ast(AB-BD)
        \end{array}\right)\\
        & = & \textup{trace}(BDB^\ast-BB^\ast A)+\textup{trace}(B^\ast AB-B^\ast BD)\\
        & = & 0
    \end{eqnarray*}
    by the cyclic property of trace. Thus, $\nabla_Y Y=0$ if $Y(e)\in\mathfrak{n}$; similar proof works if $Y(e)\in \mathfrak{u}_p\oplus\mathfrak{u}_q$. 
\end{proof}

\begin{rem}
    An integral curve of a left-invariant vector field (also called $1$-parameter subgroups) need not be a geodesic in $\upq$. For instance, if $X+Y$ is a left-invariant vector field given by $X(e)\in\mathfrak{u}_p\oplus\mathfrak{u}_q$ and $Y(e)\in \mathfrak{n}$, then $\nabla_{X+Y}(X+Y)=0$ if and only if $\nabla_X Y=\frac{1}{2}[X,Y]$ and $\nabla_Y X=\frac{1}{2}[Y,X]$. This happens if and only if the metric is bi-invariant, i.e.,
    \begin{displaymath}
        \lan [X,Z],Y\ran=\lan X, [Z,Y]\ran.
    \end{displaymath} 
    This is not true in general; for instance, with $X(e)\in \mathfrak{u}_p\oplus\mathfrak{u}_q$ and linearly independent $Y(e),Z(e)\in \mathfrak{n}$, we get $\lan [X,Z],Y\ran-\lan X, [Z,Y]\ran\neq 0$.
\end{rem}

\hf Consider the matrix
\begin{displaymath}
    Y=\left(\begin{array}{cc}
    0 & B\\
    B^\ast & 0
    \end{array}\right)\in\mathfrak{n}.
\end{displaymath}
Let $B=U \sqrt{B^\ast B}$ and $B^\ast =\sqrt{B^\ast B}\,U^\ast$ be the polar decompositions for the rectangular matrices. It follows from direct computation that
\begin{eqnarray*}
    e^Y & = & \left(\begin{array}{cc}
    I_p+\frac{BB^\ast}{2!} + \frac{(BB^\ast)^2}{4!}+\cdots & \frac{B}{1!}+\frac{B (B^\ast  B)}{3!} + \frac{B (B^\ast B)^2 }{5!}+\cdots \\
    \frac{B^\ast}{1!}+\frac{(B^\ast B)B^\ast}{3!} + \frac{(B^\ast B)^2 B^\ast}{5!}+\cdots  & I_q+\frac{B^\ast B}{2!} + \frac{(B^\ast B)^2}{4!}+\cdots 
    \end{array}\right)\\
    & = & \left(\begin{array}{cc}
    \cosh(\sqrt{BB^\ast}) & U\sinh(\sqrt{B^\ast B})\\
    \sinh(\sqrt{B^\ast B})U^\ast & \cosh(\sqrt{B^\ast B})
    \end{array}\right).
\end{eqnarray*}
It can be checked that 
\begin{displaymath}
    e^\mathfrak{n}\cap \left(U(p)\times U(q)\right)=\{I_n\}.
\end{displaymath}
It is known that the non-zero eigenvalues of $Y$ are the non-zero eigenvalues of $\sqrt{BB^\ast}$ and their negatives. 
\begin{thm}\label{mainthm}
    For any element $\mathcal{A}\in \upq$, the associated matrix $\sqrt{\mathcal{A}^\ast\mathcal{A}}$ can be expressed uniquely as $e^Y$ for $Y\in \mathfrak{n}$. Moreover, there is a unique way to express $\mathcal{A}$ as a product of a unitary matrix and an element of $e^\mathfrak{n}$, and it is given by the polar decomposition.
\end{thm}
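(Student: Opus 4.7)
The plan is to reduce the theorem to a single structural fact: the set of Hermitian positive-definite matrices lying in $U(p,q)$ coincides exactly with $\exp(\mathfrak{n})$, and the restriction of $\exp$ to $\mathfrak{n}$ is a bijection onto this set. Once this is established, the decomposition $\mathcal{A}=U|\mathcal{A}|$ with $|\mathcal{A}|=\sqrt{\mathcal{A}^{\ast}\mathcal{A}}$ from the classical polar decomposition on $GL(n,\mathbb{C})$ will deliver both statements of the theorem simultaneously.

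First I would dispose of the bijectivity of $\exp:\mathfrak{n}\to \{\text{Herm.\ pos.\ def.\ in }U(p,q)\}$. The inclusion $\exp(\mathfrak{n})\subseteq U(p,q)$ is immediate since $Y\in\mathfrak{n}$ is Hermitian and anti-commutes with $I_{p,q}$, so $e^{Y}$ is Hermitian positive definite and $I_{p,q}\,e^{Y}\,I_{p,q}=e^{I_{p,q}YI_{p,q}}=e^{-Y}=(e^{Y})^{-1}$, which is the defining relation $H^{\ast}I_{p,q}H=I_{p,q}$. Conversely, suppose $H$ is Hermitian positive definite and lies in $U(p,q)$. Because $H$ is Hermitian positive definite, it admits a unique Hermitian logarithm $W$ with $H=e^{W}$. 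The relation $HI_{p,q}H=I_{p,q}$ rewrites as $I_{p,q}\,e^{-W}\,I_{p,q}=e^{W}$, i.e.\ $e^{I_{p,q}(-W)I_{p,q}}=e^{W}$. Since $I_{p,q}WI_{p,q}$ is also Hermitian, the injectivity of $\exp$ on Hermitian matrices forces $I_{p,q}WI_{p,q}=-W$. Writing $W$ in block form and using $W=W^{\ast}$, this anti-commutation kills the diagonal blocks, so $W\in\mathfrak{n}$. This same argument gives uniqueness: if $e^{Y_1}=e^{Y_2}$ with $Y_i\in\mathfrak{n}$, both are Hermitian, so $Y_1=Y_2$.

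Next I would package the theorem. Given $\mathcal{A}\in U(p,q)$, the lemma already provides $\sqrt{\mathcal{A}^{\ast}\mathcal{A}}\in U(p,q)$; being Hermitian positive definite, the previous step yields a unique $Y\in\mathfrak{n}$ with $\sqrt{\mathcal{A}^{\ast}\mathcal{A}}=e^{Y}$. This is the first assertion. For the second, write the classical polar decomposition $\mathcal{A}=U|\mathcal{A}|$ with $U\in U(p+q)$ and $|\mathcal{A}|=e^{Y}$. Then
\begin{equation*}
U^{\ast}I_{p,q}U \;=\; |\mathcal{A}|^{-1}\mathcal{A}^{\ast}I_{p,q}\mathcal{A}|\mathcal{A}|^{-1} \;=\; e^{-Y}I_{p,q}e^{-Y} \;=\; I_{p,q},
\end{equation*}
using $\mathcal{A}\in U(p,q)$ and $e^{-Y}\in U(p,q)$. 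Hence $U\in U(p+q)\cap U(p,q)=U(p)\times U(q)$ by the earlier lemma, so the factorization $\mathcal{A}=U\cdot e^{Y}$ has the required form. For uniqueness, any factorization $\mathcal{A}=V\,e^{Z}$ with $V$ unitary and $Z\in\mathfrak{n}$ gives $\mathcal{A}^{\ast}\mathcal{A}=e^{Z}V^{\ast}V\,e^{Z}=e^{2Z}$, so $e^{Z}=\sqrt{\mathcal{A}^{\ast}\mathcal{A}}$ and hence $Z=Y$ by the injectivity established above; then $V=\mathcal{A}e^{-Y}=U$.

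The main obstacle is really the middle step: getting from the identity $I_{p,q}e^{-W}I_{p,q}=e^{W}$ to $I_{p,q}WI_{p,q}=-W$. This rests on two facts that must be stated cleanly — that conjugation by the involution $I_{p,q}$ commutes with $\exp$, and that $\exp$ is injective on the real vector space of Hermitian matrices (because any Hermitian matrix is diagonalizable with real eigenvalues, on which the exponential is strictly monotone). Everything else — the block calculation pinning $W$ inside $\mathfrak{n}$, and the compatibility of the polar decomposition with $U(p,q)$ — is routine once that injectivity is in hand.
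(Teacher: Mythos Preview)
Your proof is correct and the uniqueness arguments are essentially identical to the paper's. The route to \emph{existence} of $Y\in\mathfrak{n}$, however, is genuinely different. The paper proceeds constructively: it invokes Lemma~\ref{inv} to write down $(\mathcal{A}^\ast\mathcal{A}+I_n)^{-1}$ explicitly, then feeds this into the Gregory series for $\log(\mathcal{A}^\ast\mathcal{A})$ and verifies by direct computation that every term is an odd power of a fixed off-block-diagonal matrix, hence lies in $\mathfrak{n}$. You instead characterise $\mathfrak{n}$ by the anti-commutation $I_{p,q}WI_{p,q}=-W$, observe that conjugation by $I_{p,q}$ commutes with $\exp$, and let injectivity of $\exp$ on Hermitian matrices do the work. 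Your argument is shorter and more structural, and it makes Lemma~\ref{inv} unnecessary for this theorem; the paper's computation, on the other hand, yields an explicit closed form for $Y$ as a power series in $\begin{pmatrix}0 & A^{-1}B\\ B^\ast(A^\ast)^{-1} & 0\end{pmatrix}$, which your approach does not directly provide.
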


\vspace{0.3cm}
\noindent In order to prove the result, we discuss some preliminaries on logarithm of complex matrices. In general, there is no unique logarithm. However, the Gregory series \index{Gregory series}
\begin{displaymath}
    \log A=-\sum_{m=0}^{\infty}\frac{2}{2m+1}\left[(I-A)(I+A)^{-1}\right]^{2m+1}
\end{displaymath}
converges if all the eigenvalues of $A\in M_n(\C)$ have positive real part, see \cite[\S 11.3, page 273]{Hig08}. In particular, $\log A$ is well-defined for Hermitian positive-definite matrix. This is often called the \textit{principal logarithm}\index{principal logarithm} of $A$. This logarithm satisfies $e^{\log A}=A$. There is an integral form of logarithm that applies to matrices without real or zero eigenvalues; it is given by
\begin{displaymath}
    \log A =(A-I)\int_0^1 \left[s(A-I)+I\right]^{-1}ds.
\end{displaymath}
\begin{lemma}\label{inv}
    The inverse of $\mathcal{A}^\ast\mathcal{A}+I_n$ for $\mathcal{A}\in \upq$ is given by
    \begin{displaymath}
        \left[\mathcal{A}^\ast\mathcal{A}+I_n\right]^{-1}=\frac{1}{2}\left(\begin{array}{cc}
        I_p & -A^{-1}B\\
        -B^\ast (A^\ast)^{-1} & I_q
        \end{array}\right).
    \end{displaymath}
\end{lemma}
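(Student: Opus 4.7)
The plan is direct verification: explicitly compute $\mathcal{A}^{\ast}\mathcal{A} + I_n$ in block form, and then multiply by the claimed inverse, using the defining relations of $U(p,q)$ to reduce each block to an identity or zero. The invertibility of $A$ (hence of $A^{\ast}$) is already established in the discussion preceding the lemma, so the proposed matrix is well-defined.

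First, from $\mathcal{A}^{\ast} I_{p,q}\mathcal{A} = I_{p,q}$, and also from the observation that $\mathcal{A} I_{p,q}\mathcal{A}^{\ast} = I_{p,q}$ (obtained by inverting), I collect the block identities
\begin{align*}
    A^{\ast}A - C^{\ast}C &= I_p, & B^{\ast}B - D^{\ast}D &= -I_q, & A^{\ast}B &= C^{\ast}D, \\
    AA^{\ast} - BB^{\ast} &= I_p, & CC^{\ast} - DD^{\ast} &= -I_q, & BD^{\ast} &= AC^{\ast}.
\end{align*}
Taking adjoints of the off-diagonal relations gives $B^{\ast}A = D^{\ast}C$ and $DB^{\ast} = CA^{\ast}$. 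Substituting $A^{\ast}A = I_p + C^{\ast}C$, $D^{\ast}D = I_q + B^{\ast}B$, and $C^{\ast}D = A^{\ast}B$ into the block expansion of $\mathcal{A}^{\ast}\mathcal{A} + I_n$, I expect to obtain
\begin{displaymath}
    \mathcal{A}^{\ast}\mathcal{A} + I_n = 2\begin{pmatrix} A^{\ast}A & A^{\ast}B \\ B^{\ast}A & D^{\ast}D \end{pmatrix}.
\end{displaymath}

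With this simpler form in hand, I multiply on the right by $\tfrac{1}{2}\!\left(\begin{smallmatrix} I_p & -A^{-1}B \\ -B^{\ast}(A^{\ast})^{-1} & I_q \end{smallmatrix}\right)$ and verify the four blocks. The top-left block becomes $A^{\ast}A - A^{\ast}BB^{\ast}(A^{\ast})^{-1}$, which, after multiplying on the right by $A^{\ast}$, reduces to checking $AA^{\ast} - I_p = BB^{\ast}$, exactly the relation $AA^{\ast} - BB^{\ast} = I_p$. The top-right block is $-A^{\ast}AA^{-1}B + A^{\ast}B = 0$ immediately. The bottom-right block equals $-B^{\ast}B + D^{\ast}D = I_q$ using $D^{\ast}D - B^{\ast}B = I_q$. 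For the bottom-left block, $B^{\ast}A - D^{\ast}DB^{\ast}(A^{\ast})^{-1}$, I substitute $D^{\ast}D = I_q + B^{\ast}B$ and then use $I_p + BB^{\ast} = AA^{\ast}$ to collapse it to $B^{\ast}(A - AA^{\ast}(A^{\ast})^{-1}) = 0$.

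There is no real obstacle here; the mildly delicate step is organizing the block identities so that the correct mixture of the two forms of unitarity ($\mathcal{A}^{\ast}I_{p,q}\mathcal{A} = I_{p,q}$ and $\mathcal{A} I_{p,q}\mathcal{A}^{\ast} = I_{p,q}$) is used: the diagonal blocks of $\mathcal{A}^{\ast}\mathcal{A}+I_n$ naturally involve $A^{\ast}A$ and $D^{\ast}D$, but the verification needs $AA^{\ast} - BB^{\ast} = I_p$, which comes from $\mathcal{A}I_{p,q}\mathcal{A}^{\ast}=I_{p,q}$. Once both families of relations are made available, the verification is purely algebraic.
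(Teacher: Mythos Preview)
Your proof is correct and follows essentially the same approach as the paper: both compute $\mathcal{A}^\ast\mathcal{A}+I_n=2\left(\begin{smallmatrix}A^\ast A & A^\ast B\\ B^\ast A & D^\ast D\end{smallmatrix}\right)$ and then verify the claimed inverse by direct block multiplication, using the relations coming from $\mathcal{A}^\ast\in U(p,q)$ (equivalently, $\mathcal{A}I_{p,q}\mathcal{A}^\ast=I_{p,q}$) for the diagonal blocks. The only cosmetic difference is that the paper writes down three equations for the unknown blocks $E,F,G$ of the inverse and checks the proposed values, whereas you multiply the two matrices directly; the underlying computation and the key identities invoked are identical.
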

\begin{proof}
    Since $\mathcal{A}^\ast\mathcal{A}$ has only positive eigenvalues, $\mathcal{A}^\ast\mathcal{A}+I_n$ has no kernel. We note that 
    \begin{displaymath}
        \mathcal{A}^\ast\mathcal{A}+I_n=\left(\begin{array}{cc}
        2C^\ast C+2I_p & 2A^\ast B\\
        2B^\ast A & 2B^\ast B+2I_q
        \end{array}\right)=\left(\begin{array}{cc}
        2A^\ast A & 2A^\ast B\\
        2B^\ast A & 2D^\ast D
        \end{array}\right).
    \end{displaymath}
    The inverse matrix satisfies 
    \begin{displaymath}
        \left(\begin{array}{cc}
        2A^\ast A & 2A^\ast B\\
        2B^\ast A & 2D^\ast D
        \end{array}\right)\left(\begin{array}{cc}
        E & F\\
        F^\ast & G
        \end{array}\right)=\left(\begin{array}{cc}
        I_p & 0\\
        0 & I_q
        \end{array}\right).
    \end{displaymath} 
    As the matrices are Hermitian, the three constraints that $E,F,G$ must satisfy (and are uniquely determined by) are
    \begin{eqnarray*}
        E & = & \textstyle{\frac{1}{2}}(A^\ast A)^{-1}-A^{-1}BF^\ast\\
        G & = & \textstyle{\frac{1}{2}}(D^\ast D)^{-1}-D^{-1}CF\\
        F & = & -A^{-1}BG.
    \end{eqnarray*}
    We note that $E=\frac{1}{2}I_p$, $G=\frac{1}{2}I_q$ and $F=-\frac{1}{2}A^{-1}B$ satisfy the above equations. For instance, 
    \begin{align*}
        \frac{1}{2}(A^\ast A)^{-1}-A^{-1}BF^\ast & = \frac{1}{2}(A^\ast A)^{-1}+\frac{1}{2}A^{-1}BB^\ast (A^{\ast})^{-1} \\
        & = \frac{1}{2}(A^\ast A)^{-1}+\frac{1}{2}A^{-1}(AA^\ast-I_p)(A^{\ast})^{-1}=\frac{1}{2}I_p,
    \end{align*}
    where $BB^\ast=AA^\ast-I_p$ is a consequence of $\mathcal{A}^\ast\in \upq$. Yet another consequence is $AC^\ast=BD^\ast$, which is equivalent to
    \begin{displaymath}
        A^{-1}B=(D^{-1}C)^\ast.
    \end{displaymath}
    In a similar vein,
    \begin{align*}
        \frac{1}{2}(D^\ast D)^{-1}-D^{-1}CF & = =\frac{1}{2}(D^\ast D)^{-1}+\frac{1}{2}D^{-1}CC^\ast (D^{\ast})^{-1} \\
        & = \frac{1}{2}(D^\ast D)^{-1}+\frac{1}{2}D^{-1}(DD^\ast-I_q)(D^{\ast})^{-1} \\
        & = \frac{1}{2}I_q,
    \end{align*}
    where $CC^\ast=DD^\ast-I_q$ is due to $\mathcal{A}^\ast\in \upq$.
\end{proof}

\begin{proof}[Proof of \Cref{mainthm}]
    We use Gregory series expansion for computing the principal logarithm of $\mathcal{A}^\ast\mathcal{A}$ along with \Cref{inv}:
    \begin{align*}
        & \log (\mathcal{A}^\ast\mathcal{A}) 
        \\
        & = \sum_{m=0}^\infty{\frac{2}{2m+1}}\left[2\left(\begin{array}{cc}
        A^\ast A-I_p & A^\ast B\\
        B^\ast A & D^\ast D-I_q
        \end{array}\right)
        \frac{1}{2}\left(\begin{array}{cc}
        I_p & -A^{-1}B\\
        -B^\ast (A^\ast)^{-1} & I_q
        \end{array}\right)\right]^{2m+1}\\
        & = \sum_{m=0}^\infty{\frac{2}{2m+1}}\left(\begin{array}{cc}
        0 & A^{-1}B\\
        B^\ast (A^\ast)^{-1} & 0
        \end{array}\right)^{2m+1}.
    \end{align*}
    We set $Y=\frac{1}{2}\log (\mathcal{A}^\ast\mathcal{A})$. It is clear that $Y\in\mathfrak{n}$ and $e^Y=\sqrt{\mathcal{A}^\ast\mathcal{A}}$. It is known that the exponential map is injective on Hermitian matrices. This implies the uniqueness of $Y$.
    
    \vspace{0.1cm}
    \hf If $U_1e^{Y_1}=U_2 e^{Y_2}$ are two decompositions of $\mathcal{A}\in\upq$ with $U_i\in \textup{U}(p)\times\textup{U}(q)$ and $Y_i\in\mathfrak{n}$, then 
    \begin{displaymath}
        e^{2Y_1}=e^{Y_1}U_1^\ast U_1 e^{Y_1}=e^{Y_2}U_2^\ast U_2 e^{Y_2}=e^{2Y_2}.
    \end{displaymath}
    By the injectivity of the exponential map (on Hermitian matrices), we obtain $Y_1=Y_2$, which implies that $U_1=U_2$.
\end{proof}

\hf We infer the following (see \cite[ Lemma 1, page 211]{YaSt75} for a different proof) result.
\begin{cor}\label{expsurj}
    The exponential map $\textup{exp}:\mathfrak{u}_{p,q}\to U(p,q)$ is surjective.
\end{cor}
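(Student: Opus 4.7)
The plan is to derive the corollary directly from Theorem \ref{mainthm}, combined with the classical surjectivity of the exponential on compact connected Lie groups. Fix $\mathcal{A}\in U(p,q)$. Theorem \ref{mainthm} yields a unique decomposition $\mathcal{A}=U\cdot e^{Y}$ with $U\in U(p)\times U(q)$ and $Y\in\mathfrak{n}\subset\mathfrak{u}_{p,q}$, so the Hermitian positive-definite factor $e^Y$ is already an exponential whose exponent lies in $\mathfrak{u}_{p,q}$.

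Next, since $U(p)\times U(q)$ is a compact connected Lie group, its exponential map is surjective. (This is classical and can be established, for instance, by Hopf--Rinow applied to a bi-invariant metric: every element of a compact connected Lie group lies on a minimizing geodesic from the identity, which is a one-parameter subgroup.) Hence there exists $X\in\mathfrak{u}_p\oplus\mathfrak{u}_q\subset\mathfrak{u}_{p,q}$ with $e^X=U$, producing $\mathcal{A}=e^X\cdot e^Y$ as a product of two exponentials of elements of $\mathfrak{u}_{p,q}$.

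The remaining task is to amalgamate this product into a single exponential $e^Z$ with $Z\in\mathfrak{u}_{p,q}$. The naive choice $Z=X+Y$ fails because $X$ and $Y$ do not commute. My plan is to pass to a spectral description: viewed as an element of $GL(n,\mathbb{C})$, the matrix $\mathcal{A}$ always admits a complex matrix logarithm $Z$ by choosing branches of $\log$ on each Jordan block. The constraint $\mathcal{A}^{\ast}I_{p,q}\mathcal{A}=I_{p,q}$ forces the eigenvalues of $\mathcal{A}$ to occur in reciprocally-conjugate pairs $(\lambda,1/\bar\lambda)$ with a corresponding pairing on the Jordan structure, and I would use this pairing to pick branches so that the resulting $Z$ satisfies $Z^{\ast}I_{p,q}+I_{p,q}Z=0$, i.e., $Z\in\mathfrak{u}_{p,q}$.

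The principal obstacle is this final merging step, specifically the treatment of eigenvalues $\lambda=-1$ and non-diagonalizable Jordan blocks attached to eigenvalues on the unit circle, where branch choices for paired blocks must be made compatibly to land in $\mathfrak{u}_{p,q}$. The center $\mathbb{R}\cdot iI\subset\mathfrak{u}_{p,q}$ supplies precisely the scalar freedom needed to resolve sign ambiguities in these branch choices---freedom unavailable in $\mathfrak{su}(p,q)$, which is why $\exp$ fails to be surjective on $SU(1,1)\cong SL(2,\mathbb{R})$ but succeeds on $U(1,1)$. If this spectral route proves too delicate, a fallback would be a continuity/deformation argument showing that $\exp(\mathfrak{u}_{p,q})$ is both open (from the local diffeomorphism property of $\exp$ at regular points) and closed in the connected manifold $U(p,q)$, using the explicit fibration $U(p,q)\cong(U(p)\times U(q))\times\mathfrak{n}$ established in the preceding arguments.
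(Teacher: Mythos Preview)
Your first two steps---writing $\mathcal{A}=Ue^{Y}$ via Theorem~\ref{mainthm} and then $U=e^{X}$ by surjectivity of $\exp$ on the compact connected group $U(p)\times U(q)$---match the paper's proof exactly. The divergence is only at the final merging step: the paper dispatches it with the single sentence ``By Baker--Campbell--Hausdorff formula, we may express $e^{Z}e^{Y}$ as exponential of an element in $\mathfrak{u}_{p,q}$,'' whereas you recognize that combining two non-commuting exponentials into one is precisely the delicate point and turn instead to a direct spectral construction of a logarithm of $\mathcal{A}$.

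Your caution is well placed: the BCH series does not converge globally, and its failure is exactly what prevents surjectivity on $SU(1,1)\cong SL(2,\mathbb{R})$, so the paper's one-line invocation is at best a shorthand in need of justification. Your spectral route---pairing eigenvalues $\lambda\leftrightarrow 1/\bar\lambda$, handling Jordan blocks on the unit circle, and using the central direction $\mathbb{R}\cdot iI$ to resolve branch ambiguities at $\lambda=-1$---is essentially the classical argument of \cite{YaSt75}, which the paper itself cites as an alternative proof. So you end up abandoning the polar-decomposition setup and redoing the problem spectrally; this is correct but considerably more work than what the paper actually writes, and it does not exploit the decomposition you obtained in your first two steps.

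On your fallback: the image of $\exp$ is in general neither open nor closed (again $SL(2,\mathbb{R})$), so establishing closedness for $U(p,q)$ would itself require group-specific structure and would likely lead you back to the same spectral analysis.
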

\begin{proof}
    Using the polar decomposition and \Cref{mainthm}, 
    \begin{displaymath}
        \mathcal{A}=\mathcal{A}\big(\sqrt{\mathcal{A}^\ast\mathcal{A}}\big)^{-1}\sqrt{\mathcal{A}^\ast\mathcal{A}}=\mathcal{A}\big(\sqrt{\mathcal{A}^\ast\mathcal{A}}\big)^{-1}e^Y.
    \end{displaymath}
    Since the matrix exponential is surjective for $U(p)\times U(q)$, choose $Z\in \mathfrak{u}_p\oplus\mathfrak{u}_q$ such that $e^Z=\mathcal{A}(\sqrt{\mathcal{A}^\ast\mathcal{A}})^{-1}$. By Baker-Campbell-Hausdorff formula, we may express $e^Z e^Y$ as exponential of an element in $\mathfrak{u}_{p,q}$. 
\end{proof}

\hf The distance from any matrix $\mathcal{A}\in \upq$ to $U(p)\times U(q)$ is given by the length of the curve
\begin{displaymath}
    \gamma(t)=\mathcal{A}\big(\sqrt{\mathcal{A}^\ast\mathcal{A}}\big)^{-1}e^{tY},
\end{displaymath}
which can be computed (and simplified via left-invariance) as follows
\begin{displaymath}
    \ell(\gamma)=\int_0^1 \|\gamma'(t)\|_{\gamma(t)}\,dt=\int_0^1 \|Y\|\,dt=\|Y\|.
\end{displaymath}
Note that 
\begin{displaymath}
    \|Y\|^2=\textup{trace}(Y^\ast Y)=\textup{trace}\left[\textstyle{\frac{1}{4}}(\log (\mathcal{A}^\ast\mathcal{A}))^2\right].
\end{displaymath}
Thus, the distance squared function is given by 
\begin{displaymath}
    d^2:\upq\to\R,\,\,\mathcal{A}\mapsto \textstyle{\frac{1}{4}}\textup{trace}\left[\left(\log (\mathcal{A}^\ast\mathcal{A})\right)^2\right].
\end{displaymath}
	\chapter[Equivariant cut locus]{Equivariant cut locus}\label{ch:equivariantCutLocusTheorem}
\minitoc

\hfb Let $M$ be a smooth manifold on which a compact Lie group $G$ acts freely. It is known that $M/G$ is a smooth manifold. Moreover, if $M$ has a Riemannian metric and $G$ acts isometrically, then there is an induced Riemannian metric on $M/G$. Let $N$ be a $G$-invariant  submanifold of $M$. We want to find the cut locus of $N/G$ in $M/G$. In this chapter we will discuss the equality between $\cutn/G$ and $\cutn[N/G]$.  We will start the chapter by motivating with an example, then state the theorem and then recall Riemannian submersion which will be useful for the proof of the theorem. At the end we will discuss an application of this result to complex projective hypersurfaces.

\section{Statement of the theorem}
\hfb Let us discuss an example which will be helpful to arrive at the statement of the main theorem. Let $\mathbb{RP}^n$ denotes the $n$-dimensional real projective space which is obtained from $\mathbb{S}^n$ by identifying points $p$ and $-p$. Equivalently, this space can be obtained from the action of $\mathbb{Z}_2$ on $\mathbb{S}^n$. We know from \Cref{eg:CutLocusofPointRPn} that for a point $p\in \mathbb{RP}^n$, the cut locus is $\mathbb{RP}^{n-1}$. In \Cref{join}, we showed that $\mathrm{Cu}\left(\mathbb{S}_i^k\right)=\mathbb{S}_l^{n-k-1}$, where $\mathbb{S}_i^k \hookrightarrow \mathbb{S}^n$ denote the embedding of the $k$-sphere in the first $k+1$ coordinates while $\mathbb{S}^{n-k-1}_l$ denote the embedding of the $(n-k-1)$-sphere in the last $n-k$ coordinates. So we have
\begin{displaymath}
	\begin{tikzcd}
		\mathbb{S}^n \supset\left\{p,-p\right\}\arrow[d,  "\mathbb{Z}_2"] \arrow[rr, "\mathrm{Cu}"] &  &  \mathbb{S}^{n-1} \arrow[d, "\mathbb{Z}_2"]\\
		\mathbb{RP}^n \supset\{p\} \arrow[rr, "\mathrm{Cu}"]& & \mathbb{RP}^{n-1}
	\end{tikzcd}
\end{displaymath}

\noindent Similarly, one can see a similar diagram for the complex projective space $\mathbb{CP}^n$ which is obtained by taking $\mathbb{S}^1$ action on $\mathbb{S}^{{2n+1}}$. We have 
\begin{displaymath}
	\begin{tikzcd}
		\mathbb{S}^{2n+1} \supset \mathbb{S}_i^1 \arrow[d,  "\mathbb{S}^1"] \arrow[rr, "\mathrm{Cu}"] &  &  \mathbb{S}^{2n-1}_f \arrow[d, "\mathbb{S}^1"]\\
		\mathbb{CP}^n \supset\{p\} \arrow[rr, "\mathrm{Cu}"]& & \mathbb{CP}^{n-1}
	\end{tikzcd}
\end{displaymath}

\noindent Thus, it is natural to ask whether the following diagram commutes for a compact Lie group $G$. 
\begin{displaymath}
	\begin{tikzcd}
		M \supset N \arrow[d,  "G"] \arrow[rr, "\mathrm{Cu}"] &  &  \mathrm{Cu}(N) \arrow[d, "G"]\\
		M/G \supset N/G \arrow[rr, "\mathrm{Cu}"]& & \mathrm{Cu}(N)/G
	\end{tikzcd}
\end{displaymath}

\noindent The above diagram make sense if $N$ and $\cutn$ are $G$-invariant subsets. Note that if the action is free and isometric, i.e., length of the curves $\gamma$ and $g\cdot \gamma$ are the same,  then $\mathrm{Se}(N)$ is $G$-invariant. 
\begin{figure}[!htbp]
	\centering
	\begin{subfigure}{.45\textwidth}
    \def\svgwidth{0.8\columnwidth}
\begingroup%
  \makeatletter%
  \providecommand\color[2][]{%
    \errmessage{(Inkscape) Color is used for the text in Inkscape, but the package 'color.sty' is not loaded}%
    \renewcommand\color[2][]{}%
  }%
  \providecommand\transparent[1]{%
    \errmessage{(Inkscape) Transparency is used (non-zero) for the text in Inkscape, but the package 'transparent.sty' is not loaded}%
    \renewcommand\transparent[1]{}%
  }%
  \providecommand\rotatebox[2]{#2}%
  \newcommand*\fsize{\dimexpr\f@size pt\relax}%
  \newcommand*\lineheight[1]{\fontsize{\fsize}{#1\fsize}\selectfont}%
  \ifx\svgwidth\undefined%
    \setlength{\unitlength}{282.56388266bp}%
    \ifx\svgscale\undefined%
      \relax%
    \else%
      \setlength{\unitlength}{\unitlength * \real{\svgscale}}%
    \fi%
  \else%
    \setlength{\unitlength}{\svgwidth}%
  \fi%
  \global\let\svgwidth\undefined%
  \global\let\svgscale\undefined%
  \makeatother%
  \begin{picture}(1,0.7268979)%
    \lineheight{1}%
    \setlength\tabcolsep{0pt}%
    \put(0,0){\includegraphics[width=\unitlength,page=1]{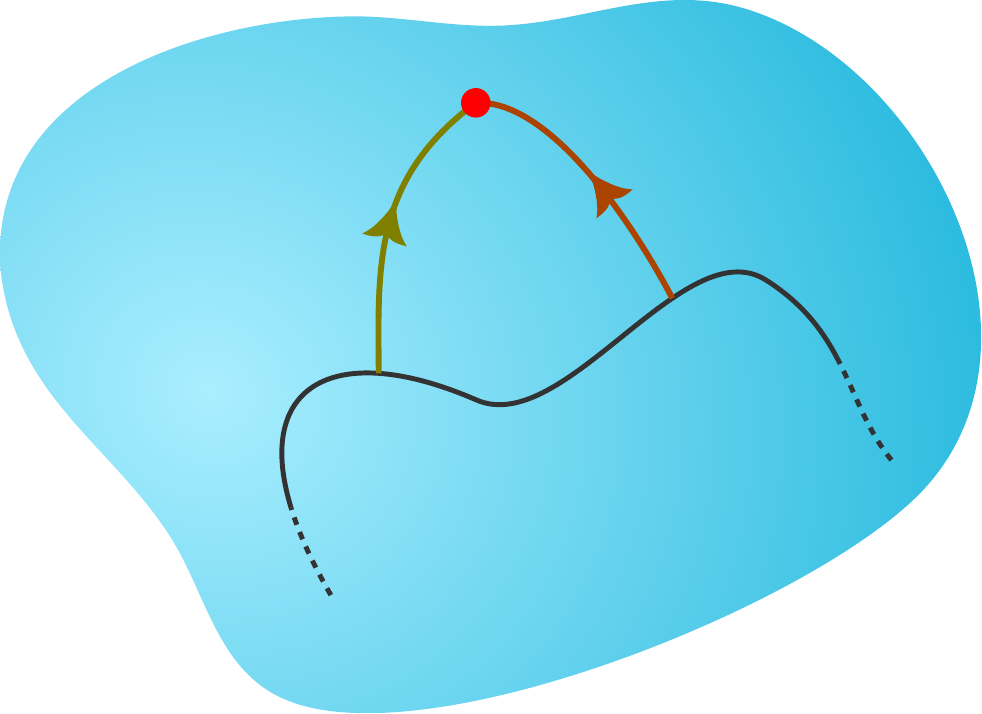}}%
    \put(0.54063113,0.25653855){\makebox(0,0)[lt]{\lineheight{1.25}\smash{\begin{tabular}[t]{l}$N$\end{tabular}}}}%
    \put(0.06171333,0.51225937){\makebox(0,0)[lt]{\lineheight{1.25}\smash{\begin{tabular}[t]{l}$M$\end{tabular}}}}%
    \put(0.32726021,0.51224345){\color[rgb]{0.50196078,0.50196078,0}\makebox(0,0)[lt]{\lineheight{1.25}\smash{\begin{tabular}[t]{l}$\gamma$\end{tabular}}}}%
    \put(0.63381953,0.54439601){\color[rgb]{0.66666667,0.26666667,0}\makebox(0,0)[lt]{\lineheight{1.25}\smash{\begin{tabular}[t]{l}$\eta$\end{tabular}}}}%
    \put(0.4497778,0.6475709){\makebox(0,0)[lt]{\lineheight{1.25}\smash{\begin{tabular}[t]{l}$p$\end{tabular}}}}%
  \end{picture}%
\endgroup%

	\end{subfigure}
	\begin{subfigure}{.45\textwidth}
    \def\svgwidth{0.8\columnwidth}
\begingroup%
  \makeatletter%
  \providecommand\color[2][]{%
    \errmessage{(Inkscape) Color is used for the text in Inkscape, but the package 'color.sty' is not loaded}%
    \renewcommand\color[2][]{}%
  }%
  \providecommand\transparent[1]{%
    \errmessage{(Inkscape) Transparency is used (non-zero) for the text in Inkscape, but the package 'transparent.sty' is not loaded}%
    \renewcommand\transparent[1]{}%
  }%
  \providecommand\rotatebox[2]{#2}%
  \newcommand*\fsize{\dimexpr\f@size pt\relax}%
  \newcommand*\lineheight[1]{\fontsize{\fsize}{#1\fsize}\selectfont}%
  \ifx\svgwidth\undefined%
    \setlength{\unitlength}{282.56386104bp}%
    \ifx\svgscale\undefined%
      \relax%
    \else%
      \setlength{\unitlength}{\unitlength * \real{\svgscale}}%
    \fi%
  \else%
    \setlength{\unitlength}{\svgwidth}%
  \fi%
  \global\let\svgwidth\undefined%
  \global\let\svgscale\undefined%
  \makeatother%
  \begin{picture}(1,0.72689795)%
    \lineheight{1}%
    \setlength\tabcolsep{0pt}%
    \put(0,0){\includegraphics[width=\unitlength,page=1]{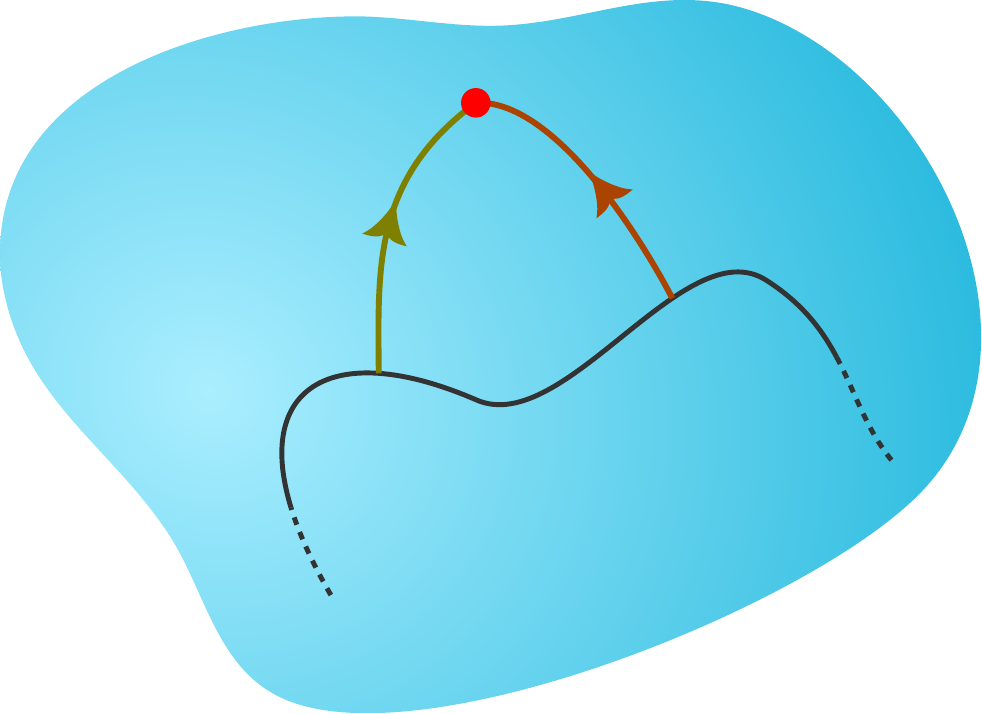}}%
    \put(0.54063117,0.25653858){\makebox(0,0)[lt]{\lineheight{1.25}\smash{\begin{tabular}[t]{l}$N$\end{tabular}}}}%
    \put(0.06171333,0.51225958){\makebox(0,0)[lt]{\lineheight{1.25}\smash{\begin{tabular}[t]{l}$M$\end{tabular}}}}%
    \put(0.2385612,0.51224351){\color[rgb]{0.50196078,0.50196078,0}\makebox(0,0)[lt]{\lineheight{1.25}\smash{\begin{tabular}[t]{l}$g\cdot\gamma$\end{tabular}}}}%
    \put(0.64778401,0.54439592){\color[rgb]{0.66666667,0.26666667,0}\makebox(0,0)[lt]{\lineheight{1.25}\smash{\begin{tabular}[t]{l}$g\cdot\eta$\end{tabular}}}}%
    \put(0.41257635,0.65137356){\makebox(0,0)[lt]{\lineheight{1.25}\smash{\begin{tabular}[t]{l}$g\cdot p$\end{tabular}}}}%
  \end{picture}%
\endgroup%

	\end{subfigure}
	\caption{$\sen$ is $G$-invariant}
\end{figure}
\noindent Now to show that the cut locus of $N$, which is the closure of $\mathrm{Se}(N)$,  is $G$-invariant we take $x\in \overline{\mathrm{Se}(N)}$. So there exists a sequence $\left(x_n\right) \subset \mathrm{Se}(N)$ such that $x_n\to x$ (this convergence is with respect to the Riemannian metric). This implies $g\cdot x_n \to g\cdot x$ as the action is continuous. Hence, $g\cdot x\in \mathrm{Se}(N).$  

\hf The following theorem tells us that the above diagram commutes if $G$ is a compact Lie group and the action is free and isometric.

\begin{thm}[Equivariant cut locus theorem]\label{thm:equivariant-cut-locus}\index{equivariant cut locus theorem}
    Let $M$ be a closed and connected Riemannian manifold  and $G$ be any compact Lie group which acts on $M$ freely and isometrically. Let $N$ be any $G$-invariant closed submanifold of $M$, then we have an equality
    \begin{displaymath}
        \mathrm{Cu}(N)/G = \mathrm{Cu}(N/G).
    \end{displaymath}
\end{thm}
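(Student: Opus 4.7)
The plan is to exploit the fact that when $G$ is compact and acts freely and isometrically, the quotient map $\pi:M\to M/G$ is a Riemannian submersion whose fibers are the $G$-orbits. Since $N$ is $G$-invariant, closed, and embedded, $N/G$ is a closed embedded submanifold of $M/G$, and for every $p\in N$ the orbit $G\cdot p$ lies in $N$, so $T_p(G\cdot p)\subseteq T_pN$. I will use two standard consequences of Riemannian submersion theory: (i) a horizontal geodesic in $M$ projects to a geodesic in $M/G$ of the same length; (ii) every geodesic in $M/G$ admits a unique horizontal lift through any prescribed point of the starting fiber, again of the same length.

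The preliminary lemma I would record is the distance equality
\begin{equation*}
d_M(N,q)=d_{M/G}(N/G,[q])\qquad\text{for all }q\in M.
\end{equation*}
This follows because the action is by isometries and $N$ is $G$-invariant, so $d_M(N,q)=d_M(N,g\cdot q)$ for every $g\in G$; infimising over $g$ gives the identification with $d_{M/G}(N/G,[q])$. Combined with (i) and (ii), this immediately implies that horizontal projections of $N$-geodesics are $N/G$-geodesics and horizontal lifts of $N/G$-geodesics (starting from a point of $N$) are $N$-geodesics.

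For the inclusion $\mathrm{Cu}(N)/G\subseteq \mathrm{Cu}(N/G)$, take $q\in \mathrm{Cu}(N)$ and choose an $N$-geodesic $\gamma$ to $q$ that fails to be minimal past $q$. Because $\gamma$ meets $N$ orthogonally at some $p\in N$ and $T_p(G\cdot p)\subseteq T_pN$, the initial velocity $\gamma'(0)$ is horizontal; standard Riemannian submersion theory then makes $\gamma$ horizontal throughout its domain. Its projection $\bar\gamma=\pi\circ\gamma$ is an $N/G$-geodesic to $[q]$ by the length equality and the distance identity above. If $\bar\gamma$ were to extend as an $N/G$-geodesic $\bar\gamma'$ past $[q]$, then the unique horizontal lift of $\bar\gamma'$ starting at $p$ would coincide with $\gamma$ on $[0,\ell]$ by uniqueness of horizontal lifts, and would extend $\gamma$ to an $N$-geodesic past $q$, contradicting $q\in \mathrm{Cu}(N)$. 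The reverse inclusion is symmetric: lift an $N/G$-geodesic to a horizontal $N$-geodesic through a chosen preimage, and push any hypothetical minimising extension in $M$ down to $M/G$ to obtain a contradiction.

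The main obstacle I anticipate is the compatibility between the two notions of minimality, which rests on two technical points that must be handled carefully: first, that an $N$-geodesic is automatically horizontal, which requires the observation $T_p(G\cdot p)\subseteq T_pN$ together with the fact that a geodesic with horizontal initial velocity remains horizontal in a Riemannian submersion; second, that uniqueness of horizontal lifts forces a lifted extension of $\bar\gamma$ to genuinely extend the original $\gamma$, rather than merely being some other horizontal lift of $\bar\gamma'$. Once these two points are nailed down, the proof is essentially a bookkeeping exercise balancing the two inclusions against each other.
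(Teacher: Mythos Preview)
Your argument is correct and uses the same Riemannian submersion machinery as the paper (horizontal lifts, O'Neill's theorem, length preservation), but the overall strategy differs. The paper does not compare cut loci directly: instead it first proves $\mathrm{Se}(N)/G=\mathrm{Se}(N/G)$ by lifting and projecting pairs of distinct $N$-geodesics, and then passes to closures using the earlier result $\mathrm{Cu}(N)=\overline{\mathrm{Se}(N)}$ together with compactness of $G$ to conclude $\mathrm{Cu}(N)/G=\overline{\mathrm{Se}(N)}/G=\overline{\mathrm{Se}(N/G)}=\mathrm{Cu}(N/G)$. Your route works directly with the defining property ``some $N$-geodesic fails to remain minimal beyond $q$'' and transports that failure up and down via uniqueness of horizontal lifts and the distance identity $d_M(N,q)=d_{M/G}(N/G,[q])$. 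The gain is that you avoid invoking the nontrivial density theorem $\overline{\mathrm{Se}(N)}=\mathrm{Cu}(N)$ altogether, making the proof self-contained once the submersion facts are in place; the paper's approach, on the other hand, showcases the utility of the separating-set characterization developed earlier in the thesis and handles the two inclusions with a single closure step. The two technical points you flag (horizontality of $N$-geodesics via $T_p(G\cdot p)\subseteq T_pN$, and uniqueness of horizontal lifts forcing genuine extension) are exactly the ones that need care, and you have identified them correctly.
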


\begin{rem}
	\begin{enumerate}[(i)]
		\item If the action of $G$ is not isometric, then we can construct a $G$-invariant metric on $M$ by averaging any metric on $M$ over $G$. In fact, for any $p\in M$ and any vectors $\mathbf{v}_1,\mathbf{v}_2\in T_pM$, we can define
		\begin{displaymath}
			\left\langle \mathbf{v}_1,\mathbf{v}_2 \right\rangle \defeq \int_G \left\langle \mathbf{v}_1,\mathbf{v}_2 \right\rangle_{g\cdot p}~dg,
		\end{displaymath}
		where the integral is taken with respect to the Haar measure. Then the theorem is valid with respect to the new metric.
		\item Recall that, in \Cref{thm: Morse-Bott}, we proved that the gradient flow of distance squared function from the submanifold $N$ deforms $M-\cutn$ to $N$. By construction, the flow lines are $G$-invariant. So we also have that $M/G-\cutn/G$ deforms to $N/G$.
	\end{enumerate}  
\end{rem}

\vspace{0.3cm}    
\noindent To prove the theorem we have to encounter mainly two problems.
\begin{enumerate}[a.]
	\item Whether a distance minimal geodesic in $M$ projects down to a distance minimal geodesic in $M/G$?
	\item Whether a distance minimal geodesic in $M/G$ lifts to a distance minimal geodesic in $M$?
\end{enumerate}

\section{Proof of the equivariant cut locus theorem} \label{sec:Riemannian-submersion}\index{Riemannian submersion}

\hfb In this section we will recall some results on Riemannian submersion and most of the results can be found in the book \cite[Chapter V, section 26]{Mic08}.

\begin{defn}\label{defn:Ehresmann-connection}
	Let $\pi:E\to B$ be a smooth principal $G$-bundle. An \textit{Ehresmann connection}\index{Ehresmann connection} on $E$ is a smooth subbundle $\mathcal{H}$ of $TE$, called the \textit{horizontal bundle}\index{horizontal bundle} of the connection, such that $TE=\mathcal{H}\directsum \mathcal{V}$, where $\mathcal{V}_p=\ker \left(d\pi_p:T_pE\to T_{\pi(p)}B\right)$.
\end{defn}

\vspace{0.3cm}
\hf The bundle $\mathcal{V}$ is called the \textit{vertical bundle}\index{vertical bundle}, and it is independent of the connection chosen. It follows from the definition that $\mathcal{H}_p$ depends smoothly on $p$ and $\mathcal{H}_p\cap \mathcal{V}_p=\{0\}$. Moreover, the map $d\pi_p$ restricts to $\mathcal{H}_p$ is an isomorphism on $T_{\pi(p)}B$. If $E$ is a Riemannian manifold with metric $g$, then by choosing a horizontal bundle $\mathcal{H}$ we have $\mathcal{H}_p=\mathcal{V}_p^\perp$.

\begin{defn}\label{defn:Riemannian-submersion}
	 A smooth submersion $\pi:(E,g)\to (B,g')$ is called a \textit{Riemannian submersion}\index{Riemannian submersion} if the linear map $d\pi_p$ preserves the length of the horizontal vectors for each point $p\in E$. Equivalently, $d\pi_p$ is a linear isometry between $\mathcal{H}_p$ and $T_{\pi(p)}B$.
\end{defn}

\vspace{0.3cm}
\noindent Using the above definitions we can define the following type of vectors.

\begin{defn}
	Let $\pi:E\to B$ be a Riemannian submersion. A vector field $X\in \mathfrak{X}(E)$ is called
	\begin{itemize}
		\item \textit{vertical} \index{vertical vector field} if for any $p\in E$, $X_p\in \mathcal{V}_p$, denoted by $X^\text{ver}$, and
		\item \textit{horizontal} \index{horizontal vector field} if for any $p\in E$, $X_p\in \mathcal{H}_p$, denoted by $X^\text{hor}$.
	\end{itemize}		
\end{defn}

\vspace{0.3cm}
\noindent We can uniquely decompose any vector field $X\in \mathfrak{X}(E)$ as
\begin{displaymath}
	X = X^\text{ver}+X^{\text{hor}}
\end{displaymath}
into its horizontal and vertical components.

\vspace{0.3cm}
\hf Once we have a connection, we now have a preferred way of lifting vectors from $TB$ to $TE$. Recall that
a vector $\tilde{X}\in T_eE$ is a \textit{lift} of $X\in T_{\pi(e)B}$ if $T_{e}\pi(\tilde{X})=X$. In absence of a connection, there are many different choices of lifts of a vector, and any two choices differ by a vertical vector. That is, if $\tilde{X},\tilde{X}'$ are lifts of $X$,  then $\tilde{X}-\tilde{X}'$ is vertical. Once we have a connection, we can define the \textit{horizontal lift} (with respect to a connection $\mathcal{H}$) of $X$ as the horizontal component of any lift of $X$. This definition
is, of course, independent of the choice of lift, since any two differ by a vertical vector, whose horizontal
component vanishes. Similarly, we can lift vector fields by lifting them in a pointwise fashion.
\begin{defn}[Horizontal lift of vector fields]\index{horizontal lift of a vector field}
	Let $X\in \mathfrak{X}(B)$ be a vector field and $\mathcal{H}\subset TE$ an Ehresmann connection on $E$. We define the \textit{horizontal lift of $X$} as the vector field $\tilde{X}\in \mathfrak{X}(E)$ which satisfies $d\pi(\tilde{X})=X$ and $\tilde{X}_e\in \mathcal{H}_e$ for all $e\in E$.
\end{defn}

\hf Suppose that we have a curve $\gamma:[0,1]\to B$. At each point over the curve, we have a vector $\gamma'(t)\in T_{\gamma(t)}B$, which we can lift to the fiber above $\gamma(t)$. So if we choose a starting point $e_0\in \pi^{-1}(\gamma(0))$, we can find an integral curve along all these lifted vectors on the fibers over the curve $\gamma$. In the end we obtain a curve $\tilde{\gamma}:[0,1]\to E$ satisfying $\pi\circ \tilde{\gamma}=\gamma,~\tilde{\gamma}(0)=e_0$, and $\tilde{\gamma}'(t)\in \mathcal{H}_{\tilde{\gamma}(t)}$ for all $t$. We call it a horizontal lift of $\gamma$.

\begin{defn}[Horizontal lift of a curve]\label{defn:horizontal-lift}\index{horizontal lift of a curve}
	Let $\pi:E\to B$ be a fiber bundle with a connection $\mathcal{H}$. Let $\gamma$ be a smooth curve in $B$ through $\gamma(0)=b$. Let $e\in E$ be such that $\pi(e)=b$. A \textit{horizontal lift}\index{horizontal lift} of $\gamma$ through $e$ is a curve $\tilde{\gamma}$ in $E$ such that $\pi\circ \tilde{\gamma}=\gamma,~\tilde{\gamma}(0)=e$, and $\tilde{\gamma}\kern 0.02cm'(t)\in \mathcal{H}_{\tilde{\gamma}(t)}$.
\end{defn}

\vspace{0.3cm}
\hf For every point $t_0\in (0,1)$, we can find $\epsilon>0$ such that the vector field $\gamma'$ can be extended to a vector field over $\gamma\big|_{(t_0-\epsilon,t_0+\epsilon)}$. Then we look at the horizontal vector field $\tilde{X}$ defined on the bundle $E\big|_{U}$, where $U\supseteq \gamma((t_0-\epsilon,t_0+\epsilon))$. Then $\tilde{\gamma}$ is the integral curve of $\tilde{X}$ starting at the prescribed point $e_0\in \pi^{-1}(\gamma(0))$. Since $[0, 1]$ is compact, a usual gluing argument will help us to construct a horizontal lift of $\gamma$. Hence, we have the following proposition. For a detailed proof of the proposition we refer the reader to \cite[Chapter XIV, Proposition 3.5(i)]{Lang99}.
\begin{prop}\label{prop:uniqueness-of-horizontal-lift}
	Given a smooth path $\gamma:[0,1]\to B$ such that $\gamma(0)=b$ and $e_0\in \pi^{-1}(b)$, there is a unique horizontal lift $\tilde{\gamma}$ of $\gamma$ through $e_0\in E$.
\end{prop}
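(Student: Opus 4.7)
The plan is to reduce existence and uniqueness of the horizontal lift to local ODE existence and uniqueness, and then promote the local statement to a global one on $[0,1]$ by a standard continuation argument using compactness.

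First I would establish the local version: for each $t_0\in[0,1]$ and each $e\in\pi^{-1}(\gamma(t_0))$, there is an $\varepsilon>0$ and a unique horizontal lift of $\gamma$ defined on $I_\varepsilon\defeq (t_0-\varepsilon,t_0+\varepsilon)\cap[0,1]$ passing through $e$ at time $t_0$. To do this, choose a local trivialization $\pi^{-1}(U)\cong U\times F$ with $\gamma(t_0)\in U$, shrink $\varepsilon$ so that $\gamma(I_\varepsilon)\subset U$, and extend $\gamma'$ to a smooth vector field $X$ on $U$. Its horizontal lift $\tilde X$ to $U\times F$ is a smooth vector field, so by the standard theorem on existence and uniqueness of integral curves of smooth vector fields, there is a unique integral curve of $\tilde X$ on some (possibly smaller) subinterval $I_{\varepsilon'}\ni t_0$ through $e$. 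By construction $\pi\circ\tilde\gamma=\gamma$ and $\tilde\gamma'(t)\in\mathcal{H}_{\tilde\gamma(t)}$, so $\tilde\gamma$ is the desired local horizontal lift. Uniqueness is inherited from uniqueness of integral curves.

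Next I would globalize by a connectedness/continuation argument. Consider the set
\[
T\defeq \sup\{\,t\in[0,1]\mid\gamma|_{[0,t]}\text{ admits a horizontal lift starting at }e_0\,\}.
\]
Local existence at $0$ shows $T>0$. I claim that the supremum is attained and equals $1$. Let $\tilde\gamma$ be the horizontal lift of $\gamma|_{[0,T)}$ built from these local pieces (using local uniqueness to paste overlaps consistently). By compactness of $E|_{\pi^{-1}(\gamma([0,T]))}$ along the compact curve $\gamma([0,T])$, any sequence $t_n\uparrow T$ has $\tilde\gamma(t_n)$ with an accumulation point $e_T\in\pi^{-1}(\gamma(T))$; local uniqueness applied in a trivialization around $\gamma(T)$ forces $\tilde\gamma(t)\to e_T$ as $t\to T^-$, so $\tilde\gamma$ extends continuously (and hence smoothly, again by local ODE uniqueness) to $[0,T]$. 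If $T<1$, apply the local existence statement at $t_0=T$ with initial condition $e_T$ to extend the lift to $[0,T+\varepsilon)$, contradicting the definition of $T$. Hence $T=1$ and $\tilde\gamma$ is defined on all of $[0,1]$.

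For global uniqueness, suppose $\tilde\gamma_1,\tilde\gamma_2$ are two horizontal lifts of $\gamma$ through $e_0$. The set
\[
S\defeq \{\,t\in[0,1]\mid \tilde\gamma_1(t)=\tilde\gamma_2(t)\,\}
\]
is closed by continuity and contains $0$. Local uniqueness of integral curves shows that $S$ is also open in $[0,1]$: if $t^\ast\in S$, then both $\tilde\gamma_1$ and $\tilde\gamma_2$ solve the same horizontal-lift ODE on some neighborhood of $t^\ast$ with the same initial condition $\tilde\gamma_1(t^\ast)=\tilde\gamma_2(t^\ast)$. Connectedness of $[0,1]$ forces $S=[0,1]$.

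The only genuinely delicate point is the continuation step: one must ensure that the locally defined lifts can be patched consistently as $t\to T^-$ and that the limit point $e_T$ actually exists in $E$. This is where compactness of $[0,1]$ (and hence of $\gamma([0,1])$) together with local uniqueness does the real work; the rest is routine ODE bookkeeping.
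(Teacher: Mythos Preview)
Your approach matches the paper's sketch: extend $\gamma'$ locally to a vector field, take its horizontal lift, solve the resulting ODE, and glue using compactness of $[0,1]$. You fill in considerably more detail than the paper, which simply says ``a usual gluing argument'' and defers to Lang's book.

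There is, however, one genuine gap in your continuation step. You assert that $\pi^{-1}(\gamma([0,T]))$ is compact so that the sequence $\tilde\gamma(t_n)$ has an accumulation point $e_T$. This requires the fibers of $E$ to be compact, which is not among the stated hypotheses. For a general Ehresmann connection on a bundle with noncompact fibers, the horizontal lift can escape to infinity in the fiber direction in finite time, and the proposition is in fact false without some completeness assumption on the connection. In the paper's intended application the bundle is $M\to M/G$ with $G$ a compact Lie group, so the fibers are copies of $G$ and your compactness argument is valid. You should either add the hypothesis of compact fibers (or completeness of the connection) explicitly, or else replace the accumulation-point argument by the observation that for a \emph{principal} connection the ODE in a trivialization has the form $g'(t)=g(t)\cdot X(t)$ on $G$, which admits global solutions regardless of whether $G$ is compact.
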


\vspace{0.3cm}
\noindent Recall the Quotient manifold theorem \cite[Theorem 21.10]{Lee13}.

\begin{thm}[Quotient Manifold Theorem]
	Suppose a Lie group $G$  acting smoothly, freely, and properly on a smooth manifold $M$. Then the orbit space $M/G$ is a topological manifold of dimension equal to $\dim M-\dim G$, and has a unique smooth structure with the property that the quotient map $\pi:M\to M/G$  is a smooth submersion.
\end{thm}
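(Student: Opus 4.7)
The plan is to establish three things in sequence: $M/G$ is Hausdorff and second countable, $M/G$ is locally Euclidean of dimension $\dim M - \dim G$, and the smooth structure making $\pi$ a submersion is unique. The entire proof hinges on constructing suitable slices for the $G$-action, which is the analytic heart of the argument.

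First I would verify the point-set topological properties. Second countability of $M/G$ is immediate: the quotient map $\pi$ is open because the saturation $G \cdot U$ of any open $U \subseteq M$ is a union of translates $g \cdot U$, and $M$ is second countable. For Hausdorffness I would exploit properness of the action: the map $\Psi: G \times M \to M \times M$, $(g,p) \mapsto (p, g\cdot p)$, is proper, hence a closed map, and its image is the orbit equivalence relation $R \subseteq M \times M$. Closedness of $R$ together with the openness of $\pi$ gives Hausdorffness of $M/G$ via the standard quotient criterion.

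The principal obstacle is constructing compatible smooth charts. Fix $p \in M$. Freeness of the action makes the orbit map $G \to M$, $g \mapsto g\cdot p$, an injective immersion, and properness promotes it to an embedding, so $G \cdot p$ is an embedded submanifold of dimension $\dim G$. I would then choose an embedded submanifold $S \subset M$ through $p$ with $\dim S = \dim M - \dim G$ and $T_p M = T_p S \oplus T_p(G\cdot p)$. The map
\begin{displaymath}
    \Phi: G \times S \to M, \qquad (g,s) \mapsto g\cdot s,
\end{displaymath}
has invertible differential at $(e,p)$ by the transversality condition, so by the inverse function theorem it is a local diffeomorphism there. Shrinking $S$ if necessary, and using properness to rule out ``returning orbits'' that meet $S$ at more than one point (this is the technically delicate step, usually handled by a contradiction argument using a sequence $g_n \cdot s_n \in S$ with $s_n \to p$), one arranges $\Phi$ to be a $G$-equivariant diffeomorphism onto a saturated open neighborhood $G\cdot S$ of the orbit. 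Then $\pi|_S: S \to \pi(S)$ is a homeomorphism onto an open set, producing a chart on $M/G$.

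Finally I would verify that transition maps between two slice charts $S_1, S_2$ at overlapping points are smooth. This is straightforward because the transition is realized by writing a point of $S_1$ in the product coordinates on $G \times S_2$ coming from $\Phi_2^{-1}$, then projecting to $S_2$; both operations are smooth since $\Phi_2$ is a diffeomorphism and the $G$-action is smooth. In these adapted coordinates $\pi$ corresponds to the projection $G \times S \to S$, which simultaneously exhibits $\pi$ as a smooth submersion and confirms $\dim M/G = \dim M - \dim G$. Uniqueness of the smooth structure follows from a universal property argument: if two smooth structures on $M/G$ both make $\pi$ a submersion, then local smooth sections of $\pi$ show the identity map is smooth in both directions, hence a diffeomorphism. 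The slice construction and the elimination of returning orbits are the only places where real work is needed; the rest is bookkeeping.
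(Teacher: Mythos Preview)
Your proof sketch is correct and follows the standard slice-chart construction (essentially the argument in Lee's \emph{Introduction to Smooth Manifolds}, Theorem~21.10). However, the paper does not prove this theorem at all: it is stated with the preamble ``Recall the Quotient Manifold Theorem \cite[Theorem 21.10]{Lee13}'' and used as a black box to justify that $M/G$ is a manifold before equipping it with the induced Riemannian metric. So there is no proof in the paper to compare against; your outline simply supplies what the paper deliberately outsources to the reference.
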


\vspace{0.3cm}
\noindent Using the above theorem, we can define a unique metric on $M/G$ such that $\pi$ is a Riemannian submersion. In fact, for any $x\in M/G$ and $p\in \pi^{-1}(x)$, we take the vertical space $\mathcal{V}_p \defeq \ker d\pi_p$ and the horizontal space $\mathcal{H}_p\defeq \mathcal{V}_p^\perp$ so that $T_pM = \mathcal{V}_p \oplus \mathcal{H}_p$. Since $d\pi_p$ is surjective, then $d\pi_p\big|_{\mathcal{H}_p}:\mathcal{H}_p\to T_{\pi(p)}M/G$ is a bijection. Define 
\begin{displaymath}
	h_x( v,w) \defeq \left\langle d\pi^{-1}\big|_{\mathcal{H}_p}(v),d\pi^{-1}\big|_{\mathcal{H}_p}(w) \right\rangle.
\end{displaymath}
Since the action is isometric, the metric is independent of the choice of point $p$. Thus, $h$ defines a well-defined metric on $M/G$ and $\pi$ is a Riemannian submersion.

\vspace{0.3cm}
\hf The following is the key lemma for proving \Cref{thm:equivariant-cut-locus} and the proof of the same can be found in \cite[Lemma 26.11]{Mic08}.

\begin{lemma}\label{lemma:horizontal-lift-is-a-geodesic}
	Let $\left(E,g_E\right)$ and $\left(B,g_B\right)$ be two Riemannian manifolds and $\pi:E\to B$ be a Riemannian submersion. Let $\gamma$ be a geodesic in $B$ and $\tilde{\gamma}$ be the horizontal lift of $\gamma$. Then we have:
	\begin{enumerate}
		\item The length of $\gamma$ and $\tilde{\gamma}$ are same.
		\item $\tilde{\gamma}'(t)$ is perpendicular to each fiber $E_{\tilde{\gamma}(t)}$.
		\item $\tilde{\gamma}$ is a geodesic in $E$.
	\end{enumerate}
\end{lemma}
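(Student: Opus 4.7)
The plan is to dispatch (1) and (2) as essentially tautological consequences of the definitions of horizontal lift and of Riemannian submersion, and then to prove (3) by a local length-comparison argument that leverages (1) together with the fact that $d\pi$ never increases norms.

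Statement (2) is immediate: by \Cref{defn:horizontal-lift}, a horizontal lift satisfies $\tilde\gamma'(t)\in \mathcal{H}_{\tilde\gamma(t)}$, and by \Cref{defn:Ehresmann-connection} together with the convention on Riemannian submersions, $\mathcal{H}_p=\mathcal{V}_p^\perp$, which is orthogonal to $T_p E_{\pi(p)}=\mathcal{V}_p$. For (1), fix $p=\tilde\gamma(t)$ and note $d\pi_p(\tilde\gamma'(t))=\gamma'(t)$. Since $\tilde\gamma'(t)\in\mathcal{H}_p$, \Cref{defn:Riemannian-submersion} gives $\|\tilde\gamma'(t)\|_{g_E}=\|\gamma'(t)\|_{g_B}$, and integrating yields $l(\tilde\gamma)=l(\gamma)$.

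For (3), my plan is to show that $\tilde\gamma$ is locally length-minimizing and parametrized with constant speed; as $\tilde\gamma$ is smooth (being the integral curve of the smooth horizontal lift of $\gamma'$), standard variational arguments then force $\nabla^E_{\tilde\gamma'}\tilde\gamma'=0$. The constant-speed property follows from (1) since $\gamma$, a geodesic in $B$, has constant speed. The key inequality is that for \emph{any} piecewise smooth curve $\sigma:[a,b]\to E$ one has $l_E(\sigma)\ge l_B(\pi\circ\sigma)$: decomposing $\sigma'(s)=\sigma'_h(s)+\sigma'_v(s)$ into horizontal and vertical parts, $d\pi(\sigma'_v)=0$ and $d\pi|_{\mathcal H}$ is a linear isometry, so
\[
\|\sigma'(s)\|_{g_E}^2=\|\sigma'_h(s)\|_{g_E}^2+\|\sigma'_v(s)\|_{g_E}^2\ge \|\sigma'_h(s)\|_{g_E}^2=\|d\pi_{\sigma(s)}(\sigma'(s))\|_{g_B}^2.
\]
Now fix $t_0$ and pick $\varepsilon>0$ small enough that $\gamma|_{[t_0-\varepsilon,\,t_0+\varepsilon]}$ is a unique minimizing geodesic in a totally normal neighbourhood of $\gamma(t_0)$. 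For any smooth $\sigma$ in $E$ joining $\tilde\gamma(t_0-\varepsilon)$ to $\tilde\gamma(t_0+\varepsilon)$, the projection $\pi\circ\sigma$ joins $\gamma(t_0-\varepsilon)$ to $\gamma(t_0+\varepsilon)$, and combining the inequality above with minimality of $\gamma$ and with (1) gives
\[
l_E(\sigma)\ge l_B(\pi\circ\sigma)\ge l_B\!\bigl(\gamma|_{[t_0-\varepsilon,\,t_0+\varepsilon]}\bigr)=l_E\!\bigl(\tilde\gamma|_{[t_0-\varepsilon,\,t_0+\varepsilon]}\bigr),
\]
so $\tilde\gamma$ locally minimizes length.

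The main technical nuisance will be passing from ``locally minimizing constant-speed smooth curve'' to ``solution of the geodesic equation''. The cleanest way is a first-variation argument: given any compactly supported variation of $\tilde\gamma|_{[t_0-\varepsilon,t_0+\varepsilon]}$ with fixed endpoints, the length inequality above makes $l_E(\tilde\gamma|_{[t_0-\varepsilon,t_0+\varepsilon]})$ a local minimum of the length functional on such variations, hence also of the energy functional when combined with constant-speed parametrization; the Euler–Lagrange equation is then $\nabla^E_{\tilde\gamma'}\tilde\gamma'=0$. An entirely algebraic alternative, which avoids variational arguments altogether, is to invoke O'Neill's identity for the basic horizontal lift $X$ of the vector field $\gamma'$: for basic horizontal $X$, the skew-symmetric A-tensor satisfies $A_X X=\tfrac{1}{2}[X,X]^V=0$, while the O'Neill formula forces $(\nabla^E_X X)^H$ to be the horizontal lift of $\nabla^B_{\gamma'}\gamma'=0$; combining the two gives $\nabla^E_X X=0$ along $\tilde\gamma$. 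Either route completes the proof.
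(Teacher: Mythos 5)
Your proposal is correct, and in fact it supplies something the thesis itself omits: the paper states this lemma and defers the proof entirely to \cite[Lemma 26.11]{Mic08}, so there is no in-paper argument to compare against. Parts (1) and (2) are, as you say, immediate from \Cref{defn:horizontal-lift}, the convention $\mathcal{H}_p=\mathcal{V}_p^{\perp}$, and \Cref{defn:Riemannian-submersion}. Your length-comparison proof of (3) is the classical one and is sound: the pointwise inequality $\|\sigma'\|_{g_E}\geq \|d\pi(\sigma')\|_{g_B}$ (with equality exactly for horizontal velocities) gives $l_E(\sigma)\geq l_B(\pi\circ\sigma)$, and combining this with the fact that a short enough segment $\gamma|_{[t_0-\varepsilon,t_0+\varepsilon]}$ minimizes length among \emph{all} curves in $B$ between its endpoints, plus part (1), shows $\tilde{\gamma}$ is locally minimizing; together with constant speed (again from (1), since $\gamma$ has constant speed) and smoothness of the lift, the standard energy/first-variation argument yields $\nabla^{E}_{\tilde{\gamma}'}\tilde{\gamma}'=0$. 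The two facts you lean on here — local minimality of small geodesic segments, and ``constant-speed locally minimizing $\Rightarrow$ geodesic'' — are standard and correctly invoked. Your algebraic alternative via O'Neill's fundamental equations also works and is arguably cleaner: extend $\gamma'$ locally to a field $\check{X}$ on $B$ (possible wherever $\gamma'\neq 0$; the case of a constant geodesic is trivial), take its basic horizontal lift $X$, note $\tilde{\gamma}$ is an integral curve of $X$, and use that the horizontal part of $\nabla^{E}_{X}X$ is the lift of $\nabla^{B}_{\check{X}}\check{X}$ (which vanishes along $\gamma$) while the vertical part is $\tfrac{1}{2}[X,X]^{\mathcal{V}}=0$. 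Either route is a complete and correct substitute for the citation to Michor, whose proof is in the same spirit as your first argument.
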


\vspace{0.3cm}
\noindent We also have the following result.
\begin{thm}[O'Neill]\label{thm:O'Neill-theorem}
	Let $\pi:E\to B$ be a Riemannian submersion. If $\tilde{\gamma}$ is a geodesic in $E$ and $\tilde{\gamma}'(0)\in \mathcal{H}_{\tilde{\gamma}(0)}$, then $\tilde{\gamma}'(t)\in \mathcal{H}_{\tilde{\gamma}(t)}$ for all $t$. Moreover, $\pi\circ \tilde{\gamma}$ is a geodesic in $B$ and the length is preserved.
\end{thm}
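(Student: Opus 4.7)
The plan is to derive O'Neill's theorem directly from \Cref{lemma:horizontal-lift-is-a-geodesic} together with uniqueness of geodesics with prescribed initial data. The strategy is to compare $\tilde{\gamma}$ with the horizontal lift of a carefully chosen geodesic downstairs, show the two curves agree locally, and then propagate this to the whole domain by a standard open-closed argument.

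First I would set up the comparison curve. Let $p_0 = \tilde{\gamma}(0)$ and $b_0 = \pi(p_0)$, and set $X_0 = d\pi_{p_0}\bigl(\tilde{\gamma}'(0)\bigr) \in T_{b_0}B$. Using geodesic existence in $B$, let $\gamma_0:(-\epsilon,\epsilon)\to B$ be the unique geodesic with $\gamma_0(0)=b_0$ and $\gamma_0'(0)=X_0$. By \Cref{lemma:horizontal-lift-is-a-geodesic}, the horizontal lift $\widetilde{\gamma_0}$ through $p_0$ (which exists uniquely by \Cref{prop:uniqueness-of-horizontal-lift}) is a geodesic in $E$. Its initial velocity is the horizontal lift of $X_0$ at $p_0$; but since $\tilde{\gamma}'(0)$ is itself horizontal and $d\pi_{p_0}$ restricted to $\mathcal{H}_{p_0}$ is a linear isometry onto $T_{b_0}B$, the horizontal lift of $X_0$ is precisely $\tilde{\gamma}'(0)$. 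Hence $\widetilde{\gamma_0}$ and $\tilde{\gamma}$ are geodesics in $E$ with identical initial conditions, so by uniqueness of geodesics they coincide on the common interval of definition.

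Next I would globalize. Let $I \subseteq \mathbb{R}$ be the maximal interval on which $\tilde{\gamma}$ is defined, and let
\begin{displaymath}
    J \defeq \left\{\, t \in I \,\middle|\, \tilde{\gamma}'(t) \in \mathcal{H}_{\tilde{\gamma}(t)} \,\right\}.
\end{displaymath}
The set $J$ is non-empty (it contains $0$ by hypothesis) and closed in $I$ by continuity of the horizontal distribution. The argument above, applied at any point $t_0 \in J$ instead of at $0$, shows that on a neighborhood of $t_0$ the geodesic $\tilde{\gamma}$ coincides with the horizontal lift of a geodesic in $B$; hence $J$ is open. By connectedness $J=I$, so $\tilde{\gamma}'(t)$ is horizontal for all $t$.

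Finally, I would deduce the statements about the projected curve. Since $\tilde{\gamma}$ is horizontal throughout, the local coincidence $\tilde{\gamma}=\widetilde{\gamma_0}$ at each point $t_0$ gives $\pi\circ\tilde{\gamma}=\gamma_0$ in a neighborhood of $t_0$, which is a geodesic of $B$; being a geodesic is a local condition, so $\pi\circ\tilde{\gamma}$ is a geodesic. Length preservation is then immediate from the Riemannian submersion property (\Cref{defn:Riemannian-submersion}): for every $t$,
\begin{displaymath}
    \|\tilde{\gamma}'(t)\|_E \;=\; \|d\pi_{\tilde{\gamma}(t)}(\tilde{\gamma}'(t))\|_B \;=\; \|(\pi\circ\tilde{\gamma})'(t)\|_B,
\end{displaymath}
whence integration over any subinterval yields equality of arc-lengths. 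The only subtle point is the open-closed propagation, which is routine but must be stated carefully because the initial horizontality assumption is given only at $t=0$; otherwise the proof is simply the marriage of \Cref{lemma:horizontal-lift-is-a-geodesic} with uniqueness of geodesics.
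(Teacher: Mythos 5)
Your proof is correct and follows essentially the route the paper itself indicates: it deduces the theorem from \Cref{lemma:horizontal-lift-is-a-geodesic} together with uniqueness of geodesics with given initial data, which is exactly the argument of the reference (Michor, Corollary 26.12) that the paper defers to. The open--closed propagation of horizontality from $t=0$ and the length computation via \Cref{defn:Riemannian-submersion} are handled correctly, so nothing is missing.
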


\vspace{0.3cm}
\noindent The above theorem can also be proved using \Cref{lemma:horizontal-lift-is-a-geodesic}, see \cite[Corollary 26.12]{Mic08}.

\vspace{.3cm}

\noindent Combining \Cref{lemma:horizontal-lift-is-a-geodesic} and \Cref{thm:O'Neill-theorem}, we have the following correspondence. 

\begin{thm}\label{thm:geodesic-correspondence}
	There is a one-to-one correspondence between the geodesics on $M/G$ and geodesics on $M$ which are horizontal.
\end{thm}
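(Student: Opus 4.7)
The plan is to construct explicit inverse maps between the two classes of geodesics using the tools already assembled: \Cref{prop:uniqueness-of-horizontal-lift} for existence/uniqueness of horizontal lifts of curves, \Cref{lemma:horizontal-lift-is-a-geodesic} for the fact that horizontal lifts of geodesics are geodesics, and \Cref{thm:O'Neill-theorem} for the converse direction. The one subtle point to flag upfront is that a single geodesic in $M/G$ has a whole family of horizontal lifts, one for each choice of starting point in the fiber, so the statement should be read as a bijection after fixing the initial point of the lift (equivalently, as a bijection between horizontal geodesics in $M$ and pairs $(\gamma, p)$ where $\gamma$ is a geodesic in $M/G$ and $p\in \pi^{-1}(\gamma(0))$).

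First I would define the forward map $\Phi$ sending a horizontal geodesic $\tilde\gamma$ in $M$ to $\pi\circ\tilde\gamma$ in $M/G$. The content of \Cref{thm:O'Neill-theorem} gives immediately that $\Phi(\tilde\gamma)$ is a geodesic in $M/G$ and that the horizontality of $\tilde\gamma'(t)$ is preserved along the flow. Next I would define the backward map $\Psi$: given a geodesic $\gamma$ in $M/G$ and a point $p\in\pi^{-1}(\gamma(0))$, produce the unique horizontal lift $\tilde\gamma$ through $p$ by \Cref{prop:uniqueness-of-horizontal-lift}, and invoke \Cref{lemma:horizontal-lift-is-a-geodesic} to conclude that $\tilde\gamma$ is a geodesic in $M$ whose velocity remains horizontal.

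To check $\Phi\circ\Psi=\operatorname{id}$, note that by construction $\pi\circ\Psi(\gamma,p)=\gamma$, which is exactly the statement that the horizontal lift projects back to the original curve. For $\Psi\circ\Phi=\operatorname{id}$ (with starting point $\tilde\gamma(0)$ remembered) I would argue as follows: the curve $\tilde\gamma$ is by hypothesis a horizontal curve above $\pi\circ\tilde\gamma$ passing through $\tilde\gamma(0)$, and the uniqueness clause of \Cref{prop:uniqueness-of-horizontal-lift} then forces $\tilde\gamma$ to coincide with the horizontal lift of $\pi\circ\tilde\gamma$ through $\tilde\gamma(0)$.

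There is no real obstacle, since all three ingredients do the heavy lifting; the only thing to be careful about is being explicit that horizontality along the entire curve (not just at $t=0$) is essential for the correspondence to be well defined, and this is exactly what \Cref{thm:O'Neill-theorem} guarantees on the $M$-side while \Cref{lemma:horizontal-lift-is-a-geodesic}(2) guarantees on the lifted side. In the application to \Cref{thm:equivariant-cut-locus} this correspondence will be used together with the length-preservation clause of \Cref{lemma:horizontal-lift-is-a-geodesic}(1) and \Cref{thm:O'Neill-theorem} to transfer the distance-minimality property between $M$ and $M/G$.
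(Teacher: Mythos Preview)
Your proposal is correct and follows exactly the route the paper indicates: the paper does not give a detailed proof but simply states that the correspondence is obtained by combining \Cref{lemma:horizontal-lift-is-a-geodesic} and \Cref{thm:O'Neill-theorem}. Your write-up is in fact more careful than the paper's, since you make explicit the dependence on the choice of initial point in the fiber (without which the ``one-to-one'' claim is not literally true), and you spell out why the two maps are mutually inverse using the uniqueness clause of \Cref{prop:uniqueness-of-horizontal-lift}.
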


\vspace{.3cm}
\noindent We now ready to prove \Cref{thm:equivariant-cut-locus}.

\begin{proof}[Proof of \Cref{thm:equivariant-cut-locus}]
	Note that if $\tilde{\gamma}$ is an $N$-geodesic, then $\tilde{\gamma}'(1)\in \mathcal{H}_{\tilde{\gamma}(1)}$ and from \Cref{thm:O'Neill-theorem} $\tilde{\gamma}'(t)\in \mathcal{H}_{\tilde{\gamma}(t)}$ and hence $\tilde{\gamma}$ is a horizontal geodesic which implies $\gamma=\pi\circ \tilde{\gamma}$ is a geodesic in $M/G$.

	\hf Let $\tilde{p}\in \mathrm{Se}(N)$. This implies there exists at least two $N$-geodesic, say $\tilde{\gamma}$ and $\tilde{\eta}$ such that $l(\tilde{\gamma})=l(\tilde{\eta})=d(\tilde{p},N)$. Let us denote $\gamma=\pi\circ \tilde{\gamma}$ and $\eta = \pi\circ\tilde{\eta}$. Due to uniqueness of horizontal lift (\Cref{prop:uniqueness-of-horizontal-lift}), $\gamma$ and $\eta$ will be two different geodesics and the lengths will be same. Also, note that as $\tilde{\gamma}$ is an $N$-geodesic, $\gamma$ will be an $N/G$ geodesic. Otherwise, there exists an $N/G$-geodesic $\delta$ joining $p$ to $N/G$ which gives a horizontal lift $\tilde{\delta}$ whose length is strictly less than $\tilde{\gamma}$, a contradiction (see \Cref{fig:Se(N)/G-subset-Se(N/G)}). Hence, $\pi(\tilde{p})\in \mathrm{Se} (N/G)$. 
	\begin{figure}[!htb]
		\centering
		\includegraphics[width=0.5\textwidth]{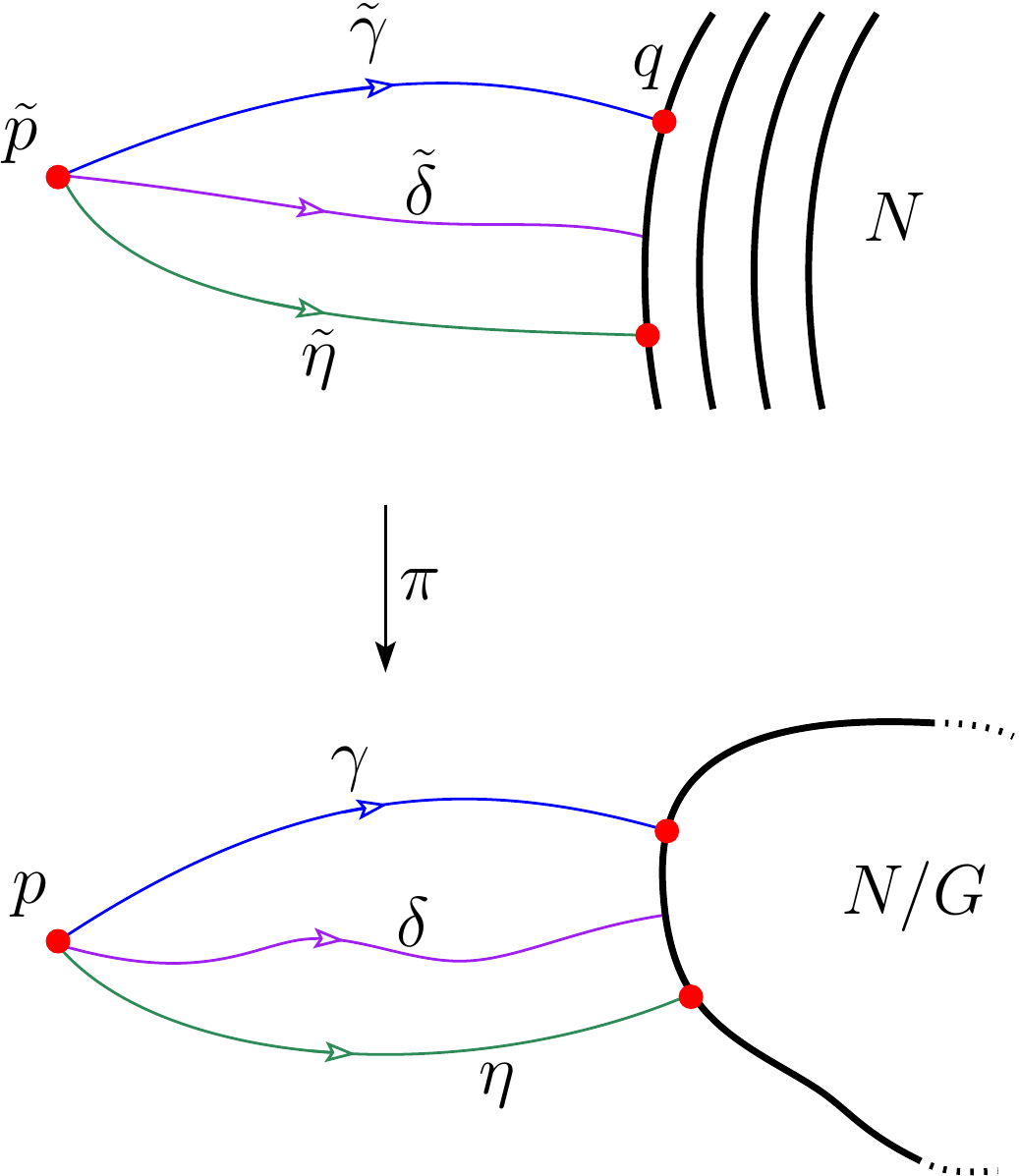}
		\caption{$N$-geodesics maps to $N/G$-geodesics}
		\label{fig:Se(N)/G-subset-Se(N/G)}
	\end{figure}
	On the other hand, if $\gamma$ is an $N/G$ geodesic starting from $p$, then its horizontal lift $\tilde{\gamma}$ will be a geodesic. In fact, it will be an $N$-geodesic. If not, let $\tilde{\eta}$ be such that $l \left(\tilde{\eta}\right)=d \left(\tilde{p},N\right)$ which implies $\tilde{\eta}$ is horizontal. Hence, $\eta$ will be a geodesic and 
	\begin{align*}
		l(\gamma) & =  d \left(p,N/G\right) = l(\eta) \\ 
		& = l \left(\tilde{\eta}\right) < l \left(\tilde{\gamma}\right) = l(\gamma),
	\end{align*}
	a contradiction. 
	Thus $\tilde{p}\in \mathrm{Se}(N)$. This proves that $\mathrm{Se}(N)/G=\mathrm{Se}(N/G)$. In order to prove the theorem, note that we have the following relation. 
	\begin{displaymath}
		\mathrm{Se}(N/G) \subseteq \overline{\sen}/G \subseteq \overline{\mathrm{Se}(N/G)}.
	\end{displaymath}
	As $\overline{\sen}$ is a closed set and $G$ is a compact Lie group, so $\overline{\sen}/G$ is closed. Thus, we have 
	\begin{displaymath}
		\overline{\sen}/G = \overline{\mathrm{Se}(N/G)} \implies \cutn/G=\cutn[N/G].
	\end{displaymath}
\end{proof}

\hf We will discuss some examples based on the above result. Recall from \Cref{join}, 
\begin{displaymath}
	 \mathrm{Cu} \left(\sbb^k_i\right)=\sbb^{n-k-1}_l.
\end{displaymath}
\begin{eg}[Real projective spaces]
	Take $M=\sbb^{n},~N=\sbb^{k}_i$ and $G=\sbb^0\cong \mathbb{Z}_2$. Applying \Cref{thm:equivariant-cut-locus}, we get
	\begin{displaymath}
		\mathrm{Cu}\left(\mathbb{RP}^k_i\right) \cong \mathbb{RP}^{n-k-1}_l.
	\end{displaymath}
\end{eg}

\begin{eg}
	Take $M=\sbb^{2n+1},~N=\sbb^{2k+1}_i$ and $G=\sbb^1$. Applying \Cref{thm:equivariant-cut-locus}, we get
	\begin{displaymath}
		\mathrm{Cu}\left(\mathbb{CP}^k_i\right) \cong \mathbb{CP}^{n-k-1}_l.
	\end{displaymath}
\end{eg}

\begin{eg}[Lens space]
	Consider $\sbb^{2n+1}\subset \cbb^{n+1}$. Let $p$ be a prime number and $\xi=e^{\frac{2\iota\pi}{p}}$ be a primitive $p^{\text{th}}$ root of unity and let $q_1,\cdots,q_{n+1}$ be integers coprime to $p$. Consider $\zbb_p=\left\{1,\xi,\xi^2,\cdots, \xi^{p-1}\right\}$ and let it acts on $\sbb^{2n+1}$ by $\xi \left(z_1,\cdots,z_{n+1}\right)\defeq \left(\xi^{q_1}z_1,\cdots,\xi^{q_{n+1}}z_{n+1}\right)$. The orbit space is denoted by $L(p;q_1,\ldots,q_{n+1})$ and is called a \emph{lens space}.\index{lens spaces} Since $\mathrm{Cu} \left(\sbb^{2n-1}\subset \sbb^{2n+1}\right)=\sbb^1$, so taking the $\zbb_p$ action, gives that the cut locus of $L\left(p;q_1,\ldots,q_n\right)$ in $L \left(p;q_1,\ldots,q_{n+1}\right)$ is $\sbb^1$. In general, we have
	\begin{displaymath}
		\mathrm{Cu}\left(L \left(p;q_1,\ldots,q_{k+1}\right)\right) \cong L \left(p;q_{k+2},\ldots,q_{n+1}\right).
	\end{displaymath}
\end{eg}

\section{Cut locus of hypersurface in complex projective space}\label{subsec:cut-locus-for-X(d)} 

\hfb Let $\zbf=\left(z_0,\cdots,z_{n}\right)\in \mathbb{C}^{n+1}$ and $[\zbf]\in \mathbb{CP}^{n}$, then the Fermat hypersurface of degree $d$ is given by the polynomial $f(\zbf) = z_0^d+\cdots+z_{n}^d$, 
\begin{displaymath}
	X(d) \defeq \{[\zbf]:f(\zbf)=0\}.
\end{displaymath} 

\noindent The homotopy type of the complement of the above hypersurface is well studied in the article \cite{KuWo80}. Since the partial derivatives $\frac{\partial f}{\partial z_j}$ do not vanish simultaneously on $\mathbb{C}^{n+1}-\{0\}$, the hypersurface is nonsingular. We wish to find its cut locus and want to compare our result with the result in \cite[Proposition 3.1]{KuWo80}. In that paper, the authors found the homotopy type of the complement of the hypersurface.
\begin{prop}[\cite{KuWo80}]
	The complement of $X(d)$ is homotopic to the base space of the $n$ universal principal $\mathbb{Z}_d$-bundle constructed by Milnor from the join of $n+1$ copies of $\mathbb{Z}_d$.  
\end{prop}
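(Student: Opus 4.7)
The plan is to derive the Kulkarni--Wood proposition from \Cref{thm:cut-locus-of_X(d)} by combining the equivariant cut locus theorem (\Cref{thm:equivariant-cut-locus}) with the deformation retraction of \Cref{defretM-N}. This pipeline is the one most natural to the thesis: the Kulkarni--Wood statement becomes an immediate corollary of the author's conjecture, and all the heavy lifting is concentrated in proving that conjecture.

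First I would realise $X(d)$ as an $\sbb^1$-quotient. Writing $\pi:\sbb^{2n+1}\to \mathbb{CP}^n$ for the Hopf fibration, homogeneity of $f(\zbf)=\sum z_i^d$ ensures that $\tilde{X}(d):=\pi^{-1}(X(d))$ is invariant under the free, isometric $\sbb^1$-action by scalar multiplication on $\sbb^{2n+1}$. Consequently $\mathrm{Cu}(\tilde{X}(d))$ is also $\sbb^1$-invariant, so \Cref{thm:equivariant-cut-locus} yields
\begin{displaymath}
    \mathrm{Cu}(X(d)) \;=\; \mathrm{Cu}(\tilde{X}(d))/\sbb^1.
\end{displaymath}
Granting the conjectured formula $\mathrm{Cu}(\tilde{X}(d)) = \zbb_d^{\star(n+1)}\times_{\zbb_d}\sbb^1$, I would substitute to obtain
\begin{displaymath}
    \mathrm{Cu}(X(d)) \;=\; \bigl(\zbb_d^{\star(n+1)}\times_{\zbb_d}\sbb^1\bigr)\big/\sbb^1 \;=\; \zbb_d^{\star(n+1)}/\zbb_d,
\end{displaymath}
since quotienting the Borel construction by the free $\sbb^1$-factor returns the underlying $\zbb_d$-quotient; this is precisely the base of the $n$-th Milnor approximation to $E\zbb_d\to B\zbb_d$. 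Finally, \Cref{defretM-N} asserts that $\mathrm{Cu}(X(d))$ is a strong deformation retract of $\mathbb{CP}^n - X(d)$, which chains with the previous equality to deliver the desired
\begin{displaymath}
    \mathbb{CP}^n - X(d) \;\simeq\; \zbb_d^{\star(n+1)}/\zbb_d.
\end{displaymath}

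The clear main obstacle is \Cref{thm:cut-locus-of_X(d)} itself, which is currently proven only for $d=2$ (any $n$) and $n=1$ (any $d$). My approach there would be, via \Cref{thm:SeClosureIsCutLocus}, to first identify the separating set $\mathrm{Se}(\tilde{X}(d))$ by exploiting the $(\zbb_d)^{n+1}$-action on $\sbb^{2n+1}$ under which $\tilde{X}(d)$ is invariant: the candidate separating points should be the $\sbb^1$-orbits of convex combinations of the maximally symmetric unit vectors $(\xi_0,\ldots,\xi_n)/\sqrt{n+1}$ with each $\xi_j^d=1$, and one must show that exactly these furnish two or more distance-minimal normal geodesics to $\tilde{X}(d)$, after which taking the closure recovers the whole cut locus. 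The two already proved cases serve as sanity checks for the Kulkarni--Wood conclusion: when $d=2$ one has $\zbb_2^{\star(n+1)}\cong \sbb^n$, giving the classical $\mathbb{CP}^n\setminus Q \simeq \mathbb{RP}^n$; when $n=1$ the quotient $\zbb_d^{\star 2}/\zbb_d$ collapses to the graph with two vertices joined by $d$ edges, which is homotopy equivalent to $\bigvee_{d-1}\sbb^1$, agreeing with $\sbb^2$ minus $d$ points.
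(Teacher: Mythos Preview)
The paper does not prove this proposition at all: it is quoted verbatim from \cite{KuWo80} and carries that citation in its header. The thesis then remarks that ``the same can be studied by looking at the cut locus of $X(d)$'' and proceeds to \emph{conjecture} the formula $\mathrm{Cu}(\tilde{X}(d))=\zbb_d^{\star(n+1)}\times_{\zbb_d}\sbb^1$, verifying it only for $d=2$ or $n=1$. So there is no proof in the paper to compare against; the Kulkarni--Wood result is taken as external input and motivation.

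Your proposal is exactly the program the thesis sketches in the paragraph following the proposition, and you are candid that its linchpin, \Cref{conj:cut-locus-of_X(d)}, is open in general. That honesty is appropriate, but it means you have not produced a proof either: what you have is a conditional argument that reproduces a known theorem from an unproven conjecture. The deduction you give (equivariant cut locus theorem plus \Cref{defretM-N}) is correct and is precisely how the thesis says one would recover Kulkarni--Wood once the conjecture is settled, but the sketch you offer for attacking the conjecture via the $(\zbb_d)^{n+1}$-symmetry and \Cref{thm:SeClosureIsCutLocus} does not go beyond what the thesis already does in the two known cases. In short, your write-up is a faithful summary of the thesis's own unfinished program rather than an independent proof of the cited proposition.
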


\vspace{0.3cm}
\noindent Using \Cref{thm: Morse-Bott}, the same can be studied by looking at the cut locus of $X(d)$ in $\mathbb{CP}^n$. We will use \Cref{thm:equivariant-cut-locus} to find the cut locus of $X(d)$. Recall that there is a principal $\mathbb{S}^1$-bundle given by
\begin{displaymath}
	\pi:\mathbb{S}^{2n+1}\to \mathbb{CP}^n,~ (z_0,\cdots,z_n)\mapsto [z_0:\cdots:z_n].
\end{displaymath} 
Therefore, using \Cref{thm:equivariant-cut-locus} it is enough to find the cut locus of $\pi^{-1}(X(d))\defeq \tilde{X}(d)$. We propose the following conjecture.

\begin{conj}\label{conj:cut-locus-of_X(d)}
	The cut locus of $\tilde{X}(d)\subseteq \sbb^{2n+1}$ is $\zbb_d^{\star(n+1)}\times_{\zbb_d}\sbb^1$, where $\times_{\mathbb{Z}_d}$ is the diagonal action of $\mathbb{Z}_d$ and $\star$ denotes the topological join of spaces.  
\end{conj}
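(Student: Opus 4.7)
The plan is to combine \Cref{thm:SeClosureIsCutLocus} with a direct analysis of the closest point problem for $\tilde{X}(d)\subseteq\mathbb{S}^{2n+1}$, and to exhibit an explicit homeomorphism onto $\cutn[\tilde{X}(d)]$. Define
\begin{equation*}
\Phi\colon \mathbb{Z}_d^{\star(n+1)}\times \mathbb{S}^1\to \mathbb{S}^{2n+1},\quad \bigl((t_i,\zeta_i)_i,e^{i\theta}\bigr)\mapsto e^{i\theta}\bigl(\sqrt{t_0}\,\zeta_0,\ldots,\sqrt{t_n}\,\zeta_n\bigr),
\end{equation*}
where $t_i\geq 0$, $\sum t_i=1$, $\zeta_i\in\mu_d$. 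The diagonal action $(\zeta_i,e^{i\theta})\mapsto (\xi\zeta_i,\xi^{-1}e^{i\theta})$ with $\xi=e^{2\pi i/d}$ is absorbed by $\Phi$, producing a continuous map $\bar\Phi\colon \mathbb{Z}_d^{\star(n+1)}\times_{\mathbb{Z}_d}\mathbb{S}^1\to\mathbb{S}^{2n+1}$. Concretely, $\mathrm{Im}(\bar\Phi)$ is the orbit of the positive octant of the standard sphere under the symmetry group $\mathbb{Z}_d^{n+1}\times\mathbb{S}^1$ (coordinate-wise $d$-th roots combined with the Hopf rotation), a group which manifestly preserves $\tilde{X}(d)$. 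The goal is to prove $\mathrm{Im}(\bar\Phi)=\cutn[\tilde{X}(d)]$ and that $\bar\Phi$ is injective on the quotient.

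Closest points on $\tilde{X}(d)$ to $q\in\mathbb{S}^{2n+1}$ are characterised via Lagrange multipliers in Wirtinger form: a critical $p$ satisfies
\begin{equation*}
q_j = \alpha\,p_j + \beta\,\bar{p}_j^{\,d-1}, \qquad j=0,\ldots,n,
\end{equation*}
for some real $\alpha$ and complex $\beta$, linked globally by $\sum p_j^d=0$ and $\sum|p_j|^2=1$. Writing $p_j=s_j\,e^{i(\arg q_j+\delta_j)}$ with $s_j\geq 0$ reduces the problem to maximising $\sum s_j|q_j|\cos\delta_j$ under $\sum s_j^2=1$ and $\sum s_j^d\,e^{id(\arg q_j+\delta_j)}=0$. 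The phases $e^{id\arg q_j}$ coincide for all $j$ with $q_j\neq 0$ precisely when $q\in\mathrm{Im}(\bar\Phi)$.

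For the forward inclusion $\mathrm{Im}(\bar\Phi)\subseteq\cutn[\tilde{X}(d)]$, given $q\in\mathrm{Im}(\bar\Phi)$ the symmetry above reduces us to $q=(r_0,\ldots,r_n)$ with $r_j\geq 0$. Since $\tilde{X}(d)$ is cut out by the real-coefficient polynomial $\sum z_j^d=0$, it is preserved by component-wise conjugation $p\mapsto \bar p$, and $d(q,p)=d(q,\bar p)$ for real $q$. When $d$ is even, $\tilde{X}(d)$ has no real points (as $\sum p_j^d\geq 0$ with equality forcing $p=0$), so every closest point $p$ satisfies $\bar p\neq p$ and furnishes a distinct second distance minimiser, placing $q$ in $\mathrm{Se}(\tilde{X}(d))$. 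When $d$ is odd, one must still exclude the possibility that the unique closest point is real: an explicit comparison (of the type carried out in the $n=1$ case through the Hopf components $c_k=e^{i\pi(2k+1)/d}$) shows that, for $q\notin\tilde{X}(d)$, the maximum of $\mathrm{Re}\langle p,q\rangle$ over non-real $p\in\tilde{X}(d)$ strictly exceeds the real-point maximum, so non-real closest points persist and conjugation again yields a distinct pair.

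The reverse inclusion is the principal obstacle. For $q\notin\mathrm{Im}(\bar\Phi)$, that is, when the phases $(q_j/|q_j|)^d$ are not simultaneously equal, one has to prove the closest point is unique. The strategy is to analyse the coordinate equation $q_j=\alpha z+\beta\bar z^{d-1}$, whose multivalued inverse has at most $d$ branches separated by the caustic of $z\mapsto\alpha z+\beta\bar z^{d-1}$ (a circle of radius $\bigl(|\alpha|/((d-1)|\beta|)\bigr)^{1/(d-2)}$). Two distinct closest points would have to arise from inequivalent branch choices across the $n+1$ coordinates, but the global algebraic constraint $\sum p_j^d=0$ forces the selected branches to align coherently, which in turn compels the $\mu_d$-alignment of $(q_j/|q_j|)^d$ defining $\mathrm{Im}(\bar\Phi)$. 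Making this rigorous is the technical heart of the proof; a plausible route is induction on $n$ (splitting off a vanishing coordinate) combined with the two base cases already established, namely \Cref{thm:cut-locus-of_Xn_2} ($d=2$) and \Cref{thm:cut-locus-for_X1_d} ($n=1$). Granting this, injectivity of $\bar\Phi$ (from $|q_j|^2$ one recovers $t_j$, and from the classes $[q_j/|q_j|]\in\mathbb{S}^1/\mu_d$ one recovers $e^{i\theta}$ and $\zeta_j$ up to the diagonal $\mathbb{Z}_d$-action) together with compactness of the source and Hausdorffness of the target promotes $\bar\Phi$ to a homeomorphism onto its image. By \Cref{thm:SeClosureIsCutLocus} the image, being closed, equals $\cutn[\tilde{X}(d)]$, establishing the conjecture.
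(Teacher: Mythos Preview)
The statement is labeled a \emph{conjecture} in the paper, and the paper does not supply a proof in full generality. Only two special cases are established there: $d=2$ with arbitrary $n$ (\Cref{thm:cut-locus-of_Xn_2}) and $n=1$ with arbitrary $d$ (\Cref{thm:cut-locus-for_X1_d}). So there is no ``paper's own proof'' to compare against.

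Your proposal is a strategy rather than a proof, and you are candid about this: you write ``Making this rigorous is the technical heart of the proof'' and ``Granting this'' before concluding. The reverse inclusion --- that every point of $\mathrm{Se}(\tilde{X}(d))$ lies in $\mathrm{Im}(\bar\Phi)$ --- is precisely the hard part, and your sketch (branch analysis of $q_j=\alpha z+\beta\bar z^{d-1}$ together with an induction on $n$) is not carried through. For perspective, the paper's treatment of the $d=2$ case already requires several pages of case analysis on the rank of the span of $\{\mathbf{v},\mathbf{w},\mathbf{v}_1,\mathbf{v}_2\}$, and that argument does not visibly extend to higher $d$; the authors themselves left the general case open.

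There is also a lesser gap in your forward inclusion for odd $d$. You assert that the non-real maximum of $\mathrm{Re}\langle p,q\rangle$ strictly exceeds the real-point maximum by analogy with the $n=1$ computation, but this is not demonstrated. For odd $d$ the variety $\tilde{X}(d)$ does contain real points (e.g.\ $(1,-1,0,\ldots,0)/\sqrt{2}$), and ruling them out as unique closest points for every nonnegative real $q$ requires a genuine argument, not an appeal to the $n=1$ case.

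In short: your outline identifies the right symmetry group and the correct candidate set, and the conjugation trick for even $d$ is a clean observation not present in the paper. But the proposal does not constitute a proof of the conjecture, and neither does the paper claim one.
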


\vspace{0.3cm}
\noindent Using \Cref{thm:equivariant-cut-locus}, the cut locus of $X(d)$ will be $\mathbb{Z}_d^{\star(n+1)}$ and hence we are recovering the $n^{\text{th}}$-stage of Milnor join construction of classifying space for $\mathbb{Z}_d$. Note that we can see $\zbb_d\subset \sbb^1$ and $\mathbb{S}^{2n+1}$ is the join of $n$ circles. Hence, $\zbb_d^{\star(n+1)}\subset \sbb^{2n+1}$ and also note that 
\begin{displaymath}
	 \zbb_d^{\star(n+1)} \times \mathbb{S}^1 \hookrightarrow \sbb^{2n+1},~ \left(\vbf,e^{\iota\theta}\right) \mapsto \vbf e^{-\iota\theta}
\end{displaymath}
gives a well-defined map from $\zbb_d^{\star(n+1)} \times_{\zbb_d}\sbb^1$ to $\sbb^{2n+1}$. We will prove some particular cases of the above conjecture. More precisely, we will prove the above conjecture holds for $d=2$ and arbitrary $n$ (\Cref{thm:cut-locus-of_Xn_2}) and $n=1$ and  arbitrary $d$ (\Cref{thm:cut-locus-for_X1_d}). Let us denote 
\begin{displaymath}
	\widetilde{\mathrm{Cu}}= \mathrm{Cu} (\tilde{X}(d)),\text{ and } \widetilde{\mathrm{Se}}= \mathrm{Se} (\tilde{X}(d)).
\end{displaymath}

\begin{thm}[Cut locus of $X(2)$] \label{thm:cut-locus-of_Xn_2}
	The cut locus of $\tilde{X}(2)\subseteq \sbb^{2n+1}$ is $\sbb^{n}\times_{\zbb_2}\sbb^1\cong \{(\vbf\cos \theta,\vbf\sin\theta):\vbf\in \sbb^n,\theta\in [0,2\pi]\}$. Hence, the cut locus of $X(2)$ in $\CP^{n}$ will be $\mathbb{RP}^{n+1}$.
\end{thm}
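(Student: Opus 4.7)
The plan is to identify the separating set $\widetilde{\mathrm{Se}} := \mathrm{Se}(\tilde X(2))$ explicitly via a Procrustes/SVD computation, observe that it is already closed, and then invoke \Cref{thm:SeClosureIsCutLocus} together with \Cref{thm:equivariant-cut-locus}. First I would rewrite $\tilde X(2)$ in real and imaginary parts: writing $\mathbf{z} = \mathbf{x} + i\mathbf{y}$ with $\mathbf{x},\mathbf{y} \in \mathbb{R}^{n+1}$, the condition $\sum z_j^2 = 0$ combined with $|\mathbf{z}|^2 = 1$ collapses to $|\mathbf{x}| = |\mathbf{y}| = \tfrac{1}{\sqrt 2}$ and $\mathbf{x}\cdot\mathbf{y} = 0$, exhibiting $\tilde X(2)$ (after rescaling by $\sqrt{2}$) as the real Stiefel manifold $V_2(\mathbb{R}^{n+1})$ of orthonormal $2$-frames. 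Since $\sbb^{2n+1}$ carries the round metric, $d(q,p) = \arccos(\mathrm{Re}\langle q,p\rangle)$, so computing $d(q,\tilde X(2))$ for $q = \mathbf{x} + i\mathbf{y}$ reduces to maximizing $\mathrm{Re}\langle q,p\rangle = \tfrac{1}{\sqrt 2}\mathrm{tr}(A^T M)$ over $A = [\mathbf{a}\,|\,\mathbf{b}]$ with $A^T A = I_2$, where $M = [\mathbf{x}\,|\,\mathbf{y}] \in \mathbb{R}^{(n+1)\times 2}$ and $p = \tfrac{1}{\sqrt 2}(\mathbf{a} + i\mathbf{b}) \in \tilde X(2)$.

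Second, using the SVD $M = U\bigl(\begin{smallmatrix} D \\ 0 \end{smallmatrix}\bigr) V^T$ with $D = \mathrm{diag}(\sigma_1,\sigma_2)$ and $\sigma_1 \geq \sigma_2 \geq 0$, a standard computation (substitute $\tilde A = U^T A V$ and expand) gives the Procrustes bound $\mathrm{tr}(A^T M) \leq \sigma_1 + \sigma_2$, with equality forcing the top $2\times 2$ block of $\tilde A$ to be $I_2$ (modulo the intrinsic SVD-ambiguities). Since $\sigma_1^2 + \sigma_2^2 = |\mathbf{x}|^2 + |\mathbf{y}|^2 = 1$, the distance lies in $[0,\pi/4]$. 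The crucial content is the uniqueness analysis: the maximizer $A$ is unique exactly when $\sigma_1 > \sigma_2 > 0$; when $\sigma_1 = \sigma_2$ the $O(2)$-ambiguity in $V$ produces a rotational family of maximizers, but this forces $\sigma_1 = \sigma_2 = \tfrac{1}{\sqrt 2}$ and hence $q \in \tilde X(2)$ itself; and when $\sigma_2 = 0$ (the rank-one case for $M$) the freedom in choosing the second column of $\tilde A$ produces an $\sbb^{n-1}$-family of maximizers.

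Third, because $d(q,\tilde X(2)) < \pi$, each minimizer $p \in \tilde X(2)$ determines a distinct $\tilde X(2)$-geodesic to $q$; hence $q \in \widetilde{\mathrm{Se}}$ iff $\sigma_2(M_q) = 0$, i.e.\ $\mathbf{x}$ and $\mathbf{y}$ are linearly dependent in $\mathbb{R}^{n+1}$. Elementary normalization shows this is equivalent to $q = e^{i\theta}\mathbf{v}$ for some $\mathbf{v} \in \sbb^n \subset \mathbb{R}^{n+1}$ and $\theta \in [0,2\pi]$. This set is the continuous image of the compact space $\sbb^n \times \sbb^1$ and is therefore closed, so \Cref{thm:SeClosureIsCutLocus} gives
\begin{equation*}
\widetilde{\mathrm{Cu}} \,=\, \overline{\widetilde{\mathrm{Se}}} \,=\, \widetilde{\mathrm{Se}} \,=\, \{(\mathbf{v}\cos\theta,\mathbf{v}\sin\theta)\,:\,\mathbf{v} \in \sbb^n,\,\theta \in [0,2\pi]\} \,\cong\, \sbb^n \times_{\zbb_2} \sbb^1,
\end{equation*}
which is the first claim. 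Finally, the Hopf action of $\sbb^1$ on $\sbb^{2n+1}$ is free and isometric, so \Cref{thm:equivariant-cut-locus} yields $\mathrm{Cu}(X(2)) = \widetilde{\mathrm{Cu}}/\sbb^1$, and since $e^{i\phi}\cdot(e^{i\theta}\mathbf{v}) = e^{i(\theta+\phi)}\mathbf{v}$ while $e^{i\alpha}\mathbf{v} = \mathbf{v}'$ for real $\mathbf{v},\mathbf{v}'$ forces $e^{i\alpha} = \pm 1$, the quotient descends to $\sbb^n/\{\pm 1\} = \mathbb{RP}^n$.

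The main obstacle I expect is the uniqueness analysis in the Procrustes step: one must track how the intrinsic SVD-ambiguities (coincident or vanishing singular values) propagate through the identity $A = U\bigl(\begin{smallmatrix} I_2 \\ 0\end{smallmatrix}\bigr) V^T$ to ambiguities in $A$, and confirm that only the rank-one case $\sigma_2 = 0$ produces genuinely distinct minimizers $p \neq p'$ in $\tilde X(2)\setminus\{q\}$, while the coincident-singular-value family lies inside $\tilde X(2)$ and so contributes nothing to the cut locus.
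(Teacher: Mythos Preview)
Your approach is correct and genuinely different from the paper's. The paper establishes the inclusion $\{e^{i\theta}\mathbf v\}\subseteq\widetilde{\mathrm{Se}}$ by using the block-diagonal $O(n+1)$-action and an explicit rotation to reduce to $\mathbf e_1$, then exhibits two $\tilde X(2)$-geodesics by hand. For the reverse inclusion the paper takes an arbitrary $p=(\mathbf v,\mathbf w)$ with $\mathbf v,\mathbf w$ linearly independent, writes down the normal geodesic $\gamma(t)=p\cos t+(\mathbf v_1,\mathbf v_2)\sin t$, and derives a contradiction through a lengthy case analysis on $\operatorname{rank}[\mathbf v,\mathbf w,\mathbf v_1,\mathbf v_2]\in\{2,3,4\}$, involving a dozen scalar equations. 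Your Procrustes/SVD reformulation replaces all of this: once $\tilde X(2)$ is identified with $\tfrac{1}{\sqrt2}V_2(\mathbb R^{n+1})$, the nearest-point problem becomes $\max_{A^TA=I_2}\operatorname{tr}(A^TM)$, whose maximizers are controlled entirely by the singular values of $M=[\mathbf x\,|\,\mathbf y]$. This buys you a uniform, coordinate-free argument that also explains \emph{why} the linearly-dependent locus is the right answer.

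Two small points. First, your remark that the $\sigma_1=\sigma_2$ case ``produces a rotational family of maximizers'' is not quite right: with $\Sigma=\sigma\bigl(\begin{smallmatrix}I_2\\0\end{smallmatrix}\bigr)$ the constraint $b_{11}+b_{22}=2$ still forces $B=\bigl(\begin{smallmatrix}I_2\\0\end{smallmatrix}\bigr)$ uniquely, and the $O(2)$-ambiguity in $(U,V)$ cancels in $A=U\bigl(\begin{smallmatrix}I_2\\0\end{smallmatrix}\bigr)V^T$. Your conclusion is unaffected, since $\sigma_1=\sigma_2$ forces $M^TM=\sigma^2 I_2$ and hence $q\in\tilde X(2)$ anyway, so the case is vacuous for the cut locus. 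Second, you obtain $\mathrm{Cu}(X(2))=\mathbb{RP}^n$ rather than the $\mathbb{RP}^{n+1}$ printed in the theorem statement; your computation is the correct one (the paper's own remark at the end of the chapter, comparing with Audin's Morse--Bott analysis, also gives $\mathbb{RP}^n$).
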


\begin{proof}
	We will show that $\{(\vbf\cos \theta,\vbf \sin\theta):\vbf\in \sbb^n,\theta\in \rbb\} = \mathrm{Se}(\tilde{X}(2))$. Let $\vbf\in \sbb^n$ and $\theta\in \rbb$. Let us write $z_j=x_j+\iota y_j$. We can write $\tilde{X}(2)$ as
	\begin{align*}
		\tilde{X}(2) & = \left\{\left(z_0,\cdots,z_n\right)\in \cbb^{n+1}:\sum_{i=0}^n z_i^2=0,\text{ and } \sum_{i=0}^n\left|z_i\right|^2=1\right\} \\ 
		& = \left\{ \left(x_0,y_0,\cdots,x_n,y_n\right): \sum_{i=0}^nx_i^2=\frac{1}{2}=\sum_{i=0}^{n} y_i^2,\text{ and } \sum_{i=0}^{n} x_iy_i=0	\right\} \\
		& = \left\{ \left(x_0,x_1,\cdots,x_n,y_0,y_1,\cdots,y_n\right): \sum_{i=0}^nx_i^2=\frac{1}{2}=\sum_{i=0}^{n} y_i^2,\text{ and } \sum_{i=0}^{n} x_iy_i=0	\right\}.
	\end{align*}
	If $A\in O(n+1)$, then $\tilde{A}=\left(\begin{array}[]{c|c}
		A & \mathbf{0}   \\ \hline 
		\mathbf{0}  & A
	\end{array}\right)\in SO(2n+2)$. Note that
	\begin{enumerate}
		\item[i] $\tilde{A}\in \mathrm{Iso}\left(\sbb^{2n+1}\right)$, where $\mathrm{Iso}(M)$ denotes the set of all isometries of $M$. 
		\item[ii] $\tilde{A}$ maps $\tilde{X}(2)$  to itself, and $\widetilde{\mathrm{Cu}}$ to itself. 
	\end{enumerate}
	Thus, if $p\in \widetilde{\mathrm{Se}}\subseteq \widetilde{\mathrm{Cu}}$ and let $\gamma$ and $\eta$ be two distance minimal geodesics joining $p$ to $\tilde{X}(2)$, then $\tilde{A}\gamma$ and $\tilde{A}\eta$ will be two minimal geodesics joining $\tilde{A}p$ to $\tilde{X}(2)$. As the action of $O(n+1)$ on $\sbb^n$ is transitive, it suffices to check if $\ebf _1\cos\theta+\ebf_{n+2}\sin \theta\in \widetilde{\mathrm{Se}}$. We can further reduce our work by looking at the matrix 
	\begin{displaymath}
		B = 
		\begin{pmatrix}
			\begin{array}{cccc|cccc}
				\cos \theta  & 0 & \cdots & 0 & \sin \theta & 0 & \cdots & 0 
				\\
				0 &  &&   & 0 &  && 
				\\
				\vdots &  &I_n&   & \vdots &  &\bigzero_n& 
				\\
				0 &  &&   & 0 &  &&  
				\\ \hline
				-\sin \theta  & 0 & \cdots & 0 & \cos \theta & 0 & \cdots & 0 
				\\
				0 &  &&   & 0 &  && 
				\\
				\vdots &  &\bigzero_n&   & \vdots &  &I_n& 
				\\
				0 &  &&   & 0 &  &&  
				\\
			\end{array}
		\end{pmatrix},
	\end{displaymath}
	which is again an isometry of $\sbb^{2n+1}$ and sends $\ebf _1\cos\theta+\ebf_{n+2}\sin \theta$ to $\ebf_1$. Hence, it is enough to prove that $\ebf _1\in \widetilde{\mathrm{Se}}$.
	Note that 
	\begin{displaymath}
		\dist \left(\ebf_1,\tilde{X}(2)\right)=\frac{\pi}{4}.
	\end{displaymath}
	Let
	\begin{displaymath}
		 \vbf_1 = \ebf_{2n+2},~\vbf_2=-\ebf_{2n+2}.
	\end{displaymath}
	Consider the geodesics 
	\begin{displaymath}
		 \gamma(t)=\ebf_1\cos t+\ebf_{2n+2}\sin t,\text{ and } \eta(t) = \ebf_1\cos t-\ebf_{2n+2}\sin t,~t\in \mathbb{R}. 
	\end{displaymath}
	Note that $\gamma$ and $\eta$ intersect the set $\tilde{X(2)}$ at $t=\frac{\pi}{4}$, so their lengths are same, and it is equal to the distance between $\ebf_1$ and the submanifold. This proves that $\ebf_1\in \widetilde{\mathrm{Se}}$. 

	\hf Conversely, if $p = (\vbf,\wbf)\in \widetilde{\mathrm{Se}}$, then we will show that there exists $\ubf \in \sbb^n$ and $\theta\in \rbb$ such that $\vbf = \ubf \cos \theta$ and $\wbf=\ubf\sin \theta$. Note that if $\{\vbf,\wbf\}$ is linearly dependent, then there exists $\theta\in \rbb$ such that 
	\begin{displaymath}
		 \vbf = \hat{\vbf}\cos \theta,~~ \wbf = \hat{\vbf}\sin \theta
	\end{displaymath}
	and hence $p\in \widetilde{\mathrm{Se}}$. So we assume that $\vbf$ and $\wbf$ are linearly independent. Suppose $p\notin \sbb^n\times_{\zbb_2} \sbb^1$. We need to show that $p\notin \widetilde{\mathrm{Se}}$. To the contrary, let us assume that $p\in \widetilde{\mathrm{Se}}$. Consider a unit speed geodesic $\gamma(t)$ in the direction of $\vbb=(\vbf_1,\vbf_2)\in T_{(\vbf,\wbf)}\sbb^{2n+1}$ which implies 
	\begin{align}
		\left\|\vbf_1\right\|^2+\left\|\vbf_2\right\|^2=1 \label{eq:tangent_space_of_sphere_cond-1} \\ 
		\left\langle \vbf_1, \vbf \right\rangle + \left\langle \vbf_2, \wbf \right\rangle=0 \label{eq:tangent_space_of_sphere_cond-2}
	\end{align}
	Consider the curve $\gamma(t)=\left(\vbf\cos t+\vbf_1\sin t, \wbf\cos t+\vbf_2\sin t\right)$ for $t\in \mathbb{R}$. Note that $2\le \operatorname{rank}[\vbf,\wbf,\vbf_1,\vbf_2] \le 4$. We will prove that none of the cases is possible. Since $p\in \widetilde{\mathrm{Se}}$, so $p\in \widetilde{\mathrm{Cu}}$ which implies there exists $t\in \rbb$ such that $\gamma(t)\in \tilde{X}(2),\gamma'(t)\in \left(T_{\gamma(t)}\tilde{X}(2)\right)^\perp$ and $t$ will be minimum among all such values. So we have
	\begin{align}
		& \left\|\vbf\right\|^2\cos^2t+\left\|\vbf_1\right\|^2\sin^2t+\left\langle \vbf, \vbf_1 \right\rangle \sin 2t = \frac{1}{2} \label{eq:geodesic-on-X(2)-1},
		\\
		& \left\|\wbf\right\|^2\cos^2t+\left\|\vbf_2\right\|^2\sin^2t+\left\langle \wbf, \vbf_2 \right\rangle \sin 2t = \frac{1}{2}, \notag
		\\
		& \left\langle \vbf, \wbf \right\rangle\cos^2t + \left\langle \vbf_1, \vbf_2 \right\rangle\sin^2t + \frac{1}{2} \left(\left\langle \vbf, \vbf_2 \right\rangle + \left\langle \vbf_1, \wbf \right\rangle\right)\sin 2t=0\label{eq:geodesic-on-X(2)-3}. 
	\end{align}
	Now we will make use of the other condition $\gamma'(t)\in \left(T_{\gamma(t)}\tilde{X}(2)\right)^\perp$. Consider the vector $\ubb= \left(-\wbf\cos t+\vbf_2\sin t,\vbf\cos t+\vbf_1\sin t\right)$. We claim that $\ubb\in T_{\gamma(t)}\tilde{X}(2)$. Note that $(\vbf,\wbf)\in T_{(\mathbf{p},\mathbf{q})}\tilde{X}(2)$ implies $\left\langle \mathbf{p} ,\vbf  \right\rangle=0, \left\langle \mathbf{q} ,\wbf  \right\rangle=0$ and $\left\langle \mathbf{p} ,\wbf  \right\rangle+ \left\langle \mathbf{q} , \vbf  \right\rangle=0$. 
	\begin{align*}
		\left\langle \mathbf{p}, \vbf \right\rangle & = \left\langle \left(\vbf \cos t+\vbf_1\sin t\right), \left(-\wbf \cos t-\vbf_2\sin t\right) \right\rangle 
		\\
		& = - \left\langle \vbf, \wbf \right\rangle\cos^2t-\left\langle \left(\vbf,\vbf_2 \right\rangle + \left\langle \vbf_1, \wbf \right\rangle \right) \cos t \sin t - \left\langle \vbf_1, \vbf_2 \right\rangle\sin^2t 
		\\
		& = 0 \qquad (\text{from \eqref{eq:geodesic-on-X(2)-3}}).
	\end{align*}
	Similarly, $\left\langle \mathbf{q}, \wbf \right\rangle=0$, and 
	\begin{align*}
		& \kern 0.5cm \left\langle \mathbf{p}, \wbf \right\rangle + \left\langle \mathbf{q}, \vbf \right\rangle \\
        & = \left\langle \vbf \cos t+\vbf_1\sin t, \vbf\cos t+\vbf_1\sin t  \right\rangle + \left\langle \wbf\cos t+\vbf_2\sin t, -\wbf\cos t-\vbf_2\sin t \right\rangle 
		\\
		& = \left\|\vbf\right\|^2\cos^2t + \left\|\vbf_1\right\|^2\sin^2t + 2 \left\langle \vbf,\vbf_1\right\rangle\cos t\sin t - \left\|\wbf\right\|^2\cos^2t-\left\|\vbf_2\right\|^2\sin^2t 
		\\ 
		& \kern 2cm - 2 \left\langle \wbf, \vbf_2 \right\rangle\cos t \sin t  
		\\ 
		& = \cos^2t \left(\left\|\vbf\right\|^2-\left\|\wbf\right\|^2\right)+ \sin^2t \left(\left\|\vbf_1\right\|^2-\left\|\vbf_2\right\|^2\right) + \sin 2t \left(\left\langle \vbf,\vbf_1 \right\rangle - \left\langle \wbf,\vbf_2 \right\rangle\right) 
		\\ 
		& = 0 \qquad (\text{from \eqref{eq:tangent_space_of_sphere_cond-1} and \eqref{eq:tangent_space_of_sphere_cond-2}}). 
	\end{align*}
	Therefore, $\ubb\in T_{\gamma(t)}\tilde{X}(2)$ and hence $\left\langle \ubb, \gamma'(t) \right\rangle = 0$ which implies 
	\begin{equation}
		\left\langle \vbf,\vbf_2 \right\rangle - \left\langle \vbf_1,\wbf \right\rangle = 0 \label{eq:perp-cond}.
	\end{equation}
	Define 
	\begin{displaymath}
		\tilde{\ubf}_1 = \sqrt{2} \left(\vbf\cos t+\vbf_1\sin t\right),\text{ and } \tilde{\ubf}_2 = \sqrt{2} \left(\wbf\cos t+\vbf_2\sin t\right).
	\end{displaymath}
	Note that $\tilde{\ubf}_1\perp \tilde{\ubf}_2$ and both are vectors in $\rbb^{n+1}$. We extend $\left\{\tilde{\ubf}_1,\tilde{\ubf}_2\right\}$ to an orthonormal basis of $\rbb^{n+1}$, say $\left\{\tilde{\ubf}_1,\tilde{\ubf}_2, \tilde{\ubf}_3,\ldots,\tilde{\ubf}_{n+1}\right\}$. If $\left(\wbf_1,\wbf_2\right)\in T_{\gamma(t)}\tilde{X}(2)$, then $\left\langle \wbf_1,\tilde{\ubf}_1 \right\rangle=0$, $\left\langle \wbf_2,\tilde{\ubf}_2 \right\rangle=0$ and $\left\langle \wbf_1,\tilde{\ubf}_2 \right\rangle+\left\langle \wbf_2,\tilde{\ubf}_1 \right\rangle=0$ as $\gamma(t) = \frac{1}{\sqrt{2}}(\tilde{\ubf}_1,\tilde{\ubf}_2)$. This implies $\wbf_1,\wbf_2\in \operatorname{Span}\left\{\tilde{\ubf}_3,\ldots,\tilde{\ubf}_{n+1}\right\}$ or 
	\begin{displaymath}
		 \wbf_1=-\tilde{\ubf}_2+\sum_{j\ge 3}c_j\tilde{\ubf}_j \text{ and } \wbf_2=\tilde{\ubf}_1+\sum_{j\ge 3}d_j\tilde{\ubf}_j.
	\end{displaymath}
	Since $\wbf_i\in T_{\gamma(t)}\tilde{X}(2)$, 
	\begin{align*}
		\left\langle \gamma'(t), \left(\wbf_1,\wbf_2\right) \right\rangle = 0 & \implies \text{ for } j\ge 3,~\left\langle \gamma'(t),\left(\tilde{\ubf}_j,0\right) \right\rangle = 0, \text{ and }  \left\langle \gamma'(t),\left(0,\tilde{\ubf}_j\right) \right\rangle = 0.
	\end{align*}
	The above implies 
	\begin{equation}
		 -\vbf\sin t+\vbf_1\cos t, -\wbf\sin t+\vbf_2\cos t\in \operatorname{Span} \left\{\tilde{\ubf}_1,\tilde{\ubf}_2\right\} \label{eq:spanning-cond}.
	\end{equation}
	Since $-\vbf\sin t+\vbf_1\cos t\in \operatorname{Span} \left\{\tilde{\ubf}_1,\tilde{\ubf}_2\right\}$, 
	\begin{align}
		 & -\vbf\sin t+\vbf_1\cos t = \alpha \sqrt{2} \left(\vbf\cos t+\vbf_1\sin t\right) + \beta\sqrt{2} \left(\wbf\cos t+\vbf_2\sin t\right)  \nonumber
		 \\ 
		 \implies & \vbf(\sin t-\alpha\sqrt{2}\cos t)+ \wbf(-\beta \sqrt{2}\cos t) + \nonumber \\
        & \kern 1.5cm \vbf_1 (\cos t-\alpha\sqrt{2}\sin t)  + \vbf_2(\beta\sqrt{2}\sin t) = 0. \label{eq:rk-4}
	\end{align}
	If $\operatorname{rank} [\vbf,\wbf,\vbf_1,\vbf_2]=4$, then from  equation \eqref{eq:rk-4}
	\begin{align*}
		\sin t-\alpha \sqrt{2}\cos t =0 = \cos t-\alpha\sqrt{2}\sin t,~\text{and }  -\beta\cos t=0=\beta \sin t.
	\end{align*}
	This implies $2\alpha^2=-1$, which is absurd. Thus, all four vectors can not be linearly independent. Now we are remaining with two cases: $\operatorname{rank} [\vbf,\wbf,\vbf_1,\vbf_2]=2$ or $\operatorname{rank} [\vbf,\wbf,\vbf_1,\vbf_2]=3$.

    \vspace{0.3cm}
	\noindent \textcolor{blue}{\textbf{Case 1:}} $\operatorname{rank} [\vbf,\wbf,\vbf_1,\vbf_2]=2:$ Let $\vbf_1=\alpha \vbf+\beta\wbf$ and $\vbf_2=\gamma\vbf+\delta\wbf$ for some $\alpha,\beta,\gamma,\delta\in \rbb$. Observe that 
	\begin{align}
		\eqref{eq:perp-cond} \implies & \gamma \left\|\vbf\right\|^2-\beta \left\|\wbf\right\|^2= (\alpha-\delta) \left\langle \vbf,\wbf \right\rangle 
		\nonumber \\
		\implies & (\gamma+\beta) \left\|\vbf\right\|^2-(\alpha-\delta)\left\langle \vbf,\wbf \right\rangle = \beta \label{eq:rk-2_cond-1}
		\\
		\eqref{eq:tangent_space_of_sphere_cond-2} \implies & \alpha \left\|\vbf\right\|^2+\delta \left\|\wbf\right\|^2=-(\beta+\gamma)\left\langle \vbf,\wbf \right\rangle \nonumber 
		\\
		\implies & (\alpha-\delta)\left\|\vbf\right\|^2+(\beta+\gamma)\left\langle \vbf,\wbf \right\rangle = -\delta \label{eq:rk-2_cond-2}.
	\end{align}

	Using equations \eqref{eq:rk-2_cond-1} and \eqref{eq:rk-2_cond-2}, we obtain
	\begin{displaymath}
		 \left\|\vbf\right\|^2 \left((\alpha-\delta)^2+(\beta+\gamma)^2\right)= \beta \gamma + \beta^2 - \alpha \delta + \delta^2. 
	\end{displaymath}
	Note that $(\alpha-\delta)^2+(\beta+\gamma)^2=0$ implies $\alpha=\delta$ and $\beta=-\gamma$. But from equations \eqref{eq:rk-2_cond-1} and \eqref{eq:rk-2_cond-2} implies that $\alpha=\beta=\delta=\gamma=0$, which is not possible. Thus, 
	\begin{equation*}
		\begin{split}
			& \left\|\vbf\right\|^2 = \frac{\beta \gamma+\beta^2-\alpha \delta+\delta^2}{(\alpha-\delta)^2+(\beta+\gamma)^2},
			\\
			& \left\|\wbf\right\|^2 = \frac{\beta \gamma+\delta^2-\alpha \delta+\beta^2}{(\alpha-\delta)^2+(\beta+\gamma)^2}, \text{ and }
			\\ 
			& \left\langle \vbf,\wbf \right\rangle = \frac{-(\alpha \beta+\gamma \delta)}{(\alpha-\delta)^2+(\beta+\gamma)^2}.\label{eq:v_w_and_their_innerprod}
		\end{split}
	\end{equation*}
	From equation \eqref{eq:geodesic-on-X(2)-1}, we have
	\begin{align*}
		& \left(\alpha^2+\gamma^2\right)\left\|\vbf\right\|^2 + 2 \left(\alpha \beta+\gamma \delta\right) \left\langle \vbf,\wbf \right\rangle + \left(\beta^2+\delta^2\right)\left\|\wbf\right\|^2=1 
		\\
		\begin{split}
			\implies & \left(\alpha^2+\gamma^2\right)\left(\beta \gamma+\beta^2-\alpha \delta+\delta^2\right)-2(\alpha \beta+\gamma \delta)^2 \\
			& \kern 1cm +\left(\beta^2+\delta^2\right)\left(\beta \gamma +\delta^2-\alpha \delta+\beta^2\right)- (\alpha-\delta)^2-(\beta+\gamma)^2= 0
		\end{split}
		\\ 
		\begin{split}
			\implies & (\beta \gamma-\alpha \delta)\left(\sum \alpha^2\right)+ 2 \left(\alpha^2+\gamma^2\right)\left(\beta^2+\delta^2\right)-2(\alpha \beta+\gamma \delta)^2 
			\\
			& \kern 2cm -\left(\sum \alpha^2\right)+2(\alpha \delta-\beta \gamma)=0 
		\end{split}
		\\ 
		\implies & (\beta \gamma-\alpha \delta-1)\left(\sum \alpha^2\right)+2(\beta \gamma-\alpha \delta)^2-2(\beta \gamma-\alpha \delta)=0 
		\\
		\implies & (\beta \gamma-\alpha \delta-1)\left(\sum \alpha^2\right)+ 2(\beta \gamma-\alpha \delta)(\beta \gamma-\alpha \delta-1)=0 
		\\
		\implies & (\beta \gamma-\alpha \delta-1)\left((\alpha-\delta)^2+(\beta+\delta)^2\right)=0,
	\end{align*}
	which implies 
	\begin{equation}
		 \beta \gamma-\alpha \delta=1. \label{eq:ad-bc_cond}
	\end{equation}
	Now we will use the condition that $\gamma(t)\in \tilde{X}(2)$ which was given by equations \eqref{eq:geodesic-on-X(2)-1} and \eqref{eq:geodesic-on-X(2)-3}. From \eqref{eq:geodesic-on-X(2)-1} we have 
	\begin{align*}
		& \left\|\vbf\right\|^2\cos^2+ \left(\alpha^2 \left\|\vbf\right\|^2+\beta^2 \left\|\wbf\right\|^2+2 \alpha \beta \left\langle \vbf,\wbf \right\rangle\right)\sin^2t \\
        & \kern 2cm + \left(\alpha \left\|\vbf\right\|^2+\beta \left\langle \vbf,\wbf \right\rangle\right)\sin 2t = \frac{1}{2} 
		\\
		\begin{split}
			\implies & \sin^2t \left(\alpha^2 \left(1+\beta^2+\delta^2\right)+\beta^2 \left(1+\alpha^2+\gamma^2\right)-2 \alpha \beta(\alpha \beta +\gamma \delta)\right) \\ 
			& \kern 1.5cm 
			+ \cos^2t \left(1+\beta^2+\delta^2\right) \\
			& \kern 1.5cm  + \sin 2t \left(\alpha \left(1+\beta^2+\delta^2\right) -\beta(\alpha \beta+\gamma \delta)\right)= \frac{2+\sum \alpha^2}{2}
		\end{split}
		\\ 
		\implies & \left(\alpha^2+\beta^2+1\right)\sin^2t + \left(\beta^2+\delta^2+1\right)\cos^2t + (\alpha-\delta)\sin 2t= \frac{2+\sum \alpha^2}{2} 
		\\
		\implies & \alpha^2+\beta^2 + \cos^2t \left(\beta^2+\delta^2+1-\alpha^2-\beta^2+1\right)+ (\alpha-\delta)\sin 2t = \frac{\sum \alpha^2}{2} 
		\\
		\implies & \left(\delta^2-\alpha^2\right)\cos^2t + (\alpha-\delta)\sin 2t - \frac{\gamma^2+\delta^2-\alpha^2-b^2}{2}=0,
	\end{align*}
	which simplifies to 
	\begin{equation*}
		 \left(\delta^2-\alpha^2\right)\cos 2t + 2(\alpha-\delta)\sin 2t = \gamma^2-\beta^2 \label{eq:sin-cosine-rel-1}. 
	\end{equation*}
	Similarly, using \eqref{eq:geodesic-on-X(2)-3} we have 
	\begin{equation*}
		 -(\alpha+\delta)(\beta+\gamma)\cos 2t+2(\beta+\gamma)\sin 2t = (\alpha-\delta)(\beta-\gamma) \label{eq:sin-cosine-rel-2}.
	\end{equation*}
	Writing the last two relation into matrix form we have 
	\begin{align}
		\begin{bmatrix}
		-(\alpha-\delta)(\alpha+\delta) & 2(\alpha-\delta) \\ 
		-(\alpha+\delta)(\beta+\gamma) & 2(\beta+\gamma)
		\end{bmatrix}
		\begin{bmatrix}
		\cos 2t \\ \sin 2t
		\end{bmatrix}
		= 
		\begin{bmatrix}
			\gamma^2-\beta^2 \\ (\alpha-\delta)(\beta-\gamma)
		\end{bmatrix}. \label{eq:system-of-eq}
	\end{align}
	Note that the rank of the coefficient matrix is $1$ and this can occur if one row is linear multiple of the other. 

	\begin{itemize}
		\item Let $\alpha=\delta$ and $\beta\neq -\gamma$. The system \eqref{eq:system-of-eq} has a solution, so $\beta-\gamma=0$. Note that 
		\begin{align*}
			\eqref{eq:rk-2_cond-1}  \implies \beta \left(\left\|\vbf\right\|^2-\left\|\wbf\right\|^2\right)= 0  \implies \beta=0 \text{ or } \left\|\vbf\right\|^2=\left\|\wbf\right\|^2. 
		\end{align*}
		But, note that $\beta=0$ can not be possible because if $\beta=0=\gamma$, then using \eqref{eq:ad-bc_cond} $\alpha^2=-1$, a contradiction. Therefore, $\left\|\vbf\right\|^2=\frac{1}{2}=\left\|\wbf\right\|^2$. Again using \eqref{eq:rk-2_cond-2}, 
		\begin{align*}
			\left\langle \vbf,\wbf \right\rangle= \frac{-\alpha}{2\beta}. 
		\end{align*}
		Since
		\begin{align*}
			\left\|\vbf_1\right\|^2+\left\|\vbf_2\right\|^2 = 1 & \implies  \frac{\left(\alpha^2+\beta^2\right)}{2} +4\alpha \beta \left(\frac{-\alpha}{2\beta}\right)+ \frac{\left(\alpha^2+\beta^2\right)}{2}=1 \\ 
			& \implies \beta^2-\alpha^2=1.
		\end{align*}
		Consider 
		\begin{align*}
			\left\langle \vbf,\wbf \right\rangle = -\frac{\alpha}{2 \beta} & \implies 4\beta^2\left\langle \vbf,\wbf \right\rangle^2 = \alpha^2 = \beta^2-1 
			\\
			& \implies \beta^2 = \frac{1}{1-4 \left\langle \vbf,\wbf \right\rangle^2},~\alpha^2 = \frac{4 \left\langle \vbf,\wbf \right\rangle^2}{1-4 \left\langle \vbf,\wbf \right\rangle^2}. 
		\end{align*}
		Note that the above expression is valid as $\vbf$ and $\wbf$ are linearly independent and each has norm $\frac{1}{\sqrt{2}}$. Thus, if $p\in \widetilde{\mathrm{Se}}$, then the only two possible directions are $\mathcal{v}_1=(\vbf_1,\vbf_2)$ and $-\mathcal{v}=(-\vbf_1,-\vbf_2)$. If $\gamma$ and $\eta$ be two geodesics in $\mathcal{v}_1$ and $-\mathcal{v}_1$ direction respectively, then they intersect $\tilde{X}(2)$ at $t$ and $\pi-t$ respectively. As their lengths are same, and they are unit speed geodesics, so $t=\pi-t$ which implies $t=\frac{\pi}{2}$. This implies $\left(\vbf_1,\vbf_2\right)\in \tilde{X}(2)$ which means 
		\begin{align*}
			\left\langle \vbf_1,\vbf_2 \right\rangle= 0 & \implies \frac{\alpha \beta}{2}  + \frac{\alpha \beta}{2} + 2\alpha \beta \left\langle \vbf,\wbf \right\rangle = 0 \\ 
			& \implies \alpha \beta(2+\left\langle \vbf,\wbf \right\rangle) = 0 \\
			& \implies \alpha = 0. 
		\end{align*}
	If $\alpha=0=\delta$, then $\beta^2=1$ which implies $\beta=\pm 1$, and hence $(\vbf,\wbf)\in \tilde{X}(2)$, which is a contradiction.

	\item Let $\beta+\gamma=0$ or $\alpha-\delta\neq 0$. But $\alpha-\delta\neq 0$, so $\beta=\gamma=0$ which implies $\vbf_1=\alpha \vbf$ and $\vbf_2=\delta\wbf$. Observe that 
	\begin{align*}
		\eqref{eq:rk-2_cond-1} & \implies \left\langle \vbf,\wbf \right\rangle=0 \implies \left\langle \vbf_1,\vbf_2 \right\rangle=0. \\
		\eqref{eq:tangent_space_of_sphere_cond-1} & \implies \alpha^2 \left\|\vbf\right\|^2 + \delta^2 \left\|\wbf\right\|^2 = 1 \\ 
		& \implies \left\|\wbf\right\|^2 \left(\alpha^2+\delta^2\right)=1-\alpha^2\\ 
		& \implies \left\|\wbf\right\|^2 = \frac{1-\alpha^2}{\alpha^2+\delta^2}, \text{ and } \left\|\vbf\right\|^2 =  \frac{\delta^2-1}{\alpha^2+\delta^2}. 
	\end{align*}
	Now we use the condition that $\alpha \delta=-1$ to obtain, 
	\begin{displaymath}
		 \alpha^2=\frac{\left\|\wbf\right\|^2}{\left\|\vbf\right\|^2}, \text{ and } \delta^2 = \frac{\left\|\vbf\right\|^2}{\left\|\wbf\right\|^2}, 
	\end{displaymath}
	which is again fixed and there are only two possible directions $\mathcal{v}_1$ and $-\mathcal{v}_1$ and hence this case is also not possible. 

	\item Finally, both row are non-zero and let $\alpha+\delta=\lambda(\beta+\gamma)$. So \eqref{eq:system-of-eq} become 
	\begin{align*}
		& \begin{bmatrix}
			-(\alpha+\delta)\lambda(\beta+\gamma) & 2\lambda(\beta+\gamma) \\ 
			-(\alpha+\delta)(\beta+\gamma) & 2(\beta+\gamma) 
		\end{bmatrix}
		\begin{bmatrix}
			\cos 2t \\ \sin 2t
		\end{bmatrix}
		= \begin{bmatrix}
			-\left(\beta^2-\gamma^2\right) \\ \lambda \left(\beta^2-\gamma^2\right)
		\end{bmatrix} 
		\\
		\implies & 
		\begin{bmatrix}
			-(\alpha+\delta)\lambda & 2\lambda \\ 
			-(\alpha+\delta) & 2
		\end{bmatrix} 
		\begin{bmatrix}
			\cos 2t \\ \sin 2t
		\end{bmatrix} 
		= \begin{bmatrix}
			-(\beta+\gamma) \\ \lambda (\beta+\gamma)
		\end{bmatrix},
	\end{align*}
	which implies $\lambda^2=-1$, a contradiction. 
	\end{itemize} 
	Thus, we have proved that $\operatorname{rank}[\vbf,\wbf,\vbf_1,\vbf_2]\neq 2$.
	
	\vspace{0.3cm}
	\noindent \textcolor{blue}{\textbf{Case 2:}} $\operatorname{rank}[\vbf,\wbf,\vbf_1,\vbf_2]=3:$ Since rank is $3$, without loss of generality we assume that $\vbf_2\in  \operatorname{Span}\left\{\vbf,\wbf,\vbf_1\right\}$. Let us write $\vbf_2= a \vbf+b\wbf+c\vbf_1$ for some $a,b,c\in \rbb$. Since the rank is three we can assume that all the vectors are in $\rbb^3$ and let $\times$ denote the vector cross product. Let 
	\begin{align*}
		\vbb_1 & = \left(\vbf\cos t+\vbf_1\sin t)\right) \times \left(\wbf\cos t+\vbf_2\sin t\right) \\ 
		& = (\vbf\times \wbf)\cos^2 t+ \left(\vbf\times \vbf_2\right)\cos t\sin t+ \left(\vbf_1\times \wbf\cos t\sin t\right)+ \left(
		\vbf_1\times \vbf_2\right)\sin^2 t. 
	\end{align*}
	Using \eqref{eq:spanning-cond}, we have 
	\begin{align}
		\vbb_1 \cdot \left(-\vbf\sin t+\vbf_1\cos t\right)=0 & \implies \left[\vbf,\wbf,\vbf_1\right]\cos t+\left[\vbf_1,\vbf,\vbf_2\right]\sin t=0 \label{eq:rk-3-cond-1} \\
		\vbb_1 \cdot \left(-\vbf\sin t+\vbf_2\cos t\right)=0 & \implies \left[\vbf,\wbf,\vbf_2\right]\cos t+\left[\vbf_1,\wbf,\vbf_2\right]\sin t=0 \label{eq:rk-3-cond-2}.
	\end{align}
	Note that 
	\begin{align*}
		\left[\vbf_1,\vbf,\vbf_2\right] = \left[\vbf_1,\vbf,a\vbf+b\wbf+c\vbf_1\right] = b \left(\vbf,\wbf,\vbf_1\right).
	\end{align*}
	So from \eqref{eq:rk-3-cond-1},
	\begin{equation}
		 \left[\vbf,\wbf,\vbf_1\right]\cos t+ b \left[\vbf,\wbf,\vbf_1\right]\sin t = 0  \implies \cos t+ b\sin t=0 \label{eq:cos+bsin=0}. 
	\end{equation}
	Above implies, 
	\begin{displaymath}
		 \cos t= \frac{-b}{1+b^2},\text{ and } \sin t=\frac{1}{1+b^2}.
	\end{displaymath}
	Similarly, using \eqref{eq:rk-3-cond-2},
	\begin{align}
		& \left[\vbf,\wbf,a\vbf+b\wbf+c\vbf_1\right]\cos t+ \left[\vbf_1,\wbf,a\vbf+b\wbf+c\vbf_1\right]\sin t = 0 \nonumber \\
		\implies & c \left(\vbf,\wbf,\vbf_1\right)\cos t+ a \left[\vbf_1,\wbf,\vbf\right]\sin t= 0 \nonumber\\ 
		\implies & \left[\vbf,\wbf,\vbf_1\right](c \cos t-a\sin t)=0 \nonumber \\
		\implies & c\cos t = a\sin t\label{eq:c_cost=asint}.
	\end{align}
	Using \eqref{eq:cos+bsin=0} and \eqref{eq:c_cost=asint} we obtain 
	\begin{equation*}
		 a+bc=0 \label{eq:a+bc=0}.
	\end{equation*} 
	Now we collect some more conditions using previous conditions. 
	\begin{align}
		\eqref{eq:perp-cond} & \implies a \left\|\vbf\right\|^2 + b \left\langle \vbf,\wbf \right\rangle + c \left\langle \vbf,\vbf_1 \right\rangle - \left\langle \vbf_1,\wbf \right\rangle =0\label{eq:rk-3-perp-cond-1}
		\\
		& \implies -bc \left\|\vbf\right\|^2+b \left\langle \vbf,\wbf \right\rangle+c \left\langle \vbf_1,\vbf \right\rangle - \left\langle \vbf_1,\wbf \right\rangle=0\nonumber
		\\ 
		& \implies \left(\vbf_1-b\vbf\right)\cdot (\wbf-c\vbf)=0 \nonumber
		\\
		\eqref{eq:tangent_space_of_sphere_cond-2} & \implies \left\langle \vbf,\vbf_1 \right\rangle+ a \left\langle \vbf,\wbf \right\rangle+ b \left\|\wbf\right\|^2+ c \left\langle \vbf_1,\wbf \right\rangle=0 \label{eq:rk-3-tangent-cond-1}
		\\
		& \implies \left\langle \vbf,\vbf_1 \right\rangle-bc \left\langle \vbf,\wbf \right\rangle+b \left\|w\right\|^2+ c \left\langle \vbf,\vbf_1 \right\rangle=0 \nonumber 
		\\ 
		& \implies \vbf_1\cdot (\vbf-c\wbf)=b\wbf\cdot (c\vbf-\wbf)\nonumber. 
	\end{align}
	Multiply \eqref{eq:rk-3-perp-cond-1} by $c$ and add to \eqref{eq:rk-3-tangent-cond-1} to obtain 
	\begin{equation}
		 \left\langle \vbf_1,\vbf \right\rangle = \frac{bc^2}{1+c^2}\left\|\vbf\right\|^2- \frac{b}{1+c^2}\left\|\wbf\right\|^2 ]\label{eq:v1.v}. 
	\end{equation}
	Similarly, 
	\begin{equation}
		 \left\langle \vbf_1,\wbf \right\rangle = \frac{-bc}{1+c^2}+b \left\langle \vbf,\wbf \right\rangle \label{eq:v1.w}.
	\end{equation}
	We use \eqref{eq:geodesic-on-X(2)-1} and substitute the value of $\cos t,\sin t$ and use \eqref{eq:v1.w} and \eqref{eq:v1.v} to obtain 
	\begin{equation*}
		 \left\|\vbf_1\right\|	^2= \frac{b^2 \left(c^2-1\right)}{1+c^2} \left\|\vbf\right\|^2- \frac{2b^2}{1+c^2} \left\|\wbf\right\|^2+ \frac{1+b^2}{2} \label{eq:v_1-norm}. 
	\end{equation*}
	Now use $\left\|\gamma(t)\right\|^2=1$,
	\begin{align*}
		& \frac{b^2\left(c^2-1\right)}{1+c^2} \left\|\vbf\right\|^2 + \frac{b^2 \left(c^2-1\right)}{1+c^2}\left\|\wbf\right\|^2 + \frac{b^2 \left(c^2-1\right)^2 + \left(c^2+1\right)^2}{2 \left(1+c^2\right)}=1 \\
		\implies & \frac{b^2\left(c^2-1\right)}{1+c^2}\left(\left\|\vbf\right\|^2+\left\|\wbf\right\|^2\right)+ \frac{b^2 \left(c^2-1\right)^2 + \left(c^2+1\right)^2}{2 \left(1+c^2\right)}=1 \\ 
		\implies & 2b^2 \left(c^2-1\right) + b^2 \left(c^2-1\right)^2 = 2 \left(1+c^2\right)-\left(1+c^2\right)^2 \\ 
		\implies & \left(c^2-1\right)\left(2b^2+b^2c^2-b^2\right)= \left(1+c^2\right)\left(1-c^2\right) \\ 
		\implies & \left(c^2-1\right)\left(b^2+b^2c^2+1+c^2\right)=0 \\
		\implies & c= \pm 1.
	\end{align*}
	We also have 
	\begin{align*}
		& \left(\vbf\cos t+\vbf_1\sin t\right)\cdot \left(\wbf \cos t+ \left(a\vbf+b\wbf+c\vbf_1\right)\sin t\right)=0 \\
		\implies & \left(-b\vbf+\vbf_1\right)\cdot \left(-b \wbf+a\vbf+b\wbf+c\vbf_1\right)=0 \\
		\implies & b^2c \left\|\vbf\right\|^2-2bc \left\langle \vbf,\vbf_1 \right\rangle+ c \left\|\vbf_1\right\|^2 = 0 \\ 
		\implies & \frac{c \left(1+b^2\right)}{2}=0 \\ 
		\implies & c = 0,
	\end{align*}
	which is a contradiction. Hence, the rank can not be $3$. Therefore, $\vbf$ and $\wbf$ are linearly dependent and hence, $(\vbf,\wbf)\in \sbb^n\times_{\zbb_2}\sbb^1$. 
\end{proof}

We now prove that the cut locus of $\tilde{X}(d)$ when $n=1$ will be $\left(\zbb_d\star \zbb_d\right) \times_{\zbb_d} \sbb^1$. 

\begin{note}[Cut locus of $X(2)$ for $n=1$] \label{note:cut-locus-of_X(2)-for_n=1}
	For $n=1$, the cut locus of $\tilde{X}(2)$ is
	\begin{equation*}
		\mathrm{Cu}(\tilde{X}(2))= \left\{\frac{1}{2} \left(\cos s+\sin t, \sin s+\cos t,\sin s-\cos t,-\cos s+\sin t\right):s,t\in \rbb\right\}.
	\end{equation*}
	Moreover, the cut locus of $X(2)$ will be $\mathbb{RP}^1$. 
\end{note}

\begin{proof}
	Let us write $z_j=x_j+\iota y_j$. Note that 
	\begin{align*}
		 \tilde{X}(2) & = \left\{\left(z_0,z_1\right)\in \cbb^{2}:z_0^2+z_1^2=0,\text{ and } \left|z_0\right|^2+\left|z_1\right|^1=1 \right\} \\
		 & = \left\{\left(x_0,y_0,x_1,y_1\right):x_0^2+x_1^2=\frac{1}{2}=y_0^2+y_1^2,\text{ and } x_0y_0+x_1y_1=0\right\}.
	\end{align*}

	\noindent Since $\left(x_0,x_1\right)$ and $\left(y_0,y_1\right)$ can not be zero vectors, so without loss of generality, we assume that $x_0y_1\neq 0$. Since 
	\begin{align*}
		x_0y_0+x_1y_1=0 \implies \frac{y_0}{y_1}+\frac{x_1}{x_0} = 0 \implies y_0 = -y_1 \left(\frac{x_1}{x_0}\right).
	\end{align*} 
	Now as 
	\begin{align*}
		y_0^2+y_1^2=\frac{1}{2} \implies y_1^2 \left(\frac{x_1}{x_0}\right)^2+y_1^2=\frac{1}{2} \implies y_1^2 \left(\frac{x_1^2}{x_0^2}+1\right) = \frac{1}{2} \implies y_1=\pm x_0.
	\end{align*}
	Similarly, we have 
	\begin{displaymath}
		x_1=\pm y_0.
	\end{displaymath}
	Therefore, 
	\begin{align*}
		\tilde{X}(2) & = \left\{\left(x_0,y_0,y_0,-x_0\right):x_0^2+y_0^2=\frac{1}{2}\right\}\sqcup \left\{\left(x_0,-y_0,y_0,x_0\right):x_0^2+y_0^2=\frac{1}{2}\right\} \\
        & = S^1_1\sqcup S^1_2.
	\end{align*}
	Define a linear transformation 
	\begin{displaymath}
		T:\rbb^4\to \rbb^4, ~(a,b,c,d) \mapsto \frac{1}{\sqrt{2}}(a-d,b+c,b-c,a+d).
	\end{displaymath}
	Note that $T$ maps $\sbb^3$ onto $\sbb^3$ and is an isometry hence it preserves the cut locus. So 
	\begin{align*}
		\mathrm{Cu}(T(\tilde{X}(2))) & = \mathrm{Cu} \left(T \left(S^1_1\right)\sqcup T \left(S^1_2\right)\right) \\ 
		& = \mathrm{Cu} \left(\left\{(a,b,0,0):a^2+b^2=1\right\} \sqcup \left\{(0,0,c,d):c^2+d^2=1\right\}\right) \\
		& = \left\{\frac{1}{\sqrt{2}}(\cos s,\sin s, \cos t,\sin t):s,t\in\rbb \right\}.
	\end{align*}
	Therefore, the cut locus of $X(2)$ can be found by the inverse transformation which is 
	\begin{displaymath}
		 T^{-1}(x,y,z,w) = \frac{1}{2} (x+2,y+z,y-z,w-x).
	\end{displaymath}
	Hence, 
	\begin{align*}
		\mathrm{Cu} (\tilde{X}(2)) & = \left\{\frac{1}{2} \left(\cos s+\sin t, \sin s+\cos t,\sin s-\cos t,-\cos s+\sin t\right):s,t\in \rbb\right\}\\ & \cong \sbb^1\times \sbb^1.
	\end{align*}
	Quotient with $\sbb^1$ give the required cut locus.
\end{proof}

\begin{thm}\label{thm:cut-locus-for_X1_d}
	For $n=1$, we have 
	\begin{displaymath}
		\widetilde{\mathrm{Cu}} = \left(\zbb_d\star\zbb_d\right) \times_{\zbb_d}\sbb^1.
	\end{displaymath}
\end{thm}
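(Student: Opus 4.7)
The plan is to compute the separating set $\widetilde{\mathrm{Se}}$ of $\tilde X(d)$ inside $\sbb^3$ directly, identify it with the image of an explicit $\zbb_d$-equivariant map from $\paran{\zbb_d \star \zbb_d} \times \sbb^1$, and then conclude by closedness together with \Cref{thm:SeClosureIsCutLocus}.

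First, I would factor $z_0^d + z_1^d = \prod_{k=0}^{d-1}\paran{z_0 - \omega_k z_1}$ with $\omega_k = e^{\iota\pi(2k+1)/d}$ to realise $\tilde X(d) = \bigsqcup_{k=0}^{d-1} C_k$, where $C_k = \curlybracket{\tfrac{1}{\sqrt 2}\paran{\omega_k e^{\iota\theta}, e^{\iota\theta}} : \theta \in \rbb}$ is a Hopf fibre, hence a great circle in $\sbb^3$. For $p = (a,b)\in\sbb^3$ an elementary maximisation of $\operatorname{Re}\langle p, q\rangle$ over $q\in C_k$ gives
\begin{equation*}
\cos d(p, C_k) = \frac{\abs{a\bar\omega_k + b}}{\sqrt 2},
\end{equation*}
with the closest point on $C_k$ unique whenever $a\bar\omega_k + b\neq 0$. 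Writing $a=r_1 e^{\iota\alpha_1}$, $b=r_2 e^{\iota\alpha_2}$ and $\phi=\alpha_1-\alpha_2$, we find
\begin{equation*}
\abs{a\bar\omega_k + b}^2 = 1 + 2 r_1 r_2 \cos\paran{\phi - \tfrac{\pi(2k+1)}{d}}.
\end{equation*}

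Since the arguments $\pi(2k+1)/d$ are equally spaced with gap $2\pi/d$ on the unit circle, the maximum of this quantity over $k$ is attained at two (or more) indices precisely when either $r_1 r_2 = 0$ (in which case all $d$ fibres are equidistant to $p$) or $\phi$ is a midpoint of two consecutive arguments, i.e.\ $\phi \in \curlybracket{2\pi\ell/d : \ell \in \zbb_d}$, in which case the two adjacent $C_k, C_{k+1}$ attain the maximum. Translating back to $(a,b)$-coordinates yields
\begin{equation*}
\widetilde{\mathrm{Se}} = \bigcup_{\ell=0}^{d-1}\curlybracket{e^{\iota\alpha}\paran{\cos t\,\zeta_d^{\ell},\, \sin t} : t \in [0,\pi/2],\ \alpha \in \rbb},
\end{equation*}
which is manifestly closed in $\sbb^3$; the endpoints $t=0,\pi/2$ recover the two boundary circles $\sbb^1\times\{0\}$ and $\{0\}\times\sbb^1$ corresponding to $r_1 r_2=0$.

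Next, with $\zbb_d \star \zbb_d$ realised inside $\sbb^3$ as $\curlybracket{\paran{\cos t\,\zeta_d^{\ell},\, \sin t\,\zeta_d^{m}} : t \in [0,\pi/2],\ \ell, m \in \zbb_d}$, the map
\begin{equation*}
\Phi:\paran{\zbb_d \star \zbb_d} \times \sbb^1 \to \sbb^3,\qquad \paran{(c, w),\ e^{\iota\alpha}} \longmapsto e^{\iota\alpha}(c, w),
\end{equation*}
is continuous and invariant under the diagonal $\zbb_d$-action (acting by $\zeta_d^{-1}$ on the $\sbb^1$ factor), hence descends to a continuous map $\bar\Phi : \paran{\zbb_d \star \zbb_d}\times_{\zbb_d}\sbb^1 \to \sbb^3$. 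Absorbing the factor $\zeta_d^m$ into $e^{\iota\alpha}$, the image of $\bar\Phi$ is precisely $\widetilde{\mathrm{Se}}$. Injectivity of $\bar\Phi$ comes down to checking that the only identifications among its values occur at the join-boundary $t \in \{0, \pi/2\}$ and match the collapses in $\zbb_d \star \zbb_d$; this is a direct inspection of the parameterisation. Compactness of the source together with Hausdorff-ness of $\sbb^3$ promotes this continuous bijection to a homeomorphism, and \Cref{thm:SeClosureIsCutLocus} gives $\widetilde{\mathrm{Cu}} = \overline{\widetilde{\mathrm{Se}}} = \widetilde{\mathrm{Se}} \cong \paran{\zbb_d \star \zbb_d}\times_{\zbb_d}\sbb^1$.

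The principal obstacle is the second stage: the clean elementary argument ruling out any non-adjacent pair $k, k'$ from simultaneously realising the maximum of $\abs{a\bar\omega_k + b}^2$, together with a uniform handling of the degenerate locus $r_1 r_2 = 0$ alongside the generic one. A cleaner alternative would be to invoke the equivariant cut locus theorem \Cref{thm:equivariant-cut-locus}: compute $\mathrm{Cu}(X(d))$ in $\CP^1 \cong \sbb^2$ first as the banana graph with two antipodal vertices and $d$ meridian edges (the cut locus of $d$ equally-spaced points on a great circle), and identify $\paran{\zbb_d \star \zbb_d}\times_{\zbb_d}\sbb^1$ as the Hopf preimage of this graph. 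I would prefer the direct computation, however, because it also pinpoints the exact pair of minimising geodesics at each separating point.
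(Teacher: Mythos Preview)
Your approach is essentially the paper's: both decompose $\tilde X(d)$ into $d$ disjoint great circles (Hopf fibres), compute the spherical distance from an arbitrary point to each fibre, and identify $\widetilde{\mathrm{Se}}$ as the locus where two fibres tie for the minimum. The paper first uses the diagonal $U(2)$-action by $d$-th roots of unity together with $\sbb^1$-equivariance to reduce to the single point $(\cos\phi,0,\sin\phi,0)$ and then observes the symmetry $X_k\leftrightarrow X_{d-1-k}$ in the distance formula; you instead keep the general point $p=(r_1e^{\iota\alpha_1},r_2e^{\iota\alpha_2})$ and read off the tie condition as ``$\phi=\alpha_1-\alpha_2$ is a midpoint of consecutive angles $\pi(2k+1)/d$''. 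These are the same computation viewed from two ends, and your formula $\abs{a\bar\omega_k+b}^2=1+2r_1r_2\cos(\phi-\pi(2k+1)/d)$ is exactly the paper's $1+\sin 2\phi\cos((2k+1)\pi/d)$ after the symmetry reduction sets $\phi=0$ and renames $\phi\to$ the modulus angle.

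Your ``principal obstacle'' is not one: for equally spaced angles $\theta_k$ on the circle, the maximum of $\cos(\phi-\theta_k)$ is attained only at the nearest $\theta_k$, so a non-unique maximum forces $\phi$ to be equidistant from two \emph{adjacent} $\theta_k$'s (or $r_1r_2=0$, making all fibres equidistant). No separate argument for non-adjacent pairs is needed. What you add beyond the paper is the explicit map $\bar\Phi$ realising the homeomorphism with $(\zbb_d\star\zbb_d)\times_{\zbb_d}\sbb^1$; the paper states the set equality but leaves the topological identification implicit. Your alternative route via \Cref{thm:equivariant-cut-locus} (compute the cut locus of $d$ equally spaced points on $\sbb^2\cong\CP^1$ and pull back along the Hopf map) is genuinely different from what the paper does and would also work cleanly.
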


\begin{proof}
	Note that 
	 \begin{displaymath}
		\zbb_d\star\zbb_d = \left\{\vbf=(\xi^k\cos \phi,\xi^l\sin\phi):0\le k,l\le d-1,~\text{and } 0\le \phi\le \frac{\pi}{2}\right\},
	 \end{displaymath}
	 where $\xi^k$ is a $d^\text{th}$ root of unity. Let $\vbf\in \zbb_d\star \zbb_d$ and $\theta\in (0,2\pi)$. We will show that $\vbf e^{\iota \theta}\in \widetilde{\mathrm{Se}}$. Due to $\sbb^1$-symmetry, it is enough to show that $\vbf\in \widetilde{\mathrm{Se}}$. Now consider the matrix 
	 \begin{displaymath}
		  A = 
		  \begin{pmatrix}
			\lambda_1 & 0 \\ 0 & \lambda_2	  
		  \end{pmatrix} \in U(2),
		  \text{ such that } \lambda_1^d=1=\lambda_2^d.
	 \end{displaymath}
	 We observe that $A$ maps $\tilde{X}(d)$ to itself and hence it is enough to show that $p=(\cos \phi,0,\sin\phi,0)\in \widetilde{\mathrm{Se}}$. Note that 
	\begin{align*}
		\tilde{X}(d) & = \left\{\left(z_1,z_2\right)\in \sbb^3: z_1^d+z_2^d=0\right\} \\
			& = \left\{\left(z_1,z_2\right)\in \sbb^3: z_1^d=-z_2^d,~ \left|z_1\right|=\frac{1}{\sqrt{2}}=\left|z_2\right|\right\} \\ 
			& = \left\{\left(z,\xi z\right)\in \sbb^3: \xi^d = -1, \text{ and } \left|z\right|=\frac{1}{\sqrt{2}}\right\} \\
			& = \bigsqcup_{k=0}^{d-1}\left\{(z,\xi z): \xi = e^{\frac{(2k+1)\pi}{d}},~|z|=\frac{1}{\sqrt{2}} \right\} = \bigsqcup_{k=0}^{d-1}X_k
	\end{align*}
	 
	\noindent We now compute the distance of $p$ from $X_k$ and will show that the distance is same with at least two components, which will show that $p\in \widetilde{\mathrm{Se}}$. Note that for any point $Q=(x_1,y_1,x_2,y_2)\in \tilde{X}(d)$, the distance between $p$ and $Q$ is given by (look at the \Cref{fig:distance_p_to_Xd})
	\begin{figure}[!htb]
		\centering
    \def\svgwidth{0.3\columnwidth}
\begingroup%
  \makeatletter%
  \providecommand\color[2][]{%
    \errmessage{(Inkscape) Color is used for the text in Inkscape, but the package 'color.sty' is not loaded}%
    \renewcommand\color[2][]{}%
  }%
  \providecommand\transparent[1]{%
    \errmessage{(Inkscape) Transparency is used (non-zero) for the text in Inkscape, but the package 'transparent.sty' is not loaded}%
    \renewcommand\transparent[1]{}%
  }%
  \providecommand\rotatebox[2]{#2}%
  \newcommand*\fsize{\dimexpr\f@size pt\relax}%
  \newcommand*\lineheight[1]{\fontsize{\fsize}{#1\fsize}\selectfont}%
  \ifx\svgwidth\undefined%
    \setlength{\unitlength}{163.61612996bp}%
    \ifx\svgscale\undefined%
      \relax%
    \else%
      \setlength{\unitlength}{\unitlength * \real{\svgscale}}%
    \fi%
  \else%
    \setlength{\unitlength}{\svgwidth}%
  \fi%
  \global\let\svgwidth\undefined%
  \global\let\svgscale\undefined%
  \makeatother%
  \begin{picture}(1,1.68697905)%
    \lineheight{1}%
    \setlength\tabcolsep{0pt}%
    \put(0,0){\includegraphics[width=\unitlength,page=1]{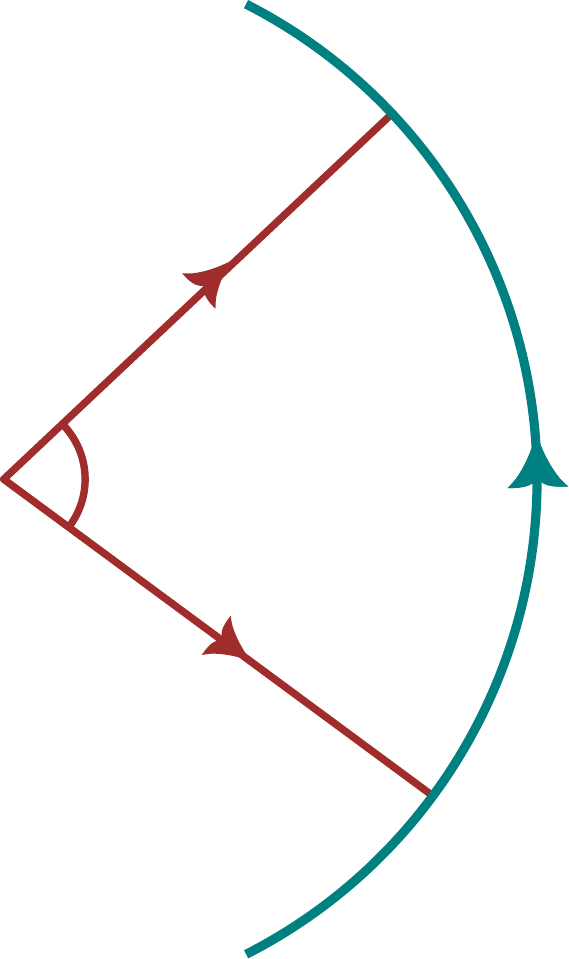}}%
    \put(0.17609985,0.82622238){\makebox(0,0)[lt]{\lineheight{1.25}\smash{\begin{tabular}[t]{l}$\theta$\end{tabular}}}}%
    \put(0,0){\includegraphics[width=\unitlength,page=2]{distance_p_to_Xd.pdf}}%
    \put(0.80295215,0.23136971){\makebox(0,0)[lt]{\lineheight{1.25}\smash{\begin{tabular}[t]{l}$p$\end{tabular}}}}%
    \put(0.73226542,1.50520188){\makebox(0,0)[lt]{\lineheight{1.25}\smash{\begin{tabular}[t]{l}$Q=(x_1,y_1,x_2,y_2)$\end{tabular}}}}%
  \end{picture}%
\endgroup%

		\caption{distance of $p$ to $\tilde{X}(d)$ \label{fig:distance_p_to_Xd}}
	\end{figure}
	
	\begin{displaymath}
		\mathrm{dist}(p,Q) = \cos^{-1}(p\cdot Q) = \cos^{-1} \left(x_1\cos \phi + x_2\sin \phi\right).
	\end{displaymath} 

	\noindent Therefore, the distance between $p$ and the set $\tilde{X}(d)$ is given by
	\begin{align*}
		\mathrm{dist}(p,\tilde{X}(d)) & = \inf \left\{\mathrm{dist}(p,Q):Q\in \tilde{X}(d)\right\}\\
		& = \inf \left\{\cos^{-1}\left(x_1\cos \phi+x_2\sin\phi\right):\left(x_1,y_1,x_2,y_2\right)\in \tilde{X}(d) \right\}. 
	\end{align*}
	As $\cos^{-1}$ is a decreasing function, it is equivalent to maximize $x_1\cos \phi + x_2\sin \phi$ such that $x_2=x_1\cos \left(\frac{(2k+1)\pi}{d}\right)-y_1\sin\left(\frac{(2k+1)\pi}{d}\right)$, and $x_1^2+y_1^2=\frac{1}{2}$. This maximum value will be
	\begin{displaymath}
		\sqrt{\frac{1+\sin 2\phi\cos \left(\frac{(2k+1)\pi}{d}\right)}{2}}.
	\end{displaymath}
	Therefore, the distance from $X_k$ will be
	\begin{displaymath}
		\cos^{-1} \left(\sqrt{\frac{1+\sin 2\phi\cos \left(\frac{(2k+1)\pi}{d}\right)}{2}}\right).
	\end{displaymath}
	Note that if $d$ is even, then the above distance is same from $X_k$ and $X_{d-1-k}$, therefore, the point is a separating point and hence is a cut point. If $d$ is odd, then the above still holds. The only thing to make sure that the distance from each component is smaller than the distance from $X_{\frac{d-1}{2}}$, but this is true as $0\le \phi \le \frac{\pi}{2}$. Therefore, we proved that $p\in \widetilde{\mathrm{Se}}$ and hence it is in $\widetilde{\mathrm{Cu}}$. The inverse inclusion follows with a similar argument. In fact, if we take any other point, then there will be only one distance minimal geodesic, that will occur from one component. Let $p=\left(r_1e^{\iota \theta_1},r_2 e^{\iota \theta_2}\right)$ be any point in $\mathbb{S}^3$ not in the above form. Note that if $\theta_1=\theta_2$, then due to $\mathbb{S}^1$-equivariant, $p\in \widetilde{\mathrm{Se}}$ if and only if $(r_1,0,r_2,0)\in \widetilde{\mathrm{Se}}$, which is a point in the above form. So we assume that $\theta_1\neq \theta_2$ and $\theta_1,\theta_2 \in [0,2\pi/d]$. Then we have to minimize the distance from $p$ to $\tilde{X}(d)$, that is
	\begin{displaymath}
		\dist(p,\tilde{X}(d)) = \inf \left\{\cos^{-1}(p\cdot (z_1,z_2)):(z_1,z_2)\in \tilde{X}(d)\right\}.
	\end{displaymath} 
	This is equivalent to maximizing the dot product $p\cdot (z_1,z_2)$, and this maximum will be achieved from one component, say $X_l$, of $\tilde{X}(d)$ as $0<|\theta_1-\theta_2|<2\pi/d$. A similar computation shows that there is only one distance minimal geodesic from the component $X_l$, and this shows that $p\notin \widetilde{\mathrm{Se}}$. Hence, the theorem is proved.  
\end{proof}

\begin{rem}
	In \cite[\S 2.2]{Aud05}, the author has shown that the function
	\begin{displaymath}
		f:\mathbb{CP}^n\to \mathbb{R},~[x+\iota y]\mapsto \dfrac{\left\|y\right\|^2}{\left(1+\left\|y\right\|^2\right)^2} 
	\end{displaymath}
	is a Morse-Bott function with two critical submanifolds $-$ $X(2)$ and $\mathbb{RP}^n$. We know that $\mathbb{CP}^n\setminus X(2)$ deformation retracts to $\mathbb{RP}^n$ via Morse-Bott flow, whereas \Cref{defretM-N} implies that $\mathbb{CP}^n\setminus X(2)$ deformation retracts to $\cutn[X(2)]$. Hence, $\cutn[X(2)]$ and $\mathbb{RP}^n$  have the same homotopy type. However, it's not a priori clear whether they are equal. Moreover, our calculation shows a computation of $\cutn[T_1\mathbb{S}^n] \subseteq \mathbb{S}^{2n+1}$, where $T_1\mathbb{S}^n$ is the unit tangent bundle of $\mathbb{S}^n$.   We deduce that the cut locus is the unique non-trivial $\mathbb{S}^1$-bundle of $\mathbb{RP}^n$ and this  is a new computation.
\end{rem}

	\backmatter
	\addcontentsline{toc}{chapter}{Bibliography}
	\bibliographystyle{apalike}

	\newpage\hbox{}\thispagestyle{empty}\newpage
	
	\addcontentsline{toc}{chapter}{Index}
	\printindex 
	\thispagestyle{empty}
\end{document}